\documentclass[11pt]{amsart}
\usepackage[foot]{amsaddr}
\usepackage[a4paper,margin=2.65cm]{geometry}
\usepackage[english]{babel}
\usepackage{csquotes}
\usepackage[T1]{fontenc}
\usepackage{textcomp}
\usepackage{amsmath,amssymb,amsthm,mathtools,mathrsfs}
\usepackage{newtxtext}
\usepackage{newtxmath}
\usepackage[bb=boondox,cal=boondoxo,scr=boondoxo]{mathalfa}
\usepackage{enumitem}
\usepackage[hidelinks]{hyperref}
\hypersetup{
	pdftitle={Hypergeometric Mixed-Type Multiple Orthogonal Polynomials},
	pdfauthor={Manuel Ma\~nas},
	pdfsubject={Mixed-type multiple orthogonality and hypergeometric systems},
	pdfkeywords={mixed-type multiple orthogonal polynomials; hypergeometric functions; Mellin transforms; step-line recurrences; Christoffel transformations; bidiagonal factorization}
}
\usepackage{microtype}
\usepackage{bigints}
\usepackage[renew-dots,renew-matrix]{nicematrix}
\mathtoolsset{showonlyrefs}
\allowdisplaybreaks

\newcommand{\dx}{\,\mathrm{d}x}
\newcommand{\dt}{\,\mathrm{d}t}

\newcommand{\e}{\mathrm{e}}
\newcommand{\M}{\mathcal{M}}
\newcommand{\one}{\mathbf{1}}
\newcommand{\pFq}[5]{\;{}_{#1}F_{#2}\left(\begin{matrix}#3\\#4\end{matrix};#5\right)}
\newcommand{\Jmat}{\boldsymbol{\mu}^{\,\mathrm J}}
\newcommand{\Lagmat}{\boldsymbol{\mu}^{\,\mathrm L}}
\newcommand{\thetaop}{\mathscr D}
\newcommand{\N}{\mathbb N}
\theoremstyle{plain}
\newtheorem{theorem}{Theorem}[section]
\newtheorem{proposition}[theorem]{Proposition}
\newtheorem{lemma}[theorem]{Lemma}
\newtheorem{corollary}[theorem]{Corollary}

\newtheoremstyle{definitionstyle}
{6pt}
{12pt}
{\normalfont}
{}
{\bfseries}
{.}
{0.5em}
{}

\theoremstyle{definitionstyle}
\newtheorem{definition}[theorem]{Definition}
\newtheorem{example}[theorem]{Example}

\newtheoremstyle{remarkstyle}
{6pt}
{12pt}
{\normalfont}
{}
{\itshape}
{.}
{0.5em}
{}

\theoremstyle{remarkstyle}
\newtheorem{remark}[theorem]{Remark}

\title{Hypergeometric Mixed-Type Multiple Orthogonal Polynomials}
\author{Manuel Ma\~nas}
\address{Department of Theoretical Physics, Faculty of Physical Sciences,
	Complutense University of Madrid, 28040 Madrid, Spain}
\email{manuel.manas@ucm.es}

\begin{document}
	
\begin{abstract}
	We construct two hypergeometric families of mixed-type multiple orthogonal
	forms for rank-one \(q\times p\) matrices of weights, with arbitrary \(p\)
	and \(q\): Jacobi and Laguerre I systems. In the admissible
	near-diagonal range, both normalized mixed forms are obtained explicitly.
	The components on the power vector are terminating generalized
	hypergeometric polynomials. The components on the hypergeometric vector are
	finite sums of terminating generalized hypergeometric polynomials; in the
	Jacobi case they can be reorganized as finite combinations of
	terminating Kamp\'e de F\'eriet polynomials evaluated at \((x,1)\), whereas
	in the mixed beta--Euler Laguerre I case a finite triangular system links
	the residues at the finite poles with the terms generated by the Euler
	operator. Gamma-quotient Mellin formulas, Meijer \(G\)-representations, and
	Rodrigues formulas are obtained for the complete mixed forms.

	On the step-line, the two biorthogonal systems satisfy dual recurrences
	governed by matrices with \(p\) subdiagonals and \(q\) superdiagonals. Every
	recurrence coefficient is reduced to a finite Gamma--Pochhammer expression.
	Under the stated normality and nonzero-pivot hypotheses, Christoffel
	transformations and Gauss--Borel factorization give bidiagonal
	factorizations of the recurrence matrices. The lower factors have closed
	Pochhammer formulas, while the upper factors are represented by finite
	Christoffel tau-determinants and, equivalently, by cross-ratios of shifted
	moment minors. Both Christoffel chains close explicitly in the mixed
	Pi\~neiro specialization.
\end{abstract}
	\keywords{mixed-type multiple orthogonal polynomials; Jacobi weights;
		Laguerre I weights; generalized hypergeometric functions; Kamp\'e de
		F\'eriet functions; Mellin transforms; Meijer \(G\)-functions; step-line
		recurrences; Christoffel transformations; bidiagonal factorization}
	\subjclass[2020]{Primary 33C45; Secondary 42C05, 33C20, 15A23, 47B36}
	\maketitle
	
	\tableofcontents
	\section{Introduction}
	\label{sec:introduction}
	
	Multiple orthogonal polynomials and multiple orthogonal forms extend the
	classical theory to systems involving several weights or moment functionals.
	Besides their intrinsic approximation-theoretic interest, they appear in random
	matrices, nonintersecting paths, Hermite--Pad\'e approximation, integrable
	systems, spectral theory of banded operators, and stochastic processes.
	Step-line recurrences and their associated banded matrices provide the
	analogue of the Jacobi matrix for multiple orthogonality.
	
	In the scalar theory, orthogonal polynomials satisfy a three-term recurrence
	relation governed by a tridiagonal Jacobi matrix. For multiple orthogonality,
	the recurrence matrix is banded. Its structure depends on
	the geometry of the underlying multi-indices and on the interaction between
	the different weights.  On the step-line, these recurrences give rise to
	banded matrices whose spectral theory is closely connected with matrix-valued
	moments, generalized Favard theorems, and block or rectangular Weyl
	functions.
	
	In the mixed-type multiple orthogonality considered here, one side of the
	orthogonality structure is given by a vector of powers,
	\[
	\mathbf u(x)
	=
	\left[
	x^{\theta_1},\ldots,x^{\theta_p}
	\right],
	\]
	while the other side is given by a vector of weights
	\[
	\mathbf v(x)
	=
	\left[
	v_1(x),\ldots,v_q(x)
	\right].
	\]
	The resulting matrix of measures has density
	\[
	\mathrm d\boldsymbol\mu(x)
	=
	\mathbf v(x)^{\mathsf T}\mathbf u(x)\,\mathrm dx,
	\]
	so it is a rectangular \(q\times p\) matrix of pointwise rank one, with
	\(pq\) scalar weights. It leads to
	rectangular moment matrices and rectangular Weyl functions.  The
	mixed setting contains, as particular cases, several well-known families of
	multiple orthogonal systems, including Jacobi--Pi\~neiro and multiple
	Laguerre systems.

	Mixed-type multiple orthogonal polynomials for arbitrary numbers \(p\) and
	\(q\) of weights were introduced by Daems and
	Kuijlaars~\cite{DaemsKuijlaars2007}. Their Gaussian system, arising from
	nonintersecting Brownian motions with several initial and final points, is a
	fundamental rank-one example. A non-Gaussian explicit
	mixed construction associated with modified Bessel functions was later
	obtained by Zhang~\cite{Zhang2016BesselMixed}, in the restricted
	\(2\times2\) setting.
	A different Bessel family of mixed type is constructed
	in~\cite{ManasBesselMixed2026} from a matrix weight on the unit circle,
	which is generically not of rank one. Its relation to the Jacobi family is
	established through a scaled limit of Markov--Stieltjes transforms, rather
	than a finite limit of the interval measures themselves.

	The mixed Jacobi and Laguerre I families considered here extend the
	Jacobi-like and Laguerre-like systems of Wolfs~\cite{Wolfs2024}, recovered
	when \(p=1\) and the power exponent is zero. We reserve the suffix
	`-like' for these ordinary \(p=1\) systems. Their
	mixed extension and the study of the corresponding bidiagonal factorizations
	were also motivated by the preceding work on mixed Pi\~neiro orthogonality by
	Branquinho, D{\'\i}az, Foulqui\'e-Moreno, and
	Ma\~nas~\cite{PineiroMixed2026}. In that power-vector setting, the mixed
	forms, their hypergeometric representations, and the complete bidiagonal
	factorization of the step-line recurrence matrix can all be made explicit.
	In the mixed Jacobi system, the right-hand power vector is replaced by
	the Jacobi hypergeometric weights arising from Wolfs's construction,
	while the boundary degeneration
	\(\boldsymbol a,\boldsymbol b\to\boldsymbol\beta\) recovers exactly the mixed Pi\~neiro
	matrix. The Pi\~neiro system~\cite{PineiroMixed2026} therefore provides the
	power-vector boundary specialization of the Jacobi mixed construction.
	
	The present construction gives two explicit non-Gaussian hypergeometric
	families of mixed-type systems on the positive real line for arbitrary
	\(p\) and \(q\):
	Jacobi systems and Laguerre I systems.  Their components are written
	in terms of terminating generalized hypergeometric functions together with
	Gamma-quotient moment formulas. In particular, the number of terminating
	hypergeometric blocks in the components on the hypergeometric vector grows
	with the total degree. In the Jacobi case, this finite reconstruction
	can also be grouped into a finite sum
	of terminating Kamp\'e de F\'eriet polynomials evaluated at
	\((x,1)\)~\cite{SrivastavaKarlsson1985}. The analogous two-variable
	regrouping is not uniform in the mixed beta--Euler Laguerre I family.
	Instead, a finite triangular linear system links the residues at finite
	poles with the terms generated by the Euler operator. At \(s=0\), the formulas
	reduce to the Jacobi Kamp\'e de F\'eriet formula; at \(r=0\), Newton
	interpolation gives terminating hypergeometric formulas for the components.
	Rodrigues-type representations for
	the complete mixed forms by means of compositions of differential operators
	acting on suitable seed weights yield explicit formulas for the mixed forms
	and their moments.
	
	On the step-line, the mixed forms constitute two biorthogonal
	sequences. Multiplication by the independent variable is represented by a
	\((p,q)\)-banded matrix. Every band entry is evaluated directly as a finite
	pairing of the explicit components. In the Jacobi case this gives finite
	Gamma--Pochhammer sums. In the Laguerre I case it gives finite
	Gamma--Pochhammer sums after the finite triangular system determining the
	components has been solved. Only the step-line sequence is used; no
	off-step-line recurrences between neighbouring multi-indices are required.
	
	Bidiagonal factorizations of these step-line recurrence matrices are also
	constructed. Such factorizations are fundamental in
	the theory of totally positive and oscillatory matrices and play a central
	role in spectral theory~\cite{BranquinhoFoulquieManas2023Spectral}.
	Under the normality and nonzero-pivot conditions stated below, Christoffel
	transformations and the Gauss--Borel factorization of the matrix of measures
	give lower bidiagonal factorizations for both mixed families. In the mixed
	Pi\~neiro specialization~\cite{PineiroMixed2026}, where both sides of the
	matrix of measures are
	power vectors, the factorization becomes completely explicit, including all
	upper bidiagonal factors.  Outside that specialization, the upper factors
	are characterized by finite Christoffel determinants and, equivalently, by
	cross-ratios of shifted moment minors. The general construction is verified
	in an exact rational \(p=q=2\) example.

	The one-weight reductions recover the known
	factorizations for the Jacobi--Pi\~neiro and multiple Laguerre systems of the
	first kind.

	The explicit forms are considered in the admissible near-diagonal range,
	under the parameter and nonresonance hypotheses stated below. There the
	required finite Mellin cancellations hold, the evaluated moment determinants
	supply normality, and the hypergeometric expressions terminate. The
	recurrence part uses only the single sequence of indices on the step-line.
	
	The paper is organized as follows.
	Section~\ref{sec:general-mixed-framework} introduces mixed orthogonality,
	rectangular moment matrices, normalization conventions, and step-line
	recurrences. The next two sections construct the Jacobi and
	Laguerre I systems and determine their normalized mixed forms in the
	admissible near-diagonal range. Their Mellin, Meijer \(G\), hypergeometric,
	and Rodrigues representations are then developed. The step-line
	specializations give the two biorthogonal sequences and explicit finite
	formulas for every band coefficient. The final section constructs the
	bidiagonal factorizations of the recurrence matrices, evaluates the lower
	factors, represents the upper factors by shifted moment determinants, and
	treats the mixed Pi\~neiro specialization and the exact \(p=q=2\) example.
	Endpoint and central confluences are not used in these constructions and are
	treated in~\cite{ManasAskeyJacobi2026}, where the Jacobi systems lead to
	Laguerre systems of the first and second kinds and to Hermite systems of
	mixed type. For the ordinary \(p=1\) families, a complementary discrete
	construction uses the Bernstein transform of the Jacobi-like weights to
	obtain Hahn-like systems and their Askey-type
	confluences~\cite{ManasAskeyHahn2026}.
		
\subsection{Hypergeometric series, Mellin transform and Meijer G-function notation}
\label{subsec:hypergeometric-notation}

The Pochhammer symbol is denoted by
\[
(a)_k
\coloneq
\frac{\Gamma(a+k)}{\Gamma(a)},
\qquad
k\in\mathbb N_0.
\]
For a parameter vector
\[
\boldsymbol a=(a_1,\ldots,a_r),
\]
the shorthand notation
\[
\Gamma(z\one_r+\boldsymbol a)
\coloneq
\prod_{\rho=1}^{r}\Gamma(z+a_\rho),
\qquad
(z\one_r+\boldsymbol a)_k
\coloneq
\prod_{\rho=1}^{r}(z+a_\rho)_k
\]
is used.
More generally, if
\[
\boldsymbol c=(c_1,\ldots,c_r),
\qquad
\boldsymbol \nu=(\nu_1,\ldots,\nu_r)\in\mathbb N_0^r,
\]
the notation
\[
(\boldsymbol c)_{\boldsymbol \nu}
\coloneq
\prod_{\rho=1}^{r}(c_\rho)_{\nu_\rho}
\]
is used.
Products over an empty parameter vector are understood to be equal to one.

For parameter vectors
\[
\boldsymbol a=(a_1,\ldots,a_p),
\qquad
\boldsymbol b=(b_1,\ldots,b_q),
\]
the generalized hypergeometric function is
\begin{equation}
	\label{eq:generalized-hypergeometric-definition}
	\pFq{p}{q}
	{\boldsymbol a}
	{\boldsymbol b}
	{z}
	\coloneq
	\sum_{k=0}^{\infty}
	\frac{(\boldsymbol a)_k}{(\boldsymbol b)_k}
	\frac{z^k}{k!},
\end{equation}
where the series is understood in its domain of convergence, or by analytic
continuation when appropriate.  The hypergeometric series occurring as
polynomial components below are terminating series.

The classical Kamp\'e de F\'eriet double hypergeometric series is also used;
see \cite[Chapter~1]{SrivastavaKarlsson1985}. If the parameter strings
\(\boldsymbol a,\boldsymbol b,\boldsymbol c,\boldsymbol d,\boldsymbol e,\boldsymbol f\) have respective lengths
\(p,q,k,\ell,m,n\), then
\begin{equation}
	\label{eq:standard-KdF-definition}
	F_{\ell:m;n}^{p:q;k}
	\left(
	\begin{matrix}
		\boldsymbol a:\boldsymbol b;\boldsymbol c\\
		\boldsymbol d:\boldsymbol e;\boldsymbol f
	\end{matrix}
	\middle|x,y
	\right)
	\coloneq
	\sum_{r,\lambda\ge0}
	\frac{
		(\boldsymbol a)_{r+\lambda}
		(\boldsymbol b)_r
		(\boldsymbol c)_\lambda
	}{
		(\boldsymbol d)_{r+\lambda}
		(\boldsymbol e)_r
		(\boldsymbol f)_\lambda
	}
	\frac{x^r}{r!}\frac{y^\lambda}{\lambda!}.
\end{equation}
Thus the first parameter pair is coupled to \(r+\lambda\), while the second
and third pairs depend only on \(r\) and \(\lambda\), respectively.  Empty
parameter strings are allowed.

The Mellin transform of a function \(f\) on \(\mathbb R_+\) is denoted by
\begin{equation}
	\label{eq:Mellin-transform-notation}
	\mathcal M[f](s)
	\coloneq
	\int_0^\infty x^{s-1}f(x)\dx,
	\qquad
	s\in\mathcal S_f,
\end{equation}
where \(\mathcal S_f\) denotes the fundamental strip of \(f\), that is, the
vertical strip of values of \(s\) for which the integral converges.  If \(f\)
is supported on \((0,1)\), the same notation is used with the integral
restricted to \((0,1)\). The elementary shift rule
\[
\mathcal M[x^\ell f](s)=\mathcal M[f](s+\ell),
\qquad
\ell\in\mathbb N_0,
\]
is used whenever both sides are defined.

For functions on \(\mathbb R_+\), the Mellin convolution is
\[
(f*g)(x)
\coloneq
\int_0^\infty
f(t)g\left(\frac{x}{t}\right)\frac{\mathrm{d}t}{t}.
\]
Whenever the integrals are convergent, the Mellin transform turns Mellin
convolution into multiplication:
\begin{equation}
	\label{eq:Mellin-convolution-product}
	\mathcal M[f*g](s)
	=
	\mathcal M[f](s)\mathcal M[g](s).
\end{equation}
These identities will also be used in the standard meromorphic-continuation
sense when both sides have meromorphic continuations.

The Meijer \(G\)-function is defined by the Mellin--Barnes integral
\begin{equation}
	\label{eq:Meijer-G-definition}
	G_{p,q}^{m,n}
	\left(
	x
	\left|
	\begin{matrix}
		a_1,\ldots,a_p\\
		b_1,\ldots,b_q
	\end{matrix}
	\right.
	\right)
	\coloneq
	\frac{1}{2\pi\mathrm{i}}
	\bigintsss_{L}
	\frac{
		\prod_{j=1}^{m}\Gamma(b_j+s)
		\prod_{j=1}^{n}\Gamma(1-a_j-s)
	}{
		\prod_{j=m+1}^{q}\Gamma(1-b_j-s)
		\prod_{j=n+1}^{p}\Gamma(a_j+s)
	}
	x^{-s}\,\mathrm{d}s,
\end{equation}
where the contour \(L\) separates the poles of the gamma factors
\(\Gamma(b_j+s)\), \(j\in\{1,\ldots,m\}\), from those of
\(\Gamma(1-a_j-s)\), \(j\in\{1,\ldots,n\}\).
With this convention,
\begin{equation}
	\label{eq:Meijer-G-Mellin-transform}
	\int_0^\infty
	x^{z-1}
	G_{p,q}^{m,n}
	\left(
	x
	\left|
	\begin{matrix}
		a_1,\ldots,a_p\\
		b_1,\ldots,b_q
	\end{matrix}
	\right.
	\right)
	\dx
	=
	\frac{
		\prod_{j=1}^{m}\Gamma(b_j+z)
		\prod_{j=1}^{n}\Gamma(1-a_j-z)
	}{
		\prod_{j=m+1}^{q}\Gamma(1-b_j-z)
		\prod_{j=n+1}^{p}\Gamma(a_j+z)
	},
\end{equation}
in the corresponding fundamental strip. Standard contour and convergence
conditions are given in the NIST Digital Library of Mathematical
Functions~\cite{DLMFMeijerG}.

\section{General mixed orthogonality and the step-line}
\label{sec:general-mixed-framework}

Before introducing the two mixed systems, we fix the notation used throughout
the paper. In this section, \(x\) denotes the variable of
the mixed system and \(\mathrm d\nu(x)\) denotes the corresponding scalar
measure on an interval \(\Delta\subset\mathbb R\).

We consider a \(p\)-component power vector
\[
\mathbf u(x)
=
\left[
x^{\theta_1},\ldots,x^{\theta_p}
\right]
\]
and a \(q\)-component weight vector
\[
\mathbf v(x)
=
\left[
v_1(x),\ldots,v_q(x)
\right].
\]
In the Jacobi and Laguerre I settings, the parameters
\(\boldsymbol\theta=(\theta_1,\dots,\theta_p)\in\mathbb R^p\) are denoted by
\(\boldsymbol\alpha\) and \(\boldsymbol\beta\), respectively.

We adopt throughout the \(q\times p\) matrix-of-measures convention
\begin{equation}
	\label{eq:general-matrix-of-measures}
		\mathrm d\boldsymbol\mu(x)
		\coloneq
		\left[
		v_j(x)x^{\theta_i}
		\right]_{
			\substack{j\in\{1,\ldots,q\}\\ i\in\{1,\ldots,p\}}}
		\mathrm d\nu(x)
		=
		\begin{bNiceMatrix}[margin=2pt]
			v_1(x)x^{\theta_1}
			&
			\Cdots[shorten-end=2pt]
			&
			v_1(x)x^{\theta_p}
			\\
			\Vdots[shorten-end=2pt]
			&
			\Ddots[shorten-end=2pt]
			&
			\Vdots[shorten-end=2pt]
			\\
			v_q(x)x^{\theta_1}
			&
			\Cdots[shorten-end=2pt]
			&
			v_q(x)x^{\theta_p}
		\end{bNiceMatrix}
		\mathrm d\nu(x).
\end{equation}
Thus the rows correspond to the weight vector and the columns to the power
vector.  Whenever convenient, we use the bilinear pairing
\[
\left\langle f,g\right\rangle_{\nu}
\coloneq
\int_{\Delta} f(x)g(x)\,\mathrm d\nu(x).
\]
The convention in \eqref{eq:general-matrix-of-measures} is fixed from the
outset because it is the one used below for the moment matrix, the
Gauss--Borel factorization, and the left and right Christoffel
transformations.

\subsection{Mixed forms and biorthogonality}

For \(N\in\mathbb Z\), we denote by \(\mathbb P_N\) the vector space of
complex polynomials of degree at most \(N\), with the convention
\(\mathbb P_N=\{0\}\) for \(N<0\). In this paper $\mathbb N\coloneq \{1,2,\dots\}$ and $\mathbb N_0\coloneq \{0,1,2,\dots\}$.

Let
\[
\boldsymbol n=(n_1,\ldots,n_p)\in\mathbb N_0^p,
\qquad
\boldsymbol m=(m_1,\ldots,m_q)\in\mathbb N_0^q.
\]
When \(|\boldsymbol n|=|\boldsymbol m|+1\), a mixed form expanded in the power vector is an
expression
\begin{equation}
	\label{eq:gen-A-form}
	\mathcal A_{\boldsymbol n,\boldsymbol m}(z)
	\coloneq
	\sum_{i=1}^{p} A^{(i)}_{\boldsymbol n,\boldsymbol m}(z)z^{\theta_i},
	\qquad
	A^{(i)}_{\boldsymbol n,\boldsymbol m}\in\mathbb P_{n_i-1},
\end{equation}
satisfying
\begin{equation}
	\label{eq:gen-A-orthogonality}
	\left\langle z^k\mathcal A_{\boldsymbol n,\boldsymbol m},v_j\right\rangle_{\nu}=0,
	\qquad
	j\in\{1,\ldots,q\},
	\quad
	k\in\{0,\ldots,m_j-1\}.
\end{equation}
When \(|\boldsymbol m|=|\boldsymbol n|+1\), a mixed form expanded in the weight vector is an
expression
\begin{equation}
	\label{eq:gen-B-form}
	\mathcal B_{\boldsymbol n,\boldsymbol m}(z)
	\coloneq
	\sum_{j=1}^{q} B^{(j)}_{\boldsymbol n,\boldsymbol m}(z)v_j(z),
	\qquad
	B^{(j)}_{\boldsymbol n,\boldsymbol m}\in\mathbb P_{m_j-1},
\end{equation}
satisfying
\begin{equation}
	\label{eq:gen-B-orthogonality}
	\left\langle z^k\mathcal B_{\boldsymbol n,\boldsymbol m},z^{\theta_i}\right\rangle_{\nu}=0,
	\qquad
	i\in\{1,\ldots,p\},
	\quad
	k\in\{0,\ldots,n_i-1\}.
\end{equation}

\begin{definition}[Normality and perfectness]
	\label{def:general-normality}
	Following the terminology for multiple orthogonality used by Van Assche in
	Chapter~23 of Ismail's monograph~\cite{Ismail2005}, an index pair
	\((\boldsymbol n,\boldsymbol m)\) satisfying \(|\boldsymbol n|=|\boldsymbol m|+1\) is called
	\emph{normal for the \(\mathcal A\)-problem} if the conditions
	\eqref{eq:gen-A-form}--\eqref{eq:gen-A-orthogonality} determine
	\(\mathcal A_{\boldsymbol n,\boldsymbol m}\) uniquely up to multiplication by a nonzero
	constant.  Likewise, an index pair satisfying
	\(|\boldsymbol m|=|\boldsymbol n|+1\) is called \emph{normal for the
		\(\mathcal B\)-problem} if the conditions
	\eqref{eq:gen-B-form}--\eqref{eq:gen-B-orthogonality} determine
	\(\mathcal B_{\boldsymbol n,\boldsymbol m}\) uniquely up to multiplication by a nonzero
	constant.  The mixed system is called \emph{perfect} when all index pairs
	in both problems are normal in this sense.
\end{definition}

\begin{remark}
	The preceding terminology is not Mahler's stronger convention, in which
	normality also incorporates simultaneous attainment of all componentwise
	degree bounds.  Whenever the nonvanishing of a particular leading
	coefficient is needed below, it is stated separately.
\end{remark}

\begin{definition}[Normalization convention for mixed forms]
	\label{def:normalization-convention}
	Assume that the index pair under consideration is normal for the
	corresponding mixed problem.  Then the space of solutions of the
	orthogonality conditions is one-dimensional.  By a \emph{normalized}
	\(\mathcal A\)-form or a \emph{normalized} \(\mathcal B\)-form we mean the
	unique representative of this one-dimensional solution space selected by
	the scalar normalization condition explicitly stated in the corresponding
	setting.

	This normalization is not an \(L^2\)-normalization.  It fixes the remaining
	scalar freedom in a normal mixed form.  Depending on the context, this scalar
	may be fixed by an adjacent mixed pairing, by the leading coefficient
	specified on the step-line, by a Mellin moment identity, or by a
	contour/residue identity.  Thus the polynomial spaces in
	\eqref{eq:gen-A-form} and
	\eqref{eq:gen-B-form} determine the allowed components, while the displayed
	normalization formula fixes the representative.  Once this condition is
	imposed, the notation \(\mathcal A_{\boldsymbol n,\boldsymbol m}\) or
	\(\mathcal B_{\boldsymbol n,\boldsymbol m}\) always refers to the normalized form.
\end{definition}

Whenever the index pairs involved are normal, the two forms satisfy the
fundamental vanishing relations
\begin{equation}
	\label{eq:gen-biorthogonality-vanishing}
	\left\langle
	\mathcal A_{\boldsymbol n,\boldsymbol m},
	\mathcal B_{\boldsymbol\nu,\boldsymbol\mu}
	\right\rangle_{\nu}=0
	\quad\text{if}\quad
	\begin{cases}
		n_i\le \nu_i,&i\in\{1,\ldots,p\},\\
		\text{or}\\
		m_j\ge \mu_j,&j\in\{1,\ldots,q\}.
	\end{cases}
\end{equation}

The adjacent nonzero pairings may be fixed by a choice of normalization.  On
the step-line below this choice produces a biorthonormal pair of sequences.

We restrict both families of mixed forms to the step-line. This gives two
biorthogonal sequences indexed by a single
integer \(N\), and multiplication by the independent variable is represented
by a banded matrix.  No recurrence between arbitrary neighbouring
multi-indices is introduced.

\subsection{Step-line recurrences and the banded matrix}
\label{subsec:general-step-line-reduction}

The step-line chooses one multi-index of each total degree.  Along
this ordered sequence, multiplication by the variable is represented on the
\(B\)- and \(A\)-sides by a band matrix and its transpose.

For \(d\in\mathbb N\), the step-line in \(\mathbb N_0^d\) consists of the
multi-indices whose components differ by at most one and are written in
nonincreasing order.  More explicitly, if
\[
N=dm+j,
\qquad
m\in\mathbb N_0,
\quad
j\in\{0,\ldots,d-1\},
\]
then the step-line multi-index of norm \(N\) is
\begin{equation}
	\label{eq:gen-step-line-index}
	\boldsymbol\sigma_d(N)
	=
	\boldsymbol\sigma_d(dm+j)
	\coloneq
	(\underbrace{m+1,\ldots,m+1}_{j\text{ entries}},
	\underbrace{m,\ldots,m}_{d-j\text{ entries}}).
\end{equation}
Thus, for example,
\[
\boldsymbol\sigma_d(0)=(0,\ldots,0),
\qquad
\boldsymbol\sigma_d(1)=(1,0,\ldots,0),
\qquad
\boldsymbol\sigma_d(d)=(1,\ldots,1).
\]
For later use, we denote the quotient and the remainder in the Euclidean
division of \(N\) by \(d\) by
\[
N=d\rho_d(N)+\ell_d(N),
\qquad
\rho_d(N)=m,
\quad
\ell_d(N)=j.
\]
It follows directly that
\begin{equation}
	\label{eq:gen-step-coordinate}
	\boldsymbol\sigma_d(N+1)-\boldsymbol\sigma_d(N)
	=
	\boldsymbol e_{\ell_d(N)+1}^{(d)}.
\end{equation}
Exactly one coordinate increases at each step: from
\(N\) to \(N+1\) it is coordinate \(\ell_d(N)+1\).  These coordinates occur
in the cyclic order
\[
1,2,\ldots,d,1,2,\ldots.
\]
This is the convention used below when the general rectangular chains are
restricted to the step-line.

The step-line mixed forms are
\begin{equation}
	\label{eq:gen-step-forms}
	\mathcal B_N
	\coloneq
	\mathcal B_{\boldsymbol\sigma_p(N),\,\boldsymbol\sigma_q(N+1)},
	\qquad
	\mathcal A_N
	\coloneq
	\mathcal A_{\boldsymbol\sigma_p(N+1),\,\boldsymbol\sigma_q(N)},
	\qquad N\in\mathbb N_0.
\end{equation}
We require all scalar leading minors of the moment matrix to be nonzero.
This ensures uniqueness of the two forms, their nonzero adjacent pairings,
and the active leading coefficient used in the normalization below, so that
the Gauss--Borel factors defining \(T\) exist at every step. Uniqueness
alone in the sense of Definition~\ref{def:general-normality} must not be
substituted for these nonvanishing requirements. For the Jacobi
and Laguerre I families, this hypothesis is supplied in the parameter
ranges established later by
Corollary~\ref{cor:Jacobi-mixed-normality-near-diagonal} and
Proposition~\ref{prop:AT-normality-Laguerre-admissible}, respectively,
using the all-order nonresonance conditions in the Laguerre I case.
Under these nonvanishing assumptions, normalize the step-line forms by
\begin{equation}
	\label{eq:gen-step-normalization}
	B_N^{(\ell_q(N)+1)}
	\left[\bigl(\boldsymbol\sigma_q(N+1)\bigr)_{\ell_q(N)+1}-1\right]=1,
	\qquad
	\left\langle
	z^{(\boldsymbol\sigma_q(N))_{\ell_q(N)+1}}\mathcal A_N,
	v_{\ell_q(N)+1}
	\right\rangle_{\nu}=1.
\end{equation}
The orthogonality conditions then give
\begin{equation}
	\label{eq:gen-step-biorthogonality}
	\left\langle \mathcal B_N,\mathcal A_M\right\rangle_{\nu}
	=
	\delta_{N,M},
	\qquad N,M\in\mathbb N_0.
\end{equation}

\begin{proposition}[Step-line recurrence matrix]
\label{prop:general-step-recurrence}
Under the preceding nonvanishing assumptions, the step-line forms satisfy
\begin{align}
	\label{eq:gen-step-B-recurrence}
	z\mathcal B_N(z)
	={}&
	\sum_{j=1}^{q} b_N^{j}\mathcal B_{N+j}(z)
	+b_N^{0}\mathcal B_N(z)
	+\sum_{i=1}^{p} b_N^{-i}\mathcal B_{N-i}(z),
	\\[4pt]
	\label{eq:gen-step-A-recurrence}
	z\mathcal A_N(z)
	={}&
	\sum_{j=1}^{q} b_{N-j}^{j}\mathcal A_{N-j}(z)
	+b_N^{0}\mathcal A_N(z)
	+\sum_{i=1}^{p} b_{N+i}^{-i}\mathcal A_{N+i}(z),
\end{align}
where forms with negative indices are understood to be zero.  The coefficients
are
\begin{equation}
	\label{eq:gen-step-coefficients}
	b_N^k
	=
	\left\langle z\mathcal B_N,\mathcal A_{N+k}\right\rangle_{\nu},
	\qquad
	k\in\{-p,\ldots,-1,0,1,\ldots,q\}.
\end{equation}
The normalization \eqref{eq:gen-step-normalization} gives
\begin{equation}
	\label{eq:gen-top-coefficient}
	b_N^q=1,
	\qquad N\in\mathbb N_0.
\end{equation}
\end{proposition}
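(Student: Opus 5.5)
The plan is a standard span-and-expand argument. First show that \(z\mathcal B_N\) lies in the span of \(\{\mathcal B_M\}_{M\le N+q}\), then identify the coefficients through the biorthogonality \eqref{eq:gen-step-biorthogonality}, and finally use the orthogonality conditions to kill all coefficients outside the band.

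\emph{Spanning.} Write \(\mathcal V_M\) for the space of forms \(\sum_j B^{(j)}v_j\) with \(B^{(j)}\in\mathbb P_{(\boldsymbol\sigma_q(M+1))_j-1}\). It has dimension \(|\boldsymbol\sigma_q(M+1)|=M+1\), and \(\mathcal V_{M}\subset\mathcal V_{M+1}\) by \eqref{eq:gen-step-coordinate}. The forms \(\mathcal B_0,\dots,\mathcal B_M\) lie in \(\mathcal V_M\). They are linearly independent because, by biorthogonality, \(\langle \mathcal B_K,\mathcal A_L\rangle_\nu=\delta_{K,L}\). Hence they form a basis of \(\mathcal V_M\).

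Multiplication by \(z\) raises each component degree by one. So \(z\mathcal B_N\) has components of degree at most \((\boldsymbol\sigma_q(N+1))_j\), and these bounds are \(\le(\boldsymbol\sigma_q(N+1+q))_j-1\), since \(\boldsymbol\sigma_q(M+q)=\boldsymbol\sigma_q(M)+\one_q\). Therefore \(z\mathcal B_N\in\mathcal V_{N+q}\), and we may write \(z\mathcal B_N=\sum_{M=0}^{N+q}c_{N,M}\mathcal B_M\). Pairing with \(\mathcal A_{M}\) and using \eqref{eq:gen-step-biorthogonality} gives \(c_{N,M}=\langle z\mathcal B_N,\mathcal A_M\rangle_\nu\), which is \eqref{eq:gen-step-coefficients} with \(M=N+k\).

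\emph{Lower band.} For \(M<N-p\) we need \(\langle z\mathcal B_N,\mathcal A_M\rangle_\nu=\langle\mathcal B_N,z\mathcal A_M\rangle_\nu=0\). The form \(z\mathcal A_M\) has power-vector components of degree at most \((\boldsymbol\sigma_p(M+1))_i\). Since \(\boldsymbol\sigma_p(M+1+p)=\boldsymbol\sigma_p(M+1)+\one_p\), these degrees are \(\le(\boldsymbol\sigma_p(M+1+p))_i-1\le(\boldsymbol\sigma_p(N))_i-1\) whenever \(M+1+p\le N\). So \(z\mathcal A_M\) is a combination of \(z^kz^{\theta_i}\) with \(k\le n_i-1\), where \(\boldsymbol n=\boldsymbol\sigma_p(N)\), and \eqref{eq:gen-B-orthogonality} gives vanishing. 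This yields the \(\mathcal B\)-recurrence \eqref{eq:gen-step-B-recurrence}.

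\emph{The \(\mathcal A\)-recurrence.} The argument is symmetric: \(z\mathcal A_N\) lies in the analogous power-side space of index \(N+p\), in which the \(\mathcal A_M\) form a basis. Its coefficient on \(\mathcal A_M\) is \(\langle \mathcal B_M,z\mathcal A_N\rangle_\nu=b_M^{N-M}\), which is nonzero only for \(-p\le N-M\le q\). Reindexing with \(M=N-j\) and \(M=N+i\) gives \eqref{eq:gen-step-A-recurrence}.

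\emph{Top coefficient.} This is the main bookkeeping step. Let \(\ell=\ell_q(N)+1\); this is the coordinate that increases from \(\boldsymbol\sigma_q(N)\) to \(\boldsymbol\sigma_q(N+1)\). Set \(m=(\boldsymbol\sigma_q(N+1))_\ell\). The normalization \eqref{eq:gen-step-normalization} says that \(B_N^{(\ell)}\) has leading coefficient \(1\) at degree \(m-1\).

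Now compute \(b_N^q=\langle z\mathcal B_N,\mathcal A_{N+q}\rangle_\nu\), and note \(\boldsymbol\sigma_q(N+q)=\boldsymbol\sigma_q(N)+\one_q\). By \eqref{eq:gen-A-orthogonality}, \(\mathcal A_{N+q}\) annihilates \(z^kv_j\) for \(k\le(\boldsymbol\sigma_q(N))_j\). Every term of \(z\mathcal B_N\) falls in this range except the single monomial \(z\cdot z^{m-1}v_\ell\), whose exponent is \(m=(\boldsymbol\sigma_q(N))_\ell+1\).

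Hence \(b_N^q=\langle z^{(\boldsymbol\sigma_q(N+q))_\ell}\mathcal A_{N+q},v_\ell\rangle_\nu\). Since \(\ell_q(N+q)=\ell_q(N)\), this is exactly the second normalization in \eqref{eq:gen-step-normalization} at index \(N+q\), and so it equals \(1\). The delicate point throughout is matching indices between \(\boldsymbol\sigma_q(N+1)\), \(\boldsymbol\sigma_q(N)\) and \(\boldsymbol\sigma_q(N+q)\); once those identities are checked, everything reduces to the orthogonality conditions.
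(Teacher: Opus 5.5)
Your proof is correct, but it follows a different route from the paper's.

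\textbf{The paper's route.} The paper gets the band structure from the moment matrix. Dressing the Hankel-type identity \(\Lambda_{[q]}\mathscr M=\mathscr M\Lambda_{[p]}^{\mathsf T}\) with the Gauss--Borel factors gives \(T=\mathscr L\Lambda_{[q]}\mathscr L^{-1}=\mathscr U^{-1}\Lambda_{[p]}^{\mathsf T}\mathscr U\). The first expression excludes entries above the \(q\)-th superdiagonal, and the second excludes entries below the \(p\)-th subdiagonal. The \(A\)-recurrence is then read off from \(T^{\mathsf T}\). Finally, \(b_N^q=1\) is obtained by comparing the top coefficient of the \((\ell_q(N)+1)\)-th component of \(z\mathcal B_N\) with that of \(\mathcal B_{N+q}\). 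Only the \(B\)-side normalization, at \(N\) and at \(N+q\), enters there.

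\textbf{Your route.} You work entirely with the forms:
\begin{itemize}
\item the dimension count for \(\mathcal V_M\) together with biorthogonality gives the upper bandwidth \(q\);
\item the orthogonality of \(\mathcal B_N\) against \(z\mathcal A_M\) for \(M<N-p\) gives the lower bandwidth \(p\);
\item a symmetric power-side argument gives the \(A\)-recurrence;
\item the top coefficient comes from evaluating \(\langle z\mathcal B_N,\mathcal A_{N+q}\rangle_\nu\) using the \(A\)-orthogonality conditions and the \(A\)-side normalization at \(N+q\), with \(\ell_q(N+q)=\ell_q(N)\).
\end{itemize}

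\textbf{What each buys.} Your argument is more elementary and self-contained. It uses only biorthogonality and the defining degree and orthogonality conditions. It needs neither the existence of the Gauss--Borel factorization nor the identification \(B(z)=\mathscr L X_{[q]}(z)\). Because it works through pairings, it also never needs the recurrence to hold component by component. The paper's coefficient comparison does use that, and the Gauss--Borel realization supplies it automatically. In exchange, the paper's route yields the realization \(T=\mathscr L\Lambda_{[q]}\mathscr L^{-1}=\mathscr U^{-1}\Lambda_{[p]}^{\mathsf T}\mathscr U\), which is exactly what the later Christoffel--Gauss--Borel bidiagonal factorization builds on.
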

\begin{proof}
	The moment matrix satisfies the intertwining identity
	\(\Lambda_{[q]}\mathscr M=\mathscr M\Lambda_{[p]}^{\mathsf T}\), recorded
	explicitly in \eqref{eq:gen-hankel-symmetry} below.  Dressing this
	identity by the two Gauss--Borel factors gives the multiplication matrix
	\[
	T=\mathscr L\Lambda_{[q]}\mathscr L^{-1}
	 =\mathscr U^{-1}\Lambda_{[p]}^{\mathsf T}\mathscr U.
	\]
	The representation
	\(T=\mathscr L\Lambda_{[q]}\mathscr L^{-1}\) shows that no entry lies
	above the \(q\)-th superdiagonal, while
	\(T=\mathscr U^{-1}\Lambda_{[p]}^{\mathsf T}\mathscr U\) shows that no entry
	lies below the \(p\)-th subdiagonal.  Hence the \(N\)-th component of
	\(T\boldsymbol{\mathcal B}=z\boldsymbol{\mathcal B}\) contains only
	\(\mathcal B_{N-p},\ldots,\mathcal B_{N+q}\), which gives
	\eqref{eq:gen-step-B-recurrence}.

	Pairing that recurrence with \(\mathcal A_{N+k}\) and using
	\eqref{eq:gen-step-biorthogonality} isolates its \(k\)-th coefficient and
	gives \eqref{eq:gen-step-coefficients}.  On the \(A\)-side the matrix is
	\(T^{\mathsf T}\).  Since \(T_{M,N}=b_M^{N-M}\), the \(N\)-th component of
	\(T^{\mathsf T}\boldsymbol{\mathcal A}=z\boldsymbol{\mathcal A}\) is exactly
	\eqref{eq:gen-step-A-recurrence}.

	It remains to identify the coefficient of \(\mathcal B_{N+q}\).
	Put \(j=\ell_q(N)+1\).  The first condition in
	\eqref{eq:gen-step-normalization} makes the coefficient of
	\(z^{(\boldsymbol\sigma_q(N+1))_j-1}\) in \(B_N^{(j)}\) equal to one.
	Therefore the \(j\)-th component of \(z\mathcal B_N\) has coefficient
	one at degree \((\boldsymbol\sigma_q(N+1))_j\).  Every
	\(\mathcal B_M\) with \(M<N+q\) that occurs in the recurrence has
	smaller degree in its \(j\)-th component, whereas
	\[
	\boldsymbol\sigma_q(N+q+1)=\boldsymbol\sigma_q(N+1)+\one_q
	\]
	and the same normalization makes the coefficient of that degree in
	the \(j\)-th component of \(\mathcal B_{N+q}\) equal to one.
	Comparison of these coefficients gives \(b_N^q=1\).
\end{proof}

If
\[
\boldsymbol{\mathcal B}(z)
\coloneq
\begin{bNiceMatrix}
	\mathcal B_0(z)\\
	\mathcal B_1(z)\\
	\mathcal B_2(z)\\
	\Vdots
\end{bNiceMatrix},
\qquad
\boldsymbol{\mathcal A}(z)
\coloneq
\begin{bNiceMatrix}
	\mathcal A_0(z)\\
	\mathcal A_1(z)\\
	\mathcal A_2(z)\\
	\Vdots
\end{bNiceMatrix},
\]
then \eqref{eq:gen-step-B-recurrence}--\eqref{eq:gen-step-A-recurrence}
can be written as
\begin{equation}
	\label{eq:gen-step-matrix-recurrence}
	T\boldsymbol{\mathcal B}(z)=z\boldsymbol{\mathcal B}(z),
	\qquad
	T^{\mathsf T}\boldsymbol{\mathcal A}(z)=z\boldsymbol{\mathcal A}(z),
\end{equation}
where \(T\) is the \((p,q)\)-banded recurrence matrix defined by
\begin{equation}
	\label{eq:gen-recurrence-matrix-entries}
	T_{N,N+k}=b_N^k,
	\qquad
	N\in\mathbb N_0,
	\quad
	k\in\{-p,\ldots,q\},
\end{equation}
whenever \(N+k\ge0\), and with \(b_N^q=1\).  Moreover,
\begin{equation}
	\label{eq:gen-powers-recurrence-matrix}
	\left(T^\ell\right)_{N,M}
	=
	\left\langle
	z^\ell\mathcal B_N,
	\mathcal A_M
	\right\rangle_{\nu},
	\qquad
	\ell,N,M\in\mathbb N_0.
\end{equation}
For the Jacobi and Laguerre I systems below, the pairings in
\eqref{eq:gen-step-coefficients} are evaluated directly by finite
Gamma--Pochhammer sums.  No recurrence outside the step-line is needed.

\subsection{Moment matrix realization and Gauss--Borel factorization}
\label{subsec:general-moment-matrix-GB}

With the matrix of measures fixed in
\eqref{eq:general-matrix-of-measures}, introduce the block monomial
vector and block shift matrix
\begin{equation}
	\label{eq:gen-block-monomial-shift}
	X_{[d]}(z)
	\coloneq
	\begin{bNiceMatrix}
		I_d\\
		z I_d\\
		z^2 I_d\\
		\Vdots
	\end{bNiceMatrix},
	\qquad
	\Lambda_{[d]}
	\coloneq
	\begin{bNiceMatrix}[margin=4pt]
		0_d&I_d&0_d&\Cdots[shorten-end=3pt]\\
		0_d&0_d&I_d&\Ddots[shorten-end=3pt]\\
		0_d&0_d&0_d&\Ddots[shorten-end=3pt]\\
		\Vdots[shorten-end=3pt]&\Ddots[shorten-end=3pt]&\Ddots[shorten-end=3pt]&\Ddots[shorten-end=12pt]
	\end{bNiceMatrix},
\end{equation}
so that
\[
\Lambda_{[d]}X_{[d]}(z)=z X_{[d]}(z).
\]
The step-line moment matrix is
\begin{equation}
	\label{eq:gen-moment-matrix}
	\mathscr M
	\coloneq
	\int X_{[q]}(z)\,\mathrm d\boldsymbol\mu(z)\,
	X_{[p]}^{\mathsf T}(z),
\end{equation}
and satisfies the Hankel-type identity
\begin{equation}
	\label{eq:gen-hankel-symmetry}
	\Lambda_{[q]}\mathscr M
	=
	\mathscr M\Lambda_{[p]}^{\mathsf T}.
\end{equation}

Assume that the leading principal truncations of \(\mathscr M\) are
nonsingular.  Its Gauss--Borel factorization is written as
\begin{equation}
	\label{eq:gen-gauss-borel}
	\mathscr M
	=
	\mathscr L^{-1}\mathscr U^{-1},
\end{equation}
where \(\mathscr L\) is lower unitriangular and \(\mathscr U\) is upper
triangular.  The matrix polynomial systems
\begin{equation}
	\label{eq:gen-GB-polynomial-systems}
	B(z)
	\coloneq
	\mathscr L X_{[q]}(z),
	\qquad
	A(z)
	\coloneq
	X_{[p]}^{\mathsf T}(z)\mathscr U
\end{equation}
provide the component vectors of the step-line forms introduced in
\eqref{eq:gen-step-forms}, after the normalizations in
\eqref{eq:gen-step-normalization}.  In this realization,
\eqref{eq:gen-hankel-symmetry} gives
\begin{equation}
	\label{eq:gen-T-GB-realization}
	T
	=
	\mathscr L\Lambda_{[q]}\mathscr L^{-1}
	=
	\mathscr U^{-1}\Lambda_{[p]}^{\mathsf T}\mathscr U,
\end{equation}
which is the moment-matrix realization of the banded recurrence matrix in
\eqref{eq:gen-step-matrix-recurrence}.

\subsection{Christoffel perturbations and the bidiagonal factorization scheme}
\label{subsec:general-bidiagonal-scheme}

For the bidiagonal factorization, we first state the
Christoffel--Gauss--Borel construction and then evaluate the elementary factors
inside each explicit hypergeometric family. The lower factors come from the column-side
Christoffel chain and the upper factors from the row-side Christoffel chain.
Whenever the corresponding chain remains inside the same parametric family, the
factor is evaluated from the transformed mixed forms; otherwise it is kept in
the finite Christoffel tau-determinant form.  This distinction is essential for
mixed systems.

We recall the factorization developed for mixed step-line systems in
\cite{BranquinhoFoulquieManas2026Factorization}, in the form needed below.  Let
\begin{equation}
	\label{eq:gen-cyclic-polynomial-matrix}
	\mathfrak X_{[d]}(z)
	\coloneq
	\begin{bNiceMatrix}[margin=4pt,cell-space-limits=2pt]
		0&1&0&\Cdots[shorten-end=3pt]&0\\
		\Vdots[shorten-end=3pt]&\Ddots[shorten-end=3pt]&&\Ddots[shorten-end=3pt]&\Vdots[shorten-end=3pt]\\
		&&&\Ddots[shorten-end=3pt]&0\\
		0&\Cdots[shorten-end=3pt]&&0&1\\
		z&0&\Cdots[shorten-end=3pt]&0&0
	\end{bNiceMatrix},
	\qquad
	\mathfrak X_{[d]}(z)^d=z I_d.
\end{equation}
Thus the elementary cycle is
\[
(\theta_1,\ldots,\theta_d)
\longmapsto
(\theta_2,\ldots,\theta_d,\theta_1+1),
\]
which is the convention compatible with the step-line order
\eqref{eq:gen-step-coordinate}.  The elementary left and right Christoffel
perturbations are
\begin{equation}
	\label{eq:gen-Christoffel-perturbations}
	\mathrm d\boldsymbol\mu_{\mathrm L}^{(k)}
	\coloneq
	\mathrm d\boldsymbol\mu
	\left(\mathfrak X_{[p]}^{k}\right)^{\mathsf T},
	\qquad
	\mathrm d\boldsymbol\mu_{\mathrm R}^{(k)}
	\coloneq
	\mathfrak X_{[q]}^{k}\mathrm d\boldsymbol\mu.
\end{equation}
Their moment matrices satisfy
\begin{equation}
	\label{eq:gen-perturbed-moment-matrices}
	\mathscr M_{\mathrm L}^{(k)}
	=
	\mathscr M\left(\Lambda^{k}\right)^{\mathsf T},
	\qquad
	\mathscr M_{\mathrm R}^{(k)}
	=
	\Lambda^{k}\mathscr M,
	\qquad
	\Lambda\coloneq\Lambda_{[1]}.
\end{equation}
Assume that the Gauss--Borel factorizations of these perturbed moment matrices
exist for all shifts which occur below.

\begin{proposition}[Step-line bidiagonal factorization scheme]
	\label{prop:gen-bidiagonal-scheme}
	Assume that the step-line mixed system is normal along the Christoffel
	chains used below and that all pivots in the construction are nonzero.
	Then the step-line multiplication matrix \(T\) admits the bidiagonal
	factorization
	\begin{equation}
		\label{eq:gen-bidiagonal-factorization}
		T
		=
		L_1\cdots L_p\,U_q\cdots U_1,
	\end{equation}
	where \(L_1,\ldots,L_p\) are lower bidiagonal matrices with unit main
	diagonal, and \(U_1,\ldots,U_q\) are upper bidiagonal matrices with unit
	first superdiagonal.
	
	For \(k\in\{0,\ldots,p\}\), let
	\[
	\mathcal A_N^{\langle k\rangle}(z)
	=
	\sum_{i=1}^{k}
	A_N^{\langle k\rangle,(i)}(z)z^{\theta_i+1}
	+
	\sum_{i=k+1}^{p}
	A_N^{\langle k\rangle,(i)}(z)z^{\theta_i}
	\]
	be the step-line \(A\)-form for the system obtained by multiplying the
	first \(k\) left powers by \(z\), in the step-line biorthonormal
	normalization.  For a polynomial \(P\), write \(\operatorname{LC}(P)\)
	for its leading coefficient.  Then
	\begin{equation}
		\label{eq:gen-L-leading-coefficients}
		(L_k)_{N+1,N}
		=
		\frac{
			\operatorname{LC}
			\left(
			A_N^{\langle k\rangle,(\ell_p(N)+1)}
			\right)
		}{
			\operatorname{LC}
			\left(
			A_{N+1}^{\langle k-1\rangle,(\ell_p(N+1)+1)}
			\right)
		},
		\qquad
		k\in\{1,\ldots,p\}.
	\end{equation}
	
	For \(b\in\{0,\ldots,q\}\), set
	\[
	v_j^{[b]}(z)
	=
	\begin{cases}
		zv_j(z), & j\in\{1,\ldots,b\},\\
		v_j(z), & j\in\{b+1,\ldots,q\}.
	\end{cases}
	\]
	In particular, \(v_j^{[0]}=v_j\).  Let
	\[
	\mathcal B_N(z)
	=
	\sum_{j=1}^{q}B_N^{(j)}(z)v_j(z)
	\]
	be the untransformed step-line \(B\)-form in the same biorthonormal
	normalization.  Define
	\begin{equation}
		\label{eq:gen-upper-Christoffel-tau}
		\tau^B_{0,N}\coloneq1,
		\qquad
		\tau^B_{b,N}
		\coloneq
		\det\left[
		B_{N+\alpha-1}^{(\nu)}(0)
		\right]_{\alpha,\nu=1}^{b},
		\qquad
		b\in\{1,\ldots,q\}.
	\end{equation}
	Whenever these Christoffel tau-functions do not vanish, the upper factors
	are
	\begin{equation}
		\label{eq:gen-U-tau-quotients}
		(U_b)_{N,N}
		=
		-
		\frac{
			\tau^B_{b-1,N}\tau^B_{b,N+1}
		}{
			\tau^B_{b-1,N+1}\tau^B_{b,N}
		},
		\qquad
		b\in\{1,\ldots,q\}.
	\end{equation}
	
	If the Christoffel chain associated with the upper bidiagonal factors is
	closed inside the same parametric family, let
	\[
	\mathcal A_N^{[b]}(z)
	=
	\sum_{i=1}^{p}
	A_N^{[b],(i)}(z)z^{\theta_i}
	\]
	be the step-line \(A\)-form for the system with right weights
	\[
	v_1^{[b]},\ldots,v_q^{[b]},
	\]
	in the corresponding biorthonormal normalization.  Then the same
	quantities may equivalently be written as quotients of leading
	coefficients:
	\begin{equation}
		\label{eq:gen-U-leading-coefficients}
		(U_b)_{N,N}
		=
		\frac{
			\operatorname{LC}
			\left(
			A_N^{[b-1],(\ell_p(N)+1)}
			\right)
		}{
			\operatorname{LC}
			\left(
			A_N^{[b],(\ell_p(N)+1)}
			\right)
		},
		\qquad
		b\in\{1,\ldots,q\}.
	\end{equation}
\end{proposition}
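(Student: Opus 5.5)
We prove this by a chain of Christoffel--Gauss--Borel connections: one applied $p$ times on the column (power-vector) side and one applied $q$ times on the row (weight) side.

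\emph{Lower factors.} For $k\in\{0,\ldots,p\}$ let $\mathscr M^{\langle k\rangle}$ be the moment matrix obtained by multiplying the first $k$ powers $z^{\theta_i}$ by $z$. Scalarly, $\mathscr M^{\langle k\rangle}=\mathscr M(\Lambda^{k})^{\mathsf T}$; this is the left perturbation \eqref{eq:gen-perturbed-moment-matrices}, because $(\mathfrak X_{[p]}^k)^{\mathsf T}$ cyclically shifts the columns. Consecutive members differ by one scalar shift: $\mathscr M^{\langle k\rangle}=\mathscr M^{\langle k-1\rangle}\Lambda^{\mathsf T}$. Write each as $\mathscr M^{\langle k\rangle}=(\mathscr L^{\langle k\rangle})^{-1}(\mathscr U^{\langle k\rangle})^{-1}$, which is legitimate under the nonzero-pivot hypothesis. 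Then
\[
\mathscr L^{\langle k-1\rangle}(\mathscr L^{\langle k\rangle})^{-1}
=\mathscr U^{\langle k-1\rangle\,-1}\Lambda^{\mathsf T}\mathscr U^{\langle k\rangle}.
\]
The left side is lower unitriangular. The right side has at most one subdiagonal. Hence the common matrix $L_k$ is lower bidiagonal with unit diagonal.

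Telescoping over $k$ gives $\mathscr L(\mathscr L^{\langle p\rangle})^{-1}=L_1\cdots L_p$. Since $\mathscr M^{\langle p\rangle}=\mathscr M(\Lambda^p)^{\mathsf T}=\mathscr M\Lambda_{[p]}^{\mathsf T}=\Lambda_{[q]}\mathscr M$ by \eqref{eq:gen-hankel-symmetry}, this equals $\mathscr M^{\langle p\rangle}_{\mathrm R}$ for the right shift.

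The subdiagonal entries are read off from the identity $\mathscr U^{\langle k-1\rangle}L_k=\Lambda^{\mathsf T}\mathscr U^{\langle k\rangle}$, i.e. $A^{\langle k-1\rangle}L_k=zA^{\langle k\rangle}$ in terms of $A=X^{\mathsf T}\mathscr U$, at the active component. Comparing the leading coefficient of the $(\ell_p(N)+1)$-th component in column $N$ (the diagonal entries $1$ together with the degree increment) yields \eqref{eq:gen-L-leading-coefficients}. The main bookkeeping point here is to check that the component indices $\ell_p(N)+1$ and $\ell_p(N+1)+1$ align with the cyclic relabelling $\theta_i\mapsto\theta_i+1$ of \eqref{eq:gen-cyclic-polynomial-matrix}. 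I would verify this on the step-line index \eqref{eq:gen-step-index}-style formula \eqref{eq:gen-step-line-index}, using \eqref{eq:gen-step-coordinate}.

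\emph{Upper factors.} Symmetrically, set $\mathscr M^{[b]}=\Lambda^b\mathscr M$. Then
\[
\mathscr U^{[b]\,-1}\mathscr U^{[b-1]}=\mathscr L^{[b]}\Lambda\mathscr L^{[b-1]\,-1}.
\]
The left side is upper triangular and the right side has at most one superdiagonal. Hence the common matrix $U_b$ is upper bidiagonal, with unit superdiagonal once the normalization \eqref{eq:gen-step-normalization} is imposed.

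Combining the two chains:
\[
T=\mathscr L\Lambda_{[q]}\mathscr L^{-1}=\mathscr L(\mathscr L^{\langle p\rangle})^{-1}\,\mathscr L^{\langle p\rangle}\Lambda^{q}\mathscr L^{-1}.
\]
Now identify $\mathscr L^{\langle p\rangle}$ with $\mathscr L^{[q]}$. Uniqueness of Gauss--Borel for $\Lambda_{[q]}\mathscr M$ gives this, since $\Lambda^q=\Lambda_{[q]}$ scalarly. With this identification, the second factor telescopes to $U_q\cdots U_1$. This gives \eqref{eq:gen-bidiagonal-factorization}.

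For the diagonal of $U_b$, apply $\mathscr L^{[b]}\Lambda=U_b\mathscr L^{[b-1]}$ to $X$, i.e. $zB^{[b]}=U_bB^{[b-1]}$. The Christoffel formula expresses $B^{[b]}_N$ as $z^{-1}$ times a combination of $B_N,\ldots,B_{N+b}$ (untransformed) chosen to vanish at $z=0$ in components $1,\ldots,b$. Cramer's rule then gives that combination through the determinants $\tau^B_{b,N}$ of \eqref{eq:gen-upper-Christoffel-tau}, and the ratio of consecutive pivots produces \eqref{eq:gen-U-tau-quotients}, including the sign.

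When the chain closes in the family, use the dual identity $\mathscr U^{[b-1]}=\mathscr U^{[b]}U_b$, i.e. $A^{[b-1]}=A^{[b]}U_b$. Since the superdiagonal entry is $1$ and the leading term sits in column $N$, comparing leading coefficients of the active component gives \eqref{eq:gen-U-leading-coefficients}.

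I expect the main obstacle to be the tau-quotient formula \eqref{eq:gen-U-tau-quotients}. It needs a careful determinantal (Sylvester/Desnanot--Jacobi) computation relating the Christoffel transformed $B$-forms to minors of $[B^{(\nu)}_{N+\alpha-1}(0)]$, and it must track the normalization at each stage of the chain.
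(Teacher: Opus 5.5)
Your proposal is correct and follows the same Christoffel--Gauss--Borel route that the paper relies on; the paper gives no proof of its own and quotes the result from \cite{BranquinhoFoulquieManas2026Factorization}. You telescope $\mathscr M^{\langle k\rangle}=\mathscr M^{\langle k-1\rangle}\Lambda^{\mathsf T}$ and $\mathscr M^{[b]}=\Lambda\mathscr M^{[b-1]}$, glue the two chains by uniqueness at $\mathscr M^{\langle p\rangle}=\Lambda_{[q]}\mathscr M=\mathscr M^{[q]}$, and read off $(L_k)_{N+1,N}=\mathscr U^{\langle k\rangle}_{N,N}/\mathscr U^{\langle k-1\rangle}_{N+1,N+1}$ and $(U_b)_{N,N}=\mathscr U^{[b-1]}_{N,N}/\mathscr U^{[b]}_{N,N}$. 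The tau formula needs no Desnanot--Jacobi step: $\prod_{b'\le b}(U_{b'})_{N,N}$ is the coefficient of $\mathcal B_N$ in $\mathcal B^{[b]}_N$, and it equals the single Cramer quotient $(-1)^b\tau^B_{b,N+1}/\tau^B_{b,N}$.

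Do correct your intertwinings, though. The scalar $z$ must be the cyclic matrix: $A^{\langle k-1\rangle}L_k=\mathfrak X_{[p]}^{\mathsf T}A^{\langle k\rangle}$ and $U_bB^{[b-1]}=B^{[b]}\mathfrak X_{[q]}$, or simply $\boldsymbol{\mathcal A}^{\langle k-1\rangle}L_k=\boldsymbol{\mathcal A}^{\langle k\rangle}$ and $U_b\boldsymbol{\mathcal B}^{[b-1]}=\boldsymbol{\mathcal B}^{[b]}$ for complete forms. This relabelling is exactly what pairs component $\ell_p(N+1)+1$ with component $\ell_p(N)+1$. Iterated as $U_b\cdots U_1B=B^{[b]}\mathfrak X_{[q]}^{\,b}$, it also puts the factor $z$ on components $1,\ldots,b$ only.
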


The identity \eqref{eq:gen-bidiagonal-factorization} is an entrywise identity
of semi-infinite matrices.  No convergence of infinite matrix products is
involved: there are only \(p+q\) bidiagonal factors, and each entry of their
product is a finite sum.  Equivalently, for any fixed set of rows and all
columns meeting their band, the identity is obtained from a sufficiently
large finite Christoffel truncation.  This is the form of the general
factorization theorem from
\cite{BranquinhoFoulquieManas2026Factorization} used here;
below we evaluate its factors for the two hypergeometric families.

The distinction in Proposition~\ref{prop:gen-bidiagonal-scheme} is essential.
The lower factors are evaluated from the left Christoffel chain.  In the two
mixed settings below this chain is closed because the \(p\)-component side is
a power vector.  Thus the left-transformed \(A\)-forms are obtained by the
same formulas after a cyclic affine shift of the power exponents.  The upper
factors are different: outside a closed right chain they must be computed from
the Christoffel tau-functions \eqref{eq:gen-upper-Christoffel-tau}. 

Define the cyclic affine shift
\begin{equation}
	\label{eq:gen-cyclic-affine-shift}
	\mathscr C_d(\theta_1,\ldots,\theta_d)
	\coloneq
	(\theta_2,\ldots,\theta_d,\theta_1+1).
\end{equation}
In the Jacobi and Laguerre I settings, respectively, the left
Christoffel chain preserves the family by
\[
\boldsymbol\alpha\longmapsto \mathscr C_p^k(\boldsymbol\alpha),
\qquad
\boldsymbol\beta\longmapsto \mathscr C_p^k(\boldsymbol\beta).
\]
Bilateral preservation occurs in the mixed Pi\~neiro specialization
of~\cite{PineiroMixed2026} in the Jacobi setting, and formally at the
\(s=0\) boundary of the
Laguerre I setting; in those cases one may use either the closed
coefficient formula \eqref{eq:gen-U-leading-coefficients} or the tau-formula
\eqref{eq:gen-U-tau-quotients}.

\section{Jacobi mixed-type multiple orthogonal systems}

We first consider a rectangular mixed extension of the Jacobi
hypergeometric moment systems introduced by Wolfs~\cite{Wolfs2024}. The
power vector is
\[
\mathbf u(x)=\left[x^{\alpha_1},\ldots,x^{\alpha_p}\right],
\]
whereas the weight vector is formed by Jacobi Mellin weights. When
\(q=1\), the construction reduces to the ordinary Jacobi--Pi\~neiro system
with \(p\) weights. When \(p=1\) and \(\alpha_1=0\), it reduces to the
	ordinary Jacobi-like system of Wolfs~\cite{Wolfs2024}. The boundary degeneration
	\(\boldsymbol a,\boldsymbol b\to\boldsymbol\beta\) gives the mixed Pi\~neiro matrix
of~\cite{PineiroMixed2026}. Finally,
the case \(q=2\) gives a Gauss hypergeometric specialization. After matching
normalizations, the relevant two-dimensional space of weights is related to
the Lima--Loureiro Gauss pair~\cite{LimaLoureiro2022} by an invertible
constant change of basis.

Let
\[
\boldsymbol a=(a_1,\ldots,a_q),
\qquad
\boldsymbol b=(b_1,\ldots,b_q),
\qquad
-1<a_j<b_j,
\quad j\in\{1,\ldots,q\}.
\]
For \(-1<a<b\), we use the Mellin-normalized beta weight
\begin{equation}
	\label{eq:beta-weight-definition}
	\mathcal B_{a,b}(x)
	\coloneq
	\frac{x^a(1-x)^{b-a-1}}{\Gamma(b-a)}
	\boldsymbol 1_{(0,1)}(x),
	\qquad
	\mathcal M[\mathcal B_{a,b}](s)
	=
	\frac{\Gamma(s+a)}{\Gamma(s+b)}.
\end{equation}
For vector-valued parameter strings, scalar shifts, addition, subtraction,
and products are understood componentwise; in particular,
\[
\Gamma(s\one_q+\boldsymbol a)\coloneq\prod_{j=1}^{q}\Gamma(s+a_j),
\qquad
(\boldsymbol a)_k\coloneq\prod_{j=1}^{q}(a_j)_k.
\]
The vector obtained by removing the \(j\)-th component of \(\boldsymbol a\) is
denoted by \(\boldsymbol a^{\,*j}\).

The finite nonresonance assumptions needed for the residue formulas are stated
explicitly in each result.

The basic Jacobi weight is
\begin{equation}
	\label{eq:w0-convolution}
	w_0(x;\boldsymbol a,\boldsymbol b)
	\coloneq
	\mathcal B_{a_1,b_1}*\cdots*\mathcal B_{a_q,b_q}(x).
\end{equation}
Since each factor in the Mellin convolution is supported on \((0,1)\), the
iterated convolution is also supported on \((0,1)\). Moreover, by the Mellin
transform of the beta density and by
\eqref{eq:Mellin-convolution-product}, it satisfies
\begin{equation}
	\label{eq:w0-Mellin}
	\mathcal M[w_0(\cdot;\boldsymbol a,\boldsymbol b)](s)
	=
	\frac{\Gamma(s\one_q+\boldsymbol a)}{\Gamma(s\one_q+\boldsymbol b)}.
\end{equation}

With the Meijer \(G\)-function convention fixed in
\eqref{eq:Meijer-G-definition}--\eqref{eq:Meijer-G-Mellin-transform}, the
upper parameters of the Meijer \(G\)-function appear in the denominator of
the Mellin transform, whereas the lower parameters appear in the numerator.
Therefore
\begin{equation}
	\label{eq:w0-Meijer}
	w_0(x;\boldsymbol a,\boldsymbol b)
	=
	G_{q,q}^{q,0}\left(
	x\,\middle|\,\begin{matrix}\boldsymbol b\\ \boldsymbol a\end{matrix}
	\right),
	\qquad 0<x<1.
\end{equation}

Following Wolfs~\cite{Wolfs2024}, define
\begin{equation}
	\label{eq:wj-definition}
	w_j(x;\boldsymbol a,\boldsymbol b)
	\coloneq
	w_0(x;\boldsymbol a,\boldsymbol b+\boldsymbol e_j),
	\qquad j\in\{1,\ldots,q\}.
\end{equation}
Then
\begin{equation}
	\label{eq:wj-Mellin}
	\mathcal M[w_j(\cdot;\boldsymbol a,\boldsymbol b)](s)
	=
	\frac{\Gamma(s\one_q+\boldsymbol a)}{\Gamma(s\one_q+\boldsymbol b+\boldsymbol e_j)}
	=
	\frac{\Gamma(s\one_q+\boldsymbol a)}{\Gamma(s\one_q+\boldsymbol b)}
	\frac{1}{s+b_j}.
\end{equation}

Assume that
\[
a_\rho-a_\sigma\notin\mathbb Z,
\qquad \rho\neq\sigma.
\]
With the Meijer \(G\)-normalization fixed in
\eqref{eq:Meijer-G-definition}--\eqref{eq:Meijer-G-Mellin-transform},
Slater's expansion of the Meijer \(G\)-function near \(x=0\), obtained by
summing the residues at the poles \(s=-a_\rho-k\), gives, for \(0<x<1\) and
every \(j\in\{1,\ldots,q\}\),
\begin{align}
	\label{eq:wj-general-hypergeometric}
	w_j(x;\boldsymbol a,\boldsymbol b)
	={}&
	\sum_{\rho=1}^{q}
	C_{j,\rho}\,x^{a_\rho}
	\pFq{q}{q-1}
	{
		(1+a_\rho)\one_q-\boldsymbol b-\boldsymbol e_j
	}
	{
		(1+a_\rho)\one_{q-1}-\boldsymbol a^{\,*\rho}
	}
	{x},
\end{align}
where
\begin{equation}
	\label{eq:wj-general-coefficients}
	C_{j,\rho}
	\coloneq
	\frac{
		\Gamma(\boldsymbol a^{\,*\rho}-a_\rho\one_{q-1})
	}{
		\Gamma(\boldsymbol b+\boldsymbol e_j-a_\rho\one_q)
	}.
\end{equation}
Thus each Jacobi weight is, generically, a linear combination of
Frobenius-type terms \(x^{a_\rho}{}_qF_{q-1}(x)\) at the origin.

\begin{remark}[The Jacobi--Pi\~neiro specialization: \(q=1\)]
	\label{rem:JP-edge}
	If \(q=1\), write \(\boldsymbol a=(a)\) and \(\boldsymbol b=(b)\). Then
	\begin{equation}
		\label{eq:q1-weight}
		w_1(x;a,b)
		=
		w_0(x;a,b+1)
		=
		\frac{x^a(1-x)^{b-a}}{\Gamma(b-a+1)}.
	\end{equation}
	Consequently, multiplication by arbitrary powers gives
	\[
	x^{\alpha_i}w_1(x;a,b)
	=
	\frac{x^{\alpha_i+a}(1-x)^{b-a}}{\Gamma(b-a+1)},
	\qquad i\in\{1,\ldots,p\},
	\]
	which, up to a common nonzero constant, are precisely the Jacobi--Pi\~neiro
	weights after setting
	\[
	\widetilde\alpha_i\coloneq\alpha_i+a,
	\qquad
	\gamma\coloneq b-a.
	\]
	This agrees with the Mellin-transform normalization for Jacobi--Pi\~neiro
	weights used, for instance, by Smet and Van Assche~\cite{SmetVanAssche2010}.
	Although the sufficient hypotheses used above to construct the full positive
	Mellin-convolution family impose \(-1<a<b\), the specialized
	Jacobi--Pi\~neiro weights are integrable under the usual conditions
	\(\widetilde\alpha_i>-1\) and \(\gamma>-1\).
\end{remark}

\begin{remark}[Relation with the Gauss system of Lima and Loureiro]
	\label{rem:Gauss-normalization}
	For \(q=2\), set
	\[
	\boldsymbol a=(a-1,b-1),
	\qquad
	\boldsymbol b=(c-1,d-1),
	\qquad
	\delta\coloneq c+d-a-b.
	\]
	Then
	\begin{equation}
		\label{eq:w0-Gauss-LL-corrected}
		w_0(x;(a-1,b-1),(c-1,d-1))
		=
		\frac{\Gamma(a)\Gamma(b)}{\Gamma(c)\Gamma(d)}
		W_{\rm LL}(x;a,b;c,d),
	\end{equation}
	where \(W_{\rm LL}\) denotes the Gauss hypergeometric weight used by
	Lima and Loureiro~\cite{LimaLoureiro2022}. We write
	\[
	\mathcal W(x;a,b;c,d)\coloneq
	\frac{\Gamma(a)\Gamma(b)}{\Gamma(c)\Gamma(d)}
	W_{\rm LL}(x;a,b;c,d)
	\]
	for its Mellin-normalized version. Their second weight is
	\(W_{\rm LL}(x;a,b+1;c+1,d)\). The shifted Jacobi weight
	\[
	w_0(x;(a-1,b-1),(c,d-1))
	\]
	corresponds, in Mellin normalization, to \(\mathcal W(x;a,b;c+1,d)\),
	and the constant contiguous relation
	\begin{equation}
		\label{eq:Gauss-contiguous-Mellin-normalized}
		\mathcal W(x;a,b;c+1,d)
		=
		\frac{1}{c-b}\mathcal W(x;a,b;c,d)
		-
		\frac{1}{c-b}\mathcal W(x;a,b+1;c+1,d)
	\end{equation}
	shows that the two pairs of weights span the same two-dimensional space,
	provided \(c-b\neq0\). Thus the \(q=2\) Jacobi construction recovers
	the Lima--Loureiro Gauss system up to normalization and an invertible
	constant change of basis, in the corresponding triangular range of
	multi-indices.
\end{remark}

\begin{remark}[Pi\~neiro degeneration]
	\label{rem:Pineiro-degeneration}
	This is the mixed Pi\~neiro system studied in~\cite{PineiroMixed2026}.
	If one takes the boundary limit
	\[
		\boldsymbol a,\boldsymbol b\longrightarrow\boldsymbol\beta,
	\]
	then \eqref{eq:wj-Mellin} gives
	\[
	\mathcal M[w_j(\cdot;\boldsymbol\beta,\boldsymbol\beta)](s)
	=
	\frac{1}{s+\beta_j},
	\]
	and therefore
	\begin{equation}
		\label{eq:wj-Pineiro-degeneration}
		w_j(x;\boldsymbol\beta,\boldsymbol\beta)=x^{\beta_j},
		\qquad j\in\{1,\ldots,q\}.
	\end{equation}
\end{remark}

Let
\[
\boldsymbol\alpha=(\alpha_1,\ldots,\alpha_p),
\]
and assume that
\[
\alpha_i+a_\rho>-1,
\qquad
i\in\{1,\ldots,p\},
\quad
\rho\in\{1,\ldots,q\}.
\]
This condition guarantees the absolute convergence of all pairings appearing
below. Equivalently, if
\[
a_{\min}\coloneq\min_{\rho\in\{1,\ldots,q\}}a_\rho,
\]
it is enough to require \(\alpha_i>-1-a_{\min}\) for every
\(i\in\{1,\ldots,p\}\). Consider the two vectors
\[
\mathbf u(x)=\left[x^{\alpha_1},\ldots,x^{\alpha_p}\right],
\qquad
\mathbf w(x)=\left[w_1(x;\boldsymbol a,\boldsymbol b),\ldots,w_q(x;\boldsymbol a,\boldsymbol b)\right].
\]
Following the convention \eqref{eq:general-matrix-of-measures}, we define the
\(q\times p\) matrix of measures
\begin{equation}
	\label{eq:mixed-Jacobi-like-matrix}
	\mathrm{d}\Jmat(x)
	\coloneq
	\left[
	w_j(x;\boldsymbol a,\boldsymbol b)x^{\alpha_i}
	\right]_{
		\substack{j\in\{1,\ldots,q\}\\i\in\{1,\ldots,p\}}}
	\dx.
\end{equation}

The preceding specializations pass directly to this matrix of measures. In the
case \(q=1\), Remark~\ref{rem:JP-edge} gives, up to a common nonzero constant,
the ordinary Jacobi--Pi\~neiro vector with parameters
\(\widetilde\alpha_i=\alpha_i+a\) and \(\gamma=b-a\). If \(p=1\) and
\(\alpha_1=0\), the matrix reduces to the ordinary Jacobi-like vector studied
by Wolfs. More generally, multiplication by \(x^{\alpha_1}\) corresponds, at
the level of Mellin transforms, to the simultaneous shift
\[
(\boldsymbol a,\boldsymbol b)
\longmapsto
(\boldsymbol a+\alpha_1\one_q,\boldsymbol b+\alpha_1\one_q).
\]
	Finally, in the boundary degeneration
	\(\boldsymbol a,\boldsymbol b\to\boldsymbol\beta\),
	\eqref{eq:wj-Pineiro-degeneration} gives the mixed Pi\~neiro matrix
of~\cite{PineiroMixed2026},
\[
\mathrm{d}\Jmat(x)
=
\left[
x^{\beta_j+\alpha_i}
\right]_{
	\substack{j\in\{1,\ldots,q\}\\i\in\{1,\ldots,p\}}}
\dx.
\]

\subsection{Leading moment determinants}
\label{subsec:Jacobi-leading-moment-determinants}

The leading moment determinants can be evaluated at every step, including a
final incomplete cycle. For \(N\ge1\), let
\begin{equation}
	\label{eq:Jacobi-step-component-counts}
	m_j(N)\coloneq\left\lfloor\frac{N+q-j}{q}\right\rfloor,
	\qquad
	n_i(N)\coloneq\left\lfloor\frac{N+p-i}{p}\right\rfloor.
\end{equation}
Thus \(m_j(N)\) and \(n_i(N)\) count the occurrences of the \(j\)-th row
component and the \(i\)-th column component among the first \(N\) step-line
indices, and
\[
	\sum_{j=1}^{q}m_j(N)=\sum_{i=1}^{p}n_i(N)=N.
\]
For \(0\le v<n_i(N)\), set
\[
	z_{v,i}\coloneq v+\alpha_i+1,
\]
and order the resulting \(N\) nodes by the scalar step-line index
\(pv+i-1\). Denote this ordered list by \(\mathcal Z_N\), and write
\[
	\operatorname{VdM}(\mathcal Z_N)
	\coloneq
	\prod_{\lambda<\mu}(z_\mu-z_\lambda).
\]
The leading \(N\times N\) moment block is
\begin{equation}
	\label{eq:Jacobi-leading-moment-block}
	\mathscr M_N^{\mathrm J}
	\coloneq
	\left[
		\int_0^1 x^{u+v+\alpha_i}w_j(x;\boldsymbol a,\boldsymbol b)\,\dx
	\right]_{
		\substack{0\le u<m_j(N),\ 1\le j\le q\\
		0\le v<n_i(N),\ 1\le i\le p}
	},
\end{equation}
\begin{samepage}
where the rows and columns follow their scalar step-line order. Define
\begin{align}
	D_N(z)
	&\coloneq
	\prod_{j=1}^{q}(z+b_j)_{m_j(N)},
	\label{eq:Jacobi-leading-common-denominator}\\
	\Delta_N^{\mathrm J}
	&\coloneq
	\prod_{j=1}^{q}
	\left[
		\prod_{h=1}^{j-1}(b_j-b_h)^{m_j(N)}
		\prod_{\rho=1}^{q}\prod_{u=0}^{m_j(N)-1}
		(b_j+1-a_\rho)_u
	\right].
	\label{eq:Jacobi-leading-coefficient-determinant}
\end{align}
\end{samepage}

\begin{proposition}[All leading Jacobi moment determinants]
	\label{prop:Jacobi-all-leading-moment-determinants}
	Assume that the Gamma factors and denominators below are finite and
	nonzero. For every \(N\ge1\),
	\begin{equation}
		\label{eq:Jacobi-all-leading-moment-determinants}
		\det\mathscr M_N^{\mathrm J}
		=
		\operatorname{VdM}(\mathcal Z_N)\,
		\Delta_N^{\mathrm J}
		\prod_{z\in\mathcal Z_N}
		\frac{\Gamma(z\one_q+\boldsymbol a)}{\Gamma(z\one_q+\boldsymbol b)D_N(z)}.
	\end{equation}
\end{proposition}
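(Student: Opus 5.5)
The plan is to write each entry of \(\mathscr M_N^{\mathrm J}\) as a Mellin moment and then factor the determinant column by column. By \eqref{eq:wj-Mellin}, the entry in row \((u,j)\) and column \((v,i)\) equals
\[
\frac{\Gamma((z+u)\one_q+\boldsymbol a)}{\Gamma((z+u)\one_q+\boldsymbol b)}\frac{1}{z+u+b_j},
\qquad z=z_{v,i}=v+\alpha_i+1 .
\]
Using \(\Gamma(z+u+c)=\Gamma(z+c)(z+c)_u\), this becomes
\[
\frac{\Gamma(z\one_q+\boldsymbol a)}{\Gamma(z\one_q+\boldsymbol b)}\cdot
\frac{(z\one_q+\boldsymbol a)_u}{(z\one_q+\boldsymbol b)_u\,(z+b_j+u)}
=\frac{\Gamma(z\one_q+\boldsymbol a)}{\Gamma(z\one_q+\boldsymbol b)}\cdot
\frac{(z\one_q+\boldsymbol a)_u}{(z\one_q+\boldsymbol b)_u}\cdot
\frac{(z+b_j)_u}{(z+b_j)_{u+1}} .
\]
The Gamma prefactor depends only on the column node \(z\), so it can be pulled out of each column. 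This produces the factor \(\prod_{z\in\mathcal Z_N}\Gamma(z\one_q+\boldsymbol a)/\Gamma(z\one_q+\boldsymbol b)\).

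Next I multiply each column by \(D_N(z)\). Since \(u+1\le m_j(N)\), every row entry becomes a polynomial in \(z\):
\[
R_{u,j}(z)\coloneq (z\one_q+\boldsymbol a)_u\,(z+b_j)_u\,
\frac{D_N(z)}{(z\one_q+\boldsymbol b)_u\,(z+b_j)_{u+1}} .
\]
The quotient is polynomial because, for each \(h\), \((z+b_h)_u\) divides \((z+b_h)_{m_h(N)}\) whenever \(u< m_h(N)\). For \(h\ne j\) one needs \(u<m_j(N)\le m_h(N)+1\), and this fails at the boundary. So the step-line ordering must be tracked carefully: when \(u=m_h(N)\) for some \(h>j\), the extra factor \((z+b_h+m_h(N))\) is not cancelled and must instead be absorbed by the numerator \((z+a_\rho)_u\). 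That does not work directly. Instead, I write each entry as a rational function with denominator exactly \(D_N(z)\), with numerator a polynomial of degree at most \(N-1\) in \(z\). The degree count is \(qu+u+(\sum_h m_h(N))-(qu+u+1)=N-1\) in the generic case, and I expect the same after handling boundaries; this is the bookkeeping to verify.

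Once the \(N\) row functions \(R_{u,j}\) are polynomials of degree \(<N\) evaluated at the \(N\) distinct nodes of \(\mathcal Z_N\), the determinant factors as
\[
\det\bigl[R_{u,j}(z_\lambda)\bigr]=\det(C)\,\operatorname{VdM}(\mathcal Z_N),
\]
where \(C\) is the \(N\times N\) coefficient matrix of the \(R_{u,j}\) in the monomial basis \(1,z,\dots,z^{N-1}\). The nodes must be ordered by the scalar step-line index, which fixes the sign convention. This leaves \(\det C=\Delta_N^{\mathrm J}\) as the main step.

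To compute \(\det C\), I would not expand in monomials. I would use a basis adapted to \(D_N\): the span of \(\{R_{u,j}\}\) is the span of partial-fraction-type polynomials \(D_N(z)/(z+b_j+u)\) weighted by the Pochhammer ratio. Evaluating at the zeros \(z=-b_j-u\) of \(D_N\) makes the matrix triangular, with ordering by \((j,u)\) lexicographic. Indeed, \(R_{u,j}(-b_{j'}-u')\) vanishes unless \(j'<j\), or \(j'=j\) with \(u'\le u\) (up to the ordering choice). The diagonal values are \(\prod_\rho (a_\rho-b_j-u)_u\)-type products times \(\prod_{h\ne j}(b_h-b_j)\) factors. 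Converting \((a_\rho-b_j-u)_u=(-1)^u(b_j+1-a_\rho)_u\), and dividing by the Vandermonde of the zeros \(\{-b_j-u\}\) against the monomial basis, should leave \(\prod_j\prod_{h<j}(b_j-b_h)^{m_j}\prod_\rho\prod_u(b_j+1-a_\rho)_u\). The signs and the exact surviving \(b\)-differences are the main obstacle.

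As a safeguard for this step, I would first check \(q=1\) and \(q=2\) with \(N\le3\) by hand. I would then run an induction on \(N\) that adds one row and one column and compares ratios \(\det\mathscr M_{N+1}^{\mathrm J}/\det\mathscr M_N^{\mathrm J}\), where only one \(m_j\) and one \(n_i\) change.
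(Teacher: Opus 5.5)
Your reduction is the paper's. You pull \(\Gamma(z\one_q+\boldsymbol a)/\Gamma(z\one_q+\boldsymbol b)\) out of each column, multiply by \(D_N\), and factor the result as a coefficient matrix \(\mathcal C_N\) times \(\operatorname{VdM}(\mathcal Z_N)\). The gap is in what follows: two of your claims are false, and the identity \(\det\mathcal C_N=\Delta_N^{\mathrm J}\), which carries the content, is left open.

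First, the boundary obstruction does not exist; it comes from an off-by-one slip. \((z+b_h)_u\) divides \((z+b_h)_{m_h(N)}\) if and only if \(u\le m_h(N)\). Moreover \((z+b_h)_{m_h(N)}\) ends with the factor \(z+b_h+m_h(N)-1\), so no factor \(z+b_h+m_h(N)\) ever needs cancelling. Since \(u+1\le m_j(N)\), and \(u\le m_j(N)-1\le m_h(N)\) for \(h\ne j\), your \(R_{u,j}\) is
\[
P_{u,j}(z)=\prod_{\rho=1}^{q}(z+a_\rho)_u\prod_{h=1}^{q}(z+b_h+u+\delta_{hj})_{m_h(N)-u-\delta_{hj}},
\]
which is a monic polynomial of degree exactly \(N-1\). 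Your ``instead'' reformulation is the same statement, and nothing remains to check there.

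Second, the triangular pattern you state is wrong. From the display, \(P_{u,j}(-b_h-k)=0\) exactly when \(k\ge u+\delta_{hj}\). For \(k<u\) the value is generically nonzero for every \(h\), including \(h>j\). For \(q=2\) and \(\boldsymbol m=(2,2)\), for instance, \(P_{1,1}(-b_2)=(a_1-b_2)(a_2-b_2)\ne0\). So the evaluation matrix is not triangular in \((j,u)\)-lexicographic order. It is lower triangular in the degree-first (step-line) order, and in that order your route does close. The diagonal entries are
\[
P_{u,j}(-b_j-u)=\prod_{\rho}(a_\rho-b_j-u)_u\,(m_j(N)-u-1)!\prod_{h\ne j}(b_h-b_j)_{m_h(N)-u}.
\]
\begin{itemize}
\item Each pair of roots at different levels occurs exactly once in the product of these entries, as minus the corresponding Vandermonde factor.
\item Each same-level pair \(h<j\) occurs twice and leaves one factor \(b_j-b_h\).
\item For each row \((u,j)\) there are \(qu\) cross-level pairs whose later element is \((u,j)\). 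This matches the sign \((-1)^{qu}\) from \((a_\rho-b_j-u)_u=(-1)^u(b_j+1-a_\rho)_u\), so all signs cancel.
\end{itemize}
What remains is \(\Delta_N^{\mathrm J}\).

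Dividing by this second Vandermonde also requires the roots \(-b_h-k\) of \(D_N\) to be distinct. That is not among the hypotheses (take \(b_2=b_1+1\)), so you need a density argument in \((\boldsymbol a,\boldsymbol b)\).

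The paper avoids both issues with a local recursion along the step-line. Appending row \((m,j)\), with \(N=qm+j\), multiplies every old polynomial by \(z+b_j+m\). Reducing the new polynomial modulo that factor gives \(\det\mathcal C_N=(-1)^{N-1}P_{m,j}(-b_j-m)\det\mathcal C_{N-1}\). This is a polynomial identity needing no distinctness, and its signs cancel at each step.
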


\begin{proof}
	Put
	\[
		G(z)\coloneq\frac{\Gamma(z\one_q+\boldsymbol a)}{\Gamma(z\one_q+\boldsymbol b)}
	\]
	and, for every row index \((u,j)\), define
	\begin{equation}
		\label{eq:Jacobi-rational-collocation-functions}
		F_{u,j}(z)
		\coloneq
		\frac{\prod_{\rho=1}^{q}(z+a_\rho)_u}
		{(z+b_j)_{u+1}
		\prod_{\substack{1\le h\le q\\h\ne j}}(z+b_h)_u}.
	\end{equation}
	Formula \eqref{eq:wj-Mellin}, with
	\(s=u+v+\alpha_i+1=u+z_{v,i}\), gives
	\begin{equation}
		\label{eq:Jacobi-moment-collocation-identity}
		\int_0^1x^{u+v+\alpha_i}w_j(x;\boldsymbol a,\boldsymbol b)\,\dx
		=
		G(z_{v,i})F_{u,j}(z_{v,i}).
	\end{equation}

	The component counts in \eqref{eq:Jacobi-step-component-counts} differ by
	at most one, and the components with the larger count occur first. If
	\(u<m_j(N)\), then \(u+1\le m_j(N)\) and
	\(u\le m_h(N)\) for \(h\ne j\). Hence the denominator of
	\(F_{u,j}\) divides \(D_N\), and
	\[
		P_{u,j}^{(N)}(z)\coloneq D_N(z)F_{u,j}(z)
	\]
	is a polynomial of degree at most \(N-1\). Let \(\mathcal C_N\) be the
	coefficient matrix of these \(N\) polynomials in the ascending monomial
	basis. Evaluation at \(\mathcal Z_N\) factors the moment block into a
	coefficient matrix, a Vandermonde matrix, and a diagonal matrix. Therefore,
	\begin{equation}
		\label{eq:Jacobi-leading-moment-collocation-factorization}
		\det\mathscr M_N^{\mathrm J}
		=
		\det\mathcal C_N\,
		\operatorname{VdM}(\mathcal Z_N)
		\prod_{z\in\mathcal Z_N}\frac{G(z)}{D_N(z)}.
	\end{equation}

	It remains to calculate \(\det\mathcal C_N\). Write
	\[
		N=qm+j,
		\qquad
		m\ge0,
		\quad
		j\in\{1,\ldots,q\}.
	\]
	The row added when passing from order \(N-1\) to order \(N\) is
	\((m,j)\), and
	\[
		D_N(z)=D_{N-1}(z)(z+b_j+m).
	\]
	Here and below, \(D_0(z)=1\).
	Thus every old polynomial acquires the factor \(z+b_j+m\), whereas the
	new polynomial is
	\[
		P_{m,j}^{(N)}(z)
		=
		\prod_{\rho=1}^{q}(z+a_\rho)_m
		\prod_{h=1}^{j-1}(z+b_h+m).
	\]
	At the root \(-b_j-m\) of the new factor,
	\begin{equation}
		\label{eq:Jacobi-new-polynomial-root-value}
		P_{m,j}^{(N)}(-b_j-m)
		=
		(-1)^{N-1}
		\prod_{h=1}^{j-1}(b_j-b_h)
		\prod_{\rho=1}^{q}(b_j+1-a_\rho)_m.
	\end{equation}
	Reducing the last polynomial modulo \(z+b_j+m\) and expanding the
	coefficient determinant gives
	\[
		\det\mathcal C_N
		=
		(-1)^{N-1}P_{m,j}^{(N)}(-b_j-m)\det\mathcal C_{N-1}.
	\]
	The two signs cancel by
	\eqref{eq:Jacobi-new-polynomial-root-value}, and hence
	\begin{equation}
		\label{eq:Jacobi-leading-coefficient-recursion}
		\frac{\det\mathcal C_N}{\det\mathcal C_{N-1}}
		=
		\prod_{h=1}^{j-1}(b_j-b_h)
		\prod_{\rho=1}^{q}(b_j+1-a_\rho)_m.
	\end{equation}
	Starting with \(\det\mathcal C_0=1\) and iterating in the scalar
	step-line order yields
	\(\det\mathcal C_N=\Delta_N^{\mathrm J}\). Substitution in
	\eqref{eq:Jacobi-leading-moment-collocation-factorization} proves
	\eqref{eq:Jacobi-all-leading-moment-determinants}.
\end{proof}

\begin{corollary}[Step-line normality]
	\label{cor:Jacobi-step-line-normality}
	Under the hypotheses of
	Proposition~\ref{prop:Jacobi-all-leading-moment-determinants}, the block
	\(\mathscr M_N^{\mathrm J}\) is nonsingular if and only if the nodes in
	\(\mathcal Z_N\) are distinct and
	\[
		b_j\ne b_h
		\quad\textnormal{for }1\le h<j\le q\textnormal{ with }m_j(N)>0,
	\]
	and
	\[
		b_j+r\ne a_\rho
		\quad\textnormal{for }
		1\le \rho,j\le q,
		\quad
		1\le r<m_j(N).
	\]
	Consequently, if these conditions hold for every \(N\), all leading
	principal truncations of the step-line moment matrix are nonsingular and
	the Gauss--Borel factorization \eqref{eq:gen-gauss-borel} exists. This
	criterion concerns the chosen step-line. The AT result below additionally
	provides normality for the stated class of multi-indices, as well as the
	zero properties of the mixed forms; interlacing is stated under the full
	AT hypothesis below.
\end{corollary}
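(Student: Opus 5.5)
The plan is to read the criterion directly off the closed formula \eqref{eq:Jacobi-all-leading-moment-determinants} of Proposition~\ref{prop:Jacobi-all-leading-moment-determinants}. Under its hypotheses every Gamma quotient \(\Gamma(z\one_q+\boldsymbol a)/\Gamma(z\one_q+\boldsymbol b)\) and every value \(D_N(z)\) for \(z\in\mathcal Z_N\) is finite and nonzero. Hence the last product is a finite nonzero number, and
\[
\det\mathscr M_N^{\mathrm J}\neq0
\iff
\operatorname{VdM}(\mathcal Z_N)\neq0
\ \text{ and }\
\Delta_N^{\mathrm J}\neq0 .
\]

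The first factor is handled directly. \(\operatorname{VdM}(\mathcal Z_N)\) is the product of the differences \(z_\mu-z_\lambda\), so it vanishes exactly when two nodes of \(\mathcal Z_N\) coincide. This gives the distinct-nodes condition.

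For the second factor I would use the product structure of \(\Delta_N^{\mathrm J}\) in \eqref{eq:Jacobi-leading-coefficient-determinant}. It is a finite product, so it vanishes iff one of its factors vanishes.
\begin{itemize}
\item The factor \((b_j-b_h)^{m_j(N)}\), with \(h<j\), matters only when \(m_j(N)>0\). When \(m_j(N)=0\) it equals \(1\). This gives the condition \(b_j\neq b_h\) for \(h<j\) with \(m_j(N)>0\).
\item The factor \((b_j+1-a_\rho)_u=\prod_{t=0}^{u-1}(b_j+1-a_\rho+t)\) runs over \(0\le u\le m_j(N)-1\). Collecting over \(u\), the linear factors that occur are \(b_j+r-a_\rho\) with \(r=t+1\in\{1,\ldots,m_j(N)-1\}\). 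Each such \(r\) really occurs, taking \(u=m_j(N)-1\ge r\).
\end{itemize}
So nonvanishing of the Pochhammer part is equivalent to \(b_j+r\neq a_\rho\) for \(1\le r<m_j(N)\) and all \(\rho,j\). Combining the two factors gives the stated equivalence.

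For the consequence, suppose the conditions hold for every \(N\ge1\). Then all leading \(N\times N\) blocks \(\mathscr M_N^{\mathrm J}\) are nonsingular. These blocks are exactly the leading principal truncations of the step-line moment matrix \eqref{eq:gen-moment-matrix}, since rows and columns are ordered by the scalar step-line index. The standard LU theorem for semi-infinite matrices with nonzero leading minors then yields the factorization \eqref{eq:gen-gauss-borel}, with \(\mathscr L\) lower unitriangular and \(\mathscr U\) upper triangular. The pivots are quotients of consecutive leading minors, so they are nonzero.

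The remaining sentences of the corollary are commentary pointing to later results, and need no proof here.

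The only delicate step is the bookkeeping of which Pochhammer factors actually appear. In particular, the \(u=0\) terms contribute nothing, and the range of \(r\) must be exactly \(1\le r<m_j(N)\). Beyond that the argument is a direct reading of the closed formula.
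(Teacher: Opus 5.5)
Your proposal is correct and follows the paper's intended argument: the paper gives no separate proof and reads the corollary directly off the closed formula of Proposition~\ref{prop:Jacobi-all-leading-moment-determinants}. Your factor-by-factor check of $\operatorname{VdM}(\mathcal Z_N)$ and $\Delta_N^{\mathrm J}$, including the index range $1\le r<m_j(N)$, is exactly that reading. The Gauss--Borel conclusion then follows from the nonvanishing of all leading principal minors, as you say.
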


\begin{proposition}[Normality and zeros under the full AT hypothesis]
\label{prop:AT-perfect-mixed}
Assume the standing integrability hypotheses and that the row vector
\(\mathbf w=(w_1,\ldots,w_q)\) is an AT-system on \((0,1)\) for every
multi-index. Assume also that \(\alpha_i-\alpha_h\notin\mathbb Z\) for
\(i\ne h\). Then both mixed problems are normal and every component with positive
prescribed length has its maximal degree. The forms \(\mathcal A\)
and \(\mathcal B\) have respectively \(|\boldsymbol m|\) and
\(|\boldsymbol n|\) simple zeros in \((0,1)\), and neighboring forms
interlace in the sense of \cite[Theorem~5]{FidalgoMedinaMinguez2013}.
\end{proposition}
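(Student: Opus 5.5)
The plan is to reduce everything to the standard AT-system machinery for mixed-type multiple orthogonality, as in Fidalgo--Medina--M\'inguez~\cite{FidalgoMedinaMinguez2013}. That requires \emph{both} sides of the matrix of measures to be AT-systems on \((0,1)\). The row side \(\mathbf w\) is one by hypothesis. For the column side \(\mathbf u=(x^{\alpha_1},\ldots,x^{\alpha_p})\), any nonzero combination \(\sum_i P_i(x)x^{\alpha_i}\) with \(\deg P_i\le n_i-1\) is a generalized polynomial (M\"untz sum) with \(|\boldsymbol n|\) real exponents \(\alpha_i+k\). These exponents are pairwise distinct precisely because \(\alpha_i-\alpha_h\notin\mathbb Z\). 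By Descartes' rule of signs for generalized polynomials, such a sum has at most \(|\boldsymbol n|-1\) zeros in \((0,\infty)\), counted with multiplicity. Hence \(\mathbf u\) is an AT-system on \((0,1)\) for every multi-index.

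For normality of the \(\mathcal A\)-problem with \(|\boldsymbol n|=|\boldsymbol m|+1\), I will first show that any nontrivial solution \(\mathcal A\) has at least \(|\boldsymbol m|\) sign changes in \((0,1)\). Suppose it had only \(k<|\boldsymbol m|\) sign changes, at \(x_1,\ldots,x_k\). Using the AT property of \(\mathbf w\), choose \(\mathcal B'=\sum_j Q_j w_j\) with \(\deg Q_j\le m_j'-1\) for some \(\boldsymbol m'\le\boldsymbol m\) with \(|\boldsymbol m'|=k+1\). Interpolation makes \(\mathcal B'\) change sign exactly at the \(x_l\) and nowhere else. Then \(\mathcal A\mathcal B'\) has constant sign and is not identically zero, so \(\langle\mathcal A,\mathcal B'\rangle\ne0\). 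This contradicts \eqref{eq:gen-A-orthogonality}, since that pairing is a combination of the vanishing moments \(\langle z^k\mathcal A,v_j\rangle\).

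Next, if two independent solutions existed, a suitable combination would lie in a strictly smaller admissible index \(\boldsymbol n'\) with \(|\boldsymbol n'|\le|\boldsymbol m|\). By the AT property of \(\mathbf u\), that combination has at most \(|\boldsymbol n'|-1<|\boldsymbol m|\) zeros, contradicting the sign-change bound; so the solution space is one-dimensional. The same argument shows that each component \(A^{(i)}\) with \(n_i>0\) has exact degree \(n_i-1\): otherwise \(\mathcal A\) lies in the space indexed by \(\boldsymbol n-\boldsymbol e_i\), where it has at most \(|\boldsymbol m|-1\) zeros. Counting gives at least \(|\boldsymbol m|\) sign changes from orthogonality and at most \(|\boldsymbol n|-1=|\boldsymbol m|\) zeros from the AT property. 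So the zeros are exactly \(|\boldsymbol m|\) simple zeros in \((0,1)\).

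The \(\mathcal B\)-problem is symmetric: swap the roles of \(\mathbf u\) and \(\mathbf w\), building the sign-matching test function from the power side. This gives \(|\boldsymbol n|\) simple zeros. Interlacing of neighbouring forms is then quoted from \cite[Theorem~5]{FidalgoMedinaMinguez2013}, whose hypotheses are exactly that both sides are AT-systems on the interval with integrable products. Integrability holds here by \(\alpha_i+a_\rho>-1\) and the expansion \eqref{eq:wj-general-hypergeometric}.

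The main obstacle I expect is the interpolation step: producing a test form with prescribed sign changes and no others. In the AT setting this is the standard lemma that an AT-system admits a combination vanishing exactly, and simply, at any \(k\) prescribed points with \(k\) one less than the dimension. I would cite it in the Fidalgo--Medina--M\'inguez form, and check that passing to a sub-multi-index \(\boldsymbol m'\le\boldsymbol m\) stays within the class where the AT hypothesis was assumed.
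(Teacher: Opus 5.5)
Your proposal is correct and follows the same route as the paper. You show that the power vector is an AT-system, so that the matrix measure is AT on both sides; you justify this via Descartes' rule of signs for M\"untz sums, whereas the paper simply asserts it. You then get the zero count and uniqueness from the mixed sign-change argument, which the paper cites as Theorems~1--2 of Fidalgo--Medina--M\'inguez rather than reproving. You obtain the maximal degrees by passing to \(\boldsymbol n-\boldsymbol e_i\) (or \(\boldsymbol m-\boldsymbol e_j\)), which has one fewer available zero, and you quote interlacing from their Theorem~5; your unpacking of the cited theorems, including the use of the full AT hypothesis at the smaller test indices \(\boldsymbol m'\le\boldsymbol m\), is accurate.
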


\begin{proof}
Distinctness modulo integers of the \(\alpha_i\)'s makes the power
vector an AT-system for every multi-index. Thus
\(\mathrm d\Jmat=\mathbf w^{\mathsf T}[x^{\alpha_i}]_{i\in\{1,\ldots,p\}}\dx\)
is an AT matrix measure as defined in
\cite[Definition~5]{FidalgoMedinaMinguez2013}.
Theorems~1 and~2 of that reference give the zero count and uniqueness.
For the degree assertion, reducing the degree of component \(j\) places
the form in the space indexed by the corresponding multi-index minus
\(\boldsymbol e_j\). That space is still AT by the full hypothesis and
admits one fewer zero, contradicting the zero count of the form.
This argument applies to either side of the matrix. The neighboring-form
assertion follows from Theorem~5 of the same reference.
\end{proof}

For the Jacobi weights, the available AT assertion below concerns only
near-diagonal indices. Normality in that range will instead be proved
directly from the Mellin coefficient matrix. This also specifies exactly
which extra minors control the maximal degrees of the \(B\)-components.

\begin{definition}[Near-diagonal multi-index]
	\label{def:near-diagonal}
	A multi-index \(\boldsymbol m=(m_1,\ldots,m_q)\in\mathbb N_0^q\) is said to be
	near the diagonal if
	\[
	|m_i-m_j|\le 1,
	\qquad i,j\in\{1,\ldots,q\}.
	\]
\end{definition}

\begin{lemma}[Wolfs {\cite[Corollary~2.4]{Wolfs2024}}]
	\label{lem:Wolfs-near-diagonal-AT}
	Assume that
	\[
	b_j-a_j\in\mathbb N_{0},
	\qquad j\in\{1,\ldots,q\},
	\]
	and
	\[
	b_i-b_j\notin\mathbb Z,
	\qquad i,j\in\{1,\ldots,q\},\quad i\ne j.
	\]
	Then the Jacobi vector
	\[
	\mathbf w(x)
	\coloneq
	\left[
	w_1(x;\boldsymbol a,\boldsymbol b),\ldots,
	w_q(x;\boldsymbol a,\boldsymbol b)
	\right]
	\]
	is an AT-system on \((0,1)\) for every near-diagonal multi-index
	\(\boldsymbol m\in\mathbb N_0^q\).
\end{lemma}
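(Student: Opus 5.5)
We need to prove Wolfs's result: for a near-diagonal \(\boldsymbol m\), every nonzero combination \(\sum_j B_j(x)w_j(x)\) with \(\deg B_j\le m_j-1\) has at most \(|\boldsymbol m|-1\) zeros in \((0,1)\). The plan is to reduce this to the vanishing of Mellin moments, and then to a Pólya-frequency/variation-diminishing argument for iterated beta convolutions.

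\textbf{Step 1: factor the Mellin transform of a form.} Let \(\mathcal B=\sum_j B_jw_j\) with \(B_j\in\mathbb P_{m_j-1}\). By \eqref{eq:wj-Mellin} and the shift rule,
\[
\mathcal M[\mathcal B](s)=\frac{\Gamma(s\one_q+\boldsymbol a)}{\Gamma(s\one_q+\boldsymbol b)}\sum_{j=1}^q\sum_{u<m_j} c_{u,j}\,F_{u,j}(s),
\]
where \(F_{u,j}\) is as in \eqref{eq:Jacobi-rational-collocation-functions}. For near-diagonal \(\boldsymbol m\), the argument in Proposition~\ref{prop:Jacobi-all-leading-moment-determinants} shows that, with \(D(s)=\prod_j(s+b_j)_{m_j}\), the products \(D\,F_{u,j}\) span polynomials of degree \(\le|\boldsymbol m|-1\). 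The nonvanishing of the coefficient determinant \(\Delta^{\mathrm J}\) there gives that they span all of \(\mathbb P_{|\boldsymbol m|-1}\). This uses \(b_i\ne b_j\), which follows from \(b_i-b_j\notin\mathbb Z\), and \((b_j+1-a_\rho)_u\ne0\). The latter needs care when \(\rho=j\), since then \(b_j-a_j\in\mathbb N_0\) and \(b_j+1-a_j\ge1\) is fine. For \(\rho\ne j\), it requires \(a_\rho-b_j-1\notin\{0,\dots,u-1\}\), which I expect to follow from the integer-difference hypotheses.

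Hence
\[
\mathcal M[\mathcal B](s)=\frac{\Gamma(s\one_q+\boldsymbol a)}{\Gamma(s\one_q+\boldsymbol b+\boldsymbol m)}\,P(s),\qquad P\in\mathbb P_{|\boldsymbol m|-1},
\]
and \(P\) ranges over all of \(\mathbb P_{|\boldsymbol m|-1}\) as \(\mathcal B\) ranges over the space.

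\textbf{Step 2: express the form through an Euler operator.} Because \(b_j-a_j\in\mathbb N_0\), the ratio \(\Gamma(s+a_j)/\Gamma(s+b_j+m_j)\) equals \(1/(s+a_j)_{b_j-a_j+m_j}\). So \(\mathcal M[\mathcal B]=P(s)/R(s)\) with \(R\) a polynomial of degree \(K=\sum_j(b_j-a_j+m_j)\ge|\boldsymbol m|\). Its roots \(-a_j-k\) are distinct across different \(j\) when \(a_i-a_j\notin\mathbb Z\). Inverting, \(\mathcal B\) is a sum of residues, i.e. an exponential-type sum \(\sum c_r x^{\gamma_r}\) on \((0,1)\), possibly with logarithmic terms if roots coincide. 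Equivalently, \(\mathcal B=P(-\vartheta)\,w_*\) with \(\vartheta=x\,\mathrm d/\mathrm dx\) and \(w_*\) the Mellin inverse of \(1/R\). Here \(w_*\) is a convolution of beta-type factors, hence a positive Pólya-frequency-type kernel in the variable \(\log x\).

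\textbf{Step 3: count zeros and conclude.} Write \(P(-\vartheta)=\prod_k(\lambda_k-\vartheta)\), factoring over complex roots. A nonzero \(\mathcal B\) then arises from a kernel that, in \(t=-\log x\), is a positive combination of exponentials with exponents given by the roots of \(R\). A generalized Descartes/Rolle argument for the Chebyshev system \(\{x^{\gamma}\}\) gives at most \(K-1\) zeros, which is too weak. Sharper control comes from Rolle's theorem applied to \(x^{-\lambda}\vartheta\,x^{\lambda}\): each real factor removes at most one sign change relative to \(w_*\), and \(w_*\) has no zeros in \((0,1)\). This bounds the zeros of \(\mathcal B\) by \(\deg P\le|\boldsymbol m|-1\) when \(P\) has real roots. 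Complex conjugate root pairs are handled by a quadratic-factor version of the same argument. Since \(P\) is arbitrary, this proves the AT property. Alternatively, I would directly invoke the reference \cite[Corollary~2.4]{Wolfs2024}, since the lemma is cited from there.

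\textbf{Main obstacle.} The hardest step is this zero count in Step 3: \(P(-\vartheta)\) with complex roots does not obviously preserve the bound \(\deg P\). The Rolle argument must instead be applied to a monotone factorization derived from the boundary behaviour at \(0\) and \(1\), with the extra factors of \(R\) serving to absorb these boundary terms.
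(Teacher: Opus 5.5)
The paper gives no proof of this lemma; it imports it from \cite[Corollary~2.4]{Wolfs2024}, so your fallback of citing the reference is exactly what the paper does. As a self-contained argument, however, your proposal has a genuine gap in Step~3, which is where all the content lies. Steps~1 and~2 are sound. For \(\rho\ne j\), a vanishing factor \((b_j+1-a_\rho)_u\) would force \(b_\rho-b_j\in\mathbb Z\). Since \(a_i-a_j=(b_i-b_j)-(b_i-a_i)+(b_j-a_j)\notin\mathbb Z\), the roots of \(R\) are automatically simple, so no logarithmic terms ever occur.

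The problem is that Rolle's theorem is used in the wrong direction. For real \(\lambda\), \((\vartheta-\lambda)f=x^{\lambda+1}\frac{\mathrm d}{\mathrm dx}\bigl(x^{-\lambda}f\bigr)\). So each factor of \(P(-\vartheta)\) is a derivative times a nonvanishing multiplier. Rolle only bounds the zeros of a function by those of its derivative plus one. It gives no upper bound for the zeros of a derivative, and in general differentiating a zero-free function can create arbitrarily many zeros. Hence the chain \(w_*\mapsto(\lambda_1-\vartheta)w_*\mapsto\cdots\) yields no upper bound on the zeros of \(\mathcal B\). The complex-root ``quadratic-factor version'' is not an argument either.

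The missing idea is the zero of high multiplicity at \(x=1\). It rescues precisely the count you discarded as too weak. Write \(P/R=\sum_{r=1}^{K}c_r/(s+\gamma_r)\) with distinct real \(\gamma_r\). Then \(\mathcal B=f\) on \((0,1)\), where \(f(x)=\sum_r c_rx^{\gamma_r}\) is analytic on \((0,\infty)\) and nonzero whenever \(P\ne0\).

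Because \(P/R=\mathrm O(s^{\deg P-K})\) at infinity, expanding each \(1/(s+\gamma_r)\) in powers of \(s^{-1}\) gives \(\sum_r c_r\gamma_r^{\,l}=0\) for \(0\le l\le K-\deg P-2\). Hence \(f^{(i)}(1)=0\) for \(0\le i\le K-\deg P-2\), so \(f\) has a zero of multiplicity at least \(K-\deg P-1\) at \(x=1\).

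The family \(\{x^{\gamma_r}\}_{r=1}^{K}\) is an extended Chebyshev system on \((0,\infty)\); this follows by induction on \(K\) through \((x^{-\gamma_1}f)'\), with Rolle now used in the correct direction. So \(f\) has at most \(K-1\) zeros there, counted with multiplicity. It therefore has at most \((K-1)-(K-\deg P-1)=\deg P\le|\boldsymbol m|-1\) zeros in \((0,1)\).

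Together with the injectivity from Step~1, which gives linear independence, this is the AT property for every near-diagonal \(\boldsymbol m\). No factorization of \(P\) and no P\'olya-frequency argument is needed.
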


\begin{corollary}[Near-diagonal mixed normality]
\label{cor:Jacobi-mixed-normality-near-diagonal}
Assume the hypotheses of Lemma~\ref{lem:Wolfs-near-diagonal-AT}, the
standing integrability conditions, and
\(\alpha_i-\alpha_h\notin\mathbb Z\) for \(i\ne h\).
Let \(\boldsymbol m\) be near the diagonal and put \(N=|\boldsymbol m|\).
Both balances are normal. In the \(A\)-balance \(|\boldsymbol n|=N+1\),
every component with \(n_i>0\) has degree \(n_i-1\). In the
\(B\)-balance \(|\boldsymbol n|=N-1\), the complete form is nonzero
and its polynomial components are unique up to a common factor.

For the latter balance, let \(G\) be the \(N\)-by-\((N-1)\) matrix
with rows \((j,k)\), \(0\le k<m_j\), columns \((i,r)\),
\(0\le r<n_i\), and entries
\(G_{(j,k),(i,r)}=\int_0^1x^{k+r+\alpha_i}w_j(x)\dx\).
Then all \(B\)-components attain their maximal degrees if and only if
\begin{equation}
 \det G\bigl[\textnormal{row }(j,m_j-1)\textnormal{ deleted}\bigr]\ne0,
 \qquad m_j>0.
 \label{eq:Jacobi-terminal-minor-condition}
\end{equation}
An empty determinant equals one. In particular, this condition holds
for every component whose reduced index
\(\boldsymbol m-\boldsymbol e_j\) remains near the diagonal.
The forms \(\mathcal A\) and \(\mathcal B\) have respectively
\(|\boldsymbol m|\) and \(|\boldsymbol n|\) simple zeros in \((0,1)\).
The full AT hypothesis of Proposition~\ref{prop:AT-perfect-mixed}
provides, separately, its neighboring-form interlacing conclusion.
\end{corollary}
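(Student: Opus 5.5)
The plan is to reduce everything to the AT machinery of \cite{FidalgoMedinaMinguez2013}, applied only to near-diagonal indices, and to supply by hand the degree information that the full AT hypothesis would otherwise give.

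\emph{Step 1: set up the AT matrix measure.}
By Lemma~\ref{lem:Wolfs-near-diagonal-AT}, the vector \(\mathbf w\) is an AT-system on \((0,1)\) for the near-diagonal index \(\boldsymbol m\). The hypothesis \(\alpha_i-\alpha_h\notin\mathbb Z\) makes the power vector an AT-system for every multi-index \(\boldsymbol n\). So the pair \((\boldsymbol n,\boldsymbol m)\) falls into the AT matrix-measure setting of \cite[Definition~5]{FidalgoMedinaMinguez2013}, exactly as in the proof of Proposition~\ref{prop:AT-perfect-mixed}.

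\emph{Step 2: normality and zero counts.}
For both balances I read uniqueness and the zero counts off \cite[Theorems~1 and~2]{FidalgoMedinaMinguez2013}. The mechanism should be the following. Suppose a nonzero \(\mathcal A\) (respectively \(\mathcal B\)) had fewer than \(|\boldsymbol m|\) (respectively \(|\boldsymbol n|\)) sign changes. One would then build an AT combination on the opposite side vanishing exactly at those points. Its pairing with the form would be nonzero, contradicting orthogonality. Uniqueness follows because two independent solutions could be combined to force an extra zero.

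The one point needing care is that only the \(\mathbf w\)-side index \(\boldsymbol m\) is assumed near-diagonal. Hence the argument must use AT only for \(\boldsymbol m\) on the \(\mathbf w\) side and for arbitrary \(\boldsymbol n\) on the power side. I would check that Theorems~1--2 of that reference need nothing more than this.

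\emph{Step 3: degrees in the \(A\)-balance.}
Suppose a component with \(n_i>0\) had degree below \(n_i-1\). Then \(\mathcal A\) would lie in the space for \(\boldsymbol n-\boldsymbol e_i\). That space is still AT, since the power vector is AT for all indices. So \(\mathcal A\) could have at most \(|\boldsymbol n|-2=N-1\) zeros, contradicting the count \(N=|\boldsymbol m|\) from Step 2. This is the same reduction as in Proposition~\ref{prop:AT-perfect-mixed}, now run on the power side only.

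\emph{Step 4: degrees in the \(B\)-balance.}
Here the reduction would pass to \(\boldsymbol m-\boldsymbol e_j\), which need not be near-diagonal. So I argue linearly instead. The coefficient vector of \(\mathcal B\) is the left kernel of the \(N\times(N-1)\) matrix \(G\). By Step 2 this kernel is one-dimensional, so \(G\) has rank \(N-1\). By Cramer's rule, each kernel entry is, up to sign, the maximal minor obtained by deleting that row. In particular, the coefficient of \(x^{m_j-1}\) in \(B^{(j)}\) is proportional to \(\det G[\text{row }(j,m_j-1)\text{ deleted}]\). This gives the equivalence \eqref{eq:Jacobi-terminal-minor-condition}.

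For the last claim, assume \(\boldsymbol m-\boldsymbol e_j\) is near-diagonal. The deleted-row minor is the matrix of the \(B\)-problem with index pair \((\boldsymbol n,\boldsymbol m-\boldsymbol e_j)\), which is square of size \(N-1\). If it were singular, there would be a nonzero form in the \(\mathbf w\)-space for \(\boldsymbol m-\boldsymbol e_j\) orthogonal to all \(N-1\) power conditions. By the AT zero argument of Step 2, such a form would need \(N-1\) sign changes, whereas its AT space only allows \(N-2\) zeros. This contradiction shows the minor is nonzero. An empty determinant arises only when \(N=1\), and then the condition holds trivially.

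\emph{Step 5: interlacing.}
Interlacing is quoted from Proposition~\ref{prop:AT-perfect-mixed} under its stronger hypothesis.

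\emph{Main obstacle.}
I expect the hardest part to be Step 2: verifying that the cited theorems really only use AT at the specific index \(\boldsymbol m\) on the \(\mathbf w\) side, together with AT for all indices on the power side. If this is not literally stated in \cite{FidalgoMedinaMinguez2013}, I would reprove the zero count directly by the sign-change argument sketched above.
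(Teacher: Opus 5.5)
Your proof works, but it establishes normality by a different route from the paper's. The paper does not derive normality from AT at all. It divides the weighted Mellin transforms by $G_0/D_{\boldsymbol m}$ and computes the coefficient determinant $c^{\mathrm J}_{\boldsymbol m}$ of \eqref{eq:Jacobi-near-diagonal-coefficient-determinant} cycle by cycle. It checks that every factor is nonzero under the hypotheses of Lemma~\ref{lem:Wolfs-near-diagonal-AT}, so the weighted Mellin map is an isomorphism onto $\mathbb P_{N-1}$. Evaluation at the distinct nodes $\alpha_i+r+1$ then makes every maximal minor a nonzero Vandermonde product. From this single fact the paper gets both normalities and a kernel vector for the $A$-balance with \emph{every} coordinate nonzero, which is stronger than maximal degree. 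It also gets nonvanishing of the square moment blocks, and the reduced-index sufficient condition by the same product formula. AT enters only for the zero counts, exactly as in your Step~2, and the terminal-minor equivalence is the same Cramer argument as your Step~4. Your approach replaces the determinant computation with sign-change arguments. It is shorter and conceptually uniform, but it yields no explicit formulas, and normality then rests on Wolfs' AT lemma rather than on an elementary nonvanishing check.

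Two details need tightening. First, the sign-change argument needs $\mathbf w$-side test functions with exactly $s<N$ prescribed sign changes. That requires Chebyshev subspaces of dimension $s+1$, so AT at $\boldsymbol m$ alone is not what is used. Take the test spaces along a nested chain of near-diagonal indices $\boldsymbol m'\le\boldsymbol m$, always removing a unit from a maximal component; each index in the chain is AT by Lemma~\ref{lem:Wolfs-near-diagonal-AT}, and this is what the paper does. Second, for uniqueness in the $B$-balance, the phrase ``combine two solutions to force an extra zero'' must be made precise. Combine two independent solutions so as to kill the top coefficient of a component $j$ with $m_j$ maximal. Then $\boldsymbol m-\boldsymbol e_j$ stays near-diagonal, so the nonzero result lies in an $(N-1)$-dimensional Chebyshev space yet must have $N-1$ sign changes, a contradiction. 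If the reduction were made in an arbitrary component, you would hit the same non-near-diagonal obstruction you noticed in Step~4. Alternatively, show that $G$ has full column rank. A nonzero column-kernel vector would be a nonzero power form of total length $N-1$, hence with at most $N-2$ zeros, orthogonal to the whole $N$-dimensional $\mathbf w$-space. With these two repairs your argument is complete.
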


\begin{proof}
Put \(G_0(z)\coloneq\Gamma(z\one_q+\boldsymbol a)/\Gamma(z\one_q+\boldsymbol b)\)
and \(D_{\boldsymbol m}(z)\coloneq\prod_h(z+b_h)_{m_h}\). Division of the
weighted Mellin transforms by \(G_0/D_{\boldsymbol m}\) gives the
polynomials
\[
 P_{j,k}(z)\coloneq\prod_{\rho=1}^q(z+a_\rho)_k
             \prod_{h=1}^q(z+b_h+k+\delta_{h,j})_{m_h-k-\delta_{h,j}},
 \qquad 0\le k<m_j.
\]
Near-diagonality makes their lengths nonnegative and their degrees at
most \(N-1\). In degree-first row order, their coefficient determinant is
\begin{equation}
 c_{\boldsymbol m}^{\mathrm J}
 =\prod_{h<j}(b_j-b_h)^{\min(m_h,m_j)}
   \prod_{j=1}^q\prod_{\rho=1}^q\prod_{k=0}^{m_j-1}(b_j+1-a_\rho)_k.
 \label{eq:Jacobi-near-diagonal-coefficient-determinant}
\end{equation}
To obtain this formula, perform the complete degree cycles in the proof of
Proposition~\ref{prop:Jacobi-all-leading-moment-determinants} and then
append precisely the components with one extra entry, in their original
order. Appending component \(j\) at degree \(k\) multiplies the old
polynomials by \(z+b_j+k\); evaluation of the new row at
\(-b_j-k\) gives the factor
\(\prod_\rho(b_j+1-a_\rho)_k\) and one factor \(b_j-b_h\) for each
component \(h\) already appended in that cycle. Each pair occurs in
\(\min(m_h,m_j)\) cycles, proving the displayed determinant.

All factors are nonzero. For \(\rho=j\), \(b_j+1-a_j>0\).
For \(\rho\ne j\), a zero would imply an integer difference between
\(b_j\) and \(b_\rho\), since \(b_\rho-a_\rho\) is an integer.
Thus the weighted Mellin map is an isomorphism onto
\(\mathbb P_{N-1}\). At \(N\) distinct column nodes
\(z=\alpha_i+r+1\), the square moment determinant is consequently
\[
 c_{\boldsymbol m}^{\mathrm J}
 \prod_{u<v}(z_v-z_u)\prod_v\frac{G_0(z_v)}{D_{\boldsymbol m}(z_v)}\ne0.
\]
Evaluation at \(N+1\) distinct nodes has rank \(N\), and every maximal
minor is nonzero. Its kernel is one-dimensional with every coordinate
nonzero, proving the \(A\)-assertions. Evaluation at \(N-1\) distinct
nodes has rank \(N-1\); its transpose kernel gives the unique
\(B\)-vector, whose nonzero Mellin numerator is proportional to
\(\prod_i(\alpha_i+1-z)_{n_i}\). Its coordinates are the signed
maximal minors. This proves \eqref{eq:Jacobi-terminal-minor-condition}.
If the reduced row index is near the diagonal, its square determinant
has the same nonzero product evaluation, proving the stated sufficient
condition.

For the zero counts, the row space at \(\boldsymbol m\) is AT by
Lemma~\ref{lem:Wolfs-near-diagonal-AT}, and the power space is AT at
every index. The sign-change argument for mixed orthogonality
\cite[Theorem~1]{FidalgoMedinaMinguez2013} therefore gives the asserted
counts. The smaller test spaces used in that argument can be chosen
along a nested sequence of near-diagonal indices ending at
\(\boldsymbol m\); the power-side test spaces are available at every
size. This use of AT does not require the reduced terminal indices used
in the separate maximal-degree question.
\end{proof}

We obtain the two mixed forms associated with the rectangular matrix
\eqref{eq:mixed-Jacobi-like-matrix}. The first form is expanded in the
power vector \(\mathbf u=(x^{\alpha_i})_{i=1}^p\), whereas the second form
is expanded in the Jacobi vector
\(\mathbf w=(w_j)_{j=1}^q\).

In the statements below, we use the notation \(\boldsymbol b^{\,*j}\) for the
vector obtained from \(\boldsymbol b\) by deleting its \(j\)-th component.
Proposition~\ref{prop:AT-perfect-mixed} explains the structural source of
normality whenever the two underlying systems are AT-systems. In the explicit
formulas below, the near-diagonal hypothesis provides the finite cancellations
needed in the contour argument for the form on the power vector and in the
rational Mellin reconstruction of the form on the Jacobi vector. All
displayed identities are identities of meromorphic functions of the parameters
wherever both sides are defined.

\begin{theorem}[Explicit mixed Jacobi forms]
	\label{thm:mixed-Jacobi-like-forms}
	Let
	\[
	\boldsymbol n=(n_1,\ldots,n_p)\in\mathbb N_0^p,
	\qquad
	\boldsymbol m=(m_1,\ldots,m_q)\in\mathbb N_0^q.
	\]
	
	\smallskip
	
	\noindent
	\textnormal{(i) The form on the power vector.}
	Assume that \(|\boldsymbol n|=|\boldsymbol m|+1\) and that \(\boldsymbol m\) is near the
	diagonal. Assume also the finite nonresonance condition that the shifted
	nodes
	\[
	\alpha_i+\ell,
	\qquad
	i\in\{1,\ldots,p\}\textnormal{ with }n_i\ge1,
	\quad
	\ell\in\{0,\ldots,n_i-1\},
	\]
	are pairwise distinct. Then there exists a mixed type~I form
	\[
	\mathcal A_{\boldsymbol n,\boldsymbol m}(x)
	=
	\sum_{i=1}^{p}A_{\boldsymbol n,\boldsymbol m}^{(i)}(x)x^{\alpha_i},
	\]
	where
	\[
	A_{\boldsymbol n,\boldsymbol m}^{(i)}\in\mathbb P_{n_i-1}
	\quad\textnormal{if }n_i\ge1,
	\qquad
	A_{\boldsymbol n,\boldsymbol m}^{(i)}\equiv0
	\quad\textnormal{if }n_i=0.
	\]
	This form satisfies
	\begin{equation}
		\label{eq:A-orthogonality}
		\int_0^1x^r w_j(x;\boldsymbol a,\boldsymbol b)\mathcal A_{\boldsymbol n,\boldsymbol m}(x)\dx=0,
		\qquad
		r\in\{0,\ldots,m_j-1\},
		\quad
		j\in\{1,\ldots,q\}\textnormal{ with }m_j\ge1.
	\end{equation}
	Define
	\[
	D_{\boldsymbol\alpha,\boldsymbol n}(t)
	\coloneq
	((t+1)\one_p-\boldsymbol\alpha-\boldsymbol n)_{\boldsymbol n},
	\qquad
	P_{\boldsymbol b,\boldsymbol m}(t)
	\coloneq
	((t+1)\one_q+\boldsymbol b)_{\boldsymbol m}.
	\]
	The following contour formula fixes a particular representative. Whenever
	the corresponding problem is normal, it is the normalized \(\mathcal A\)-form
	in the sense of Definition~\ref{def:normalization-convention}:
	\begin{equation}
		\label{eq:A-contour}
		\mathcal A_{\boldsymbol n,\boldsymbol m}(x)
		=
		\frac{(-1)^{|\boldsymbol n|}}{2\pi\mathrm{i}}
		\bigintsss_{\Sigma}
		\frac{P_{\boldsymbol b,\boldsymbol m}(t)}{D_{\boldsymbol\alpha,\boldsymbol n}(t)}
		\frac{\Gamma(t\one_q+\boldsymbol b+\one_q)}{\Gamma(t\one_q+\boldsymbol a+\one_q)}
		x^t\dt,
	\end{equation}
	where \(\Sigma\) encloses all nodes
	\(\alpha_i,\ldots,\alpha_i+n_i-1\), with \(n_i\ge1\), and no other
	pole of the integrand. Its polynomial components, for
	\(i\in\{1,\ldots,p\}\) with \(n_i\ge1\), are
	\begin{equation}
		\label{eq:A-components}
		A_{\boldsymbol n,\boldsymbol m}^{(i)}(x)
		=
		\kappa_i^{\mathrm A}
		\pFq{p+q}{p+q-1}
		{
			-n_i+1,\
			(\alpha_i+1)\one_q+\boldsymbol b+\boldsymbol m,\
			(\alpha_i+1)\one_{p-1}
			-\boldsymbol\alpha^{\,*i}-\boldsymbol n^{\,*i}
		}
		{
			(\alpha_i+1)\one_q+\boldsymbol a,\
			(\alpha_i+1)\one_{p-1}-\boldsymbol\alpha^{\,*i}
		}
		{x},
	\end{equation}
	where
	\begin{equation}
		\label{eq:A-kappa}
		\kappa_i^{\mathrm A}
		\coloneq
		(-1)^{|\boldsymbol n^{\,*i}|+1}
		\frac{\Gamma((\alpha_i+1)\one_q+\boldsymbol b)}
		{\Gamma((\alpha_i+1)\one_q+\boldsymbol a)}
		\frac{
			((\alpha_i+1)\one_q+\boldsymbol b)_{\boldsymbol m}
		}{
			(n_i-1)!
			((\alpha_i+1)\one_{p-1}-\boldsymbol\alpha^{\,*i}
			-\boldsymbol n^{\,*i})_{\boldsymbol n^{\,*i}}
		}.
	\end{equation}
	
	\smallskip
	
	\noindent
	\textnormal{(ii) The form on the Jacobi vector.}
	Assume that \(|\boldsymbol m|=|\boldsymbol n|+1\) and that \(\boldsymbol m\) is near the
	diagonal. Assume also the finite nonresonance conditions that the nodes
	\[
	-b_j-\ell,
	\qquad
	j\in\{1,\ldots,q\}\textnormal{ with }m_j\ge1,
	\quad
	\ell\in\{0,\ldots,m_j-1\},
	\]
	are pairwise distinct and that no denominator appearing in the finite
	reconstruction formula below vanishes. Then there exists a mixed type~II
	form
	\[
	\mathcal B_{\boldsymbol n,\boldsymbol m}(x)
	=
	\sum_{j=1}^{q}B_{\boldsymbol n,\boldsymbol m}^{(j)}(x)w_j(x;\boldsymbol a,\boldsymbol b),
	\]
	where
	\[
	B_{\boldsymbol n,\boldsymbol m}^{(j)}\in\mathbb P_{m_j-1}
	\quad\textnormal{if }m_j\ge1,
	\qquad
	B_{\boldsymbol n,\boldsymbol m}^{(j)}\equiv0
	\quad\textnormal{if }m_j=0.
	\]
	This form satisfies
	\begin{equation}
		\label{eq:B-orthogonality}
		\int_0^1\mathcal B_{\boldsymbol n,\boldsymbol m}(x)x^{\alpha_i+r}\dx=0,
		\qquad
		r\in\{0,\ldots,n_i-1\},
		\quad
		i\in\{1,\ldots,p\}\textnormal{ with }n_i\ge1.
	\end{equation}
	The following Mellin identity fixes a particular representative. Whenever
	the corresponding problem is normal, it is the normalized \(\mathcal B\)-form
	in the sense of Definition~\ref{def:normalization-convention}:
	\begin{equation}
		\label{eq:B-Mellin}
		\mathcal M[\mathcal B_{\boldsymbol n,\boldsymbol m}](s)
		=
		-
		\frac{\Gamma(s\one_q+\boldsymbol a)}{\Gamma(s\one_q+\boldsymbol b+\boldsymbol m)}
		(\boldsymbol\alpha+(1-s)\one_p)_{\boldsymbol n}.
	\end{equation}
	If, in addition,
	\[
	a_\rho-a_\sigma\notin\mathbb Z,
	\qquad \rho\ne\sigma,
	\]
	then the simple-pole expansion of the complete form is
	\begin{multline}
		\label{eq:B-complete-hypergeometric}
			\mathcal B_{\boldsymbol n,\boldsymbol m}(x)
			=
			-
			\sum_{j=1}^{q}
			\frac{
				\Gamma(\boldsymbol a^{\,*j}-a_j\one_{q-1})
				(\boldsymbol\alpha+(a_j+1)\one_p)_{\boldsymbol n}
			}{
				\Gamma(\boldsymbol b+\boldsymbol m-a_j\one_q)
			}
			x^{a_j}
			\\* \times
			\pFq{p+q}{p+q-1}
			{
				\boldsymbol\alpha+\boldsymbol n+(a_j+1)\one_p,\
				(a_j+1)\one_q-\boldsymbol b-\boldsymbol m
			}
			{
				\boldsymbol\alpha+(a_j+1)\one_p,\
				(a_j+1)\one_{q-1}-\boldsymbol a^{\,*j}
			}
			{x}.
	\end{multline}
	For every parameter choice covered above, the same complete form admits the
	Meijer \(G\)-representation
	\begin{equation}
		\label{eq:B-complete-Meijer-G}
		\mathcal B_{\boldsymbol n,\boldsymbol m}(x)
		=
		-
		G_{p+q,p+q}^{\,q,p}
		\left(
		x\,\middle|\,
		\begin{matrix}
			-\boldsymbol\alpha-\boldsymbol n,\ \boldsymbol b+\boldsymbol m\\
			\boldsymbol a,\ -\boldsymbol\alpha
		\end{matrix}
		\right),
		\qquad 0<x<1.
	\end{equation}
	At coalescent values of the \(a_\rho\),
	\eqref{eq:B-complete-hypergeometric} is understood by continuity, or
	equivalently by evaluating the higher-order residues of
	\eqref{eq:B-complete-Meijer-G}.
	
	Moreover, for \(j\in\{1,\ldots,q\}\) with \(m_j\ge1\), the polynomial
	components are given by the finite hypergeometric expansion
	\begin{align}
		\label{eq:B-components-hypergeometric}
		B_{\boldsymbol n,\boldsymbol m}^{(j)}(x)
		={}&
		\sum_{\substack{
				J\in\{1,\ldots,q\}\\
				K\in\{0,\ldots,m_J-1\}\\
				K-1+\delta_{j,J}\ge0
		}}
		\mathcal C_{j;J,K}^{\mathcal B}
		\pFq{q+1}{q}
		{
			1,\ \boldsymbol b+(1-b_J-K)\one_q-\boldsymbol e_j
		}
		{
			\boldsymbol a+(1-b_J-K)\one_q
		}
		{x},
	\end{align}
	where, for the indices appearing in
	\eqref{eq:B-components-hypergeometric},
		\begin{multline}
			\label{eq:B-components-coefficients}
			\mathcal C_{j;J,K}^{\mathcal B}
			\coloneq
			(-1)^{K+1}
			\frac{
				(\boldsymbol\alpha+(b_J+K+1)\one_p)_{\boldsymbol n}
			}{
				K!(m_J-K-1)!
				(\boldsymbol b^{\,*J}-(b_J+K)\one_{q-1})_{\boldsymbol m^{\,*J}}
			}
			\\*\times
			\frac{(\boldsymbol a-b_j\one_q)_1}
			{(\boldsymbol b^{\,*j}-b_j\one_{q-1})_1}
			\frac{
				(\boldsymbol b^{\,*j}-(b_J+K)\one_{q-1})_1
			}{
				(\boldsymbol a-(b_J+K)\one_q)_1
			}.
		\end{multline}
	Each hypergeometric series appearing in
	\eqref{eq:B-components-hypergeometric} is terminating, of degree
	\(K-1+\delta_{j,J}\).

\end{theorem}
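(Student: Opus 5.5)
The plan is to treat both parts through the Mellin transform (\ref{eq:wj-Mellin}). All pairings reduce to evaluating rational-times-Gamma functions at shifted nodes, and each form is identified by a contour/residue argument.

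For (i), start from the contour integral (\ref{eq:A-contour}). Since \(\Sigma\) encloses only the simple zeros of \(D_{\boldsymbol\alpha,\boldsymbol n}\) (distinct by the nonresonance hypothesis), the residue theorem writes \(\mathcal A_{\boldsymbol n,\boldsymbol m}\) as \(\sum_i\sum_{\ell=0}^{n_i-1} c_{i,\ell}\,x^{\alpha_i+\ell}\). Hence \(A^{(i)}\in\mathbb P_{n_i-1}\).

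To obtain (\ref{eq:A-components}), compute the residue at \(t=\alpha_i+\ell\) explicitly:
\[
\frac{1}{D_{\boldsymbol\alpha,\boldsymbol n}(t)}\quad\text{gives}\quad \frac{(-1)^{n_i-1-\ell}}{\ell!\,(n_i-1-\ell)!}\cdot\frac{1}{\bigl((\alpha_i+\ell+1)\one_{p-1}-\boldsymbol\alpha^{*i}-\boldsymbol n^{*i}\bigr)_{\boldsymbol n^{*i}}}.
\]
Then rewrite the Gamma and Pochhammer quotients at \(\alpha_i+1+\ell\) as \(\ell\)-dependent Pochhammer symbols over their \(\ell=0\) values. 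This yields \((-n_i+1)_\ell/\ell!\) together with the stated upper and lower parameters, and the prefactor collects into \(\kappa_i^{\mathrm A}\).

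For the orthogonality (\ref{eq:A-orthogonality}), pair \(x^r w_j\) with \(x^t\) and use (\ref{eq:wj-Mellin}) at \(s=t+r+1\). Then \(\int_0^1 x^r w_j\,\mathcal A=\frac{(-1)^{|\boldsymbol n|}}{2\pi\mathrm i}\int_\Sigma R_{j,r}(t)\,dt\), where
\[
R_{j,r}(t)=\frac{P_{\boldsymbol b,\boldsymbol m}(t)}{D_{\boldsymbol\alpha,\boldsymbol n}(t)}\,
\frac{\Gamma((t+1)\one_q+\boldsymbol b)}{\Gamma((t+1)\one_q+\boldsymbol a)}\,
\frac{\Gamma((t+r+1)\one_q+\boldsymbol a)}{\Gamma((t+r+1)\one_q+\boldsymbol b)}\,
\frac{1}{t+r+1+b_j}.
\]
The two Gamma quotients telescope to the rational function \(((t+1)\one_q+\boldsymbol a)_r/((t+1)\one_q+\boldsymbol b)_r\). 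For \(r\le m_j-1\) and near-diagonal \(\boldsymbol m\), the factor \(((t+1)\one_q+\boldsymbol b)_r(t+r+1+b_j)\) divides \(P_{\boldsymbol b,\boldsymbol m}(t)\). This uses \(r\le m_h\) for all \(h\) and \(r+1\le m_j\), which is exactly where near-diagonality enters. So \(R_{j,r}\) is a polynomial of degree at most \(qr+|\boldsymbol m|-qr-1\le|\boldsymbol n|-2\) divided by \(D_{\boldsymbol\alpha,\boldsymbol n}\), which has degree \(|\boldsymbol n|\).

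After cancellation the integrand has no poles outside \(\Sigma\), and it decays like \(t^{-2}\). Deforming \(\Sigma\) to infinity therefore gives zero. The normalization statement follows from Corollary~\ref{cor:Jacobi-mixed-normality-near-diagonal}: the solution space is one-dimensional and the contour formula picks a representative.

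For (ii), start from the Mellin side. Write \(\Gamma(s\one_q+\boldsymbol a)/\Gamma(s\one_q+\boldsymbol b+\boldsymbol m)=G_0(s)/D_{\boldsymbol m}(s)\) with \(D_{\boldsymbol m}(s)=\prod_h(s+b_h)_{m_h}\). The target (\ref{eq:B-Mellin}) is then \(G_0(s)\) times a rational function whose numerator \(-(\boldsymbol\alpha+(1-s)\one_p)_{\boldsymbol n}\) has degree \(|\boldsymbol n|=|\boldsymbol m|-1\), strictly less than \(\deg D_{\boldsymbol m}\). Because the nodes \(-b_J-K\) are distinct, partial fractions give \(\sum_{J,K} r_{J,K}/(s+b_J+K)\), with \(r_{J,K}\) the residues computed directly.

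Each term \(G_0(s)/(s+b_J+K)\) has to be realized as a Mellin transform of \(\sum_j(\text{poly})\,w_j\). The Mellin transform of \(x^k w_j\) is \(G_0(s+k)/(s+k+b_j)=G_0(s)\,((s\one_q+\boldsymbol a)_k/(s\one_q+\boldsymbol b)_k)/(s+k+b_j)\), and the span of these over \(k<m_j\) is precisely \(G_0\) times the rational functions with denominator \(D_{\boldsymbol m}\) and numerator degree below \(|\boldsymbol m|\). The matching is bijective by the nonvanishing determinant \(c^{\mathrm J}_{\boldsymbol m}\) in Corollary~\ref{cor:Jacobi-mixed-normality-near-diagonal}. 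So the form \(\mathcal B\) exists.

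Orthogonality (\ref{eq:B-orthogonality}) is immediate: evaluating at \(s=\alpha_i+r+1\) with \(r<n_i\) makes \((\alpha_i-\alpha_i-r)_{n_i}\) vanish.

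Then (\ref{eq:B-Meijer-G}) is read off from (\ref{eq:Meijer-G-Mellin-transform}) by writing \((\boldsymbol\alpha+(1-s)\one_p)_{\boldsymbol n}=\Gamma(\boldsymbol\alpha+\boldsymbol n+(1-s)\one_p)/\Gamma(\boldsymbol\alpha+(1-s)\one_p)\). Closing the Mellin--Barnes contour to the left and summing the residues at \(s=-a_j-k\) gives (\ref{eq:B-complete-hypergeometric}) in the Slater manner, exactly as for (\ref{eq:wj-general-hypergeometric}).

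The main obstacle is the explicit component formula (\ref{eq:B-components-hypergeometric}), which requires inverting the map \(k\mapsto x^kw_j\) on each simple fraction explicitly. The approach will be to find, for each node \(\sigma=b_J+K\), polynomials \(Q_j^{(\sigma)}\) with \(\sum_j\mathcal M[Q^{(\sigma)}_jw_j]=G_0(s)/(s+\sigma)\). These can be obtained from the terminating series \({}_{q+1}F_q(1,\boldsymbol b+(1-\sigma)\one_q-\boldsymbol e_j;\boldsymbol a+(1-\sigma)\one_q;x)\). Its Mellin pairing with \(w_j\) telescopes, via \((s+a)_k/(s+b)_k\) and the Gosper-type identity
\[
\sum_k\frac{(\beta)_k}{(\gamma)_k}\ \text{increments},
\]
to a difference of two terms. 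The constant weights \((\boldsymbol a-b_j\one_q)_1/(\boldsymbol b^{*j}-b_j\one_{q-1})_1\) are chosen so that summing over \(j\) cancels everything except \(G_0(s)/(s+\sigma)\). Verifying this telescoping, and tracking which series terminate at degree \(K-1+\delta_{j,J}\) (note the upper parameter \(b_j+1-b_J-K-\delta\) is a nonpositive integer when \(j=J\), and for \(j\neq J\) the \(\boldsymbol e_j\) shift lowers the degree), is the delicate step. I would first check it for \(q=1\) and small \(m\), then multiply by \(r_{J,K}\), which produces the prefactor in (\ref{eq:B-components-coefficients}).
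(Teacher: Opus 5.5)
Your part~(i) is the paper's argument: residues at the nodes, cancellation of $((t+1)\one_q+\boldsymbol b)_r\,(t+b_j+r+1)$ against $P_{\boldsymbol b,\boldsymbol m}$ by near-diagonality, and vanishing of the residue at infinity. In part~(ii) the Mellin identity, the orthogonality, the Meijer $G$-form and the Slater expansion are also handled correctly.

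For existence you use bijectivity of the weighted Mellin map instead of the paper's explicit construction. That is a legitimate shortcut. However, Corollary~\ref{cor:Jacobi-mixed-normality-near-diagonal} proves $c^{\mathrm J}_{\boldsymbol m}\neq0$ only under Wolfs' hypotheses, and the theorem does not assume them. Derive it from the stated nonresonance instead:
\begin{itemize}
\item distinct nodes give $b_h\neq b_j$ for active components;
\item $(\boldsymbol a-(b_J+K)\one_q)_1\neq0$ for $0\le K\le m_J-1$ excludes $a_\rho\in\{b_J,\ldots,b_J+m_J-1\}$, which covers the vanishing of the factors $(b_J+1-a_\rho)_k$, $k\le m_J-1$.
\end{itemize}

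The genuine gap is the component formula \eqref{eq:B-components-hypergeometric}--\eqref{eq:B-components-coefficients}, which you leave as ``the delicate step''. Write $B=b_J+K$, and let $\tau_{j,\ell}$ be the coefficient of $x^\ell$ in the block attached to $w_j$:
\[
\tau_{j,\ell}=\frac{(\boldsymbol a-b_j\one_q)_1}{(\boldsymbol b^{\,*j}-b_j\one_{q-1})_1}\,
\frac{(\boldsymbol b^{\,*j}-B\one_{q-1})_{\ell+1}(b_j-B)_\ell}{(\boldsymbol a-B\one_q)_{\ell+1}}.
\]
Everything rests on the identity
\[
\sum_{j=1}^{q}\sum_{\ell=0}^{K-1+\delta_{j,J}}\tau_{j,\ell}\,
\frac{(s\one_q+\boldsymbol a)_\ell}{(s\one_q+\boldsymbol b)_\ell\,(s+b_j+\ell)}=\frac{1}{s+B}.
\]
Your sketch does not prove it. The per-block ``Gosper-type'' telescoping is never stated. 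A $q=1$ check cannot validate it either, because there the sum over $j$ is trivial.

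The paper instead telescopes in the degree $\ell$, across all weights at once. With $E_\ell(s)\coloneq\frac{(\boldsymbol b-B\one_q)_\ell}{(\boldsymbol a-B\one_q)_\ell}\frac{(s\one_q+\boldsymbol a)_\ell}{(s\one_q+\boldsymbol b)_\ell}$, it shows
\[
\frac{E_\ell(s)-E_{\ell+1}(s)}{s+B}=\frac{(s\one_q+\boldsymbol a)_\ell}{(s\one_q+\boldsymbol b)_\ell}\sum_{j=1}^{q}\frac{\tau_{j,\ell}}{s+b_j+\ell},
\qquad 0\le\ell\le K-1.
\]
This follows by comparing principal parts after dividing by $(s\one_q+\boldsymbol a)_\ell/(s\one_q+\boldsymbol b)_\ell$:
\begin{itemize}
\item the residues at $s=-B$ cancel;
\item the residue at $s=-b_j-\ell$ equals $\tau_{j,\ell}$, so $(\boldsymbol a-b_j\one_q)_1/(\boldsymbol b^{\,*j}-b_j\one_{q-1})_1$ is simply the residue there of $(s\one_q+\boldsymbol a+\ell\one_q)_1/(s\one_q+\boldsymbol b+\ell\one_q)_1$, not a weight tuned afterwards;
\item both sides are $\mathrm{O}(s^{-1})$.
\end{itemize}

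Summing over $\ell$ leaves $E_K(s)/(s+B)$. This must be identified with the single extra term $j=J$, $\ell=K$, using $\tau_{j,\ell}=(\boldsymbol b-B\one_q)_K/(\boldsymbol a-B\one_q)_K$ at $j=J$, $\ell=K$. That step is what gives the diagonal block degree $K$ and the others degree $K-1$. It also explains why the blocks with $K=0$, $j\neq J$ are absent: their $J$-th upper parameter is $1$, so they would not terminate.

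Your termination remark is also off. For $j\neq J$ the terminating entry is the unshifted $J$-th one, $1-K$. The $\boldsymbol e_j$ shift matters only for $j=J$, where it turns $1-K$ into $-K$. Without the reconstruction identity, the coefficients $\mathcal C^{\mathcal B}_{j;J,K}$ and the degree claims remain unproved.
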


\begin{proof}
	\textit{Step 1: Construction of the form on the power vector.}
	The only poles of the integrand in \eqref{eq:A-contour} enclosed by
	\(\Sigma\) are the simple poles of \(D_{\boldsymbol\alpha,\boldsymbol n}(t)^{-1}\). They
	are located at
	\[
	t=\alpha_i+\ell,
	\qquad
	i\in\{1,\ldots,p\}\textnormal{ with }n_i\ge1,
	\quad
	\ell\in\{0,\ldots,n_i-1\}.
	\]
	Evaluating the contour integral by residues gives
	\[
	\mathcal A_{\boldsymbol n,\boldsymbol m}(x)
	=
	\sum_{\substack{i=1\\ n_i\ge1}}^{p}
	\sum_{\ell=0}^{n_i-1}
	c_{i,\ell}x^{\alpha_i+\ell},
	\]
	where
	\[
	c_{i,\ell}
	\coloneq 
	(-1)^{|\boldsymbol n|}
	\operatorname*{Res}_{t=\alpha_i+\ell}
	\left[
	\frac{P_{\boldsymbol b,\boldsymbol m}(t)}{D_{\boldsymbol\alpha,\boldsymbol n}(t)}
	\frac{\Gamma(t\one_q+\boldsymbol b+\one_q)}{\Gamma(t\one_q+\boldsymbol a+\one_q)}
	\right].
	\]
	Equivalently,
	\[
	\mathcal A_{\boldsymbol n,\boldsymbol m}(x)
	=
	\sum_{\substack{i=1\\ n_i\ge1}}^{p}
	x^{\alpha_i}
	\sum_{\ell=0}^{n_i-1}c_{i,\ell}x^\ell.
	\]
	Thus the coefficient multiplying \(x^{\alpha_i}\) lies in
	\(\mathbb P_{n_i-1}\) for every \(i\in\{1,\ldots,p\}\), with the
	convention \(\mathbb P_{-1}=\{0\}\). Hence
	\(A_{\boldsymbol n,\boldsymbol m}^{(i)}\in\mathbb P_{n_i-1}\) when \(n_i\ge1\), and
	\(A_{\boldsymbol n,\boldsymbol m}^{(i)}\equiv0\) when \(n_i=0\).
	
	Let \(j\in\{1,\ldots,q\}\) with \(m_j\ge1\), and let
	\(r\in\{0,\ldots,m_j-1\}\). By \eqref{eq:wj-Mellin},
	\begin{equation*}
		\int_0^1x^r w_j(x;\boldsymbol a,\boldsymbol b)
		\mathcal A_{\boldsymbol n,\boldsymbol m}(x)\dx
		=
		\frac{(-1)^{|\boldsymbol n|}}{2\pi\mathrm{i}}
		\bigintsss_{\Sigma}
		\frac{P_{\boldsymbol b,\boldsymbol m}(t)}{D_{\boldsymbol\alpha,\boldsymbol n}(t)}
		\frac{(t\one_q+\boldsymbol a+\one_q)_r}
		{(t\one_q+\boldsymbol b+\one_q)_r(t+b_j+r+1)}
		\dt.
	\end{equation*}
	Since \(\boldsymbol m\) is near the diagonal, \(r\le m_h\) for every
	\(h\in\{1,\ldots,q\}\). Therefore \(P_{\boldsymbol b,\boldsymbol m}(t)\) cancels the
	factor \((t\one_q+\boldsymbol b+\one_q)_r\). Moreover, since \(r+1\le m_j\), it also cancels
	the remaining factor \(t+b_j+r+1\). The resulting integrand is rational and
	is \(\mathrm{O}(t^{-2})\) at infinity, because
	\(|\boldsymbol n|=|\boldsymbol m|+1\). Therefore its residue at infinity is zero.
	Equivalently, the sum of its finite residues vanishes. Since the contour
	\(\Sigma\) encloses precisely the poles \(t=\alpha_i+\ell\), this sum of
	finite residues is exactly the contour integral above. Hence the integral is
	zero, and \eqref{eq:A-orthogonality} follows.
	
	\textit{Step 2: Hypergeometric components on the power vector.}
	We evaluate the residues in the contour representation and show that, for
	each \(i\in\{1,\ldots,p\}\) with \(n_i\ge1\), the component
	\(A_{\boldsymbol n,\boldsymbol m}^{(i)}\) is a terminating generalized hypergeometric
	polynomial. Fix such an index \(i\). The coefficient of
	\(x^{\alpha_i+\ell}\) in \eqref{eq:A-contour} is
	\[
	c_{i,\ell}
	=
	(-1)^{|\boldsymbol n|}
	\frac{P_{\boldsymbol b,\boldsymbol m}(\alpha_i+\ell)}
	{D_{\boldsymbol\alpha,\boldsymbol n}'(\alpha_i+\ell)}
	\frac{\Gamma((\alpha_i+\ell)\one_q+\boldsymbol b+\one_q)}
	{\Gamma((\alpha_i+\ell)\one_q+\boldsymbol a+\one_q)}.
	\]
	For \(\ell=0\),
	\[
	D_{\boldsymbol\alpha,\boldsymbol n}'(\alpha_i)
	=
	(-1)^{n_i-1}(n_i-1)!
	(\alpha_i\one_{p-1}-\boldsymbol\alpha^{\,*i}-\boldsymbol n^{\,*i}+\one_{p-1})_{\boldsymbol n^{\,*i}},
	\]
	and therefore \(c_{i,0}=\kappa_i^{\mathrm A}\). For \(\ell\ge0\), direct use
	of Pochhammer identities gives
	\[
	\frac{c_{i,\ell}}{c_{i,0}}
	=
	\frac{
		(-n_i+1)_\ell
		(\alpha_i\one_q+\boldsymbol b+\boldsymbol m+\one_q)_\ell
		(\alpha_i\one_{p-1}-\boldsymbol\alpha^{\,*i}-\boldsymbol n^{\,*i}+\one_{p-1})_\ell
	}{
		\ell!
		((\alpha_i)\one_q+\boldsymbol a+\one_q)_\ell
		(\alpha_i\one_{p-1}-\boldsymbol\alpha^{\,*i}+\one_{p-1})_\ell
	}.
	\]
	Thus the residue expansion for the \(i\)-th component can be written as
	\[
	A_{\boldsymbol n,\boldsymbol m}^{(i)}(x)
	=
	c_{i,0}
	\sum_{\ell=0}^{n_i-1}
	\frac{
		(-n_i+1)_\ell
		(\alpha_i\one_q+\boldsymbol b+\boldsymbol m+\one_q)_\ell
		(\alpha_i\one_{p-1}-\boldsymbol\alpha^{\,*i}-\boldsymbol n^{\,*i}+\one_{p-1})_\ell
	}{
		((\alpha_i)\one_q+\boldsymbol a+\one_q)_\ell
		(\alpha_i\one_{p-1}-\boldsymbol\alpha^{\,*i}+\one_{p-1})_\ell
	}
	\frac{x^\ell}{\ell!}.
	\]
	This is exactly the terminating generalized hypergeometric series in
	\eqref{eq:A-components}, with \(c_{i,0}=\kappa_i^{\mathrm A}\). Hence
	\eqref{eq:A-components} and \eqref{eq:A-kappa} follow. In particular, for
	every \(i\in\{1,\ldots,p\}\) with \(n_i\ge1\), the component
	\(A_{\boldsymbol n,\boldsymbol m}^{(i)}\) is a single terminating polynomial of type
	\({}_{p+q}F_{p+q-1}\).
	
	\textit{Step 3: A finite reconstruction formula on the Jacobi vector.}
	We prove the elementary Mellin reconstruction formula used in
	Step~4. The goal is to realize each simple rational factor
	\[
	\frac{1}{s+b_J+K},
	\qquad
	J\in\{1,\ldots,q\},
	\quad
	K\in\{0,\ldots,m_J-1\},
	\]
	as the Mellin transform of a finite linear combination of the Jacobi
	weights with polynomial coefficients. For fixed
	\(J\in\{1,\ldots,q\}\) with \(m_J\ge1\), and
	\(K\in\{0,\ldots,m_J-1\}\), we construct polynomials
	\[
	\mathcal R_{j;J,K}(x)\in\mathbb P_{m_j-1},
	\qquad
	j\in\{1,\ldots,q\},
	\]
	such that the following elementary Mellin reconstruction identity holds:
	\begin{equation}
		\label{eq:Jacobi-elementary-Mellin-reconstruction}
		\mathcal M\left[
		\sum_{j=1}^{q}
		\mathcal R_{j;J,K}(\cdot)w_j(\cdot;\boldsymbol a,\boldsymbol b)
		\right](s)
		=
		\frac{\Gamma(s\one_q+\boldsymbol a)}{\Gamma(s\one_q+\boldsymbol b)}
		\frac{1}{s+b_J+K}.
	\end{equation}
	By linearity, any finite partial-fraction expansion whose poles are simple and
	belong to the set of nodes \(-b_J-K\), \(J\in\{1,\ldots,q\}\),
	\(K\in\{0,\ldots,m_J-1\}\), can then be reconstructed by taking the
	corresponding linear combination of these elementary polynomials.
	
	By \eqref{eq:wj-Mellin}, the Mellin transform of
	\(x^\ell w_j(x;\boldsymbol a,\boldsymbol b)\) is
	\[
	\mathcal M[x^\ell w_j(\cdot;\boldsymbol a,\boldsymbol b)](s)
	=
	\frac{\Gamma(s\one_q+\boldsymbol a)}{\Gamma(s\one_q+\boldsymbol b)}
	\frac{(s\one_q+\boldsymbol a)_\ell}
	{(s\one_q+\boldsymbol b)_\ell(s+b_j+\ell)}.
	\]
	Thus, after factoring out the common quotient
	\(\Gamma(s\one_q+\boldsymbol a)/\Gamma(s\one_q+\boldsymbol b)\), it remains to prove a rational
	identity in the variable \(s\).
	
	For fixed \(J\in\{1,\ldots,q\}\) with \(m_J\ge1\), and
	\(K\in\{0,\ldots,m_J-1\}\), define
	\begin{equation}
		\label{eq:Jacobi-elementary-reconstructed-polynomial}
		\mathcal R_{j;J,K}(x)
		\coloneq
		\frac{(\boldsymbol a-b_j\one_q)_1}{(\boldsymbol b^{\,*j}-b_j\one_{q-1})_1}
		\sum_{\ell=0}^{K-1+\delta_{j,J}}
		\frac{
			(\boldsymbol b^{\,*j}-(b_J+K)\one_{q-1})_{\ell+1}
			(b_j-b_J-K)_\ell
		}{
			(\boldsymbol a-(b_J+K)\one_q)_{\ell+1}
		}
		x^\ell,
	\end{equation}
	where the convention is that a sum with upper limit \(-1\) is empty. Since
	\(\boldsymbol m\) is near the diagonal, \(0\le K\le m_J-1\) implies
	\[
	K-1+\delta_{j,J}\le m_j-1.
	\]
	Hence \(\mathcal R_{j;J,K}\in\mathbb P_{m_j-1}\).
	
	We claim that these polynomials satisfy
	\eqref{eq:Jacobi-elementary-Mellin-reconstruction}. Using the Mellin transform
	of \(x^\ell w_j\), this claim is equivalent to the rational identity
	\begin{equation}
		\label{eq:Jacobi-rational-reconstruction-identity}
		\sum_{j=1}^{q}
		\sum_{\ell=0}^{K-1+\delta_{j,J}}
		\tau_{j;J,K,\ell}
		\frac{(s\one_q+\boldsymbol a)_\ell}
		{(s\one_q+\boldsymbol b)_\ell(s+b_j+\ell)}
		=
		\frac{1}{s+b_J+K},
	\end{equation}
	where
	\[
	\tau_{j;J,K,\ell}
	=
	\frac{(\boldsymbol a-b_j\one_q)_1}{(\boldsymbol b^{\,*j}-b_j\one_{q-1})_1}
	\frac{
		(\boldsymbol b^{\,*j}-(b_J+K)\one_{q-1})_{\ell+1}
		(b_j-b_J-K)_\ell
	}{
		(\boldsymbol a-(b_J+K)\one_q)_{\ell+1}
	}.
	\]
	We prove \eqref{eq:Jacobi-rational-reconstruction-identity} by decomposing
	the single fraction \(1/(s+B)\) in \(K\) successive steps. Put
	\(B\coloneq b_J+K\). The nonresonance assumptions imply that
	\(B\neq b_j+\ell\), for \(j\in\{1,\ldots,q\}\) and
	\(\ell\in\{0,\ldots,K-1\}\), and that all poles appearing below are simple.
	If \(K=0\), then the left-hand side of
	\eqref{eq:Jacobi-rational-reconstruction-identity} contains only the term
	\(j=J\), \(\ell=0\), and \(\tau_{J;J,0,0}=1\). Hence the identity is
	immediate. We assume from now on that \(K\ge1\).
	
	The successive steps are arranged as follows. At step \(\ell\), with
	\(\ell\in\{0,\ldots,K-1\}\), the part of \(1/(s+B)\) not yet decomposed is
	written as the sum of the fractions with poles
	\[
	s=-b_j-\ell,
	\qquad j\in\{1,\ldots,q\},
	\]
	and a new part to be decomposed at the next step. For
	\(\ell\ge0\), set
	\[
	E_\ell(s)
	\coloneq
	\frac{(\boldsymbol b-B\one_q)_\ell}{(\boldsymbol a-B\one_q)_\ell}
	\frac{(s\one_q+\boldsymbol a)_\ell}{(s\one_q+\boldsymbol b)_\ell}.
	\]
	Then \(E_0(s)=1\), so the procedure starts with
	\[
	\frac{E_0(s)}{s+B}
	=
	\frac{1}{s+B}.
	\]
	At step \(\ell\), the part not yet decomposed is
	\[
	\frac{E_\ell(s)}{s+B},
	\]
	and the next part to be decomposed is
	\[
	\frac{E_{\ell+1}(s)}{s+B}.
	\]
	Thus it is enough to prove the transition from step \(\ell\) to step
	\(\ell+1\). After the \(K\)-th step, the part not yet decomposed is
	\(E_K(s)/(s+B)\).
	
	For \(\ell\in\{0,\ldots,K-1\}\), set
	\[
	\tau_{j,\ell}
	\coloneq
	\frac{(\boldsymbol a-b_j\one_q)_1}{(\boldsymbol b^{\,*j}-b_j\one_{q-1})_1}
	\frac{
		(\boldsymbol b^{\,*j}-B\one_{q-1})_{\ell+1}
		(b_j-B)_\ell
	}{
		(\boldsymbol a-B\one_q)_{\ell+1}
	},
	\qquad
	j\in\{1,\ldots,q\}.
	\]
	Since \(B=b_J+K\), we have
	\(\tau_{j,\ell}=\tau_{j;J,K,\ell}\). We claim that
	\begin{equation}
		\label{eq:Jacobi-reconstruction-transition-identity}
		\frac{E_\ell(s)}{s+B}
		=
		\sum_{j=1}^{q}
		\tau_{j,\ell}
		\frac{(s\one_q+\boldsymbol a)_\ell}
		{(s\one_q+\boldsymbol b)_\ell(s+b_j+\ell)}
		+
		\frac{E_{\ell+1}(s)}{s+B}.
	\end{equation}
	This is the transition identity at step \(\ell\).
	
	To prove \eqref{eq:Jacobi-reconstruction-transition-identity}, divide both
	sides by \((s\one_q+\boldsymbol a)_\ell/(s\one_q+\boldsymbol b)_\ell\). Since
	\[
	\frac{E_\ell(s)}
	{(s\one_q+\boldsymbol a)_\ell/(s\one_q+\boldsymbol b)_\ell}
	=
	\frac{(\boldsymbol b-B\one_q)_\ell}
	{(\boldsymbol a-B\one_q)_\ell},
	\]
	and
	\[
	\frac{E_{\ell+1}(s)}
	{(s\one_q+\boldsymbol a)_\ell/(s\one_q+\boldsymbol b)_\ell}
	=
	\frac{(\boldsymbol b-B\one_q)_{\ell+1}}
	{(\boldsymbol a-B\one_q)_{\ell+1}}
	\frac{(s\one_q+\boldsymbol a+\ell\one_q)_1}
	{(s\one_q+\boldsymbol b+\ell\one_q)_1},
	\]
	the required identity is equivalent to
	\begin{equation}
		\label{eq:Jacobi-reduced-reconstruction-transition-identity}
		\frac{(\boldsymbol b-B\one_q)_\ell}
		{(\boldsymbol a-B\one_q)_\ell}
		\frac{1}{s+B}
		=
		\sum_{j=1}^{q}
		\frac{\tau_{j,\ell}}{s+b_j+\ell}
		+
		\frac{(\boldsymbol b-B\one_q)_{\ell+1}}
		{(\boldsymbol a-B\one_q)_{\ell+1}}
		\frac{(s\one_q+\boldsymbol a+\ell\one_q)_1}
		{(s\one_q+\boldsymbol b+\ell\one_q)_1}
		\frac{1}{s+B}.
	\end{equation}
	We verify this identity by comparing principal parts.
	
	At \(s=-B\), the terms in the sum over \(j\) have no pole, because
	\(B\neq b_j+\ell\). The residue of the last term on the right-hand side of
	\eqref{eq:Jacobi-reduced-reconstruction-transition-identity} is
		\begin{align*}
		\frac{(\boldsymbol b-B\one_q)_{\ell+1}}
		{(\boldsymbol a-B\one_q)_{\ell+1}}
		\frac{((-B)\one_q+\boldsymbol a+\ell\one_q)_1}
		{((-B)\one_q+\boldsymbol b+\ell\one_q)_1}
		&=
		\frac{(\boldsymbol b-B\one_q)_\ell}
		{(\boldsymbol a-B\one_q)_\ell}
		\frac{
			(\boldsymbol b-B\one_q+\ell\one_q)_1
		}{
			(\boldsymbol a-B\one_q+\ell\one_q)_1
		}
		\frac{
			(\boldsymbol a-B\one_q+\ell\one_q)_1
		}{
			(\boldsymbol b-B\one_q+\ell\one_q)_1
		}
		\\
		&=
		\frac{(\boldsymbol b-B\one_q)_\ell}
		{(\boldsymbol a-B\one_q)_\ell}.
		\end{align*}
	which is the residue of the left-hand side.
	
	Now fix \(j\in\{1,\ldots,q\}\). At \(s=-b_j-\ell\), the left-hand side of
	\eqref{eq:Jacobi-reduced-reconstruction-transition-identity} has no pole,
	because \(B\neq b_j+\ell\). On the right-hand side, only two terms may have
	a pole: the \(j\)-th simple fraction in the sum and the last term. The
	residue of the latter is
	\[
	\frac{(\boldsymbol b-B\one_q)_{\ell+1}}
	{(\boldsymbol a-B\one_q)_{\ell+1}}
	\frac{1}{B-b_j-\ell}
	\operatorname*{Res}_{s=-b_j-\ell}
	\frac{(s\one_q+\boldsymbol a+\ell\one_q)_1}
	{(s\one_q+\boldsymbol b+\ell\one_q)_1}
	=
	\frac{(\boldsymbol b-B\one_q)_{\ell+1}}
	{(\boldsymbol a-B\one_q)_{\ell+1}}
	\frac{1}{B-b_j-\ell}
	\frac{(\boldsymbol a-b_j\one_q)_1}
	{(\boldsymbol b^{\,*j}-b_j\one_{q-1})_1}.
	\]
	Using
	\[
	(\boldsymbol b-B\one_q)_{\ell+1}
	=
	(\boldsymbol b^{\,*j}-B\one_{q-1})_{\ell+1}
	(b_j-B)_{\ell+1},
	\qquad
	\frac{(b_j-B)_{\ell+1}}{B-b_j-\ell}
	=
	-(b_j-B)_\ell,
	\]
	this residue becomes
	\[
	-
	\frac{(\boldsymbol a-b_j\one_q)_1}
	{(\boldsymbol b^{\,*j}-b_j\one_{q-1})_1}
	\frac{
		(\boldsymbol b^{\,*j}-B\one_{q-1})_{\ell+1}
		(b_j-B)_\ell
	}{
		(\boldsymbol a-B\one_q)_{\ell+1}
	}
	=
	-\tau_{j,\ell}.
	\]
	This cancels the residue of \(\tau_{j,\ell}/(s+b_j+\ell)\). Thus both sides
	of \eqref{eq:Jacobi-reduced-reconstruction-transition-identity} have the same
	principal part at every finite pole. Their difference is therefore a
	polynomial. Since both sides are \(\mathrm{O}(s^{-1})\) as \(s\to\infty\),
	that polynomial is zero. This proves
	\eqref{eq:Jacobi-reduced-reconstruction-transition-identity}, and hence
	\eqref{eq:Jacobi-reconstruction-transition-identity}.
	
	We apply \eqref{eq:Jacobi-reconstruction-transition-identity}
	successively for \(\ell\in\{0,\ldots,K-1\}\). When these \(K\) identities
	are added, the terms
	\[
	\frac{E_{\ell+1}(s)}{s+B},
	\qquad \ell\in\{0,\ldots,K-2\},
	\]
	cancel with the same terms appearing on the left-hand side of the next
	identity. Since \(E_0(s)=1\), the sum of the identities gives
	\begin{equation}
		\label{eq:Jacobi-rational-identity-before-terminal-term}
		\frac{1}{s+B}
		=
		\sum_{j=1}^{q}
		\sum_{\ell=0}^{K-1}
		\tau_{j;J,K,\ell}
		\frac{(s\one_q+\boldsymbol a)_\ell}
		{(s\one_q+\boldsymbol b)_\ell(s+b_j+\ell)}
		+
		\frac{E_K(s)}{s+B}.
	\end{equation}
	
	It remains to identify the last term. By definition,
	\[
	\frac{E_K(s)}{s+B}
	=
	\frac{(\boldsymbol b-B\one_q)_K}
	{(\boldsymbol a-B\one_q)_K}
	\frac{(s\one_q+\boldsymbol a)_K}
	{(s\one_q+\boldsymbol b)_K(s+B)}.
	\]
	Since \(B=b_J+K\), we claim that
	\[
	\tau_{J;J,K,K}
	=
	\frac{(\boldsymbol b-B\one_q)_K}
	{(\boldsymbol a-B\one_q)_K}.
	\]
	Indeed,
	\begin{align*}
		\tau_{J;J,K,K}
		&=
		\frac{(\boldsymbol a-b_J\one_q)_1}
		{(\boldsymbol b^{\,*J}-b_J\one_{q-1})_1}
		\frac{
			(\boldsymbol b^{\,*J}-B\one_{q-1})_{K+1}
			(b_J-B)_K
		}{
			(\boldsymbol a-B\one_q)_{K+1}
		}
		\\
		&=
		\frac{(\boldsymbol a-b_J\one_q)_1}
		{(\boldsymbol b^{\,*J}-b_J\one_{q-1})_1}
		\frac{
			(\boldsymbol b^{\,*J}-B\one_{q-1})_K
			(\boldsymbol b^{\,*J}-b_J\one_{q-1})_1
			(b_J-B)_K
		}{
			(\boldsymbol a-B\one_q)_K
			(\boldsymbol a-b_J\one_q)_1
		}
		\\
		&=
		\frac{
			(\boldsymbol b^{\,*J}-B\one_{q-1})_K
			(b_J-B)_K
		}{
			(\boldsymbol a-B\one_q)_K}
		=
		\frac{(\boldsymbol b-B\one_q)_K}
		{(\boldsymbol a-B\one_q)_K}.
	\end{align*}
	Therefore,
	\[
	\frac{E_K(s)}{s+B}
	=
	\tau_{J;J,K,K}
	\frac{(s\one_q+\boldsymbol a)_K}
	{(s\one_q+\boldsymbol b)_K(s+b_J+K)}.
	\]
	Replacing the last term in
	\eqref{eq:Jacobi-rational-identity-before-terminal-term} by this expression
	gives \eqref{eq:Jacobi-rational-reconstruction-identity}. This proves
	\eqref{eq:Jacobi-rational-reconstruction-identity}, and therefore
	\eqref{eq:Jacobi-elementary-Mellin-reconstruction}.
	
	\textit{Step 4: Construction and components of the form on the Jacobi vector.}
	We apply the elementary reconstruction formula of Step~3 to the rational
	function required by the form on the Jacobi vector. The orthogonality
	conditions will be imposed by forcing zeros of the Mellin transform at the
	nodes \(s=\alpha_i+r+1\). Consider
	\[
	R_{\boldsymbol n,\boldsymbol m}^{\mathcal B}(s)
	\coloneq
	-
	\frac{
		(\boldsymbol\alpha+(1-s)\one_p)_{\boldsymbol n}
	}{
		(s\one_q+\boldsymbol b)_{\boldsymbol m}
	}.
	\]
	The denominator has simple poles at the nodes \(s=-b_J-K\), with
	\(J\in\{1,\ldots,q\}\), \(m_J\ge1\), and
	\(K\in\{0,\ldots,m_J-1\}\). Hence
	\[
	R_{\boldsymbol n,\boldsymbol m}^{\mathcal B}(s)
	=
	\sum_{\substack{J=1\\ m_J\ge1}}^{q}
	\sum_{K=0}^{m_J-1}
	\frac{\pi_{J,K}^{\mathcal B}}{s+b_J+K},
	\]
	where
	\[
	\pi_{J,K}^{\mathcal B}
	=
	\lim_{s\to -b_J-K}
	(s+b_J+K)R_{\boldsymbol n,\boldsymbol m}^{\mathcal B}(s).
	\]
	A direct evaluation gives
	\[
	\pi_{J,K}^{\mathcal B}
	=
	(-1)^{K+1}
	\frac{
		(\boldsymbol\alpha+(b_J+K+1)\one_p)_{\boldsymbol n}
	}{
		K!(m_J-K-1)!
		(\boldsymbol b^{\,*J}-(b_J+K)\one_{q-1})_{\boldsymbol m^{\,*J}}
	}.
	\]
	
	For \(j\in\{1,\ldots,q\}\), define
	\[
	B_{\boldsymbol n,\boldsymbol m}^{(j)}(x)
	\coloneq
	\sum_{\substack{J=1\\ m_J\ge1}}^{q}
	\sum_{K=0}^{m_J-1}
	\pi_{J,K}^{\mathcal B}\,
	\mathcal R_{j;J,K}(x),
	\]
	where the elementary reconstructed polynomials
	\(\mathcal R_{j;J,K}\) are those defined in
	\eqref{eq:Jacobi-elementary-reconstructed-polynomial}. Since
	\(\mathcal R_{j;J,K}\in\mathbb P_{m_j-1}\), we have
	\(B_{\boldsymbol n,\boldsymbol m}^{(j)}\in\mathbb P_{m_j-1}\). Now set
	\[
	\mathcal B_{\boldsymbol n,\boldsymbol m}(x)
	\coloneq
	\sum_{j=1}^{q}
	B_{\boldsymbol n,\boldsymbol m}^{(j)}(x)w_j(x;\boldsymbol a,\boldsymbol b).
	\]
	By \eqref{eq:Jacobi-elementary-Mellin-reconstruction} and linearity,
	\[
	\mathcal M[\mathcal B_{\boldsymbol n,\boldsymbol m}](s)
	=
	\frac{\Gamma(s\one_q+\boldsymbol a)}{\Gamma(s\one_q+\boldsymbol b)}
	R_{\boldsymbol n,\boldsymbol m}^{\mathcal B}(s),
	\]
	that is,
	\[
	\mathcal M[\mathcal B_{\boldsymbol n,\boldsymbol m}](s)
	=
	-
	\frac{\Gamma(s\one_q+\boldsymbol a)}{\Gamma(s\one_q+\boldsymbol b+\boldsymbol m)}
	(\boldsymbol\alpha+(1-s)\one_p)_{\boldsymbol n},
	\]
	which is \eqref{eq:B-Mellin}.
	
	We also obtain the Meijer \(G\)-representation of the complete form from
	this Mellin transform. Using
	\[
	(\alpha_i+1-s)_{n_i}
	=
	\frac{\Gamma(\alpha_i+n_i+1-s)}
	{\Gamma(\alpha_i+1-s)},
	\]
	the Mellin transform \eqref{eq:B-Mellin} can be written as
	\begin{equation}
		\label{eq:B-Mellin-gamma-quotient}
		\mathcal M[\mathcal B_{\boldsymbol n,\boldsymbol m}](s)
		=
		-
		\frac{
			\Gamma(s\one_q+\boldsymbol a)
			\Gamma(\boldsymbol\alpha+\boldsymbol n+(1-s)\one_p)
		}{
			\Gamma(s\one_q+\boldsymbol b+\boldsymbol m)
			\Gamma(\boldsymbol\alpha+(1-s)\one_p)
		}.
	\end{equation}
	Since the form \(\mathcal B_{\boldsymbol n,\boldsymbol m}\) is represented on \((0,1)\), this
	Mellin transform may be read as a Mellin transform on \(\mathbb R_+\).
	With the Meijer \(G\)-function convention fixed in
	\eqref{eq:Meijer-G-definition}--\eqref{eq:Meijer-G-Mellin-transform},
	\eqref{eq:B-Mellin-gamma-quotient} is the Mellin transform of
	\[
	-
	G_{p+q,p+q}^{\,q,p}
	\left(
	x\,\middle|\,
	\begin{matrix}
		-\boldsymbol\alpha-\boldsymbol n,\ \boldsymbol b+\boldsymbol m\\
		\boldsymbol a,\ -\boldsymbol\alpha
	\end{matrix}
	\right).
	\]
	This proves \eqref{eq:B-complete-Meijer-G}.
	
	We derive the displayed formula for the polynomial components. Substituting
	\eqref{eq:Jacobi-elementary-reconstructed-polynomial} into the definition of
	\(B_{\boldsymbol n,\boldsymbol m}^{(j)}\), we get
	\[
	B_{\boldsymbol n,\boldsymbol m}^{(j)}(x)
	=
	\frac{(\boldsymbol a-b_j\one_q)_1}{(\boldsymbol b^{\,*j}-b_j\one_{q-1})_1}
	\sum_{\substack{J=1\\ m_J\ge1}}^{q}
	\sum_{K=0}^{m_J-1}
	\pi_{J,K}^{\mathcal B}
	\sum_{\ell=0}^{K-1+\delta_{j,J}}
	\frac{
		(\boldsymbol b^{\,*j}-(b_J+K)\one_{q-1})_{\ell+1}
		(b_j-b_J-K)_\ell
	}{
		(\boldsymbol a-(b_J+K)\one_q)_{\ell+1}
	}
	x^\ell.
	\]
	If \(K=0\) and \(j\neq J\), the inner sum is empty. Otherwise, extracting
	the \(\ell=0\) factor from the inner sum gives
	\[
	\frac{
		(\boldsymbol b^{\,*j}-(b_J+K)\one_{q-1})_1
	}{
		(\boldsymbol a-(b_J+K)\one_q)_1
	}
	\sum_{\ell=0}^{K-1+\delta_{j,J}}
	\frac{
		(\boldsymbol b^{\,*j}+(1-b_J-K)\one_{q-1})_\ell
		(b_j-b_J-K)_\ell
	}{
		(\boldsymbol a+(1-b_J-K)\one_q)_\ell
	}
	x^\ell.
	\]
	Since
	\[
	(\boldsymbol b^{\,*j}-(b_J+K)\one_{q-1}+\one_{q-1})_\ell
	(b_j-b_J-K)_\ell
	=
		(\boldsymbol b+(1-b_J-K)\one_q-\boldsymbol e_j)_\ell
	\]
	and \((1)_\ell/\ell!=1\), this finite sum is
	\[
	\pFq{q+1}{q}
	{
		1,\ \boldsymbol b+(1-b_J-K)\one_q-\boldsymbol e_j
	}
	{
		\boldsymbol a+(1-b_J-K)\one_q
	}
	{x}.
	\]
	Combining this expression with the formula for
	\(\pi_{J,K}^{\mathcal B}\) gives exactly
	\eqref{eq:B-components-hypergeometric} and
	\eqref{eq:B-components-coefficients}. Notice that the case
	\(K=0\), \(j\neq J\), does not contribute, because the corresponding
	reconstructed polynomial \(\mathcal R_{j;J,0}\) is identically zero.
	
	If \(s=\alpha_i+r+1\), with \(n_i\ge1\) and \(0\le r\le n_i-1\), then
	\[
	(\alpha_i+1-s)_{n_i}=(-r)_{n_i}=0.
	\]
	Thus \eqref{eq:B-orthogonality} follows from \eqref{eq:B-Mellin}. Finally,
	Mellin inversion applied to \eqref{eq:B-Mellin}, or equivalently to
	\eqref{eq:B-Mellin-gamma-quotient}, followed by residue evaluation at the
	poles
	\[
	s=-a_j-k,
	\qquad
	j\in\{1,\ldots,q\},
	\quad
	k\in\mathbb N_0,
	\]
	gives \eqref{eq:B-complete-hypergeometric}. Under the additional
	nonresonance condition on the \(a_\rho\), these poles are simple, and collecting the
	residues with fixed \(j\) gives exactly the \(j\)-th hypergeometric term
	in \eqref{eq:B-complete-hypergeometric}. Thus
	\eqref{eq:B-complete-hypergeometric} is the residue expansion of the
	Meijer \(G\)-function in \eqref{eq:B-complete-Meijer-G}. When poles
	coalesce, the Meijer \(G\)-identity remains valid and the same expansion is
	obtained by continuity, equivalently from the corresponding higher-order
	residues.
\end{proof}

The finite expression in
\eqref{eq:B-components-hypergeometric} is directly computable.  Its terms can
also be grouped according to the pole family \(J\), and each group is then a
terminating Kamp\'e de F\'eriet polynomial.  This reformulation is not needed
for the orthogonality proof, but it gives a compact hypergeometric form of each
component.  The diagonal family \(J=j\) and the families \(J\ne j\) must be
kept separate, since their first arguments are different.

\begin{corollary}[Kamp\'e de F\'eriet form of the \(B\)-components]
	\label{cor:Jacobi-B-KdF}
	Under the assumptions of
	Theorem~\ref{thm:mixed-Jacobi-like-forms}\textnormal{(ii)}, let
	\(j\in\{1,\ldots,q\}\).  If \(m_j=0\), then
	\(B_{\boldsymbol n,\boldsymbol m}^{(j)}\equiv0\).  If \(m_j\ge1\), then
	\begin{equation}
		\label{eq:Jacobi-B-KdF-representation}
		B_{\boldsymbol n,\boldsymbol m}^{(j)}(x)
		=
		\mathcal K_{j,j}^{\mathcal B}\mathcal F_j(x)
		+
		\sum_{\substack{J=1\\J\ne j,\ m_J\ge2}}^{q}
		\mathcal K_{j,J}^{\mathcal B}\mathcal F_{j,J}(x),
	\end{equation}
	where
	\begin{equation}
		\label{eq:Jacobi-B-KdF-diagonal-prefactor}
		\mathcal K_{j,j}^{\mathcal B}
		\coloneq
		-
		\frac{
			(\boldsymbol\alpha+(b_j+1)\one_p)_{\boldsymbol n}
		}{
			(m_j-1)!
			(\boldsymbol b^{\,*j}-b_j\one_{q-1})_{\boldsymbol m^{\,*j}}
		},
	\end{equation}
	and
	\begingroup
	\small
	\begin{equation}
		\label{eq:Jacobi-B-KdF-diagonal}
		\mathcal F_j(x)
		\coloneq
		F_{p+q:0;q-1}^{p+q:1;q}
		\left[
		\begin{array}{c}
			1-m_j,\,
			\boldsymbol\alpha+(b_j+1)\one_p+\boldsymbol n,\,
			b_j\one_{q-1}-\boldsymbol b^{\,*j}
			+\one_{q-1}-\boldsymbol m^{\,*j}
			:1;\,
			b_j\one_q-\boldsymbol a
			\\
			\boldsymbol\alpha+(b_j+1)\one_p,\,
			b_j\one_q-\boldsymbol a+\one_q
			:\text{---};\,
			b_j\one_{q-1}-\boldsymbol b^{\,*j}
		\end{array}
		\middle|x,1
		\right].
	\end{equation}
	\endgroup
	For \(J\ne j\) with \(m_J\ge2\),
	\begin{equation}
		\label{eq:Jacobi-B-KdF-offdiagonal-prefactor}
		\mathcal K_{j,J}^{\mathcal B}
		\coloneq
		\frac{1}{(m_J-2)!}
		\frac{
			(\boldsymbol\alpha+(b_J+2)\one_p)_{\boldsymbol n}
		}{
			(\boldsymbol b^{\,*J}-(b_J+1)\one_{q-1})_{\boldsymbol m^{\,*J}}
		}
		\frac{
			(\boldsymbol a-b_j\one_q)_1
		}{
			(\boldsymbol b^{\,*j}-b_j\one_{q-1})_1
		}
		\frac{
			(\boldsymbol b^{\,*j}-(b_J+1)\one_{q-1})_1
		}{
			(\boldsymbol a-(b_J+1)\one_q)_1
		},
	\end{equation}
	while
	\begin{equation}
		\label{eq:Jacobi-B-KdF-offdiagonal}
		\resizebox{\textwidth}{!}{$
		\begin{multlined}[t]
			\mathcal F_{j,J}(x)
			\coloneq
			F_{p+q:0;q-1}^{p+q:1;q}
			\left[
			\begin{array}{c}
				2-m_J,\,
				\boldsymbol\alpha+(b_J+2)\one_p+\boldsymbol n,\,
				b_J\one_{q-1}-\boldsymbol b^{\,*J}
				+2\one_{q-1}-\boldsymbol m^{\,*J}
				:1;\,
				b_J\one_q-\boldsymbol a+\one_q
				\\
				\boldsymbol\alpha+(b_J+2)\one_p,\,
				b_J\one_q-\boldsymbol a+2\one_q
				:\text{---};\,
				b_J\one_{q-1}-\boldsymbol b^{\,*J}
				+\one_{q-1}+\boldsymbol e_j^{\,*J}
			\end{array}
			\middle|x,1
			\right].
		\end{multlined}
		$}
	\end{equation}
	Here \(\boldsymbol e_j^{\,*J}\) is obtained from the \(j\)-th coordinate
	vector \(\boldsymbol e_j\in\mathbb R^q\) by deleting its \(J\)-th component.
	The parameter \(1-m_j\) restricts \(\mathcal F_j\) to
	\(u+\lambda\le m_j-1\), whereas \(2-m_J\) restricts
	\(\mathcal F_{j,J}\) to \(u+\lambda\le m_J-2\).
\end{corollary}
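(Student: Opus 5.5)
The plan is to start from the finite expansion \eqref{eq:B-components-hypergeometric}, equivalently from the defining double sum
\[
B_{\boldsymbol n,\boldsymbol m}^{(j)}(x)=\sum_{J,K}\pi_{J,K}^{\mathcal B}\mathcal R_{j;J,K}(x)
\]
of Step~4, and to regroup by pole family \(J\). The case \(m_j=0\) is immediate, since then all \(\mathcal R_{j;J,K}\in\mathbb P_{-1}=\{0\}\). For \(m_j\ge1\) I will write the contribution of a fixed \(J\) as a double sum over \(K\) and \(\ell\). I will then substitute a shifted summation variable for \(K\) so that both indices start at zero, and identify the result with \eqref{eq:standard-KdF-definition} evaluated at \((x,1)\). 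Here \(x^\ell\) plays the role of \(x^r\), and the second KdF variable, with argument \(1\), absorbs the remaining \(K\)-dependence.

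\textbf{Diagonal family \(J=j\).} Here \(\ell\) runs over \(0\le\ell\le K\). I will set \(K=\ell+\lambda\) with \(\lambda\ge0\) and \(\ell+\lambda\le m_j-1\). Writing \(B=b_j+K\), the coefficient is \(\pi_{j,K}^{\mathcal B}\tau_{j;j,K,\ell}\). I will expand every factor depending on \(B=b_j+\ell+\lambda\) as a Pochhammer symbol in \(\ell+\lambda\), \(\ell\) or \(\lambda\), using reflection identities of the form \((c-K)_k=(-1)^k(1-c)_K/(1-c)_{K-k}\) together with the following rewritings:
\begin{itemize}
\item \(1/(m_j-K-1)!=(1-m_j)_{K}(-1)^K/(m_j-1)!\), which produces the parameter \(1-m_j\);
\item \((\boldsymbol\alpha+(b_j+K+1)\one_p)_{\boldsymbol n}=(\boldsymbol\alpha+(b_j+1)\one_p)_{\boldsymbol n}\,(\boldsymbol\alpha+(b_j+1)\one_p+\boldsymbol n)_K/(\boldsymbol\alpha+(b_j+1)\one_p)_K\);
\item \((\boldsymbol b^{\,*j}-(b_j+K)\one_{q-1})_{\boldsymbol m^{*j}}\), rewritten via reflection as a ratio of Pochhammers in \(K\) with base \(b_j\one_{q-1}-\boldsymbol b^{\,*j}+\one_{q-1}-\boldsymbol m^{\,*j}\) over \(b_j\one_{q-1}-\boldsymbol b^{\,*j}\);
\item \(1/(\boldsymbol a-B\one_q)_{\ell+1}\) and \((\boldsymbol b^{\,*j}-B\one_{q-1})_{\ell+1}\), reflected in the same way so that they split into \(K\)-parts and \(\lambda\)-parts. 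The factor \((b_j-B)_\ell=(-K)_\ell\) gives \(K!/\lambda!\) up to sign, and it cancels the \(1/K!\) in \(\pi\).
\end{itemize}
Collecting terms, the factors in \(K=\ell+\lambda\) supply the coupled strings \((1-m_j,\ \boldsymbol\alpha+(b_j+1)\one_p+\boldsymbol n,\dots)\) over \((\boldsymbol\alpha+(b_j+1)\one_p,\ b_j\one_q-\boldsymbol a+\one_q)\). The \(\ell\)-only part is \((1)_\ell\), and the \(\lambda\)-only parts are \((b_j\one_q-\boldsymbol a)_\lambda/(b_j\one_{q-1}-\boldsymbol b^{\,*j})_\lambda\). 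The \(\lambda\)-independent constants, including the prefactor \((\boldsymbol a-b_j)_1/(\boldsymbol b^{*j}-b_j)_1\), should collapse to \(\mathcal K_{j,j}^{\mathcal B}\).

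\textbf{Off-diagonal families \(J\ne j\).} These work the same way, with two differences. First, \(\ell\le K-1\), so I will set \(K=\ell+\lambda+1\). Second, only \(J\) with \(m_J\ge2\) contribute, because \(K\ge1\) requires \(m_J-1\ge1\). The shift by one accounts for the \(b_J+2\), \(2-m_J\) and \((m_J-2)!\) appearing in \(\mathcal K_{j,J}^{\mathcal B}\) and \(\mathcal F_{j,J}\). The factor \((b_j-B)_\ell\) no longer reduces to \((-K)_\ell\). Instead, \((b_j-b_J-K)_\ell\) combined with \((\boldsymbol b^{\,*j}-B)_{\ell+1}\) rebuilds \((\boldsymbol b^{\,*J}\dots)\) with the \(J\)-th slot replaced by the \(j\)-th. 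This is the source of the lower \(\lambda\)-parameter shift \(\boldsymbol e_j^{\,*J}\).

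\textbf{Main obstacle.} I expect the hardest part to be the bookkeeping of signs and of the \(\ell\) versus \(\lambda\) splitting in the \(B\)-dependent Pochhammers. I will handle it by checking each identity on the ratio of consecutive terms in \(\ell\) and in \(\lambda\), rather than on full products, and then fix the constant by evaluating at \(\ell=\lambda=0\). The ranges \(u+\lambda\le m_j-1\) and \(u+\lambda\le m_J-2\) follow from the upper parameters \(1-m_j\) and \(2-m_J\), which terminate the series.
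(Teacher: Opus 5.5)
Your proposal is correct and is essentially the paper's own argument: regroup \eqref{eq:B-components-hypergeometric} by pole family, put \(K=\ell+\lambda\) for \(J=j\) and \(K=1+\ell+\lambda\) for \(J\ne j\), split the \(K\)-dependent Pochhammer factors with \((c+K)_n=(c)_n(c+n)_K/(c)_K\) and \((c-K)_n=(c)_n(1-c)_K/(1-c-n)_K\), and let \((-K)_\ell\) (respectively the \(J\)-th factor \((-K)_{\ell+1}\)) turn \(1/K!\) into \(1/\lambda!\). One bookkeeping slip to fix when you carry it out: reflecting \((\boldsymbol b^{\,*j}-(b_j+K)\one_{q-1})_{\boldsymbol m^{\,*j}}\) gives the \(K\)-denominator \((b_j\one_{q-1}-\boldsymbol b^{\,*j}+\one_{q-1})_K\), not \((b_j\one_{q-1}-\boldsymbol b^{\,*j})_K\); this cancels against the \(K\)-part of \((\boldsymbol b^{\,*j}-(b_j+K)\one_{q-1})_{\ell+1}\), which also supplies the \(\lambda\)-denominator \((b_j\one_{q-1}-\boldsymbol b^{\,*j})_\lambda\), so no \(\boldsymbol b\)-string survives in the coupled denominator of \(\mathcal F_j\).
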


\begin{proof}
	Fix \(j\) and expand every terminating \({}_{q+1}F_q\) in
	\eqref{eq:B-components-hypergeometric}.  We treat the pole family
	\(J=j\) first.  Write
	\[
		K=u+\lambda,
		\qquad
		u,\lambda\in\mathbb N_0,
		\qquad
		u+\lambda\le m_j-1.
	\]
	Here \(u\) is the summation index of the hypergeometric polynomial and
	\(\lambda=K-u\).  Thus this change of indices includes the term \(K=0\)
	at \(u=\lambda=0\).  Put \(s=u+\lambda\).  From
	\eqref{eq:B-components-coefficients},
	\begin{equation}
		\label{eq:Jacobi-B-KdF-diagonal-coefficient-ratio}
		\frac{\mathcal C_{j;j,s}^{\mathcal B}}
		{\mathcal C_{j;j,0}^{\mathcal B}}
		=
		\frac{(1-m_j)_s}{s!}
		\frac{
			(\boldsymbol\alpha+(b_j+1)\one_p+\boldsymbol n)_s
		}{
			(\boldsymbol\alpha+(b_j+1)\one_p)_s
		}
		\frac{
			(b_j\one_{q-1}-\boldsymbol b^{\,*j}
			+\one_{q-1}-\boldsymbol m^{\,*j})_s
		}{
			(b_j\one_{q-1}-\boldsymbol b^{\,*j}
			+\one_{q-1})_s
		}.
	\end{equation}
	The \(j\)-th upper parameter of the hypergeometric polynomial is
	\(-s\), and therefore
	\[
		(-s)_u=(-1)^u\frac{s!}{\lambda!}.
	\]
	Using, componentwise,
	\[
		(a+s)_n=(a)_n\frac{(a+n)_s}{(a)_s},
		\qquad
		(a-s)_n=(a)_n\frac{(1-a)_s}{(1-a-n)_s},
	\]
	in \eqref{eq:Jacobi-B-KdF-diagonal-coefficient-ratio} and in the
	expanded hypergeometric term, the summand indexed by
	\((u,\lambda)\) becomes
	\begin{multline}
		\label{eq:Jacobi-B-KdF-diagonal-summand}
			\mathcal K_{j,j}^{\mathcal B}
			\frac{
				(1-m_j)_{u+\lambda}
				(\boldsymbol\alpha+(b_j+1)\one_p+\boldsymbol n)_{u+\lambda}
			}{
				(\boldsymbol\alpha+(b_j+1)\one_p)_{u+\lambda}
			}
			\\*
			\times
			\frac{
				(b_j\one_{q-1}-\boldsymbol b^{\,*j}
				+\one_{q-1}-\boldsymbol m^{\,*j})_{u+\lambda}
			}{
				(b_j\one_q-\boldsymbol a+\one_q)_{u+\lambda}
			}
			\frac{
				(1)_u(b_j\one_q-\boldsymbol a)_\lambda
			}{
				(b_j\one_{q-1}-\boldsymbol b^{\,*j})_\lambda
			}
			\frac{x^u}{u!}\frac{1}{\lambda!}.
	\end{multline}
	To see the cancellations explicitly, the \(j\)-th entry of
	\(b_j\one_q-\boldsymbol b+\boldsymbol e_j\) is \(1\), and its remaining entries form
	\(b_j\one_{q-1}-\boldsymbol b^{\,*j}\).  These entries cancel the corresponding
	scalar and \((q-1)\)-strings produced by the two shift identities.
	The common string \(b_j\one_q-\boldsymbol a\) cancels in the coupled block
	and remains in the \(\lambda\)-dependent numerator.  The signs from
	the reflected factors cancel in this diagonal case.  Consequently,
	\eqref{eq:Jacobi-B-KdF-diagonal-summand} is exactly the
	\((u,\lambda)\)-term of
	\(\mathcal K_{j,j}^{\mathcal B}\mathcal F_j(x)\). Its first
	argument is \(x\).

	Now let \(J\ne j\).  Write
	\[
		K=1+u+\lambda,
		\qquad
		u,\lambda\in\mathbb N_0,
		\qquad
		u+\lambda\le m_J-2.
	\]
	This family is therefore absent when \(m_J<2\).  At
	\(u=\lambda=0\), the coefficient is
	\(\mathcal K_{j,J}^{\mathcal B}\).  With \(s=u+\lambda\), direct
	simplification of \eqref{eq:B-components-coefficients} gives
	\begin{equation}
		\label{eq:Jacobi-B-KdF-offdiagonal-coefficient-ratio}
		\frac{\mathcal C_{j;J,1+s}^{\mathcal B}}
		{\mathcal C_{j;J,1}^{\mathcal B}}
		=
		\frac{(2-m_J)_s}{(2)_s}
		\frac{
			(\boldsymbol\alpha+(b_J+2)\one_p+\boldsymbol n)_s
		}{
			(\boldsymbol\alpha+(b_J+2)\one_p)_s
		}
		\frac{
			(b_J\one_{q-1}-\boldsymbol b^{\,*J}
			+2\one_{q-1}-\boldsymbol m^{\,*J})_s
		}{
			(b_J\one_{q-1}-\boldsymbol b^{\,*J}
			+2\one_{q-1})_s
		}.
	\end{equation}
	The \(J\)-th upper parameter is now \(1-K=-s\), so again
	\[
		(-s)_u=(-1)^u\frac{s!}{\lambda!}.
	\]
	Applying the same two shift identities as above, now about the base
	point \(K=1\), transforms the summand into
	\begin{multline}
		\label{eq:Jacobi-B-KdF-offdiagonal-summand}
			\mathcal K_{j,J}^{\mathcal B}
			\frac{
				(2-m_J)_{u+\lambda}
				(\boldsymbol\alpha+(b_J+2)\one_p+\boldsymbol n)_{u+\lambda}
			}{
				(\boldsymbol\alpha+(b_J+2)\one_p)_{u+\lambda}
			}
			\\* \times
			\frac{
				(b_J\one_{q-1}-\boldsymbol b^{\,*J}
				+2\one_{q-1}-\boldsymbol m^{\,*J})_{u+\lambda}
			}{
				(b_J\one_q-\boldsymbol a+2\one_q)_{u+\lambda}
			}
			\frac{
				(1)_u(b_J\one_q-\boldsymbol a+\one_q)_\lambda
			}{
				(b_J\one_{q-1}-\boldsymbol b^{\,*J}
				+\one_{q-1}+\boldsymbol e_j^{\,*J})_\lambda
			}
			\frac{x^u}{u!}
			\frac{1}{\lambda!}.
	\end{multline}
	Indeed, the \(J\)-th entry of
	\(b_J\one_q-\boldsymbol b+\one_q+\boldsymbol e_j\) is \(1\).  Deleting it leaves
	the \(\lambda\)-dependent denominator string displayed above.  The
	remaining common strings cancel between numerator and denominator,
	and the reflected signs cancel as well. Hence
	\eqref{eq:Jacobi-B-KdF-offdiagonal-summand} is the
	\((u,\lambda)\)-term of
	\(\mathcal K_{j,J}^{\mathcal B}\mathcal F_{j,J}(x)\).
	Summing the diagonal family and all nonempty off-diagonal families gives
	\eqref{eq:Jacobi-B-KdF-representation}.
\end{proof}

\begin{remark}[Beyond the near-diagonal range]
	\label{rem:Jacobi-beyond-near-diagonal}
	The near-diagonal assumption in
	Theorem~\ref{thm:mixed-Jacobi-like-forms} is a sufficient condition for the
	finite cancellations needed in the Mellin construction. Under the parameter
	hypotheses of Lemma~\ref{lem:Wolfs-near-diagonal-AT}, it also supplies the
	AT property of the underlying Jacobi vector. It is not, however,
	intrinsic to the residue
	argument itself. Outside the near-diagonal range, the same construction
	can still be carried out whenever suitable finite cancellation conditions
	hold for the extra Mellin factors.
	
	That extension is not considered here. We use the near-diagonal formulation,
	for which cancellation is automatic and, under the hypotheses of
	Corollary~\ref{cor:Jacobi-mixed-normality-near-diagonal}, normality follows
	from the cited AT result; the formulas then have a terminating
	hypergeometric form. The off-step-line
	finite-cancellation theory beyond the near-diagonal range lies outside the
	scope of this paper.
\end{remark}

\begin{remark}[The ordinary Wolfs specialization: \(p=1\)]
	\label{rem:ordinary-Wolfs-hypergeometric}
	If
	\[
	p=1,
	\qquad
	\alpha_1=0,
	\]
	then the rectangular matrix reduces to the ordinary Jacobi-like vector
	\[
	\left[w_1(x;\boldsymbol a,\boldsymbol b),\ldots,w_q(x;\boldsymbol a,\boldsymbol b)\right]\dx
	\]
	studied by Wolfs~\cite{Wolfs2024}. In this specialization, the form
	\(\mathcal A_{\boldsymbol n,\boldsymbol m}\) in part~\textnormal{(i)} is the ordinary
	Jacobi-like type II polynomial. Since there is only one power component,
	formula \eqref{eq:A-components} becomes a single terminating polynomial of
	type \({}_{q+1}F_q\). On the other hand, the form
	\(\mathcal B_{\boldsymbol n,\boldsymbol m}\) in part~\textnormal{(ii)} is the ordinary
	Jacobi-like type I form. Formula \eqref{eq:B-complete-hypergeometric}
	writes the complete type I form as a sum of \(q\) generalized
	hypergeometric functions of type \({}_{q+1}F_q\), whereas each of its
	polynomial components remains, generically, a finite linear combination
	of terminating \({}_{q+1}F_q\) polynomials through
	\eqref{eq:B-components-hypergeometric}. Thus the finite hypergeometric
	combination occurring for the components of \(\mathcal B\) is already
	present in the ordinary Jacobi-like theory and is not an artefact of the
	rectangular mixed extension.
\end{remark}

\begin{remark}[Meijer \(G\)-representation of the complete form]
	\label{rem:B-complete-Meijer-G}
	The sum of generalized hypergeometric functions in
	\eqref{eq:B-complete-hypergeometric} can be compressed at the level of the
	complete form. Indeed, using
	\[
	(\alpha_i+1-s)_{n_i}
	=
	\frac{\Gamma(\alpha_i+n_i+1-s)}
	{\Gamma(\alpha_i+1-s)},
	\]
	formula \eqref{eq:B-Mellin} becomes
	\[
	\mathcal M[\mathcal B_{\boldsymbol n,\boldsymbol m}](s)
	=
	-
	\frac{
		\Gamma(s\one_q+\boldsymbol a)
		\Gamma(\boldsymbol\alpha+\boldsymbol n+(1-s)\one_p)
	}{
		\Gamma(s\one_q+\boldsymbol b+\boldsymbol m)
		\Gamma(\boldsymbol\alpha+(1-s)\one_p)
	}.
	\]
	Since the form \(\mathcal B_{\boldsymbol n,\boldsymbol m}\) is represented on \((0,1)\), this
	Mellin transform may be read as a Mellin transform on \(\mathbb R_+\).
	Consequently, with the Meijer \(G\)-function convention fixed in
	\eqref{eq:Meijer-G-definition}--\eqref{eq:Meijer-G-Mellin-transform},
	\[
	\mathcal B_{\boldsymbol n,\boldsymbol m}(x)
	=
	-
	G_{p+q,p+q}^{\,q,p}
	\left(
	x\,\middle|\,
	\begin{matrix}
		-\boldsymbol\alpha-\boldsymbol n,\ \boldsymbol b+\boldsymbol m\\
		\boldsymbol a,\ -\boldsymbol\alpha
	\end{matrix}
	\right),
	\qquad 0<x<1.
	\]
	Under the corresponding nonresonance assumptions, the hypergeometric sum
	in \eqref{eq:B-complete-hypergeometric} is the residue expansion of this
	single Meijer \(G\)-function. Formula \eqref{eq:B-complete-Meijer-G}
	concerns the complete mixed form \(\mathcal B_{\boldsymbol n,\boldsymbol m}\); it does
	not, in general, replace each individual polynomial component
	\(B_{\boldsymbol n,\boldsymbol m}^{(j)}\) by one Meijer \(G\)-function with comparably
	simple parameters. In the Pi\~neiro degeneration
	of~\cite{PineiroMixed2026}, the simplification of the
	components arises instead from re-expansion in the degenerate power basis,
	as explained below.
\end{remark}

\begin{corollary}[Degeneration to the mixed Pi\~neiro family]
	\label{cor:reduction-mixed-Pineiro}
	Take the boundary degeneration
	\(\boldsymbol a,\boldsymbol b\to\boldsymbol\beta\) in the Mellin identities above.
	Assume that \(\alpha_i+\beta_j>-1\) for all \(i,j\), and that the
	denominators in the two limiting component formulas below do not vanish.
	Then
	\eqref{eq:mixed-Jacobi-like-matrix} reduces to the mixed Pi\~neiro matrix
	of~\cite{PineiroMixed2026},
	\begin{equation}
		\label{eq:Pineiro-matrix}
		\mathrm{d}\boldsymbol{\mu}(x)
		=
		\left[
		x^{\beta_j+\alpha_i}
		\right]_{
			\substack{j\in\{1,\ldots,q\}\\i\in\{1,\ldots,p\}}}
		\dx.
	\end{equation}
	With the normalizations fixed in
	Theorem~\ref{thm:mixed-Jacobi-like-forms}, the nonzero polynomial
	components become
	\begin{multline}
		\label{eq:Pineiro-A}
			A_{\boldsymbol n,\boldsymbol m}^{(i)}(x)
			=
			-
			\frac{
				((\alpha_i+1)\one_q+\boldsymbol\beta)_{\boldsymbol m}
			}{
				(n_i-1)!
				(\boldsymbol\alpha^{\,*i}-\alpha_i\one_{p-1})_{\boldsymbol n^{\,*i}}
			}
			\\*\times
			\pFq{p+q}{p+q-1}
			{
				-n_i+1,\
				(\alpha_i+1)\one_q+\boldsymbol\beta+\boldsymbol m,\
				(\alpha_i+1)\one_{p-1}
				-\boldsymbol\alpha^{\,*i}-\boldsymbol n^{\,*i}
			}
			{
				(\alpha_i+1)\one_q+\boldsymbol\beta,\
				(\alpha_i+1)\one_{p-1}-\boldsymbol\alpha^{\,*i}
			}
			{x},
	\end{multline}
	for \(i\in\{1,\ldots,p\}\) with \(n_i\ge1\), while
	\(A_{\boldsymbol n,\boldsymbol m}^{(i)}\equiv0\) if \(n_i=0\).
	Similarly,
	\begin{multline}
		\label{eq:Pineiro-B}
			B_{\boldsymbol n,\boldsymbol m}^{(j)}(x)
			=
			-
			\frac{
				(\boldsymbol\alpha+(\beta_j+1)\one_p)_{\boldsymbol n}
			}{
				(m_j-1)!
				(\boldsymbol\beta^{\,*j}-\beta_j\one_{q-1})_{\boldsymbol m^{\,*j}}
			}
			\\*\times
			\pFq{p+q}{p+q-1}
			{
				-m_j+1,\
				\boldsymbol\alpha+\boldsymbol n+(\beta_j+1)\one_p,\
				(\beta_j+1)\one_{q-1}
				-\boldsymbol\beta^{\,*j}-\boldsymbol m^{\,*j}
			}
			{
				\boldsymbol\alpha+(\beta_j+1)\one_p,\
				(\beta_j+1)\one_{q-1}-\boldsymbol\beta^{\,*j}
			}
			{x},
	\end{multline}
	for \(j\in\{1,\ldots,q\}\) with \(m_j\ge1\), while
	\(B_{\boldsymbol n,\boldsymbol m}^{(j)}\equiv0\) if \(m_j=0\).
\end{corollary}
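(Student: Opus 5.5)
The proof takes the limit \(\boldsymbol a,\boldsymbol b\to\boldsymbol\beta\) in the formulas of Theorem~\ref{thm:mixed-Jacobi-like-forms}. It works at the level of Mellin transforms, since the components themselves have a delicate limit. The matrix identity \eqref{eq:Pineiro-matrix} is immediate from \eqref{eq:wj-Pineiro-degeneration}, because \(w_j(x;\boldsymbol\beta,\boldsymbol\beta)=x^{\beta_j}\) on \((0,1)\). For the \(A\)-components, set \(\boldsymbol a=\boldsymbol b=\boldsymbol\beta\) directly in \eqref{eq:A-components}--\eqref{eq:A-kappa}. The gamma quotient \(\Gamma((\alpha_i+1)\one_q+\boldsymbol b)/\Gamma((\alpha_i+1)\one_q+\boldsymbol a)\) becomes \(1\). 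The lower parameters \((\alpha_i+1)\one_q+\boldsymbol a\) become \((\alpha_i+1)\one_q+\boldsymbol\beta\), and the upper ones become \((\alpha_i+1)\one_q+\boldsymbol\beta+\boldsymbol m\). It remains to rewrite the prefactor \((-1)^{|\boldsymbol n^{*i}|+1}/((\alpha_i+1)\one_{p-1}-\boldsymbol\alpha^{*i}-\boldsymbol n^{*i})_{\boldsymbol n^{*i}}\) using the reflection \((c-n+1)_n=(-1)^n(-c)_n\) with \(c=\alpha_i-\alpha_h\). This gives \((-1)^{n_h}(\alpha_h-\alpha_i)_{n_h}\), and the signs cancel, leaving the leading \(-\) of \eqref{eq:Pineiro-A}. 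The contour argument of Step~1 is unaffected, because the integrand becomes the rational function \(P_{\boldsymbol\beta,\boldsymbol m}/D_{\boldsymbol\alpha,\boldsymbol n}\); orthogonality therefore persists in the limit.

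For the \(B\)-components we do not take the limit in \eqref{eq:B-components-hypergeometric}. The factors \((\boldsymbol a-b_j\one_q)_1\) vanish there while the reconstruction degenerates, so the limit is of indeterminate type. Instead we pass to the limit in \eqref{eq:B-Mellin}, which gives
\[
\mathcal M[\mathcal B_{\boldsymbol n,\boldsymbol m}](s)=-\frac{(\boldsymbol\alpha+(1-s)\one_p)_{\boldsymbol n}}{(s\one_q+\boldsymbol\beta)_{\boldsymbol m}}.
\]
This is a rational function with simple poles at \(s=-\beta_J-K\), and it has residues \(\pi^{\mathcal B}_{J,K}\) (with \(b\) replaced by \(\beta\)). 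Since \(\mathcal M[x^{\beta_J+K}](s)=1/(s+\beta_J+K)\), the inverse is
\[
\mathcal B_{\boldsymbol n,\boldsymbol m}(x)=\sum_J x^{\beta_J}\sum_{K}\pi^{\mathcal B}_{J,K}x^K.
\]
Hence \(B^{(J)}_{\boldsymbol n,\boldsymbol m}(x)=\sum_{K=0}^{m_J-1}\pi^{\mathcal B}_{J,K}x^K\); this is the re-expansion in the degenerate power basis mentioned in Remark~\ref{rem:B-complete-Meijer-G}. This description is unique provided the \(\beta_j\) are distinct modulo integers, or at least the nodes \(-\beta_j-\ell\) are distinct. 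That is our nonresonance hypothesis, and it is what the assumed nonvanishing denominators guarantee.

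The remaining task is to identify \(\sum_K\pi_{j,K}x^K\) with \eqref{eq:Pineiro-B}. Write \(\pi_{j,K}/\pi_{j,0}\) as a Pochhammer ratio:
\begin{itemize}
\item \((-1)^K/(K!(m_j-K-1)!)=(1-m_j)_K/(K!\,(m_j-1)!)\);
\item \((\boldsymbol\alpha+(\beta_j+1+K)\one_p)_{\boldsymbol n}=(\boldsymbol\alpha+(\beta_j+1)\one_p)_{\boldsymbol n}\,(\boldsymbol\alpha+\boldsymbol n+(\beta_j+1)\one_p)_K/(\boldsymbol\alpha+(\beta_j+1)\one_p)_K\);
\item by the shift identity \((c-K)_n=(c)_n(1-c)_K/(1-c-n)_K\), the reciprocal \(1/(\boldsymbol\beta^{*j}-(\beta_j+K)\one_{q-1})_{\boldsymbol m^{*j}}\) produces \(((\beta_j+1)\one_{q-1}-\boldsymbol\beta^{*j}-\boldsymbol m^{*j})_K/((\beta_j+1)\one_{q-1}-\boldsymbol\beta^{*j})_K\).
\end{itemize}
Collecting these factors yields the \({}_{p+q}F_{p+q-1}\) series with prefactor \(\pi_{j,0}\), which matches \eqref{eq:Pineiro-B}. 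The main obstacle is justifying that the limiting form is still the normalized one. We argue this via continuity of the Mellin transform in the parameters and dominated convergence on \((0,1)\), using \(\alpha_i+\beta_j>-1\) for integrability. A further point is that the orthogonality conditions \eqref{eq:B-orthogonality} transfer to the limit, since they are read off from the zeros of the limiting numerator.
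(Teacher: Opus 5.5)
Your proposal is correct and follows the paper's proof essentially step for step. For the $\mathcal A$-components you specialize the residue formula directly, and your reflection-identity sign check is the paper's remark that $(-1)^{|\boldsymbol n|}$ absorbs the residue sign. For the $\mathcal B$-components you specialize the Mellin identity \eqref{eq:B-Mellin}, expand the resulting rational function at its simple poles $s=-\beta_j-\ell$, and collect the coefficient ratios into a terminating ${}_{p+q}F_{p+q-1}$, exactly as the paper does.

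The dominated-convergence step you flag is not needed in the paper's reading, which takes the degeneration directly in the Mellin identities. If you do want a genuine limit statement, a cleaner justification is that the finite coefficient system for the components stays invertible at $\boldsymbol a=\boldsymbol b=\boldsymbol\beta$ under the same node-distinctness hypothesis, so the components vary continuously into the degenerate ones.
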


\begin{proof}
	Equation \eqref{eq:wj-Pineiro-degeneration} gives the degeneration of the
	weights to the mixed Pi\~neiro system studied in~\cite{PineiroMixed2026}.
	In part~\textnormal{(i)} of
	Theorem~\ref{thm:mixed-Jacobi-like-forms}, the factor
	\((-1)^{|\boldsymbol n|}\) in \eqref{eq:A-contour} compensates the residue sign
	\((-1)^{|\boldsymbol n|-1}\), and hence the formula reduces exactly to
	\eqref{eq:Pineiro-A}.
	
	For the type II components, the specialization must be taken at the level of
	the Mellin transform.
	Before degeneration, formula \eqref{eq:B-components-hypergeometric}
	expresses each component \(B_{\boldsymbol n,\boldsymbol m}^{(j)}\) as a finite linear
	combination of terminating polynomials of type \({}_{q+1}F_q\). This is a
	representation in the Jacobi basis and it is not the convenient
		formula to specialize term by term in the boundary limit
		\(\boldsymbol a,\boldsymbol b\to\boldsymbol\beta\): in this limit
		the Jacobi weights degenerate to pure powers and the relevant basis
		changes.
		
		Instead, specialize the Mellin transform \eqref{eq:B-Mellin} itself. Since
		\(\boldsymbol a,\boldsymbol b\to\boldsymbol\beta\), it becomes
	\begin{equation}
		\label{eq:B-Pineiro-degenerate-Mellin}
		\int_0^1 x^{s-1}\mathcal B_{\boldsymbol n,\boldsymbol m}(x)\dx
		=
		-
		\frac{
			(\boldsymbol\alpha+(1-s)\one_p)_{\boldsymbol n}
		}{
			(s\one_q+\boldsymbol\beta)_{\boldsymbol m}
		}.
	\end{equation}
	Moreover, \(w_j(x;\boldsymbol\beta,\boldsymbol\beta)=x^{\beta_j}\), so we may write
	\[
	\mathcal B_{\boldsymbol n,\boldsymbol m}(x)
	=
	\sum_{\substack{j=1\\m_j\ge1}}^{q}
	\left(
	\sum_{\ell=0}^{m_j-1}c_{j,\ell}x^\ell
	\right)x^{\beta_j}.
	\]
	The coefficients \(c_{j,\ell}\) are precisely the coefficients obtained by
	expanding the rational function in
	\eqref{eq:B-Pineiro-degenerate-Mellin} at its poles
	\(s=-\beta_j-\ell\). For fixed \(j\), their quotient with the constant
	coefficient is
	\begin{align*}
		\frac{c_{j,\ell}}{c_{j,0}}
		={}&
		\frac{(-m_j+1)_\ell}{\ell!}
		\prod_{i=1}^{p}
		\frac{(\alpha_i+\beta_j+n_i+1)_\ell}
		{(\alpha_i+\beta_j+1)_\ell}
		\prod_{\substack{k=1\\k\neq j}}^{q}
		\frac{(\beta_j-\beta_k-m_k+1)_\ell}
		{(\beta_j-\beta_k+1)_\ell}.
	\end{align*}
	Thus all terms belonging to the degenerate power weight
	\(x^{\beta_j}\) collect into one terminating hypergeometric series of
	type \({}_{p+q}F_{p+q-1}\). With the normalization fixed in
	part~\textnormal{(ii)} of Theorem~\ref{thm:mixed-Jacobi-like-forms}, this
	gives exactly \eqref{eq:Pineiro-B}. Hence the apparent simplification from
	a finite combination of \({}_{q+1}F_q\) polynomials to a single
	\({}_{p+q}F_{p+q-1}\) polynomial is a consequence of degeneration and
	re-expansion in the power basis, rather than a termwise collapse of
	\eqref{eq:B-components-hypergeometric}.
\end{proof}

\begin{remark}[Bilateral Christoffel preservation in the Pi\~neiro specialization]
	The degeneration in Corollary~\ref{cor:reduction-mixed-Pineiro}, which
	underlies the mixed Pi\~neiro system studied in~\cite{PineiroMixed2026},
	differs from the nonspecialized Jacobi case. In fact, both
	sides of \eqref{eq:Pineiro-matrix} are power vectors. Hence the left
	and right Christoffel chains preserve the same Pi\~neiro family through
	the cyclic affine shifts
	\[
	\boldsymbol\alpha\longmapsto
	\mathscr C_p^k(\boldsymbol\alpha),
	\qquad
	\boldsymbol\beta\longmapsto
	\mathscr C_q^k(\boldsymbol\beta),
	\]
	respectively. Thus both the lower factors \(L_1,\ldots,L_p\) and the
	upper factors \(U_1,\ldots,U_q\) in
	\eqref{eq:gen-bidiagonal-factorization} are obtained from leading
	coefficients of forms that remain in the Pi\~neiro family. Away from
	this degeneration, only the left chain is automatically preserved inside
	the Jacobi parametric class.
\end{remark}
\section{Laguerre I mixed-type multiple orthogonal systems}
\label{sec:mixed-Laguerre-like-family}

In this section the power vector has length \(p\), whereas the Laguerre I
vector has length
\[
q=r+s.
\]
The first \(r\) Laguerre I weights are beta-shifted weights and the last
\(s\) are Mellin-Euler derivative weights. Thus the mixed matrix below is a
\(q\times p\) matrix of measures. In parallel with the Jacobi notation,
the form expanded in the Laguerre I vector is denoted by \(\mathcal B\),
whereas the dual form expanded in the power vector is denoted by
\(\mathcal A\).

The notation in this section is adapted to the Laguerre I construction.
The multi-index on the Laguerre I vector will be denoted by
\(\boldsymbol\lambda=(\boldsymbol n,\boldsymbol\eta)\in\mathbb N_0^q\), whereas
\(\boldsymbol m\in\mathbb N_0^p\) denotes the multi-index on the power vector. Thus
the order of the two multi-indices is not meant to duplicate the Jacobi
notation, but to keep the Laguerre I side visible in the formulas below.

Let
\[
q=r+s,
\qquad r\ge0,
\qquad s\ge1,
\]
and let
\[
\boldsymbol a=(a_1,\ldots,a_q)\in(-1,\infty)^q,
\qquad
\boldsymbol b=(b_1,\ldots,b_r)\in(-1,\infty)^r.
\]
When \(r>0\), assume
\[
a_h<b_h,
\qquad h\in\{1,\ldots,r\}.
\]

We use the beta density \(\mathcal B_{a,b}\) introduced in the Jacobi
section. For \(a>-1\), define the gamma weight
\[
\mathcal G_a(x)
\coloneq
x^a\mathrm{e}^{-x}\boldsymbol 1_{(0,\infty)}(x),
\qquad
\mathcal M[\mathcal G_a](z)=\Gamma(z+a).
\]

The Laguerre I base weight is the Mellin convolution
\begin{equation}
	\label{eq:w0-explicit-convolution}
	w_0(x;\boldsymbol a,\boldsymbol b)
	\coloneq
	\mathcal B_{a_1,b_1}*\cdots*\mathcal B_{a_r,b_r}
	*
	\mathcal G_{a_{r+1}}*\cdots*\mathcal G_{a_q}(x),
\end{equation}
with the convention that the beta block is absent when \(r=0\). Hence
\begin{equation}
	\label{eq:w0-Laguerre-Mellin}
	\mathcal M[w_0(\cdot;\boldsymbol a,\boldsymbol b)](z)
	=
	\frac{\Gamma(z\one_q+\boldsymbol a)}{\Gamma(z\one_r+\boldsymbol b)}.
\end{equation}
Equivalently, with the Mellin--Barnes convention fixed in
\eqref{eq:Meijer-G-definition}--\eqref{eq:Meijer-G-Mellin-transform},
\begin{equation}
	\label{eq:w0-Meijer-Laguerre}
	w_0(x;\boldsymbol a,\boldsymbol b)
	=
	G_{r,q}^{q,0}
	\left(
	x\,\middle|\,
	\begin{matrix}
		\boldsymbol b\\
		\boldsymbol a
	\end{matrix}
	\right),
	\qquad x>0.
\end{equation}

Following Wolfs's Laguerre-like construction, define the shifted beta weights
\begin{equation}
	\label{eq:wh-def}
	w_h(x;\boldsymbol a,\boldsymbol b)
	\coloneq
	w_0(x;\boldsymbol a,\boldsymbol b+\boldsymbol e_h),
	\qquad h\in\{1,\ldots,r\}.
\end{equation}
For the derivative block we use the Mellin-Euler operator
\[
\thetaop\coloneq -x\frac{\mathrm d}{\mathrm dx},
\]
so that, after integration by parts,
\[
\mathcal M[\thetaop f](z)=z\mathcal M[f](z).
\]
Set
\begin{equation}
	\label{eq:vl-def}
	v_\ell(x;\boldsymbol a,\boldsymbol b)
	\coloneq
	\thetaop^{\ell-1}w_0(x;\boldsymbol a,\boldsymbol b),
	\qquad \ell\in\{1,\ldots,s\}.
\end{equation}
Then
\begin{align}
	\label{eq:wh-Mellin}
	\mathcal M[w_h(\cdot;\boldsymbol a,\boldsymbol b)](z)
	&=
	\frac{\Gamma(z\one_q+\boldsymbol a)}{\Gamma(z\one_r+\boldsymbol b+\boldsymbol e_h)},
	&&h\in\{1,\ldots,r\},
	\\
	\label{eq:vl-Mellin}
	\mathcal M[v_\ell(\cdot;\boldsymbol a,\boldsymbol b)](z)
	&=
	z^{\ell-1}\frac{\Gamma(z\one_q+\boldsymbol a)}{\Gamma(z\one_r+\boldsymbol b)},
	&&\ell\in\{1,\ldots,s\}.
\end{align}

We collect these \(q=r+s\) functions in the Laguerre I vector
\begin{equation}
	\label{eq:Laguerre-vector}
	\mathbf U^{\mathrm L}(x)
	\coloneq
	\left[
	w_1,\ldots,w_r,
	v_1,\ldots,v_s
	\right]
	=
	\left[U_1^{\mathrm L},\ldots,U_q^{\mathrm L}\right].
\end{equation}
Let the independent power vector be
\begin{equation}
	\label{eq:power-vector}
	\mathbf V(x)
	\coloneq
	\left[x^{\beta_1},\ldots,x^{\beta_p}\right].
\end{equation}
Assume that
\[
\beta_i+a_\rho>-1,
\qquad
i\in\{1,\ldots,p\},
\quad
\rho\in\{1,\ldots,q\}.
\]
Under this condition all pairings appearing in the mixed orthogonality
relations below are absolutely convergent. Equivalently, if
\[
a_{\min}\coloneq\min_{\rho\in\{1,\ldots,q\}}a_\rho,
\]
it is enough to require \(\beta_i>-1-a_{\min}\) for every
\(i\in\{1,\ldots,p\}\). The rectangular mixed matrix of measures is
\begin{equation}
	\label{eq:mixed-Laguerre-matrix}
	\mathrm d\Lagmat(x)
	\coloneq
	\left[
	U_\rho^{\mathrm L}(x)x^{\beta_i}
	\right]_{
		\substack{\rho\in\{1,\ldots,q\}\\i\in\{1,\ldots,p\}}
	}\dx.
\end{equation}

\begin{remark}[The multiple Laguerre-I specialization: \(q=1\)]
	\label{rem:multiple-Laguerre-I-edge}
	If \(q=1\), then \(q=r+s\), \(r\ge0\), and \(s\ge1\) force
	\(r=0\) and \(s=1\). Consequently,
	\[
		U_1^{\mathrm L}(x)=w_0(x;a_1)=x^{a_1}\e^{-x},
	\]
	and the entries of \eqref{eq:mixed-Laguerre-matrix} are
	\[
		x^{\beta_i}U_1^{\mathrm L}(x)
		=
		x^{\widetilde a_i-1}\e^{-x},
		\qquad
		\widetilde a_i\coloneq a_1+\beta_i+1,
		\qquad i\in\{1,\ldots,p\}.
	\]
	Thus the one-row specialization is exactly the system of multiple Laguerre
	polynomials of the first kind.
\end{remark}

\begin{definition}[Laguerre admissibility]
	\label{def:Laguerre-admissible}
	Let
	\[
	\boldsymbol\lambda=(\boldsymbol n,\boldsymbol\eta)
	=
	(n_1,\ldots,n_r,\eta_1,\ldots,\eta_s)
	\in
	\mathbb N_0^r\times\mathbb N_0^s.
	\]
	Following the index range used by Wolfs for Laguerre I weights, we say
	that \(\boldsymbol\lambda\) is \emph{Laguerre admissible} if the full multi-index
	\(\boldsymbol\lambda\) is near the diagonal and the derivative multi-index
	\(\boldsymbol\eta\) lies on the step-line in \(\mathbb N_0^s\).
\end{definition}

\begin{proposition}[Combinatorial criterion for Laguerre admissibility]
	\label{prop:Laguerre-admissibility-inequalities}
	Let
	\[
	\boldsymbol\lambda=(\boldsymbol n,\boldsymbol\eta)
	=
	(n_1,\ldots,n_r,\eta_1,\ldots,\eta_s)
	\in
	\mathbb N_0^r\times\mathbb N_0^s.
	\]
	Then \(\boldsymbol\lambda\) is Laguerre admissible if and only if the following four explicit inequalities hold:
	\begin{subequations}
		\label{eq:Laguerre-admissibility-inequalities}
		\begin{align}
			n_h-1&\le n_u,
			&
			h&\in\{1,\ldots,r\}\textnormal{ with }n_h\ge1,
			\quad
			u\in\{1,\ldots,r\},
			\label{eq:Laguerre-admissibility-nn}
			\\[2pt]
			\eta_\ell-1&\le n_u,
			&
			\ell&\in\{1,\ldots,s\}\textnormal{ with }\eta_\ell\ge1,
			\quad
			u\in\{1,\ldots,r\},
			\label{eq:Laguerre-admissibility-etan}
			\\[2pt]
			s(n_h-1)&\le |\boldsymbol\eta|,
			&
			h&\in\{1,\ldots,r\}\textnormal{ with }n_h\ge1,
			\label{eq:Laguerre-admissibility-n-eta}
			\\[2pt]
			s(\eta_\ell-1)+\ell&\le |\boldsymbol\eta|,
			&
			\ell&\in\{1,\ldots,s\}\textnormal{ with }\eta_\ell\ge1.
			\label{eq:Laguerre-admissibility-eta-step}
		\end{align}
	\end{subequations}
	When \(r=0\), the conditions involving \(\boldsymbol n\) are absent. The criterion is purely combinatorial: it rewrites the condition that \(\boldsymbol\lambda\) is near the diagonal and that \(\boldsymbol\eta\) is on the step-line without referring to those two definitions separately. 
\end{proposition}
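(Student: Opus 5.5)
Both conditions in Definition~\ref{def:Laguerre-admissible} will be rewritten as inequalities, and the rewritten system will then be matched against \eqref{eq:Laguerre-admissibility-inequalities}. The first step is to describe the step-line in \(\mathbb N_0^s\) concretely. By \eqref{eq:gen-step-line-index}, \(\boldsymbol\eta\) lies on the step-line if and only if \(\boldsymbol\eta=\boldsymbol\sigma_s(|\boldsymbol\eta|)\). Writing \(|\boldsymbol\eta|=sm+j\) with \(0\le j<s\), this says \(\eta_\ell=m+1\) for \(\ell\le j\) and \(\eta_\ell=m\) for \(\ell>j\). I will show that this is equivalent to \(s(\eta_\ell-1)+\ell\le|\boldsymbol\eta|\) for every \(\ell\) with \(\eta_\ell\ge1\), which is \eqref{eq:Laguerre-admissibility-eta-step}.

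For the forward direction, check the two cases \(\ell\le j\) and \(\ell>j\) directly. If \(\ell\le j\), then \(sm+\ell\le sm+j\). If \(\ell>j\), then \(s(m-1)+\ell\le sm<sm+j+1\), which is enough because the inequality is between integers.

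For the converse, use the fact that \(\ell\le s\). The inequality gives \(\eta_\ell\le \lfloor(|\boldsymbol\eta|-\ell)/s\rfloor+1=\lfloor(|\boldsymbol\eta|+s-\ell)/s\rfloor\), and the same bound holds trivially when \(\eta_\ell=0\). The right-hand side is exactly the count \(m_\ell(|\boldsymbol\eta|)\) from \eqref{eq:Jacobi-step-component-counts}, and these counts sum to \(|\boldsymbol\eta|\). Hence the componentwise bound \(\eta_\ell\le m_\ell(|\boldsymbol\eta|)\), together with \(\sum_\ell\eta_\ell=|\boldsymbol\eta|\), forces equality in every component. This summation argument is the main point of the proof.

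Next comes near-diagonality of the full index \(\boldsymbol\lambda\). By Definition~\ref{def:near-diagonal} it means \(\lambda_a-1\le\lambda_b\) for all pairs of entries, and the pairs with \(\lambda_a=0\) are vacuous. I will split the pairs by block. The pairs \((n,n)\) give \eqref{eq:Laguerre-admissibility-nn}, and the pairs \((\eta,n)\) give \eqref{eq:Laguerre-admissibility-etan}. The remaining pairs are \((\eta,\eta)\) and \((n,\eta)\).

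The \((\eta,\eta)\) pairs are automatically satisfied once \(\boldsymbol\eta\) is on the step-line, so they add nothing. The \((n,\eta)\) pairs require \(n_h-1\le\eta_\ell\) for all \(\ell\). When \(\boldsymbol\eta\) is on the step-line, \(\min_\ell\eta_\ell=\lfloor|\boldsymbol\eta|/s\rfloor\). So, given \eqref{eq:Laguerre-admissibility-eta-step}, the condition \(n_h-1\le\min_\ell\eta_\ell\) is equivalent to \(n_h-1\le\lfloor|\boldsymbol\eta|/s\rfloor\), that is, to \(s(n_h-1)\le|\boldsymbol\eta|\), which is \eqref{eq:Laguerre-admissibility-n-eta}.

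The equivalence is therefore proved in two stages: first obtain the step-line property from \eqref{eq:Laguerre-admissibility-eta-step}, and then use it to translate the \((n,\eta)\) pairs. When \(r=0\), only \eqref{eq:Laguerre-admissibility-eta-step} remains, and the argument reduces to the first step alone.
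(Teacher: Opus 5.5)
Your proposal is correct and follows essentially the same route as the paper. You characterize the step-line by the componentwise bound \(\eta_\ell\le\lfloor(|\boldsymbol\eta|+s-\ell)/s\rfloor\) and the fact that these bounds sum to \(|\boldsymbol\eta|\); the paper phrases the same argument with \(|\boldsymbol\eta|=sQ+R\) and the bounds \(Q+1\), \(Q\). You then use \(\min_\ell\eta_\ell=\lfloor|\boldsymbol\eta|/s\rfloor\) to turn the \((n,\eta)\) near-diagonal pairs into \eqref{eq:Laguerre-admissibility-n-eta}, exactly as the paper does, so writing it as one chain of equivalences instead of two directions is only cosmetic.
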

\begin{proof}
	If \(r=0\), the assertion reduces to the standard characterization of the
	step-line condition for \(\boldsymbol\eta\), namely
	\eqref{eq:Laguerre-admissibility-eta-step}. Thus we assume \(r>0\) in the
	rest of the proof.
	Write
	\[
	|\boldsymbol\eta|=sQ+R,
	\qquad
	Q\in\mathbb N_0,
	\quad
	R\in\{0,\ldots,s-1\}.
	\]
	With our step-line convention,
	\[
	\eta_\ell=
	\begin{cases}
		Q+1, & \ell\in\{1,\ldots,R\},\\
		Q, & \ell\in\{R+1,\ldots,s\}.
	\end{cases}
	\]
	This is equivalent to
	\[
	s(\eta_\ell-1)+\ell\le |\boldsymbol\eta|,
	\qquad
	\ell\in\{1,\ldots,s\}\textnormal{ with }\eta_\ell\ge1.
	\]
	Indeed, these inequalities imply
	\[
	\eta_\ell\le Q+1
	\quad\textnormal{for }\ell\in\{1,\ldots,R\},
	\qquad
	\eta_\ell\le Q
	\quad\textnormal{for }\ell\in\{R+1,\ldots,s\},
	\]
	and, since the sum of the \(\eta_\ell\)'s is \(sQ+R\), all these upper
	bounds must be attained.
	
	Assume first that \(\boldsymbol\lambda\) is Laguerre admissible. Then
	\(\boldsymbol\lambda\) is near the diagonal, so the first two inequalities in
	\eqref{eq:Laguerre-admissibility-inequalities} follow immediately. The
	fourth inequality is the step-line condition just described. Since the
	minimum component of \(\boldsymbol\eta\) is \(Q\), near diagonality gives
	\(n_h\le Q+1\). Hence
	\[
	s(n_h-1)\le sQ\le |\boldsymbol\eta|,
	\]
	which is the third inequality.
	
	Conversely, assume
	\eqref{eq:Laguerre-admissibility-inequalities}. The first inequality says
	that the components of \(\boldsymbol n\) differ from each other by at most one.
	The second says that no component of \(\boldsymbol\eta\) exceeds any component
	of \(\boldsymbol n\) by more than one. The fourth says that \(\boldsymbol\eta\) lies on
	the step-line, and therefore \(\min_\ell\eta_\ell=Q\). The third
	inequality gives \(n_h\le Q+1\), so no component of \(\boldsymbol n\) exceeds
	any component of \(\boldsymbol\eta\) by more than one. Hence all components of
	\(\boldsymbol\lambda=(\boldsymbol n,\boldsymbol\eta)\) differ by at most one, and
	\(\boldsymbol\lambda\) is near the diagonal. Thus \(\boldsymbol\lambda\) is Laguerre
	admissible.
\end{proof}

\begin{remark}
	The preceding proposition is an admissibility criterion, not a normality statement.
	The role of the four inequalities will be made explicit in
	Theorem~\ref{thm:mixed-Laguerre-forms}. Namely,
	\eqref{eq:Laguerre-admissibility-nn} and
	\eqref{eq:Laguerre-admissibility-etan} ensure that the relevant gamma
	quotients are polynomials, whereas
	\eqref{eq:Laguerre-admissibility-n-eta} and
	\eqref{eq:Laguerre-admissibility-eta-step} give the required membership in
	the polynomial spaces \(\mathbb P_{m_i-1}\), \(\mathbb P_{n_h-1}\), and
	\(\mathbb P_{\eta_\ell-1}\) in the contour argument.  Thus the admissible
	range is used here only to keep the residue formulas inside the prescribed
	mixed polynomial spaces; normality itself is supplied separately by
	Proposition~\ref{prop:AT-normality-Laguerre-admissible}.
\end{remark}

\begin{proposition}[Exact Laguerre determinants and normality]
\label{prop:AT-normality-Laguerre-admissible}
Let \(\boldsymbol\lambda=(\boldsymbol n,\boldsymbol\eta)\) be Laguerre
admissible, and set \(N=|\boldsymbol\lambda|\).  Retain the
integrability assumptions of the Laguerre matrix measure.  Put
\begin{gather}
 D_{\boldsymbol\lambda}(z):=\prod_{h=1}^r(z+b_h)_{n_h},
 \qquad G_0(z):=\frac{\Gamma(z\one_q+\boldsymbol a)}
                  {\Gamma(z\one_r+\boldsymbol b)},
 \label{eq-Laguerre-determinant-data}\\
 c_{\boldsymbol\lambda}
 :=\prod_{1\leq h<j\leq r}(b_j-b_h)^{\min(n_h,n_j)}
 \prod_{j=1}^r\prod_{\rho=1}^q
       \prod_{u=0}^{n_j-1}(b_j+1-a_\rho)_u.
 \label{eq-Laguerre-coefficient-determinant}
\end{gather}
Empty products are one.  For a power-side multi-index
\(\boldsymbol m\), order its nodes \(z=\beta_i+k+1\), \(0\leq k<m_i\),
by increasing \(k\), and then increasing \(i\); use the analogous
degree-first order on the weighted rows.  If \(|\boldsymbol m|=N\),
the square mixed moment block satisfies
\begin{equation}
 \det G_{\boldsymbol\lambda,\boldsymbol m}
 =c_{\boldsymbol\lambda}
   \prod_{1\leq u<v\leq N}(z_v-z_u)
   \prod_{v=1}^N\frac{G_0(z_v)}{D_{\boldsymbol\lambda}(z_v)}.
 \label{eq-Laguerre-moment-determinant}
\end{equation}
In particular, assume \(c_{\boldsymbol\lambda}\ne0\) and
\begin{equation}
 \beta_i-\beta_j\notin\mathbb Z,
 \qquad i\ne j.
 \label{eq-Laguerre-beta-nonresonance}
\end{equation}
Then the two balances
\begin{equation}
 |\boldsymbol m|=N+1
 \qquad\hbox{and}\qquad
 |\boldsymbol m|=N-1,
 \label{eq-Laguerre-two-balances}
\end{equation}
have one-dimensional solution spaces; the second is used only for
\(N\geq1\).  In the first balance every active power component has
its maximal degree.  In the second, the complete Laguerre form is
nonzero and has a unique coefficient representation.  All its active
components have maximal degree if and only if every minor obtained
by deleting a terminal weighted row from the
\(N\)-by-\((N-1)\) moment block is nonzero.  A terminal row is the
highest-degree row of any active component.

All scalar step-line leading minors are therefore nonzero if
\eqref{eq-Laguerre-beta-nonresonance} and
\begin{equation}
 b_h\ne b_j\ (h\ne j),\qquad
 b_h+v\ne a_\rho
 \quad(1\leq h\leq r,\ 1\leq\rho\leq q,\ v\in\N).
 \label{eq-Laguerre-global-nonresonance}
\end{equation}
These conditions also ensure the active step-line normalizations of
both forms at every order.  They do not assert maximal degrees of
all non-active Laguerre components without the terminal-minor test.
\end{proposition}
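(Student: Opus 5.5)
The plan is to follow the collocation argument of Proposition~\ref{prop:Jacobi-all-leading-moment-determinants} and Corollary~\ref{cor:Jacobi-mixed-normality-near-diagonal}, with the derivative block supplying extra monomial factors.

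First, by \eqref{eq:wh-Mellin}--\eqref{eq:vl-Mellin}, the entry in row \((h,u)\) (weight \(w_h\), power \(x^u\)) and column \((i,k)\) equals \(G_0(z)F_{h,u}(z)\) at \(z=\beta_i+k+1\), where
\[
F_{h,u}(z)=\frac{(z\one_q+\boldsymbol a)_u}{(z\one_r+\boldsymbol b)_u\,(z+b_h+u)}.
\]
The row \((\ell,u)\) for \(v_\ell\) gives instead \(G_0(z)(z+u)^{\ell-1}(z\one_q+\boldsymbol a)_u/(z\one_r+\boldsymbol b)_u\). Multiplying by \(D_{\boldsymbol\lambda}\) clears all denominators, because admissibility gives \(u+1\le n_h\) and \(u\le n_{h'}\) (inequalities \eqref{eq:Laguerre-admissibility-nn}, \eqref{eq:Laguerre-admissibility-etan}). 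This yields polynomials \(P_{\rho,u}\). The degree bounds \eqref{eq:Laguerre-admissibility-n-eta}, \eqref{eq:Laguerre-admissibility-eta-step} should give \(\deg P\le N-1\). Concretely, a derivative row contributes degree \(qu+\ell-1+|\boldsymbol n|-ru\); here the factors \((z+a_\rho)_u\) for \(\rho>r\) are not cancelled, but \(s u+\ell-1\le|\boldsymbol\eta|-1\) by the step-line bound. Hence \(\det G=\det\mathcal C\cdot\operatorname{VdM}\cdot\prod G_0/D_{\boldsymbol\lambda}\), exactly as in \eqref{eq:Jacobi-leading-moment-collocation-factorization}.

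The main obstacle is computing \(\det\mathcal C=c_{\boldsymbol\lambda}\). I will add rows in degree-first order and use a two-part recursion. When a beta row \((h,u)\) is appended, \(D\) gains the factor \(z+b_h+u\). Every earlier polynomial is divisible by it, so expanding modulo this factor gives the new row's value at \(-b_h-u\). That value is \(\prod_\rho(b_h+1-a_\rho)_u\) times \(\prod(b_h-b_{h'})\) over the beta components already appended in the cycle, up to a sign that cancels as in \eqref{eq:Jacobi-new-polynomial-root-value}. When a derivative row \((\ell,u)\) is appended, \(D\) is unchanged. I will argue that the derivative rows, within the cycle at level \(u\), have leading terms of consecutive new degrees \(\ell-1\) above the common factor. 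The common factor is monic, so these rows contribute a unit triangular block. To make this rigorous I would reorder the basis as \(\{D\text{-multiples}\}\oplus\{\text{low-degree remainders}\}\), and check that the \(\prod_{\rho>r}(z+a_\rho)_u\) factors only shift lower terms. The step-line requirement on \(\boldsymbol\eta\) is exactly what makes these degrees fill the gap with no repetition. Collecting the beta factors over cycles gives the exponent \(\min(n_h,n_j)\), which produces \(c_{\boldsymbol\lambda}\).

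The normality consequences then follow as in the Jacobi corollary. If \(c_{\boldsymbol\lambda}\ne0\) and the \(\beta\)'s are nonresonant mod \(\mathbb Z\), then the nodes are distinct and \(G_0(z_v)\ne0\) by integrability. So the weighted Mellin map is an isomorphism onto \(\mathbb P_{N-1}\), and the evaluation at \(N+1\) nodes has a one-dimensional kernel. Every maximal minor of that evaluation is a Vandermonde-type product and hence nonzero, so every kernel coordinate is nonzero; this gives the maximal degrees on the power side. For \(N-1\) nodes, the transposed kernel is one-dimensional, and its coordinates are the signed minors with a terminal row deleted. This gives the stated criterion.

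For the step-line claim, the global conditions \eqref{eq-Laguerre-global-nonresonance} make \(c_{\boldsymbol\lambda}\ne0\) for every step-line \(\boldsymbol\lambda\), which is admissible. Therefore every leading minor is nonzero. The active normalization involves the newest row, whose deletion leaves a leading minor, so that minor is also nonzero.
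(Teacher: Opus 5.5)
Your proposal is correct and follows the paper's proof essentially step by step. It uses the same collocation factorization $\det G=\det\mathcal C\cdot\operatorname{VdM}\cdot\prod G_0/D_{\boldsymbol\lambda}$, the same row-appending recursion in degree-first (cycle) order with the new beta polynomial evaluated at the new root $-b_h-u$, and the same kernel and signed-minor arguments for the two balances and the step-line normalizations. The basis reordering you plan for the Euler rows is unnecessary: $D$ is unchanged there, each appended Euler polynomial is monic of degree exactly the number of previous rows, and all previous polynomials have lower degree, so the coefficient determinant is unchanged.
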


\begin{proof}
The weighted Mellin transforms share the factor
\(G_0(z)/D_{\boldsymbol\lambda}(z)\).  After division, they become
\begin{align*}
 P_{h,k}(z)
 &=\prod_{\rho=1}^q(z+a_\rho)_k
   \prod_{u=1}^r
    (z+b_u+k+\delta_{uh})_{n_u-k-\delta_{uh}},
 &&0\leq k<n_h,\\
 E_{\ell,k}(z)
 &=\prod_{\rho=1}^q(z+a_\rho)_k
   \prod_{u=1}^r(z+b_u+k)_{n_u-k}(z+k)^{\ell-1},
 &&0\leq k<\eta_\ell.
\end{align*}
Admissibility makes every Pochhammer length nonnegative and every
degree at most \(N-1\).  Let \(C_{\boldsymbol\lambda}\) be their
coefficient matrix in \(1,z,\ldots,z^{N-1}\), in degree-first row
order.  We prove \(\det C_{\boldsymbol\lambda}=c_{\boldsymbol\lambda}\).

Write \(t=\min\lambda_j\).  Build the rows as \(t\) complete
cycles, each containing the beta rows followed by the Euler rows,
then the additional beta rows in their original order and the first
additional Euler rows.  Every prefix is admissible.  Appending beta
row \((v,j)\) multiplies every previous polynomial by
\(z+b_j+v\).  If \(H\) is the set of beta rows already appended in
this cycle, the new polynomial is
\[
 P(z)=\prod_{\rho=1}^q(z+a_\rho)_v
       \prod_{h\in H}(z+b_h+v).
\]
There are \(d=qv+|H|\) previous rows.  The coefficient determinant
of \((z+c)f_1,\ldots,(z+c)f_d,P\) is
\((-1)^dP(-c)\) times that of \(f_1,\ldots,f_d\): evaluate the
last row at \(-c\), or divide by \(z+c\) in the monomial basis.
For \(c=b_j+v\) its multiplier is therefore
\[
 (-1)^dP(-b_j-v)
 =\prod_{\rho=1}^q(b_j+1-a_\rho)_v
   \prod_{h\in H}(b_j-b_h).
\]
Appending Euler row \((v,\ell)\) leaves the common denominator
unchanged.  Its polynomial is monic of degree equal to the number
of previous rows, so it leaves the coefficient determinant unchanged.
Each beta pair \(h<j\) contributes once per shared cycle, namely
\(\min(n_h,n_j)\) times.  Multiplication of these factors proves
\eqref{eq-Laguerre-coefficient-determinant}, including zero
factors; the case \(N=0\) is the empty determinant.

At a moment node \(z\), the column of the moment block is the
evaluation column of these polynomials multiplied by
\(G_0(z)/D_{\boldsymbol\lambda}(z)\).  Factoring the coefficient
matrix and the Vandermonde evaluation matrix proves
\eqref{eq-Laguerre-moment-determinant}.  The gamma arguments
and the factors in \(D_{\boldsymbol\lambda}(z)\) are positive in
the stated integrability range, so they introduce no zeros or poles.

If \(c_{\boldsymbol\lambda}\ne0\), the weighted Mellin map is an
isomorphism onto \(\mathbb P_{N-1}\).  Evaluating at \(N+1\)
distinct nodes gives an \(N\)-by-\((N+1)\) matrix whose every
maximal minor is nonzero, by the same Vandermonde factorization.
Its kernel has dimension one and all its coordinates are nonzero.
This proves the first balance, including maximal component degrees.
For \(N-1\) nodes the evaluation map has rank \(N-1\), so the
second kernel also has dimension one.  Its polynomial image is a
nonzero multiple of \(\prod_{i=1}^p(\beta_i+1-z)_{m_i}\).
The coordinates of this kernel are the signed maximal row minors.
Thus maximal degree in each active Laguerre component is equivalent
to the asserted terminal-minor condition.

For a step-line row index, deletion of its last row gives the
preceding step-line moment block.  Under
\eqref{eq-Laguerre-global-nonresonance} both coefficient
determinants are nonzero.  Hence the last coefficient used in the
step-line normalization is nonzero.  Together with the square
determinant formula this proves all the final assertions.
\end{proof}

\begin{remark}[Paired beta gaps do not guarantee independence]
The condition \(b_h-a_h\in\mathbb N_0\cup(N-1,\infty)\), used in
the Laguerre I AT argument of \cite[Proposition~3.3]{Wolfs2024},
must not replace the determinant test for the present family.
For \(p=r=s=1\), \(\boldsymbol a=(0,2)\), \(b_1=1\), and \(\beta_1=0\),
one has \(w_1=e^{-x}\) and \(v_1=(x+1)e^{-x}\). The admissible
index \(\boldsymbol\lambda=(2,1)\) gives dependent rows
\(w_1,v_1,xw_1\), and
\[
 \det\begin{bmatrix}1&1&2\\2&3&8\\1&2&6\end{bmatrix}=0.
\]
Here \(b_1-a_1=1\), but \(c_{(2,1)}=0\) since \(b_1+1=a_2\).
A simple sufficient chamber for all scalar step-line leading minors
is given by distinct \(b_h\), \(a_\rho<b_h+1\) for all \(h,\rho\),
and noninteger differences of the \(\beta_i\), in addition to
integrability. Maximality of every B-component remains a separate
terminal-coefficient question, addressed next.
\end{remark}

\begin{proposition}[Strong B-normality by exponent shifts]
\label{prop-Laguerre-shift-normality}
Fix an admissible \(\boldsymbol\lambda\), \(N=|\boldsymbol\lambda|\geq1\),
with \(c_{\boldsymbol\lambda}\ne0\), and a power index
\(|\boldsymbol m|=N-1\). Let
\(\beta_i=t+\gamma_i\), where \(t\) is real and the fixed real
\(\gamma_i\) have pairwise noninteger differences. Form the coefficient
matrix \(C_{\boldsymbol\lambda}\) in the preceding proof and write
\[
 R_t(z):=\prod_{i=1}^p\prod_{k=0}^{m_i-1}
                (z-t-\gamma_i-k-1)
       =\sum_{d=0}^{N-1}r_d(t)z^d,\qquad
 h_j(t):=\sum_{d=0}^{N-1}r_d(t)
                         [C_{\boldsymbol\lambda}^{-1}]_{d,j}.
\]
Here both matrix indices run from \(0\) to \(N-1\). For each weighted
row \(j\), put
\[
 d_j:=\min\{d:[C_{\boldsymbol\lambda}^{-1}]_{d,j}\ne0\},
 \qquad D_j:=N-1-d_j.
\]
Then \(h_j\) is a nonzero polynomial of degree \(D_j\), with leading
coefficient
\[
 [t^{D_j}]h_j(t)=
 (-1)^{D_j}\binom{N-1}{d_j}
 [C_{\boldsymbol\lambda}^{-1}]_{d_j,j}.
\]
In the integrability range, all active B-components have maximal degree
except at the real zeros of the finitely many \(h_j\) corresponding
to terminal weighted rows. In particular, writing
\(h_j(t)=\sum_{v=0}^{D_j}h_{j,v}t^v\), the explicit sufficient condition
\[
 t>1+\max_{\substack{j\ {\rm terminal}\\0\leq v<D_j}}
               \left|\frac{h_{j,v}}{h_{j,D_j}}\right|
\]
excludes every such zero; an empty maximum is zero.

One common sufficiently large shift works simultaneously for any
prescribed finite collection of these index pairs. Under the all-order
conditions \eqref{eq-Laguerre-global-nonresonance}, at most a
countable set of real shifts is exceptional for strong B-normality
at any admissible index pair. Thus almost every integrable common
shift works at all orders. This last assertion does not provide an
order-independent threshold for all sufficiently large shifts.
\end{proposition}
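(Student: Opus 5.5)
The plan is to reuse the kernel description from the proof of Proposition~\ref{prop:AT-normality-Laguerre-admissible} and track how the coefficient vector of the \(B\)-form depends on the shift \(t\). Since \(c_{\boldsymbol\lambda}\ne0\), the weighted Mellin map sends the coefficient vector \((B\text{-coefficients})\) isomorphically onto \(\mathbb P_{N-1}\) through \(C_{\boldsymbol\lambda}\). The orthogonality conditions for \(|\boldsymbol m|=N-1\) force the Mellin numerator to vanish at the \(N-1\) nodes \(z=\beta_i+k+1\). Hence the numerator is a multiple of \(R_t\), and the coefficient vector is proportional to \(r(t)^{\mathsf T}C_{\boldsymbol\lambda}^{-1}\), that is, to \((h_j(t))_j\). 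The key point is that \(C_{\boldsymbol\lambda}\) does not depend on \(t\): the polynomials \(P_{h,k},E_{\ell,k}\) involve only \(\boldsymbol a,\boldsymbol b\), and \(t\) enters only through the nodes. The \(j\)-th row coordinate is exactly \(h_j(t)\), so maximality of an active component is equivalent to \(h_j(t)\ne0\) at its terminal row \(j\).

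Next I determine the degree and leading coefficient of \(h_j\) in \(t\). Expand \(R_t(z)=\prod(z-t-c_v)\), with \(c_v=\gamma_i+k+1\), so that \(r_d(t)\) is a polynomial of degree \(N-1-d\) in \(t\). Its top coefficient is \((-1)^{N-1-d}\binom{N-1}{d}\), because the leading \(t\)-term of \(R_t\) is \((z-t)^{N-1}\). In \(h_j=\sum_d r_d(t)[C^{-1}]_{d,j}\), the highest \(t\)-power comes from the smallest \(d\) with \([C^{-1}]_{d,j}\ne0\), namely \(d_j\). This gives degree \(D_j\) and the stated leading coefficient. The coefficient \(d_j\) exists because \(C^{-1}\) is invertible, so its columns are nonzero. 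Hence \(h_j\not\equiv0\).

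The bound on \(t\) is Cauchy's root bound. Every root \(\tau\) of \(h_j\) satisfies \(|\tau|\le1+\max_v|h_{j,v}/h_{j,D_j}|\), so any larger \(t\) excludes all zeros of all terminal \(h_j\) simultaneously. One must also check that \(t\) stays in the integrability range and that the nodes stay distinct. The latter is automatic from the noninteger \(\gamma_i\) differences, and the former holds for all large \(t\).

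For a finite collection of index pairs, I take the maximum of the finitely many thresholds. For all orders, under \eqref{eq-Laguerre-global-nonresonance}, \(c_{\boldsymbol\lambda}\ne0\) at every admissible index. Each index pair then yields finitely many nonzero polynomials \(h_j\), hence finitely many real zeros. There are countably many pairs, so the exceptional set is countable and almost every integrable shift works. Since the thresholds may grow with the order, no uniform threshold is claimed. I expect the main obstacle to be the first step, namely justifying rigorously that the kernel coordinates are exactly \(h_j\), with consistent row ordering and a normalization independent of \(t\). The degree computation is then routine.
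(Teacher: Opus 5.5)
Your proposal is correct and follows the paper's own proof essentially step by step. The kernel coordinates are \(r(t)^{\mathsf T}C_{\boldsymbol\lambda}^{-1}\) with \(C_{\boldsymbol\lambda}\) independent of \(t\), the top \(t\)-power of \(h_j\) comes from the first nonzero entry \([C_{\boldsymbol\lambda}^{-1}]_{d_j,j}\), and the remaining claims follow from Cauchy's root bound, a maximum over finitely many thresholds, and a countable union of finite zero sets, with \(c_{\boldsymbol\lambda}\ne0\) at every admissible index supplied by \eqref{eq-Laguerre-global-nonresonance}. The step you flag as the main obstacle is in fact immediate: the Mellin normalization \((\boldsymbol\beta+(1-z)\one_p)_{\boldsymbol m}\) equals \((-1)^{N-1}R_t(z)\), so the normalized coefficient vector is \((-1)^{N-1}(h_j(t))_j\), with a \(t\)-independent sign and the same row indexing as \(C_{\boldsymbol\lambda}\).
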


\begin{proof}
The normalized Mellin numerator of a B-form is, up to its overall sign,
\(R_t\). Its weighted coefficient row is
\((r_0(t),\ldots,r_{N-1}(t))C_{\boldsymbol\lambda}^{-1}\).
The coefficient \(r_d(t)\) has degree \(N-1-d\) and leading coefficient
\((-1)^{N-1-d}\binom{N-1}{d}\). In the sum defining \(h_j\), the first
nonzero inverse-matrix entry therefore gives the unique term of
highest degree; no cancellation of that term is possible. This proves
the degree and leading-coefficient formulas, including \(N=1\).
A terminal coefficient vanishes exactly when its \(h_j\) does.
Cauchy's elementary root bound gives the stated sufficient inequality:
if \(|t|>1+M\), the sum of the absolute values of all lower terms of
a monic polynomial with coefficients bounded by \(M\) is less than
\(|t|^{D_j}\). Taking the maximum of finitely many bounds proves the
finite-collection assertion. Finally, there are only countably many
index pairs, and each contributes finitely many real roots.
Their union has Lebesgue measure zero. Integrability is retained by
restricting to \(t+\gamma_i+a_\rho>-1\) for every \(i,\rho\).
\end{proof}

Let
\[
\boldsymbol n=(n_1,\ldots,n_r)\in\mathbb N_0^r,
\qquad
\boldsymbol\eta=(\eta_1,\ldots,\eta_s)\in\mathbb N_0^s,
\]
and put
\[
\boldsymbol\lambda\coloneq(\boldsymbol n,\boldsymbol\eta)
=
(n_1,\ldots,n_r,\eta_1,\ldots,\eta_s),
\qquad
N\coloneq |\boldsymbol n|+|\boldsymbol\eta|.
\]

Empty sums and empty products are understood in the usual sense. In the
Laguerre admissible range considered below, the normality input is supplied
by Proposition~\ref{prop:AT-normality-Laguerre-admissible}.  Once this
normality is known, the word normalized is used in the sense of
Definition~\ref{def:normalization-convention}: the formulas below choose a
specific representative of the one-dimensional space of solutions.

\begin{theorem}[Explicit mixed Laguerre I forms]
	\label{thm:mixed-Laguerre-forms}
	Assume that
	\[
	\boldsymbol\lambda=(\boldsymbol n,\boldsymbol\eta)
	\in
	\mathbb N_0^r\times\mathbb N_0^s
	\]
	is Laguerre admissible in the sense of
	Definition~\ref{def:Laguerre-admissible}. Assume the standing integrability
	conditions, \(c_{\boldsymbol\lambda}\ne0\) in
	\eqref{eq-Laguerre-coefficient-determinant}, and
	\(\beta_i-\beta_j\notin\mathbb Z\) for \(i\ne j\).
	
	\smallskip
	
	\noindent
	\textnormal{(i) The \(A\)-form.}
	Let
	\[
	\boldsymbol m=(m_1,\ldots,m_p)\in\mathbb N_0^p,
	\qquad
	|\boldsymbol m|=N+1,
	\]
	and assume that the nodes
	\[
	\beta_i+k,
	\qquad
	i\in\{1,\ldots,p\}\textnormal{ with }m_i\ge1,
	\quad
	k\in\{0,\ldots,m_i-1\},
	\]
	are pairwise distinct. Set
	\[
	D_{\boldsymbol\beta,\boldsymbol m}(t)
	\coloneq
	\prod_{i=1}^{p}(\beta_i-t)_{m_i}.
	\]
	The contour-normalized representative of the mixed form on the power vector is
	\begin{equation}
		\label{eq:Laguerre-A-contour}
		\mathcal A_{\boldsymbol\lambda,\boldsymbol m}^{\mathrm L}(x)
		\coloneq
		\frac{1}{2\pi\mathrm i}
		\int_{\Sigma}
		\frac{\Gamma(t\one_r+\boldsymbol b+\boldsymbol n+\one_r)}
		{\Gamma(t\one_q+\boldsymbol a+\one_q)}
		\frac{x^t}{D_{\boldsymbol\beta,\boldsymbol m}(t)}\dt,
	\end{equation}
	where \(\Sigma\) encloses precisely the nodes
	\(\beta_i,\ldots,\beta_i+m_i-1\),
	\(i\in\{1,\ldots,p\}\) with \(m_i\ge1\),
	and no other pole of the integrand. Its residue expansion is
	\[
	\mathcal A_{\boldsymbol\lambda,\boldsymbol m}^{\mathrm L}(x)
	=
	\sum_{i=1}^{p}
	A_{\boldsymbol\lambda,\boldsymbol m}^{(i)}(x)x^{\beta_i},
	\]
	where
	\(A_{\boldsymbol\lambda,\boldsymbol m}^{(i)}\equiv0\) if \(m_i=0\), and
	\(A_{\boldsymbol\lambda,\boldsymbol m}^{(i)}\in\mathbb P_{m_i-1}\) if \(m_i\ge1\).
	More explicitly, for \(m_i\ge1\),
	\begin{equation}
		\label{eq:Laguerre-A-components-hypergeometric}
		A_{\boldsymbol\lambda,\boldsymbol m}^{(i)}(x)
		=
		\kappa_i^{\mathrm L,\mathcal A}(\boldsymbol\lambda,\boldsymbol m)
		\pFq{p+r}{p+q-1}
		{
			-m_i+1,\
			(\beta_i+1)\one_r+\boldsymbol b+\boldsymbol n,\
			(\beta_i+1)\one_{p-1}
			-\boldsymbol\beta^{\,*i}-\boldsymbol m^{\,*i}
		}
		{
			(\beta_i+1)\one_q+\boldsymbol a,\
			(\beta_i+1)\one_{p-1}-\boldsymbol\beta^{\,*i}
		}
		{x},
	\end{equation}
	where
	\begin{equation}
		\label{eq:Laguerre-A-kappa}
		\kappa_i^{\mathrm L,\mathcal A}(\boldsymbol\lambda,\boldsymbol m)
		\coloneq
		-
		\frac{
			\Gamma((\beta_i+1)\one_r+\boldsymbol b+\boldsymbol n)
		}{
			(m_i-1)!\,
			\Gamma((\beta_i+1)\one_q+\boldsymbol a)\,
			(\boldsymbol\beta^{\,*i}-\beta_i\one_{p-1})_{\boldsymbol m^{\,*i}}
		}.
	\end{equation}
	If \(m_i=0\), then \(A_{\boldsymbol\lambda,\boldsymbol m}^{(i)}\equiv0\). The \(A\)-form
	satisfies
	\begin{align}
		\label{eq:A-orthogonality-wh}
		\int_0^\infty
		\mathcal A_{\boldsymbol\lambda,\boldsymbol m}^{\mathrm L}(x)
		x^k w_h(x;\boldsymbol a,\boldsymbol b)\dx
		&=0,
		&&k\in\{0,\ldots,n_h-1\},
		\quad
		h\in\{1,\ldots,r\}\textnormal{ with }n_h\ge1,
		\\
		\label{eq:A-orthogonality-vl}
		\int_0^\infty
		\mathcal A_{\boldsymbol\lambda,\boldsymbol m}^{\mathrm L}(x)
		x^k v_\ell(x;\boldsymbol a,\boldsymbol b)\dx
		&=0,
		&&k\in\{0,\ldots,\eta_\ell-1\},
		\quad
		\ell\in\{1,\ldots,s\}\textnormal{ with }\eta_\ell\ge1.
	\end{align}
	
	\smallskip
	
	\noindent
	\textnormal{(ii) The \(B\)-form.}
	Let
	\[
	\boldsymbol m=(m_1,\ldots,m_p)\in\mathbb N_0^p,
	\qquad
	|\boldsymbol m|=N-1.
	\]
	Assume also, for the residue expansion below, that
	\[
	a_\rho-a_\sigma\notin\mathbb Z,
	\qquad
	\rho,\sigma\in\{1,\ldots,q\},
	\quad
	\rho\neq\sigma.
	\]
	There exists a unique representative of the normal \(\mathcal B\)-form,
	normalized by the Mellin identity below,
	\begin{equation}
		\label{eq:Laguerre-B-form}
		\mathcal B_{\boldsymbol\lambda,\boldsymbol m}^{\mathrm L}(x)
		=
		\sum_{h=1}^{r}
		B_{\boldsymbol\lambda,\boldsymbol m}^{(h)}(x)
		w_h(x;\boldsymbol a,\boldsymbol b)
		+
		\sum_{\ell=1}^{s}
		D_{\boldsymbol\lambda,\boldsymbol m}^{(\ell)}(x)
		v_\ell(x;\boldsymbol a,\boldsymbol b),
	\end{equation}
	where
	\[
	B_{\boldsymbol\lambda,\boldsymbol m}^{(h)}\in\mathbb P_{n_h-1}
	\quad\textnormal{if }n_h\ge1,
	\qquad
	B_{\boldsymbol\lambda,\boldsymbol m}^{(h)}\equiv0
	\quad\textnormal{if }n_h=0,
	\]
	and
	\[
	D_{\boldsymbol\lambda,\boldsymbol m}^{(\ell)}\in\mathbb P_{\eta_\ell-1}
	\quad\textnormal{if }\eta_\ell\ge1,
	\qquad
	D_{\boldsymbol\lambda,\boldsymbol m}^{(\ell)}\equiv0
	\quad\textnormal{if }\eta_\ell=0.
	\]
	It satisfies
	\begin{equation}
		\label{eq:Laguerre-B-orthogonality}
		\int_0^\infty
		\mathcal B_{\boldsymbol\lambda,\boldsymbol m}^{\mathrm L}(x)
		x^{\beta_i+k}\dx
		=0,
		\qquad
		k\in\{0,\ldots,m_i-1\},
		\quad
		i\in\{1,\ldots,p\},
	\end{equation}
	where this condition is absent when \(m_i=0\). The normalization is
	prescribed by
	\begin{equation}
		\label{eq:Laguerre-B-Mellin}
		\mathcal M[\mathcal B_{\boldsymbol\lambda,\boldsymbol m}^{\mathrm L}](z)
		=
		\frac{\Gamma(z\one_q+\boldsymbol a)}
		{\Gamma(z\one_r+\boldsymbol b+\boldsymbol n)}
		(\boldsymbol\beta+(1-z)\one_p)_{\boldsymbol m}.
	\end{equation}
	The complete form also has the hypergeometric residue expansion
	\begin{multline}
		\label{eq:Laguerre-B-complete-hypergeometric}
			\mathcal B_{\boldsymbol\lambda,\boldsymbol m}^{\mathrm L}(x)
			=
			\sum_{\rho=1}^{q}
			\frac{
				\Gamma(\boldsymbol a^{\,*\rho}-a_\rho\one_{q-1})
				(\boldsymbol\beta+(a_\rho+1)\one_p)_{\boldsymbol m}
			}{
				\Gamma(\boldsymbol b+\boldsymbol n-a_\rho\one_r)
			}
			x^{a_\rho}
			\\*\times
			\pFq{r+p}{q+p-1}
			{
				(a_\rho+1)\one_r-\boldsymbol b-\boldsymbol n,\
				\boldsymbol\beta+\boldsymbol m+(a_\rho+1)\one_p
			}
			{
				(a_\rho+1)\one_{q-1}-\boldsymbol a^{\,*\rho},\
				\boldsymbol\beta+(a_\rho+1)\one_p
			}
			{(-1)^s x}.
	\end{multline}
	
	Equivalently, the same complete form can be written compactly as the
	Meijer \(G\)-function
	\begin{equation}
		\label{eq:Laguerre-B-complete-Meijer-G}
		\mathcal B_{\boldsymbol\lambda,\boldsymbol m}^{\mathrm L}(x)
		=
		G_{p+r,p+q}^{q,p}
		\left(
		x\,\middle|\,
		\begin{matrix}
			-\boldsymbol\beta-\boldsymbol m,\ \boldsymbol b+\boldsymbol n\\
			\boldsymbol a,\ -\boldsymbol\beta
		\end{matrix}
		\right).
	\end{equation}
	
	The shifted-beta components also admit an explicit closed
	hypergeometric form. For the formulas below, assume additionally that the
	nodes
	\[
	-b_H-K,
	\qquad
	H\in\{1,\ldots,r\},
	\quad K\in\{0,\ldots,n_H-1\},
	\]
	are pairwise distinct and that every displayed denominator is nonzero.
	These extra finite nonresonance conditions are not needed for the Mellin or
	Meijer \(G\)-representations of the complete form. Let
	\[
	\Pi_{\boldsymbol\lambda,\boldsymbol m}^{\mathcal B,\mathrm L}(z)
	\coloneq
	\left[
	\frac{(\boldsymbol\beta+(1-z)\one_p)_{\boldsymbol m}}{(z\one_r+\boldsymbol b)_{\boldsymbol n}}
	\right]_+
	\]
	be the polynomial part at \(z=\infty\). For
	\(\ell\in\{1,\ldots,s\}\) and
	\(k\in\{0,\ldots,\eta_\ell-1\}\), set
	\[
	E_{\ell,k}^{\mathrm L}(z)
	\coloneq
	\left[
	\frac{(z\one_q+\boldsymbol a)_k}{(z\one_r+\boldsymbol b)_k}(z+k)_{\ell-1}
	\right]_+.
	\]
	The coefficients \(\delta_{\ell,k}^{\mathcal B,\mathrm L}\) are determined by
	the finite expansion
	\begin{equation}
		\label{eq:Laguerre-B-delta-polynomial-expansion}
		\Pi_{\boldsymbol\lambda,\boldsymbol m}^{\mathcal B,\mathrm L}(z)
		=
		\sum_{\ell=1}^{s}
		\sum_{k=0}^{\eta_\ell-1}
		\delta_{\ell,k}^{\mathcal B,\mathrm L}
		E_{\ell,k}^{\mathrm L}(z),
	\end{equation}
	with the convention that the sum is absent when \(|\boldsymbol\eta|=0\).
	For \(H\in\{1,\ldots,r\}\) and \(K\in\{0,\ldots,n_H-1\}\), define the
	effective residues directly by
	\begin{multline}
	\label{eq:Laguerre-B-effective-rho-direct-statement}
			\widehat\rho_{H,K}^{\mathcal B,\mathrm L}
			\coloneq
			(-1)^K
			\frac{
				(\boldsymbol\beta+(b_H+K+1)\one_p)_{\boldsymbol m}
			}{
				K!(n_H-K-1)!
				(\boldsymbol b^{\,*H}-(b_H+K)\one_{r-1})_{\boldsymbol n^{\,*H}}
			}
			\\*
			-
			\sum_{\ell=1}^{s}
			\sum_{k=K+1}^{\eta_\ell-1}
			\delta_{\ell,k}^{\mathcal B,\mathrm L}
			\frac{
				(k-b_H-K)_{\ell-1}
				(\boldsymbol a-(b_H+K)\one_q)_k
			}{
				(-1)^K K!(k-1-K)!
				(\boldsymbol b^{\,*H}-(b_H+K)\one_{r-1})_k
			},
	\end{multline}
	where empty sums are absent. Then, for each
	\(h\in\{1,\ldots,r\}\) with \(n_h\ge1\),
	\begin{equation}
		\label{eq:Laguerre-B-components-effective-hypergeometric}
		B_{\boldsymbol\lambda,\boldsymbol m}^{(h)}(x)
		=
		\sum_{\substack{H=1\\ n_H\ge1}}^{r}
		\sum_{K=0}^{n_H-1}
		\widehat\rho_{H,K}^{\mathcal B,\mathrm L}
		\mathscr R_{h;H,K}^{\mathrm L}(x),
	\end{equation}
	where
	\begin{equation}
		\label{eq:Laguerre-B-block-zero}
		\mathscr R_{h;H,K}^{\mathrm L}(x)=0
		\quad\textnormal{if }h\ne H\textnormal{ and }K=0,
	\end{equation}
	and, in all other cases,
	\begin{equation}
		\label{eq:Laguerre-B-hypergeometric-block-final}
		\mathscr R_{h;H,K}^{\mathrm L}(x)
		\coloneq
		\frac{(\boldsymbol a-b_h\one_q)_1}{(\boldsymbol b^{\,*h}-b_h\one_{r-1})_1}
		\frac{(\boldsymbol b^{\,*h}-(b_H+K)\one_{r-1})_1}
		{(\boldsymbol a-(b_H+K)\one_q)_1}
		\pFq{r+1}{q}
		{
			1,\ \boldsymbol b+(1-b_H-K)\one_r-\boldsymbol e_h
		}
		{
			\boldsymbol a+(1-b_H-K)\one_q
		}
		{x}.
	\end{equation}
	Each hypergeometric block is terminating; its degree is \(K\) if
	\(h=H\), and \(K-1\) if \(h\ne H\).

\end{theorem}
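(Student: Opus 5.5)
The proof will mirror that of Theorem~\ref{thm:mixed-Jacobi-like-forms}, with the Jacobi Mellin data replaced by \eqref{eq:wh-Mellin}--\eqref{eq:vl-Mellin}.

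\textbf{Part (i).} Evaluating \eqref{eq:Laguerre-A-contour} by residues at the simple zeros of \(D_{\boldsymbol\beta,\boldsymbol m}\) gives a finite sum \(\sum_i x^{\beta_i}\sum_{k<m_i}c_{i,k}x^k\). This shows \(A^{(i)}\in\mathbb P_{m_i-1}\). The ratio \(c_{i,k}/c_{i,0}\) is computed with the same Pochhammer manipulations as in Step~2 of the Jacobi proof, which yields \eqref{eq:Laguerre-A-components-hypergeometric} and \eqref{eq:Laguerre-A-kappa}.

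For the orthogonality, I pair with \(x^kw_h\) or \(x^kv_\ell\) and interchange the finite contour with the integral over \((0,\infty)\). The Mellin factor evaluated at \(t+k+1\) then combines with the Gamma quotient in the integrand. For \(w_h\) this produces the rational function
\[
\frac{(\boldsymbol b+\boldsymbol n+(t+1)\one_r)\text{-Pochhammer product}}{\dots}
=\frac{(t\one_q+\boldsymbol a+\one_q)_k}{D_{\boldsymbol\beta,\boldsymbol m}(t)}\cdot\frac{\Gamma(t\one_r+\boldsymbol b+\boldsymbol n+\one_r)}{\Gamma(t\one_r+\boldsymbol b+\one_r+k\one_r+\boldsymbol e_h)}.
\]
By \eqref{eq:Laguerre-admissibility-nn} we have \(k+\delta\le n_u\), so the Gamma quotient is a polynomial of degree \(|\boldsymbol n|-rk-1\). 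For \(v_\ell\), the extra factor \((t+k+1)^{\ell-1}\) appears, and \eqref{eq:Laguerre-admissibility-etan} gives the polynomiality. It remains to check that the total degree is at most \(\deg D-2=N-1\). For \(w_h\) the degree is \(qk+|\boldsymbol n|-rk-1\le |\boldsymbol n|+sk-1\), and this is bounded by \(N-1\) because \(sk\le s(n_h-1)\le|\boldsymbol\eta|\), which is exactly \eqref{eq:Laguerre-admissibility-n-eta}. For \(v_\ell\) the degree is \(|\boldsymbol n|+sk+\ell-1\le N-1\), which follows from \eqref{eq:Laguerre-admissibility-eta-step}. Since the residue at infinity therefore vanishes, the sum of the finite residues is zero, and this sum is the pairing.

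\textbf{Part (ii), complete form.} Existence and uniqueness come from Proposition~\ref{prop:AT-normality-Laguerre-admissible}, since \(c_{\boldsymbol\lambda}\ne0\). The weighted Mellin map onto \(\mathbb P_{N-1}\) is then an isomorphism, so there is a unique coefficient vector whose numerator is \((\boldsymbol\beta+(1-z)\one_p)_{\boldsymbol m}\), a polynomial of degree \(N-1\). This gives \eqref{eq:Laguerre-B-Mellin}, and orthogonality follows because this numerator vanishes at \(z=\beta_i+k+1\). Writing the Pochhammer product as a Gamma quotient gives \eqref{eq:Laguerre-B-complete-Meijer-G} via \eqref{eq:Meijer-G-Mellin-transform}. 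Summing the residues at \(z=-a_\rho-k\) gives \eqref{eq:Laguerre-B-complete-hypergeometric}; the argument \((-1)^sx\) comes from the \(s\) unmatched Gamma factors in the reflection count.

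\textbf{Part (ii), components.} Divide the target rational function \(R(z)=(\boldsymbol\beta+(1-z)\one_p)_{\boldsymbol m}/(z\one_r+\boldsymbol b)_{\boldsymbol n}\) into its polynomial part \(\Pi\) and its principal parts at \(-b_H-K\). The rows \(x^kv_\ell\) have Mellin ratio \((z\one_q+\boldsymbol a)_k(z+k)^{\ell-1}/(z\one_r+\boldsymbol b)_k\). Each such row contributes its polynomial part \(E_{\ell,k}\) together with principal parts at \(-b_H-K'\) for \(K'<k\). I expand \(\Pi\) in the triangular basis \(E_{\ell,k}\) to define \(\delta\) as in \eqref{eq:Laguerre-B-delta-polynomial-expansion}. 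The principal parts of the Euler rows are then subtracted from the true residues of \(R\), which gives the effective residues \eqref{eq:Laguerre-B-effective-rho-direct-statement}. The remaining simple fractions are reconstructed from the \(w_h\) by Step~3 of the Jacobi proof with \(\boldsymbol a\) of length \(q\) and \(\boldsymbol b\) of length \(r\). That transition identity needs only the \(O(s^{-1})\) decay, which persists because \(q\ge r\) still makes the ratio a rational function with the same pole structure, though I must verify that the extra \(s\) numerator factors do not spoil the decay at each step. This reconstruction gives \eqref{eq:Laguerre-B-hypergeometric-block-final}.

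The main obstacle is this decay check, together with the bookkeeping of the residues of the Euler rows at \(-b_H-K\) needed to match the correction term in \eqref{eq:Laguerre-B-effective-rho-direct-statement}. I would first try a direct residue computation of \((z\one_q+\boldsymbol a)_k(z+k)_{\ell-1}/(z\one_r+\boldsymbol b)_k\); if the decay fails, I would absorb the excess polynomial part into the \(E\)-expansion, since the triangularity of that expansion makes it self-consistent.
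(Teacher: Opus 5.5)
Your treatment of part (i) and of the complete form in part (ii) follows the paper's proof. That covers the residues at the nodes, the vanishing residue at infinity via the four admissibility inequalities, the isomorphism onto \(\mathbb P_{N-1}\) given by \(c_{\boldsymbol\lambda}\ne0\), and Mellin inversion. The gap is in the shifted-beta components.

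You propose to realize each simple fraction \(1/(z+b_H+K)\) by the Jacobi transition identity, claiming the \(\mathrm O(z^{-1})\) decay ``persists because \(q\ge r\)''. It does not persist. After dividing by \((z\one_q+\boldsymbol a)_j/(z\one_r+\boldsymbol b)_j\), the last term \((z\one_q+\boldsymbol a+j\one_q)_1/\bigl((z\one_r+\boldsymbol b+j\one_r)_1(z+B)\bigr)\) grows like \(z^{q-r-1}=z^{s-1}\). So the principal-part comparison only shows that the two sides differ by a polynomial \(P_j(\cdot;B)\in\mathbb P_{s-1}\). Its coefficient of \(z^{s-1}\) is \(-(\boldsymbol b-B\one_r)_{j+1}/(\boldsymbol a-B\one_q)_{j+1}\ne0\) for \(j<K\). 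Hence \(1/(z+b_H+K)\) is \emph{not} the reduced symbol of the block vector \([\mathscr R^{\mathrm L}_{h;H,K}]_h\); the excess \(q>r\) is exactly what breaks the Jacobi step.

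Your fallback, absorbing ``the excess polynomial part into the \(E\)-expansion'', is not the right repair. The remainder is the rational function \((z\one_q+\boldsymbol a)_j(z\one_r+\boldsymbol b)_j^{-1}P_j(z;B)\), which has its own poles at \(-b_h-i\), \(i<j\). Changing the \(\delta\)'s would also change \(\widehat\rho_{H,K}\), whereas in the statement the \(\delta_{\ell,k}\) are fixed by \(\Pi\) alone.

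The missing idea is that the whole remainder is itself an Euler symbol. Since \(\{(z+j)_{\ell-1}\}_{\ell=1}^s\) is a basis of \(\mathbb P_{s-1}\), one has \((z\one_q+\boldsymbol a)_j(z\one_r+\boldsymbol b)_j^{-1}P_j=\sum_{\ell}\gamma_{\ell,j}\widetilde Q_{\ell,j}\), where \(\widetilde Q_{\ell,j}=(z\one_q+\boldsymbol a)_j(z+j)_{\ell-1}/(z\one_r+\boldsymbol b)_j\). Telescoping over \(j<K\) then gives
\[
\frac{1}{z+b_H+K}=\mathscr W^{\mathrm L}_{H,K}(z)+\sum_{j=0}^{K-1}\sum_{\ell=1}^{s}\gamma_{\ell,j}^{H,K}\widetilde Q_{\ell,j}(z).
\]
These Euler rows lie in the finite system, because \(sK\le s(n_H-1)\le|\boldsymbol\eta|\) with \(\boldsymbol\eta\) on the step-line forces \(K\le\min_\ell\eta_\ell\).

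Insert this into \(R-\sum\delta_{\ell,k}\widetilde Q_{\ell,k}=\sum\widehat\rho_{H,K}/(z+b_H+K)\). This writes \(R\) as \(\sum\widehat\rho_{H,K}\mathscr W^{\mathrm L}_{H,K}\) plus Euler symbols only. Uniqueness of the component vector (the isomorphism from \(c_{\boldsymbol\lambda}\ne0\)) then identifies the \(w_h\)-components as \(\sum\widehat\rho_{H,K}\mathscr R^{\mathrm L}_{h;H,K}\), with all corrections landing in the \(v_\ell\)-components.

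Without this ``modulo Euler symbols'' reconstruction and the final uniqueness step, your argument does not reach \eqref{eq:Laguerre-B-components-effective-hypergeometric}. One minor point: to match the statement's \(\delta\) and residues, work with the Pochhammer symbols \((z+k)_{\ell-1}\) rather than \((z+k)^{\ell-1}\). They span the same space for each fixed \(k\), but the coefficients differ.
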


\begin{proof}
	\textit{Step 1: Residue formula and hypergeometric components for the form on the power vector.}
	We first turn to the \(A\)-form. The contour expression
	\eqref{eq:Laguerre-A-contour} has possible poles inside \(\Sigma\) only
	at the zeros of \(D_{\boldsymbol\beta,\boldsymbol m}\). These nodes are assumed to be
	pairwise distinct, and they are \(t=\beta_i+k\), with
	\(i\in\{1,\ldots,p\}\), \(m_i\ge1\), and
	\(k\in\{0,\ldots,m_i-1\}\). Hence all enclosed poles are simple.
	
	Evaluating the contour integral by residues gives
	\[
	\mathcal A_{\boldsymbol\lambda,\boldsymbol m}^{\mathrm L}(x)
	=
	\sum_{\substack{i=1\\m_i\ge1}}^{p}
	\sum_{k=0}^{m_i-1}
	c_{i,k}x^{\beta_i+k},
	\]
	where
	\[
	c_{i,k}
	\coloneq
	\operatorname*{Res}_{t=\beta_i+k}
	\left[
	\frac{\Gamma(t\one_r+\boldsymbol b+\boldsymbol n+\one_r)}
	{\Gamma(t\one_q+\boldsymbol a+\one_q)}
	\frac{1}{D_{\boldsymbol\beta,\boldsymbol m}(t)}
	\right].
	\]
	Equivalently,
	\[
	\mathcal A_{\boldsymbol\lambda,\boldsymbol m}^{\mathrm L}(x)
	=
	\sum_{i=1}^{p}
	A_{\boldsymbol\lambda,\boldsymbol m}^{(i)}(x)x^{\beta_i},
	\]
	where \(A_{\boldsymbol\lambda,\boldsymbol m}^{(i)}\equiv0\) if \(m_i=0\), and, for
	\(m_i\ge1\),
	\[
	A_{\boldsymbol\lambda,\boldsymbol m}^{(i)}(x)
	=
	\sum_{k=0}^{m_i-1}c_{i,k}x^k
	\in\mathbb P_{m_i-1}.
	\]
	This already gives the required expansion in the power vector and the
	degree bounds for its components.
	
	We compute the coefficients explicitly. Fix
	\(i\in\{1,\ldots,p\}\) with \(m_i\ge1\). At the node
	\(t=\beta_i+k\), direct evaluation of the simple residue gives
	\begin{equation}
		\label{eq:Laguerre-A-residue-coefficients}
		c_{i,k}
		=
		\frac{(-1)^{k+1}}{
			k!(m_i-k-1)!
			(\boldsymbol\beta^{\,*i}-\beta_i\one_{p-1}-k\one_{p-1})_{\boldsymbol m^{\,*i}}
		}
		\frac{
			\Gamma((\beta_i+k)\one_r+\boldsymbol b+\boldsymbol n+\one_r)
		}{
			\Gamma((\beta_i+k)\one_q+\boldsymbol a+\one_q)
		},
	\end{equation}
	for \(k\in\{0,\ldots,m_i-1\}\). In particular,
	\[
	c_{i,0}
	=
	-
	\frac{
		\Gamma((\beta_i)\one_r+\boldsymbol b+\boldsymbol n+\one_r)
	}{
		(m_i-1)!\,
		\Gamma((\beta_i)\one_q+\boldsymbol a+\one_q)\,
		(\boldsymbol\beta^{\,*i}-\beta_i\one_{p-1})_{\boldsymbol m^{\,*i}}
	}
	=
	\kappa_i^{\mathrm L,\mathcal A}(\boldsymbol\lambda,\boldsymbol m).
	\]
	For \(k\ge0\), the quotient \(c_{i,k}/c_{i,0}\) is obtained by the usual
	Pochhammer identities. One gets
	\[
	\frac{c_{i,k}}{c_{i,0}}
	=
	\frac{(-m_i+1)_k}{k!}
	\frac{
		((\beta_i)\one_r+\boldsymbol b+\boldsymbol n+\one_r)_k
		(\beta_i\one_{p-1}-\boldsymbol\beta^{\,*i}-\boldsymbol m^{\,*i}+\one_{p-1})_k
	}{
		((\beta_i)\one_q+\boldsymbol a+\one_q)_k
		(\beta_i\one_{p-1}-\boldsymbol\beta^{\,*i}+\one_{p-1})_k
	}.
	\]
	Therefore
	\[
	A_{\boldsymbol\lambda,\boldsymbol m}^{(i)}(x)
	=
	\kappa_i^{\mathrm L,\mathcal A}(\boldsymbol\lambda,\boldsymbol m)
	\sum_{k=0}^{m_i-1}
	\frac{(-m_i+1)_k}{k!}
	\frac{
		((\beta_i)\one_r+\boldsymbol b+\boldsymbol n+\one_r)_k
		(\beta_i\one_{p-1}-\boldsymbol\beta^{\,*i}-\boldsymbol m^{\,*i}+\one_{p-1})_k
	}{
		((\beta_i)\one_q+\boldsymbol a+\one_q)_k
		(\beta_i\one_{p-1}-\boldsymbol\beta^{\,*i}+\one_{p-1})_k
	}x^k.
	\]
	This is exactly the terminating hypergeometric polynomial
	\eqref{eq:Laguerre-A-components-hypergeometric}, with the normalization
	\eqref{eq:Laguerre-A-kappa}.
	
	\textit{Step 2: Orthogonality of the \(A\)-form against the shifted beta block.}
	Let \(h\in\{1,\ldots,r\}\) with \(n_h\ge1\), and let
	\(k\in\{0,\ldots,n_h-1\}\). We pair the contour expression
	\eqref{eq:Laguerre-A-contour} with \(x^k w_h(x;\boldsymbol a,\boldsymbol b)\).
	The exchange of the contour integral with the \(x\)-integral is legitimate.
	Indeed, \(\Sigma\) is a finite union of compact contours around the nodes
	\(\beta_i+j\), and it can be chosen so that, for every \(t\in\Sigma\), the
	Mellin parameter \(t+k+1\) lies in the common fundamental strip of the
	weights involved. Equivalently, the functions
	\(x^{t+k}w_h(x;\boldsymbol a,\boldsymbol b)\) are absolutely integrable uniformly for
	\(t\in\Sigma\). Therefore Fubini's theorem gives
	\begin{equation*}
	\int_0^\infty \mathcal A_{\boldsymbol\lambda,\boldsymbol m}^{\mathrm L}(x)x^k w_h(x;\boldsymbol a,\boldsymbol b)\dx
	=
	\frac{1}{2\pi\mathrm i}
	\int_\Sigma
	\frac{\Gamma(t\one_r+\boldsymbol b+\boldsymbol n+\one_r)}
	{\Gamma(t\one_q+\boldsymbol a+\one_q)D_{\boldsymbol\beta,\boldsymbol m}(t)}
	\left(\int_0^\infty x^{t+k}w_h(x;\boldsymbol a,\boldsymbol b)\dx\right)
	\dt.
	\end{equation*}
	Using \eqref{eq:wh-Mellin},
	\[
	\mathcal M[w_h](t+k+1)
	=
	\frac{\Gamma((t+k)\one_q+\boldsymbol a+\one_q)}
	{\Gamma((t+k)\one_r+\boldsymbol b+\one_r+\boldsymbol e_h)}.
	\]
	Thus the required pairing is
	\begin{equation*}
	\left\langle x^k\mathcal A_{\boldsymbol\lambda,\boldsymbol m}^{\mathrm L},w_h\right\rangle
	=
	\frac{1}{2\pi\mathrm i}
	\int_\Sigma
	\frac{R_{h,k}(t)}
	{D_{\boldsymbol\beta,\boldsymbol m}(t)}
	\dt,
	\end{equation*}
	where
	\[
	R_{h,k}(t)
	=
	\frac{\Gamma(t\one_r+\boldsymbol b+\boldsymbol n+\one_r)}
	{\Gamma(t\one_q+\boldsymbol a+\one_q)}
	\frac{\Gamma((t+k)\one_q+\boldsymbol a+\one_q)}
	{\Gamma((t+k)\one_r+\boldsymbol b+\one_r+\boldsymbol e_h)}.
	\]
	
	We check that \(R_{h,k}\) is a polynomial of sufficiently small
	degree. By \eqref{eq:Laguerre-admissibility-nn}, one has
	\(k\le n_h-1\le n_u\) for every \(u\in\{1,\ldots,r\}\). Hence
	\(\boldsymbol n-k\one_r-\boldsymbol e_h\in\mathbb N_0^r\), and
	\[
	R_{h,k}(t)
	=
	(t\one_q+\boldsymbol a+\one_q)_k
	(t\one_r+\boldsymbol b+(k+1)\one_r+\boldsymbol e_h)_{\boldsymbol n-k\one_r-\boldsymbol e_h}.
	\]
	This is a polynomial of degree
	\(qk+|\boldsymbol n|-rk-1=|\boldsymbol n|+sk-1\). Since
	\eqref{eq:Laguerre-admissibility-n-eta} gives
	\(sk\le s(n_h-1)\le|\boldsymbol\eta|\), we have
	\(R_{h,k}\in\mathbb P_{N-1}\).
	
	Now \(\deg D_{\boldsymbol\beta,\boldsymbol m}=|\boldsymbol m|=N+1\), so
	\(R_{h,k}(t)/D_{\boldsymbol\beta,\boldsymbol m}(t)=\mathrm{O}(t^{-2})\) as
	\(t\to\infty\). Hence its residue at infinity is zero. By the residue
	theorem, the sum of all finite residues is zero. Since \(\Sigma\)
	encloses precisely the poles at the nodes \(\beta_i+j\), with
	\(i\in\{1,\ldots,p\}\), \(m_i\ge1\), and
	\(j\in\{0,\ldots,m_i-1\}\), the contour integral vanishes. This proves
	\eqref{eq:A-orthogonality-wh}.
	
	\textit{Step 3: Orthogonality of the \(A\)-form against the Euler-derivative block.}
	Let \(\ell\in\{1,\ldots,s\}\) with \(\eta_\ell\ge1\), and let
	\(k\in\{0,\ldots,\eta_\ell-1\}\). We pair
	\eqref{eq:Laguerre-A-contour} with \(x^k v_\ell(x;\boldsymbol a,\boldsymbol b)\).
	The same justification of the exchange of integrations applies to the
	weights \(v_\ell\): after the integrations by parts used in
	\eqref{eq:vl-def}, the Mellin transform \eqref{eq:vl-Mellin} is valid in
	the corresponding fundamental strip, and the chosen compact contour
	\(\Sigma\) lies inside it for the Mellin parameters \(t+k+1\). Since
	\(\Sigma\) is a finite union of compact contours and the integrand is
	absolutely integrable uniformly for \(t\in\Sigma\), Fubini's theorem
	allows us to interchange the order of integration. Hence
	\begin{align*}
	\left\langle
	x^k\mathcal A_{\boldsymbol\lambda,\boldsymbol m}^{\mathrm L},v_\ell
	\right\rangle
	&=
	\int_0^\infty
	\mathcal A_{\boldsymbol\lambda,\boldsymbol m}^{\mathrm L}(x)
	x^k v_\ell(x;\boldsymbol a,\boldsymbol b)\dx
	\\
	&=
	\frac{1}{2\pi\mathrm i}
	\int_\Sigma
	\frac{\Gamma(t\one_r+\boldsymbol b+\boldsymbol n+\one_r)}
	{\Gamma(t\one_q+\boldsymbol a+\one_q)D_{\boldsymbol\beta,\boldsymbol m}(t)}
	\left(
	\int_0^\infty x^{t+k}v_\ell(x;\boldsymbol a,\boldsymbol b)\dx
	\right)\dt.
	\end{align*}
	Using \eqref{eq:vl-Mellin},
	\[
	\mathcal M[v_\ell](t+k+1)
	=
	(t+k+1)^{\ell-1}
	\frac{\Gamma((t+k)\one_q+\boldsymbol a+\one_q)}
	{\Gamma((t+k)\one_r+\boldsymbol b+\one_r)}.
	\]
	Therefore the required pairing is
	\begin{equation*}
	\left\langle x^k\mathcal A_{\boldsymbol\lambda,\boldsymbol m}^{\mathrm L},v_\ell\right\rangle = \frac{1}{2\pi\mathrm i}\int_\Sigma \frac{S_{\ell,k}(t)}{D_{\boldsymbol\beta,\boldsymbol m}(t)}\dt,
	\end{equation*}
	with
	\[
	S_{\ell,k}(t)
	=
	\frac{\Gamma(t\one_r+\boldsymbol b+\boldsymbol n+\one_r)}
	{\Gamma(t\one_q+\boldsymbol a+\one_q)}
	\frac{\Gamma((t+k)\one_q+\boldsymbol a+\one_q)}
	{\Gamma((t+k)\one_r+\boldsymbol b+\one_r)}
	(t+k+1)^{\ell-1}.
	\]
	
	By \eqref{eq:Laguerre-admissibility-etan}, one has
	\(k\le\eta_\ell-1\le n_u\) for every \(u\in\{1,\ldots,r\}\). Thus
	\(\boldsymbol n-k\one_r\in\mathbb N_0^r\), and
	\[
	S_{\ell,k}(t)
	=
	(t\one_q+\boldsymbol a+\one_q)_k
	(t\one_r+\boldsymbol b+(k+1)\one_r)_{\boldsymbol n-k\one_r}
	(t+k+1)^{\ell-1}.
	\]
	This is a polynomial of degree
	\(qk+|\boldsymbol n|-rk+\ell-1=|\boldsymbol n|+sk+\ell-1\). By
	\eqref{eq:Laguerre-admissibility-eta-step}, one has
	\(sk+\ell\le s(\eta_\ell-1)+\ell\le|\boldsymbol\eta|\). Hence
	\(S_{\ell,k}\in\mathbb P_{N-1}\).
	
	Again, \(\deg D_{\boldsymbol\beta,\boldsymbol m}=|\boldsymbol m|=N+1\), and therefore
	\(S_{\ell,k}(t)/D_{\boldsymbol\beta,\boldsymbol m}(t)=\mathrm{O}(t^{-2})\) as
	\(t\to\infty\). Its residue at infinity is zero, so the sum of the
	finite residues enclosed by \(\Sigma\) vanishes. The contour integral is
	zero, and \eqref{eq:A-orthogonality-vl} follows.
	
	\textit{Step 4: Mellin transforms of the elementary building blocks for the \(B\)-form.}
	We compute the Mellin transforms of the terms that appear in
	the \(B\)-form. The purpose of this step is to show that, after taking out the
	common factor
	\[
	\frac{\Gamma(z\one_q+\boldsymbol a)}{\Gamma(z\one_r+\boldsymbol b+\boldsymbol n)},
	\]
	the remaining factor should be a polynomial in \(z\), of degree at most
	\(N-1\). This is what allows us to reconstruct the \(B\)-form by solving
	a finite polynomial identity.
	
	For the shifted beta weights and the Euler-derivative weights, we have
	\[
	\mathcal M[w_h](z)
	=
	\frac{\Gamma(z\one_q+\boldsymbol a)}
	{\Gamma(z\one_r+\boldsymbol b+\boldsymbol e_h)},
	\qquad
	h\in\{1,\ldots,r\},
	\]
	and
	\[
	\mathcal M[v_\ell](z)
	=
	z^{\ell-1}
	\frac{\Gamma(z\one_q+\boldsymbol a)}
	{\Gamma(z\one_r+\boldsymbol b)},
	\qquad
	\ell\in\{1,\ldots,s\}.
	\]
	Multiplication by \(x^k\) shifts the Mellin variable. Thus
	\(\mathcal M[x^k w_h](z)=\mathcal M[w_h](z+k)\), and we get
	\begin{equation}
		\label{eq:Laguerre-Mellin-xk-wh-first}
		\mathcal M[x^k w_h](z)
		=
		\frac{\Gamma(z\one_q+\boldsymbol a)}
		{\Gamma(z\one_r+\boldsymbol b+\boldsymbol n)}
		(z\one_q+\boldsymbol a)_k
		\frac{\Gamma(z\one_r+\boldsymbol b+\boldsymbol n)}
		{\Gamma(z\one_r+\boldsymbol b+k\one_r+\boldsymbol e_h)}.
	\end{equation}
	Similarly, since \(\mathcal M[x^k v_\ell](z)=\mathcal M[v_\ell](z+k)\),
	we obtain
	\begin{equation}
		\label{eq:Laguerre-Mellin-xk-vl-first}
		\mathcal M[x^k v_\ell](z)
		=
		\frac{\Gamma(z\one_q+\boldsymbol a)}
		{\Gamma(z\one_r+\boldsymbol b+\boldsymbol n)}
		(z\one_q+\boldsymbol a)_k
		\frac{\Gamma(z\one_r+\boldsymbol b+\boldsymbol n)}
		{\Gamma(z\one_r+\boldsymbol b+k\one_r)}
		(z+k)^{\ell-1}.
	\end{equation}
	
	We use Laguerre admissibility. Let
	\(h\in\{1,\ldots,r\}\) with \(n_h\ge1\), and let
	\(k\in\{0,\ldots,n_h-1\}\). By
	\eqref{eq:Laguerre-admissibility-nn}, we have
	\(k\le n_h-1\le n_u\) for every \(u\in\{1,\ldots,r\}\). Hence
	\(\boldsymbol n-k\one_r-\boldsymbol e_h\in\mathbb N_0^r\). Therefore the quotient of gamma
	functions in \eqref{eq:Laguerre-Mellin-xk-wh-first} is a finite
	Pochhammer product:
	\[
	\frac{\Gamma(z\one_r+\boldsymbol b+\boldsymbol n)}
	{\Gamma(z\one_r+\boldsymbol b+k\one_r+\boldsymbol e_h)}
	=
	(z\one_r+\boldsymbol b+k\one_r+\boldsymbol e_h)_{\boldsymbol n-k\one_r-\boldsymbol e_h}.
	\]
	Thus
	\begin{equation}
		\label{eq:Laguerre-Mellin-xk-wh-proof}
		\mathcal M[x^k w_h](z)
		=
		\frac{\Gamma(z\one_q+\boldsymbol a)}
		{\Gamma(z\one_r+\boldsymbol b+\boldsymbol n)}
		(z\one_q+\boldsymbol a)_k
		(z\one_r+\boldsymbol b+k\one_r+\boldsymbol e_h)_{\boldsymbol n-k\one_r-\boldsymbol e_h}.
	\end{equation}
	
	Now let \(\ell\in\{1,\ldots,s\}\) with \(\eta_\ell\ge1\), and let
	\(k\in\{0,\ldots,\eta_\ell-1\}\). By
	\eqref{eq:Laguerre-admissibility-etan}, we have
	\(k\le\eta_\ell-1\le n_u\) for every \(u\in\{1,\ldots,r\}\). Hence
	\(\boldsymbol n-k\one_r\in\mathbb N_0^r\). Therefore
	\[
	\frac{\Gamma(z\one_r+\boldsymbol b+\boldsymbol n)}
	{\Gamma(z\one_r+\boldsymbol b+k\one_r)}
	=
	(z\one_r+\boldsymbol b+k\one_r)_{\boldsymbol n-k\one_r}.
	\]
	Substituting this into \eqref{eq:Laguerre-Mellin-xk-vl-first} gives
	\begin{equation}
		\label{eq:Laguerre-Mellin-xk-vl-proof}
		\mathcal M[x^k v_\ell](z)
		=
		\frac{\Gamma(z\one_q+\boldsymbol a)}
		{\Gamma(z\one_r+\boldsymbol b+\boldsymbol n)}
		(z\one_q+\boldsymbol a)_k
		(z\one_r+\boldsymbol b+k\one_r)_{\boldsymbol n-k\one_r}
		(z+k)^{\ell-1}.
	\end{equation}
	
	The polynomial factors in
	\eqref{eq:Laguerre-Mellin-xk-wh-proof} and
	\eqref{eq:Laguerre-Mellin-xk-vl-proof} are the two finite polynomial
	families that enter the coefficient system below.
	
	It remains to check the degree bounds. In
	\eqref{eq:Laguerre-Mellin-xk-wh-proof}, the polynomial factor is
	\[
	(z\one_q+\boldsymbol a)_k
	(z\one_r+\boldsymbol b+k\one_r+\boldsymbol e_h)_{\boldsymbol n-k\one_r-\boldsymbol e_h}.
	\]
	The first factor has degree \(qk\), and the second one has degree
	\(|\boldsymbol n|-rk-1\). Hence the total degree is
	\[
	qk+|\boldsymbol n|-rk-1=|\boldsymbol n|+sk-1.
	\]
	Since \(k\le n_h-1\), condition
	\eqref{eq:Laguerre-admissibility-n-eta} gives
	\(sk\le s(n_h-1)\le|\boldsymbol\eta|\). Therefore
	\[
	|\boldsymbol n|+sk-1\le |\boldsymbol n|+|\boldsymbol\eta|-1=N-1.
	\]
	So the polynomial factor in \eqref{eq:Laguerre-Mellin-xk-wh-proof}
	belongs to \(\mathbb P_{N-1}\).
	
	In \eqref{eq:Laguerre-Mellin-xk-vl-proof}, the polynomial factor is
	\[
	(z\one_q+\boldsymbol a)_k
	(z\one_r+\boldsymbol b+k\one_r)_{\boldsymbol n-k\one_r}
	(z+k)^{\ell-1}.
	\]
	Its degree is
	\[
	qk+|\boldsymbol n|-rk+\ell-1=|\boldsymbol n|+sk+\ell-1.
	\]
	Since \(k\le\eta_\ell-1\), condition
	\eqref{eq:Laguerre-admissibility-eta-step} gives
	\[
	sk+\ell\le s(\eta_\ell-1)+\ell\le|\boldsymbol\eta|.
	\]
	Hence
	\[
	|\boldsymbol n|+sk+\ell-1\le|\boldsymbol n|+|\boldsymbol\eta|-1=N-1.
	\]
	Thus the polynomial factor in \eqref{eq:Laguerre-Mellin-xk-vl-proof}
	also belongs to \(\mathbb P_{N-1}\).
	
	We have therefore proved that every polynomial combination in
	\eqref{eq:Laguerre-B-form} has a Mellin transform of the form
	\begin{equation}
		\label{eq:Laguerre-common-Mellin-polynomial-form}
		\mathcal M[\mathcal B](z)
		=
		\frac{\Gamma(z\one_q+\boldsymbol a)}
		{\Gamma(z\one_r+\boldsymbol b+\boldsymbol n)}
		F(z),
		\qquad
		F\in\mathbb P_{N-1}.
	\end{equation}
	
	\textit{Step 5: Determination of the form on the Laguerre I vector.}
	We choose the polynomial \(F\) in
	\eqref{eq:Laguerre-common-Mellin-polynomial-form}. The choice is dictated
	by the orthogonality conditions against the power vector. Thus the
	problem is reduced to expanding a prescribed polynomial in a finite
	family of polynomials.
	
	Let \(\mathcal V_{\boldsymbol\lambda}\) be the space of component vectors
	\[
	\left[
	\{B^{(h)}\}_{h=1}^{r},
	\{D^{(\ell)}\}_{\ell=1}^{s}
	\right]
	\]
	with \(B^{(h)}\in\mathbb P_{n_h-1}\) if \(n_h\ge1\), and
	\(B^{(h)}\equiv0\) if \(n_h=0\), together with
	\(D^{(\ell)}\in\mathbb P_{\eta_\ell-1}\) if \(\eta_\ell\ge1\), and
	\(D^{(\ell)}\equiv0\) if \(\eta_\ell=0\). This space has dimension
	\[
	\dim\mathcal V_{\boldsymbol\lambda}=|\boldsymbol n|+|\boldsymbol\eta|=N.
	\]
	By Step~4, the Mellin transform defines a linear map
	\[
	\Phi_{\boldsymbol\lambda}:\mathcal V_{\boldsymbol\lambda}\longrightarrow
	\mathbb P_{N-1}
	\]
	through the identity
	\begin{equation}
		\label{eq:Laguerre-Phi-definition-proof}
		\mathcal M[\mathcal B](z)
		=
		\frac{\Gamma(z\one_q+\boldsymbol a)}
		{\Gamma(z\one_r+\boldsymbol b+\boldsymbol n)}
		\Phi_{\boldsymbol\lambda}
		\left(
		\{B^{(h)}\}_{h=1}^{r},
		\{D^{(\ell)}\}_{\ell=1}^{s}
		\right)(z).
	\end{equation}
	
	If
	\[
	B^{(h)}(x)=\sum_{k=0}^{n_h-1}b_{h,k}x^k,
	\qquad
	D^{(\ell)}(x)=\sum_{k=0}^{\eta_\ell-1}d_{\ell,k}x^k,
	\]
	then this map is explicitly
	\begin{multline}
		\label{eq:Laguerre-Phi-expanded}
		\Phi_{\boldsymbol\lambda}
		\left(
		\{B^{(h)}\}_{h=1}^{r},
		\{D^{(\ell)}\}_{\ell=1}^{s}
		\right)(z)
		=
		\sum_{\substack{h=1\\n_h\ge1}}^{r}
		\sum_{k=0}^{n_h-1}
		b_{h,k}
		(z\one_q+\boldsymbol a)_k
		\frac{\Gamma(z\one_r+\boldsymbol b+\boldsymbol n)}
		{\Gamma(z\one_r+\boldsymbol b+k\one_r+\boldsymbol e_h)}
		\\*
		+
		\sum_{\substack{\ell=1\\\eta_\ell\ge1}}^{s}
		\sum_{k=0}^{\eta_\ell-1}
		d_{\ell,k}
		(z\one_q+\boldsymbol a)_k
		\frac{\Gamma(z\one_r+\boldsymbol b+\boldsymbol n)}
		{\Gamma(z\one_r+\boldsymbol b+k\one_r)}
		(z+k)^{\ell-1}.
	\end{multline}
	Using the Pochhammer rewritings from Step~4, this becomes
	\begin{multline}
		\label{eq:Laguerre-Phi-Pochhammer-expanded}
		\Phi_{\boldsymbol\lambda}
		\left(
		\{B^{(h)}\}_{h=1}^{r},
		\{D^{(\ell)}\}_{\ell=1}^{s}
		\right)(z)
		=
		\sum_{\substack{h=1\\n_h\ge1}}^{r}
		\sum_{k=0}^{n_h-1}
		b_{h,k}
		(z\one_q+\boldsymbol a)_k
		(z\one_r+\boldsymbol b+k\one_r+\boldsymbol e_h)_{\boldsymbol n-k\one_r-\boldsymbol e_h}
		\\*
		+
		\sum_{\substack{\ell=1\\\eta_\ell\ge1}}^{s}
		\sum_{k=0}^{\eta_\ell-1}
		d_{\ell,k}
		(z\one_q+\boldsymbol a)_k
		(z\one_r+\boldsymbol b+k\one_r)_{\boldsymbol n-k\one_r}
		(z+k)^{\ell-1}.
	\end{multline}
	So the coefficient problem for
	\(\mathcal B_{\boldsymbol\lambda,\boldsymbol m}^{\mathrm L}\) is exactly the problem
	of expanding a prescribed polynomial in \(\mathbb P_{N-1}\) in the
	finite family appearing in
	\eqref{eq:Laguerre-Phi-Pochhammer-expanded}. Equivalently, the
	coefficients \(b_{h,k}\) and \(d_{\ell,k}\) satisfy
		\begin{multline}
			\label{eq:B-component-polynomial-system}
			\sum_{\substack{h=1\\n_h\ge1}}^{r}
			\sum_{k=0}^{n_h-1}
			b_{h,k}
			(z\one_q+\boldsymbol a)_k
			(z\one_r+\boldsymbol b+k\one_r+\boldsymbol e_h)_{\boldsymbol n-k\one_r-\boldsymbol e_h}
			\\*
			+
			\sum_{\substack{\ell=1\\\eta_\ell\ge1}}^{s}
			\sum_{k=0}^{\eta_\ell-1}
		d_{\ell,k}
		(z\one_q+\boldsymbol a)_k
		(z\one_r+\boldsymbol b+k\one_r)_{\boldsymbol n-k\one_r}
			(z+k)^{\ell-1}
			=
			(\boldsymbol\beta+(1-z)\one_p)_{\boldsymbol m}.
		\end{multline}
	
	The matrix of \(\Phi_{\boldsymbol\lambda}\), in the weighted
	coefficient basis and the monomial basis of \(\mathbb P_{N-1}\),
	is the coefficient matrix of
	Proposition~\ref{prop:AT-normality-Laguerre-admissible}, up to
	transposition and the fixed row ordering. Its determinant is
	\(c_{\boldsymbol\lambda}\ne0\). Hence
	\(\Phi_{\boldsymbol\lambda}\) is an isomorphism, proving uniqueness
	of the expansion without any appeal to an AT property.
	
	We choose
	\[
	F(z)=(\boldsymbol\beta+(1-z)\one_p)_{\boldsymbol m}.
	\]
	Because \(|\boldsymbol m|=N-1\), this polynomial belongs to
	\(\mathbb P_{N-1}\). Since \(\Phi_{\boldsymbol\lambda}\) is an isomorphism,
	there is a unique component vector such that
	\[
	\Phi_{\boldsymbol\lambda}
	\left(
	\{B^{(h)}\}_{h=1}^{r},
	\{D^{(\ell)}\}_{\ell=1}^{s}
	\right)(z)
	=
	(\boldsymbol\beta+(1-z)\one_p)_{\boldsymbol m}.
	\]
	Substitution in \eqref{eq:Laguerre-Phi-definition-proof} gives
	\begin{equation}
		\label{eq:Laguerre-B-Mellin-proof}
		\mathcal M[\mathcal B_{\boldsymbol\lambda,\boldsymbol m}^{\mathrm L}](z)
		=
		\frac{\Gamma(z\one_q+\boldsymbol a)}
		{\Gamma(z\one_r+\boldsymbol b+\boldsymbol n)}
		(\boldsymbol\beta+(1-z)\one_p)_{\boldsymbol m},
	\end{equation}
	which is \eqref{eq:Laguerre-B-Mellin}.
	
	The orthogonality now follows directly from the zeros of this Mellin
	transform. Let \(i\in\{1,\ldots,p\}\) with \(m_i\ge1\), and let
	\(k\in\{0,\ldots,m_i-1\}\). At \(z=\beta_i+k+1\), the factor
	\((\beta_i+1-z)_{m_i}=(-k)_{m_i}\) vanishes. Therefore
	\[
	\mathcal M[\mathcal B_{\boldsymbol\lambda,\boldsymbol m}^{\mathrm L}]
	(\beta_i+k+1)=0,
	\]
	which means exactly that
	\[
	\int_0^\infty
	\mathcal B_{\boldsymbol\lambda,\boldsymbol m}^{\mathrm L}(x)
	x^{\beta_i+k}\dx=0.
	\]
	This proves \eqref{eq:Laguerre-B-orthogonality}.

	\textit{Step 6: Effective finite-pole residues and the shifted-beta components.}
	We prove the explicit formula
	\eqref{eq:Laguerre-B-components-effective-hypergeometric}.  Set
	\begin{equation}
		\label{eq:Laguerre-B-rational-function}
		R_{\boldsymbol\lambda,\boldsymbol m}^{\mathcal B,\mathrm L}(z)
		\coloneq
		\frac{(\boldsymbol\beta+(1-z)\one_p)_{\boldsymbol m}}{(z\one_r+\boldsymbol b)_{\boldsymbol n}}.
	\end{equation}
	Dividing the identity \eqref{eq:B-component-polynomial-system} by
	\((z\one_r+\boldsymbol b)_{\boldsymbol n}\) is equivalent to decomposing \(R\) into partial
	fractions at the finite beta poles,
	\begin{equation}
		\label{eq:Laguerre-B-rational-decomposition}
		R_{\boldsymbol\lambda,\boldsymbol m}^{\mathcal B,\mathrm L}(z)
		=
		\Pi_{\boldsymbol\lambda,\boldsymbol m}^{\mathcal B,\mathrm L}(z)
		+
		\sum_{\substack{H=1\\ n_H\ge1}}^{r}
		\sum_{K=0}^{n_H-1}
		\frac{\rho_{H,K}^{\mathcal B,\mathrm L}}{z+b_H+K},
	\end{equation}
	where \(\Pi_{\boldsymbol\lambda,\boldsymbol m}^{\mathcal B,\mathrm L}\) is the
	polynomial part at infinity.  Evaluating the simple residue of \(R\) at
	\(z=-b_H-K\) gives
	\begin{equation}
		\label{eq:Laguerre-B-rho-finite-poles}
		\rho_{H,K}^{\mathcal B,\mathrm L}
		\coloneq
		(-1)^K
		\frac{
			(\boldsymbol\beta+(b_H+K+1)\one_p)_{\boldsymbol m}
		}{
			K!(n_H-K-1)!
			(\boldsymbol b^{\,*H}-(b_H+K)\one_{r-1})_{\boldsymbol n^{\,*H}}
		}.
	\end{equation}

	The polynomial part is expanded in the polynomial parts of the
	confluent extraction symbols.  For \(\ell\in\{1,\ldots,s\}\) and
	\(k\in\{0,\ldots,\eta_\ell-1\}\), define
	\begin{equation}
		\label{eq:Laguerre-B-tilde-Q-symbol}
		\widetilde Q_{\ell,k}^{\mathrm L}(z)
		\coloneq
		\frac{(z\one_q+\boldsymbol a)_k}{(z\one_r+\boldsymbol b)_k}(z+k)_{\ell-1},
		\qquad
		E_{\ell,k}^{\mathrm L}(z)
		\coloneq
		\left[\widetilde Q_{\ell,k}^{\mathrm L}(z)\right]_+,
	\end{equation}
	where \([\cdot]_+\) denotes the polynomial part at \(z=\infty\), and write
	\begin{equation}
		\label{eq:Laguerre-B-Pi-expansion}
		\Pi_{\boldsymbol\lambda,\boldsymbol m}^{\mathcal B,\mathrm L}(z)
		=
		\sum_{\nu=0}^{|\boldsymbol\eta|-1}\pi_\nu z^\nu,
		\qquad
		E_{\ell,k}^{\mathrm L}(z)
		=
		\sum_{\nu=0}^{|\boldsymbol\eta|-1}e_{\nu;\ell,k}^{\mathrm L}z^\nu.
	\end{equation}
	Since \(\boldsymbol\eta\) is on the step-line, the degrees \(sk+\ell-1\),
	with \(\ell\in\{1,\ldots,s\}\) and \(k\in\{0,\ldots,\eta_\ell-1\}\), are
	precisely \(0,\ldots,|\boldsymbol\eta|-1\), and \(E_{\ell,k}^{\mathrm L}\) is
	monic of degree \(sk+\ell-1\).  If \(|\boldsymbol\eta|=0\) the construction is
	empty; otherwise let
	\begin{equation}
		\label{eq:Laguerre-B-E-matrix}
		\mathcal E_{\boldsymbol\eta}^{\mathrm L}
		\coloneq
		\left[e_{\nu;\ell,k}^{\mathrm L}\right]_{
		\substack{\nu\in\{0,\ldots,|\boldsymbol\eta|-1\}\\
		\ell\in\{1,\ldots,s\},\ k\in\{0,\ldots,\eta_\ell-1\}}},
	\end{equation}
	with columns ordered by the degree \(sk+\ell-1\).  By the previous
	remark \(\mathcal E_{\boldsymbol\eta}^{\mathrm L}\) is unitriangular in this
	ordering, so the connection coefficients defined by
	\begin{equation}
		\label{eq:Laguerre-B-delta-connection}
		\left[\delta_{\ell,k}^{\mathcal B,\mathrm L}\right]_{\ell,k}
		=
		\left(\mathcal E_{\boldsymbol\eta}^{\mathrm L}\right)^{-1}
		[\pi_0,\ldots,\pi_{|\boldsymbol\eta|-1}]^{\mathsf T},
	\end{equation}
	equivalently, by Cramer's rule,
	\begin{equation}
		\label{eq:Laguerre-B-delta-Cramer}
		\delta_{\ell,k}^{\mathcal B,\mathrm L}
		=
		\frac{
			\det \mathcal E_{\boldsymbol\eta}^{\mathrm L}(\ell,k)
		}{
			\det \mathcal E_{\boldsymbol\eta}^{\mathrm L}
		},
	\end{equation}
	where \(\mathcal E_{\boldsymbol\eta}^{\mathrm L}(\ell,k)\) is obtained from
	\(\mathcal E_{\boldsymbol\eta}^{\mathrm L}\) by replacing the column indexed by
	\((\ell,k)\) with \([\pi_0,\ldots,\pi_{|\boldsymbol\eta|-1}]^{\mathsf T}\), give the
	unique expansion
	\begin{equation}
		\label{eq:Laguerre-B-polynomial-part-delta-proof}
		\Pi_{\boldsymbol\lambda,\boldsymbol m}^{\mathcal B,\mathrm L}(z)
		=
		\sum_{\ell=1}^{s}
		\sum_{k=0}^{\eta_\ell-1}
		\delta_{\ell,k}^{\mathcal B,\mathrm L}
		E_{\ell,k}^{\mathrm L}(z).
	\end{equation}
	The entries of \(\mathcal E_{\boldsymbol\eta}^{\mathrm L}\) are explicit: with
	\(u=z^{-1}\), put
	\begin{equation}
		\label{eq:Laguerre-B-Ak-Bk}
		A_k(u)
		\coloneq
		\prod_{\rho=1}^{q}\prod_{j=0}^{k-1}\left(1+(a_\rho+j)u\right),
		\qquad
		B_k(u)
		\coloneq
		\prod_{h=1}^{r}\prod_{j=0}^{k-1}\left(1+(b_h+j)u\right);
	\end{equation}
	and, with the empty product understood as one,
	\begin{equation}
		\label{eq:Laguerre-B-C-ell-k}
		C_{\ell,k}(u)
		\coloneq
		\prod_{j=1}^{\ell-1}\left(1+(k+j-1)u\right).
	\end{equation}
	Then, from
	\(
	\widetilde Q_{\ell,k}^{\mathrm L}(z)
	= z^{sk+\ell-1} C_{\ell,k}(u)A_k(u)/B_k(u)
	\),
	\begin{equation}
		\label{eq:Laguerre-B-E-coefficients}
		e_{\nu;\ell,k}^{\mathrm L}
		=
		\left[u^{sk+\ell-1-\nu}\right]
		C_{\ell,k}(u)\frac{A_k(u)}{B_k(u)}.
	\end{equation}

	For the same symbol \(\widetilde Q_{\ell,k}^{\mathrm L}\), the finite
	beta-pole contribution is obtained by ordinary residues.  For
	\(H\in\{1,\ldots,r\}\), \(K\in\{0,\ldots,n_H-1\}\), and \(k\ge K+1\), define
	\begin{equation}
		\label{eq:Laguerre-B-sigma-tail}
		\sigma_{H,K}^{(\ell,k)}
		\coloneq
		\operatorname*{Res}_{z=-b_H-K}
		\widetilde Q_{\ell,k}^{\mathrm L}(z)
		=
		\frac{
			(k-b_H-K)_{\ell-1}(\boldsymbol a-(b_H+K)\one_q)_k
		}{
			(-1)^K K!(k-1-K)!
			(\boldsymbol b^{\,*H}-(b_H+K)\one_{r-1})_k
		}.
	\end{equation}
	The same confluent symbol is therefore used both at infinity and at the
	finite beta nodes.  Since the polynomial part of
	\(\sum_{\ell,k}\delta_{\ell,k}^{\mathcal B,\mathrm L}
	\widetilde Q_{\ell,k}^{\mathrm L}\) is
	\(\Pi_{\boldsymbol\lambda,\boldsymbol m}^{\mathcal B,\mathrm L}\) by
	\eqref{eq:Laguerre-B-polynomial-part-delta-proof}, the rational function
	\begin{equation}
		\label{eq:Laguerre-B-tildeQ-subtracted-identity}
		R_{\boldsymbol\lambda,\boldsymbol m}^{\mathcal B,\mathrm L}(z)
		-
		\sum_{\ell=1}^{s}
		\sum_{k=0}^{\eta_\ell-1}
		\delta_{\ell,k}^{\mathcal B,\mathrm L}
		\widetilde Q_{\ell,k}^{\mathrm L}(z)
	\end{equation}
	has no polynomial part at \(z=\infty\).  Its beta-pole residues are
	therefore exactly
	\begin{equation}
		\label{eq:Laguerre-B-effective-rho}
		\widehat\rho_{H,K}^{\mathcal B,\mathrm L}
		\coloneq
		\rho_{H,K}^{\mathcal B,\mathrm L}
		-
		\sum_{\ell=1}^{s}
		\sum_{k=K+1}^{\eta_\ell-1}
		\delta_{\ell,k}^{\mathcal B,\mathrm L}
		\sigma_{H,K}^{(\ell,k)},
	\end{equation}
	where an inner sum with upper limit smaller than its lower limit is
	absent.  Equivalently,
	\begin{equation}
		\label{eq:Laguerre-B-effective-rational-decomposition}
		R_{\boldsymbol\lambda,\boldsymbol m}^{\mathcal B,\mathrm L}(z)
		-
		\sum_{\ell=1}^{s}
		\sum_{k=0}^{\eta_\ell-1}
		\delta_{\ell,k}^{\mathcal B,\mathrm L}
		\widetilde Q_{\ell,k}^{\mathrm L}(z)
		=
		\sum_{\substack{H=1\\ n_H\ge1}}^{r}
		\sum_{K=0}^{n_H-1}
		\frac{\widehat\rho_{H,K}^{\mathcal B,\mathrm L}}{z+b_H+K}.
	\end{equation}
	The inequalities in
	Proposition~\ref{prop:Laguerre-admissibility-inequalities} imply that every finite
	beta pole arising from these connection coefficients satisfies
	\(K\le n_H-1\), so all poles lie in the displayed range of
	\eqref{eq:Laguerre-B-components-effective-hypergeometric}.

We show that the shifted-beta blocks
	\(\mathscr R_{h;H,K}^{\mathrm L}\) realize the effective finite-pole
	residues \(\widehat\rho_{H,K}^{\mathcal B,\mathrm L}\). The pole-by-pole
	realization available in the Jacobi case \(s=0\) is
	\emph{not} available here: when \(s\ge1\) the reduced rational symbol of a
	single shifted-beta block is not a pure simple fraction. Each finite pole is
	instead reconstructed by its shifted-beta
	block \emph{modulo the Euler-symbol space}.

	Fix a finite beta node and set \(B=b_H+K\). Let
		\begin{equation}
			\label{eq:Laguerre-B-block-reduced-symbol}
			\begin{aligned}
			\mathscr W_{H,K}^{\mathrm L}(z)
			&\coloneq
			\sum_{h=1}^{r}
			\sum_{j=0}^{K-1+\delta_{h,H}}
			\tau_{h,j}^{H,K}
			 \frac{(z\one_q+\boldsymbol a)_j}{(z\one_r+\boldsymbol b)_j\,(z+b_h+j)},
			\\
			\tau_{h,j}^{H,K}
			&=
			\frac{(\boldsymbol a-b_h\one_q)_1}{(\boldsymbol b^{\,*h}-b_h\one_{r-1})_1}
			 \frac{(\boldsymbol b^{\,*h}-B\one_{r-1})_{j+1}(b_h-B)_j}{(\boldsymbol a-B\one_q)_{j+1}},
			\end{aligned}
		\end{equation}
	be the reduced rational symbol of the block vector
	\([\mathscr R_{1;H,K}^{\mathrm L},\ldots,\mathscr R_{r;H,K}^{\mathrm L}]\);
	the coefficients \(\tau_{h,j}^{H,K}\) are the Jacobi reconstruction
	coefficients of
	\eqref{eq:Jacobi-rational-reconstruction-identity}, evaluated at the present
	parameters, and the expansion of the hypergeometric block
	\eqref{eq:Laguerre-B-hypergeometric-block-final} agrees coefficientwise with
	\eqref{eq:Laguerre-B-block-reduced-symbol}. We prove
	\begin{equation}
		\label{eq:Laguerre-B-block-modulo-euler}
		\frac{1}{z+b_H+K}
		-
		\mathscr W_{H,K}^{\mathrm L}(z)
		\in
		\operatorname{span}
		\bigl\{
		\widetilde Q_{\ell,j}^{\mathrm L}:\
		\ell\in\{1,\ldots,s\},\ j\in\{0,\ldots,K-1\}
		\bigr\}.
	\end{equation}

	\noindent\emph{The modified telescoping.} For \(j\ge0\) put
	\begin{equation}
		\label{eq:Laguerre-B-telescoping-Ej}
		\mathcal E_j(z;B)
		\coloneq
		\frac{(\boldsymbol b-B\one_r)_j}{(\boldsymbol a-B\one_q)_j}\,
		\frac{(z\one_q+\boldsymbol a)_j}{(z\one_r+\boldsymbol b)_j},
		\qquad
		\mathcal E_0(z;B)=1.
	\end{equation}
	A direct computation gives, for each \(j\), the transition identity
	\begin{equation}
		\label{eq:Laguerre-B-transition}
		\frac{\mathcal E_j(z;B)}{z+B}
		=
		\sum_{h=1}^{r}
		\tau_{h,j}^{H,K}
		\frac{(z\one_q+\boldsymbol a)_j}{(z\one_r+\boldsymbol b)_j\,(z+b_h+j)}
		+
		\frac{\mathcal E_{j+1}(z;B)}{z+B}
		+
		\frac{(z\one_q+\boldsymbol a)_j}{(z\one_r+\boldsymbol b)_j}\,P_j(z;B),
	\end{equation}
	where \(P_j(\,\cdot\,;B)\) is a polynomial. For \(s=0\) the remainder
	\(P_j\) is absent, whereas for \(s\ge1\)
	it is a nonzero element of \(\mathbb P_{s-1}\).

	To see this, divide \eqref{eq:Laguerre-B-transition} by
	\((z\one_q+\boldsymbol a)_j/(z\one_r+\boldsymbol b)_j\), which gives
	\begin{equation}
		\label{eq:Laguerre-B-Pj-explicit}
		P_j(z;B)
		=
		\frac{(\boldsymbol b-B\one_r)_j}{(\boldsymbol a-B\one_q)_j}\frac{1}{z+B}
		-
		\sum_{h=1}^{r}\frac{\tau_{h,j}^{H,K}}{z+b_h+j}
		-
		\frac{(\boldsymbol b-B\one_r)_{j+1}}{(\boldsymbol a-B\one_q)_{j+1}}
		\frac{(z\one_q+\boldsymbol a+j\one_q)_1}{(z\one_r+\boldsymbol b+j\one_r)_1}\frac{1}{z+B}.
	\end{equation}
	At \(z=-B\) the residue of the first term is
	\((\boldsymbol b-B\one_r)_j/(\boldsymbol a-B\one_q)_j\), while that of the third term is
	\[
	-\frac{(\boldsymbol b-B\one_r)_{j+1}}{(\boldsymbol a-B\one_q)_{j+1}}
	\frac{(\boldsymbol a-B\one_q+j\one_q)_1}{(\boldsymbol b-B\one_r+j\one_r)_1}
	=
	-\frac{(\boldsymbol b-B\one_r)_j}{(\boldsymbol a-B\one_q)_j},
	\]
	so the two cancel. At each node \(z=-b_h-j\) the residue of the third term
	equals \(-\tau_{h,j}^{H,K}\), which cancels the residue of the corresponding
	summand in the middle term. Hence the right-hand side of
	\eqref{eq:Laguerre-B-Pj-explicit} has no poles and is a polynomial. Its
	degree is controlled by the only term that grows at infinity,
	\((z\one_q+\boldsymbol a+j\one_q)_1(z+B)^{-1}/(z\one_r+\boldsymbol b+j\one_r)_1\): the numerator has degree
	\(q=r+s\) and the denominator degree \(r+1\), so the growth is
	\((r+s)-(r+1)=s-1\). Thus \(P_j(\,\cdot\,;B)\in\mathbb P_{s-1}\).

	Since \(\{(z+j)_{\ell-1}:\ell=1,\ldots,s\}\) is a basis of
	\(\mathbb P_{s-1}\), there are scalars \(\gamma_{\ell,j}^{H,K}\) with
	\(P_j(z;B)=\sum_{\ell=1}^{s}\gamma_{\ell,j}^{H,K}(z+j)_{\ell-1}\), whence,
	by \eqref{eq:Laguerre-B-tilde-Q-symbol},
	\begin{equation}
		\label{eq:Laguerre-B-Pj-euler}
		\frac{(z\one_q+\boldsymbol a)_j}{(z\one_r+\boldsymbol b)_j}\,P_j(z;B)
		=
		\sum_{\ell=1}^{s}\gamma_{\ell,j}^{H,K}\,
		\widetilde Q_{\ell,j}^{\mathrm L}(z).
	\end{equation}
	The remainder is therefore an Euler symbol and belongs to the
	Euler-derivative block.

	Summing \eqref{eq:Laguerre-B-transition} over \(j=0,\ldots,K-1\), the terms
	\(\mathcal E_{j+1}(z;B)/(z+B)\) telescope. Using \(\mathcal E_0(z;B)=1\) and
	\(B=b_H+K\), the terminal term is
	\[
	\frac{\mathcal E_K(z;B)}{z+B}
	=
	\tau_{H,K}^{H,K}
	\frac{(z\one_q+\boldsymbol a)_K}{(z\one_r+\boldsymbol b)_K\,(z+b_H+K)},
	\]
	which is precisely the \(h=H,\ j=K\) summand of
	\eqref{eq:Laguerre-B-block-reduced-symbol}. Collecting all shifted-beta
	summands into \(\mathscr W_{H,K}^{\mathrm L}\) yields
	\begin{equation}
		\label{eq:Laguerre-B-pole-modulo-euler}
		\frac{1}{z+b_H+K}
		=
		\mathscr W_{H,K}^{\mathrm L}(z)
		+
		\sum_{j=0}^{K-1}\sum_{\ell=1}^{s}
		\gamma_{\ell,j}^{H,K}\,
		\widetilde Q_{\ell,j}^{\mathrm L}(z),
	\end{equation}
	which is \eqref{eq:Laguerre-B-block-modulo-euler}. The Euler symbols used
	here lie within the finite system:
	\eqref{eq:Laguerre-admissibility-n-eta} gives
	\(sK\le s(n_H-1)\le|\boldsymbol\eta|\), and since \(\boldsymbol\eta\) is on the step-line
	this forces \(K\le\min_{\ell}\eta_\ell\); hence
	\(j\le K-1\le\eta_\ell-1\) for every \(\ell\).

	\noindent\emph{Conclusion.} Insert
	\eqref{eq:Laguerre-B-pole-modulo-euler} into the effective decomposition
	\eqref{eq:Laguerre-B-effective-rational-decomposition}. Multiplying by
	\(\widehat\rho_{H,K}^{\mathcal B,\mathrm L}\) and summing over the finite
	nodes gives
	\[
	R_{\boldsymbol\lambda,\boldsymbol m}^{\mathcal B,\mathrm L}(z)
	=
	\sum_{\substack{H=1\\ n_H\ge1}}^{r}\sum_{K=0}^{n_H-1}
	\widehat\rho_{H,K}^{\mathcal B,\mathrm L}\,
	\mathscr W_{H,K}^{\mathrm L}(z)
	+
	\sum_{\ell=1}^{s}\sum_{k=0}^{\eta_\ell-1}
	\widetilde\delta_{\ell,k}\,
	\widetilde Q_{\ell,k}^{\mathrm L}(z),
	\]
	where \(\widetilde\delta_{\ell,k}\) are the coefficients
	\(\delta_{\ell,k}^{\mathcal B,\mathrm L}\) corrected by the Euler
	contributions \eqref{eq:Laguerre-B-pole-modulo-euler}. By
	\eqref{eq:Laguerre-B-block-reduced-symbol} the first sum is exactly the
	reduced symbol of \(\sum_h B^{(h)}w_h\) with
	\(B^{(h)}=\sum_{H,K}\widehat\rho_{H,K}^{\mathcal B,\mathrm L}
	\mathscr R_{h;H,K}^{\mathrm L}\), and the second is the reduced symbol of an
	Euler-derivative block. This exhibits \(R_{\boldsymbol\lambda,\boldsymbol m}^{\mathcal
	B,\mathrm L}\) as the reduced symbol of a mixed form of the type
	\eqref{eq:Laguerre-B-form}. By the isomorphism of Step~5 its component
	vector is unique; hence the shifted-beta components are
	\eqref{eq:Laguerre-B-components-effective-hypergeometric}, and the
	remaining coordinates are the components multiplying the weights
	\(v_\ell\). This proves
	\eqref{eq:Laguerre-B-components-effective-hypergeometric}.
	
	\textit{Step 7: Meijer and hypergeometric representations of the complete \(B\)-form.}

	We derive the Meijer \(G\)-representation of the complete \(B\)-form.
	Using
	\[
	(\beta_i+1-z)_{m_i}
	=
	\frac{\Gamma(\beta_i+m_i+1-z)}
	{\Gamma(\beta_i+1-z)},
	\]
	the Mellin transform \eqref{eq:Laguerre-B-Mellin-proof} can be written as
	\[
	\mathcal M[\mathcal B_{\boldsymbol\lambda,\boldsymbol m}^{\mathrm L}](z)
	=
	\frac{
		\Gamma(z\one_q+\boldsymbol a)
		\Gamma(\boldsymbol\beta+\boldsymbol m+(1-z)\one_p)
	}{
		\Gamma(z\one_r+\boldsymbol b+\boldsymbol n)
		\Gamma(\boldsymbol\beta+(1-z)\one_p)
	}.
	\]
	With the Mellin--Barnes convention fixed in
	\eqref{eq:Meijer-G-definition}--\eqref{eq:Meijer-G-Mellin-transform}, this
	is the Mellin transform of
	\[
	G_{p+r,p+q}^{q,p}
	\left(
	x\,\middle|\,
	\begin{matrix}
		-\boldsymbol\beta-\boldsymbol m,\ \boldsymbol b+\boldsymbol n\\
		\boldsymbol a,\ -\boldsymbol\beta
	\end{matrix}
	\right).
	\]
	This proves \eqref{eq:Laguerre-B-complete-Meijer-G}.
	
	We next give the complete hypergeometric expression of the linear form and
	account for the resonant parameters excluded in part~(ii).
	The Meijer \(G\)-representation \eqref{eq:Laguerre-B-complete-Meijer-G} is
	defined, and equals the complete \(B\)-form, for \emph{every} admissible
	parameter choice considered here: it has a finite analytic continuation in
	the parameters \(\boldsymbol a\) across the coalescence hyperplanes relevant here,
	and the residue hypothesis \(a_\rho-a_\sigma\notin\mathbb Z\) of part~(ii)
	serves only to keep the poles of the Mellin transform simple, so that the
	inversion integral can be summed termwise.

	Apply Mellin inversion to \eqref{eq:Laguerre-B-Mellin-proof}, closing the
	contour to the left and collecting the residues of
	\[
	\frac{\Gamma(z\one_q+\boldsymbol a)\,\Gamma(\boldsymbol\beta+\boldsymbol m+(1-z)\one_p)}
	{\Gamma(z\one_r+\boldsymbol b+\boldsymbol n)\,\Gamma(\boldsymbol\beta+(1-z)\one_p)}\,x^{-z}
	\]
	at the poles \(z=-a_\rho-k\), \(\rho\in\{1,\ldots,q\}\),
	\(k\in\mathbb N_0\), contributed by \(\Gamma(z\one_q+\boldsymbol a)\). When
	\(a_\rho-a_\sigma\notin\mathbb Z\) every such pole is simple, with
	\(\operatorname*{Res}_{z=-a_\rho-k}\Gamma(z+a_\rho)=(-1)^k/k!\), and summing
	the resulting family over \(k\) yields the \({}_{r+p}F_{q+p-1}\) series
	\eqref{eq:Laguerre-B-complete-hypergeometric}. The argument \((-1)^sx\), and
	its sign, come from the excess \(q-r\), namely \(s\), of numerator gamma factors over the
	beta-shifted denominator factors.

	\emph{Coalescent parameters.} If \(a_\sigma-a_\rho=\nu\in\mathbb Z\) for some
	pair, the towers \(\{-a_\rho-k\}_{k}\) and \(\{-a_\sigma-k\}_{k}\) overlap and
	the corresponding poles merge. The right-hand side of
	\eqref{eq:Laguerre-B-complete-hypergeometric} is then to be read as the limit
	\(a_\sigma\to a_\rho+\nu\) of its values at nearby non-resonant parameters;
	this limit exists and is finite, because by the first paragraph the complete
	form is the Meijer \(G\)-function \eqref{eq:Laguerre-B-complete-Meijer-G},
	with a finite analytic continuation in \(\boldsymbol a\) across the relevant
		coalescence hyperplanes. Equivalently, one inverts the Mellin transform
		directly. Write its Mellin--Barnes integrand as
		\[
		\mathcal I(z;x)
		\coloneq
		\frac{\Gamma(z\one_q+\boldsymbol a)\Gamma(\boldsymbol\beta+\boldsymbol m+(1-z)\one_p)}
		{\Gamma(z\one_r+\boldsymbol b+\boldsymbol n)\Gamma(\boldsymbol\beta+(1-z)\one_p)}x^{-z}.
		\]
		A point at which \(j\) of the poles \(-a_\rho-k\) coincide is a
		pole of order at most \(j\), equal to \(j\) unless a reciprocal gamma factor
		cancels part of the singularity, and its contribution is the exact residue
		\begin{equation}
			\label{eq:Laguerre-B-coalescent-residue}
			\operatorname*{Res}_{z=z_0}\mathcal I(z;x)
			=
			\frac{1}{(j-1)!}
			\frac{\mathrm d^{\,j-1}}{\mathrm dz^{\,j-1}}
			\left[
			(z-z_0)^{j}
			\mathcal I(z;x)
			\right]_{z=z_0}.
		\end{equation}
	Since \(\dfrac{\mathrm d}{\mathrm dz}x^{-z}=-x^{-z}\log x\) and the derivatives of the
	surviving gamma factors produce digamma values, \eqref{eq:Laguerre-B-coalescent-residue}
	contributes a term of the form \(x^{a_\rho+k}P_{j-1}(\log x)\), with
	\(P_{j-1}\) a polynomial of degree at most \(j-1\) in \(\log x\). In
	particular, an uncancelled two-fold coincidence \((j=2)\) replaces the two
	coalescing \({}_{r+p}F_{q+p-1}\) series by a logarithmic
	reduction-of-order solution carrying one power of \(\log x\). This determines
	\eqref{eq:Laguerre-B-complete-hypergeometric} for all admissible parameters.
\end{proof}

\begin{remark}[Kamp\'e de F\'eriet reduction and the confluent correction]
	The formula \eqref{eq:Laguerre-B-components-effective-hypergeometric}
	gives the shifted-beta components as finite sums of terminating
	\({}_{r+1}F_q\) polynomials. There is a structural reason why the
	Jacobi Kamp\'e de F\'eriet reduction does not extend to these formulas
	in a comparably compact uniform form. For each Jacobi pole family, expansion
	of the terminating hypergeometric block followed by the substitution
	\(K=u+\lambda\) produces coefficients made from fixed Pochhammer strings in
	\(u\), \(\lambda\), and \(u+\lambda\); this is exactly a classical
	Kamp\'e de F\'eriet polynomial.

	In the Laguerre I case the coefficient of the block indexed by \((H,K)\)
	is instead
	\[
		\widehat\rho_{H,K}^{\mathcal B,\mathrm L}
		=
		\rho_{H,K}^{\mathcal B,\mathrm L}
		-
		\sum_{\ell=1}^{s}
		\sum_{k=K+1}^{\eta_\ell-1}
		\delta_{\ell,k}^{\mathcal B,\mathrm L}
		\sigma_{H,K}^{(\ell,k)}.
	\]
	Thus the substitution \(K=u+\lambda\) leaves an additional confluent index
	\(k>u+\lambda\), whose coefficients
	\(\delta_{\ell,k}^{\mathcal B,\mathrm L}\) are obtained from the inverse
	unitriangular connection system
	\eqref{eq:Laguerre-B-delta-connection}. The result is a triangular nested
	sum, rather than one double hypergeometric series with fixed parameter
	strings. Reordering the triangular correction does not remove the
	connection coefficients and gives no compression of the formula. The finite
	sum in \eqref{eq:Laguerre-B-components-effective-hypergeometric}, together
	with the connection system, is therefore the natural uniform closed form.

	The correction from
	\(\rho_{H,K}^{\mathcal B,\mathrm L}\) to
	\(\widehat\rho_{H,K}^{\mathcal B,\mathrm L}\) is the contribution of the
	polynomial part at infinity of
	\(R_{\boldsymbol\lambda,\boldsymbol m}^{\mathcal B,\mathrm L}\).  If
	\(|\boldsymbol\eta|\le s\), this correction is absent, and the finite-pole
	residues alone give the shifted-beta components; in this shallow case the
	shifted-beta part may be regrouped by the same double-index argument. This
	does not provide a formula for the full Laguerre I component vector,
	because the Euler-derivative components remain to be determined. For deep indices,
	\(|\boldsymbol\eta|>s\), the correction is necessary; it is encoded explicitly by
	\eqref{eq:Laguerre-B-delta-connection}--
	\eqref{eq:Laguerre-B-effective-rho} through the confluent extraction symbol
	\(\widetilde Q_{\ell,k}^{\mathrm L}\).  The remaining components
	\(D^{(\ell)}_{\boldsymbol\lambda,\boldsymbol m}\), which multiply \(v_\ell\), are then
	determined, together with the displayed \(B\)-components, by the finite
	identity
	\eqref{eq:B-component-polynomial-system}.
\end{remark}

\begin{remark}[The two pure boundaries for \(q>1\)]
	\label{rem:Laguerre-two-pure-boundaries}
	The preceding obstruction is specific to the mixed beta--Euler case
	\(r,s>0\). The two pure boundaries have different structures.

	If the formal boundary \(s=0\), \(r=q\), is allowed, the gamma block is
	absent and the weight vector is
	exactly the Jacobi vector. Under the index identification
	\[
		\boldsymbol n^{\,\mathrm J}=\boldsymbol m^{\,\mathrm L},
		\qquad
		\boldsymbol m^{\,\mathrm J}=\boldsymbol n^{\,\mathrm L},
		\qquad
		\boldsymbol\alpha=\boldsymbol\beta,
	\]
	the component formula is therefore the Jacobi formula, and
	Corollary~\ref{cor:Jacobi-B-KdF} gives its terminating Kamp\'e de F\'eriet
	representation.

	If \(r=0\) and \(s=q\), all components belong to the pure Euler block.
	In this case the polynomial system is a block Newton interpolation problem.
	The next proposition gives the resulting terminating hypergeometric
	expression for every component.
\end{remark}

\begin{proposition}[Terminating hypergeometric components at the pure Euler boundary]
	\label{prop:Laguerre-pure-Euler-components}
	Assume \(r=0\), \(s=q>1\), and put
	\[
		P(z)\coloneq(\boldsymbol\beta+(1-z)\one_p)_{\boldsymbol m},
		\qquad
		A_k(z)\coloneq(z\one_q+\boldsymbol a)_k.
	\]
	For \(0\le k\le\eta_\ell-1\), set
	\[
		\boldsymbol K^{k,\ell}
		\coloneq
		\bigl(K_1^{k,\ell},\ldots,K_q^{k,\ell}\bigr),
		\qquad
		K_\sigma^{k,\ell}
		\coloneq k+\begin{cases}1,&\sigma\le\ell,\\0,&\sigma>\ell,\end{cases}
		\qquad
		n_\rho^{k,\ell}\coloneq K_\rho^{k,\ell}-1,
	\]
	and abbreviate
	\[
		\boldsymbol B_\rho
		\coloneq(B_{1,\rho},\ldots,B_{p,\rho}),
		\qquad
		B_{i,\rho}\coloneq\beta_i+a_\rho+1,
		\qquad
		\boldsymbol C_\rho
		\coloneq(C_{1,\rho},\ldots,C_{q,\rho}),
		\qquad
		C_{\sigma,\rho}\coloneq a_\sigma-a_\rho.
	\]
	Suppose first that \(a_\rho-a_\sigma\notin\mathbb Z\) for
	\(\rho\ne\sigma\). Define
	\begin{multline}
	\label{eq:pure-Euler-Newton-hypergeometric-coefficient}
			c_{k,\ell}
			\coloneq
			\sum_{\substack{\rho=1\\K_\rho^{k,\ell}\ge1}}^q
			\frac{
				(\boldsymbol B_\rho)_{\boldsymbol m}
			}{
				n_\rho^{k,\ell}!
				(\boldsymbol C_\rho^{\,*\rho})_{
					(\boldsymbol K^{k,\ell})^{\,*\rho}}
			}
			\\*
			\times
			\pFq{p+q}{p+q-1}
			{
				-n_\rho^{k,\ell},\
				\boldsymbol B_\rho+\boldsymbol m,\
				\one_{q-1}-\boldsymbol C_\rho^{\,*\rho}
				-(\boldsymbol K^{k,\ell})^{\,*\rho}
			}
			{
				\boldsymbol B_\rho,\
				\one_{q-1}-\boldsymbol C_\rho^{\,*\rho}
			}
			{1}.
	\end{multline}
	The series terminates at \(n_\rho^{k,\ell}\). Extend the definition by
	\(c_{k,\ell}=0\) when \(k\ge\eta_\ell\), and let
	\[
		d_{h,k}
		\coloneq
		\sum_{\ell=h}^{q}
		e_{\ell-h}(a_1,\ldots,a_{\ell-1})c_{k,\ell},
		\qquad
		h\in\{1,\ldots,q\},
	\]
	where \(e_j\) denotes the elementary symmetric polynomial and \(e_0=1\).
	Then the pure Euler components are
	\begin{equation}
		\label{eq:pure-Euler-components-hypergeometric}
		D_{\boldsymbol\eta,\boldsymbol m}^{(h)}(x)
		=
		\sum_{k=0}^{\eta_h-1}d_{h,k}x^k,
		\qquad h\in\{1,\ldots,q\}.
	\end{equation}
	If two lattice nodes coalesce, the same formula is understood by continuity,
	equivalently by replacing ordinary divided differences with confluent ones.
\end{proposition}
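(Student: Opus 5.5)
The plan is to reduce the pure Euler case of the polynomial system \eqref{eq:B-component-polynomial-system} to a single Newton interpolation problem and then read off the two layers of the formula.

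\emph{Reduction and the Newton basis.} For \(r=0\) the system says that the components are determined by
\[
\sum_{\ell=1}^{q}\sum_{k=0}^{\eta_\ell-1}d_{\ell,k}\,A_k(z)\,(z+k)^{\ell-1}=P(z),
\qquad \deg P=|\boldsymbol m|=N-1 .
\]
I will replace the family \(A_k(z)(z+k)^{\ell-1}\) by the Newton-type family
\[
N_{k,\ell}(z)\coloneq A_k(z)\prod_{\sigma=1}^{\ell-1}(z+k+a_\sigma),
\]
which has degree \(qk+\ell-1\). The family \(N_{k,\ell}\) is exactly the Newton basis attached to the node sequence
\[
-a_1,\ldots,-a_q,\ -a_1-1,\ldots,-a_q-1,\ \ldots,
\]
taken in step-line order. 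The polynomial \(N_{k,\ell}\) vanishes precisely at the first \(qk+\ell-1\) nodes of this sequence.

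The change of basis comes from expanding \(\prod_{\sigma<\ell}(w+a_\sigma)\) in powers of \(w=z+k\):
\[
N_{k,\ell}(z)=\sum_{h=1}^{\ell}e_{\ell-h}(a_1,\ldots,a_{\ell-1})\,A_k(z)(z+k)^{h-1}.
\]
Hence, if \(P=\sum c_{k,\ell}N_{k,\ell}\), collecting the coefficient of \(A_k(z)(z+k)^{h-1}\) gives exactly
\[
d_{h,k}=\sum_{\ell\ge h}e_{\ell-h}(a_1,\ldots,a_{\ell-1})\,c_{k,\ell}.
\]
By Step~5 of Theorem~\ref{thm:mixed-Laguerre-forms} (uniqueness, since \(c_{\boldsymbol\lambda}=1\) when \(r=0\)), this identifies the components \eqref{eq:pure-Euler-components-hypergeometric}.

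\emph{Index ranges.} Because \(\boldsymbol\eta\) lies on the step-line, the pairs \((k,\ell)\) with \(k<\eta_\ell\) are exactly those with \(qk+\ell-1\le N-1\). So \(P\) is expanded in the first \(N\) Newton basis elements, and no others.

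\begin{itemize}
\item The convention \(c_{k,\ell}=0\) for \(k\ge\eta_\ell\) is therefore consistent with the expansion.
\item In \(d_{h,k}\), any term with \(c_{k,\ell}\ne0\) has \(\ell\ge h\) and \(k<\eta_\ell\le\eta_h\). Hence \(d_{h,k}\) is supported on \(k<\eta_h\), as required for \(D^{(h)}\in\mathbb P_{\eta_h-1}\).
\end{itemize}

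\emph{Computing \(c_{k,\ell}\).} By Newton's formula, \(c_{k,\ell}\) is the divided difference of \(P\) on the first \(qk+\ell\) nodes. These nodes are \(-a_\rho-j\) for \(0\le j<K_\rho^{k,\ell}\). Under \(a_\rho-a_\sigma\notin\mathbb Z\) they are distinct, so
\[
c_{k,\ell}=\sum_{\rho}\sum_{j=0}^{n_\rho^{k,\ell}}
\frac{P(-a_\rho-j)}{\prod'\bigl(-a_\rho-j-\text{node}\bigr)},
\]
where \(\prod'\) runs over all the other nodes.

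I will evaluate the pieces of each term with the standard Pochhammer manipulations.
\begin{itemize}
\item Since \(P(-a_\rho-j)=(\boldsymbol B_\rho+j\one_p)_{\boldsymbol m}\), we have
\[
P(-a_\rho-j)=(\boldsymbol B_\rho)_{\boldsymbol m}\frac{(\boldsymbol B_\rho+\boldsymbol m)_j}{(\boldsymbol B_\rho)_j}.
\]
\item The nodes of the same tower \(\rho\) contribute \((-1)^j j!\,(n_\rho^{k,\ell}-j)!\). This rewrites as \(n_\rho^{k,\ell}!\) times \((-n_\rho^{k,\ell})_j/j!\), up to sign.
\item Each other tower \(\sigma\ne\rho\) contributes \(\prod_{i<K_\sigma}(C_{\sigma,\rho}+i-j)\). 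By the reflection identity \((c-j)_K=(c)_K(1-c)_j/(1-c-K)_j\), this equals
\[
(C_{\sigma,\rho})_{K_\sigma}\frac{(1-C_{\sigma,\rho})_j}{(1-C_{\sigma,\rho}-K_\sigma)_j},
\]
up to a sign \((-1)^{j}\).
\end{itemize}
Collecting these factors gives the \(j\)-sum as the terminating series \({}_{p+q}F_{p+q-1}(\ldots;1)\) displayed in \eqref{eq:pure-Euler-Newton-hypergeometric-coefficient}, with prefactor \((\boldsymbol B_\rho)_{\boldsymbol m}/\bigl(n_\rho^{k,\ell}!\,(\boldsymbol C_\rho^{\,*\rho})_{(\boldsymbol K^{k,\ell})^{*\rho}}\bigr)\). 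Towers with \(K_\rho^{k,\ell}=0\) contribute no nodes and drop out.

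\emph{Coalescent nodes.} When two lattice nodes coalesce, both sides are continuous in \(\boldsymbol a\). Divided differences pass to confluent ones, and \(d_{h,k}\) is a polynomial in the Newton coefficients, so the formula extends by continuity.

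\emph{Main obstacle.} The delicate step is the sign and Pochhammer bookkeeping in the divided-difference evaluation. In particular, the parity factors from the same-tower product and the \(q-1\) reflected cross-tower products must cancel exactly, leaving argument \(+1\) and an overall positive prefactor. I would check this first in the case \(q=2\), \(p=1\), small \(k\), then argue generally by tracking one factor \((-1)^j\) per tower.
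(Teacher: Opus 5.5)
Your route is the paper's own. It uses Newton interpolation of $P$ on the lattice nodes $-a_1,\ldots,-a_q,-a_1-1,\ldots$ in step-line order, the barycentric formula for the divided differences $c_{k,\ell}$, and Pochhammer rewriting into a terminating ${}_{p+q}F_{p+q-1}(1)$. It then re-expands $\prod_{\sigma<\ell}(w+a_\sigma)=\sum_{h\le\ell}e_{\ell-h}(a_1,\ldots,a_{\ell-1})w^{h-1}$ with $w=z+k$, and concludes by uniqueness of the expansion in the triangular family $A_k(z)(z+k)^{h-1}$. Your explicit check that $d_{h,k}$ is supported on $k<\eta_h$ is a useful addition.

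The one correction concerns the sign bookkeeping you single out as the main obstacle: there are no stray signs at all.
\begin{itemize}
\item The reflection identity $(c-j)_K=(c)_K(1-c)_j/(1-c-K)_j$ holds exactly. The factors $(-1)^j$ in $(c-j)_j=(-1)^j(1-c)_j$ and in $(c+K-j)_j=(-1)^j(1-c-K)_j$ cancel, so there is no ``sign $(-1)^j$'' per cross tower.
\item With $n=n_\rho^{k,\ell}$, the same-tower factor entering the barycentric weight is exactly $1/\bigl((-1)^j j!\,(n-j)!\bigr)=(-n)_j/(n!\,j!)$. Your phrase ``$n!$ times $(-n)_j/j!$, up to sign'' is not the right rewriting.
\end{itemize}
So no parity cancellation has to be arranged. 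If you carried a factor $(-1)^j$ per tower through the general argument, as you propose, the argument of the series would depend on the parity of $q$ rather than being $1$.
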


\begin{proof}
	Order the lattice nodes as
	\[
		-a_1,\ldots,-a_q,\
		-a_1-1,\ldots,-a_q-1,\ldots .
	\]
	Because \(\boldsymbol\eta\) is on the step-line, the pairs
	\((k,\ell)\) with \(0\le k\le\eta_\ell-1\) form the first
	\(N=|\boldsymbol\eta|\) positions in this order.  Let \(c_{k,\ell}\) be the
	divided difference of \(P\) at all nodes up to and including
	\(-a_\ell-k\). Newton interpolation gives
	\begin{equation}
		\label{eq:pure-Euler-block-Newton-expansion}
		P(z)
		=
		\sum_{\ell=1}^{q}
		\sum_{k=0}^{\eta_\ell-1}
		c_{k,\ell}A_k(z)
		\prod_{\sigma=1}^{\ell-1}(z+a_\sigma+k).
	\end{equation}
	The node polynomial used in this divided difference is
	\[
		\Omega_{k,\ell}(z)
		=
		A_k(z)\prod_{\sigma=1}^{\ell}(z+a_\sigma+k)
		=
		\prod_{\sigma=1}^{q}(z+a_\sigma)_{K_\sigma^{k,\ell}}.
	\]
	Its simple roots are \(-a_\rho-j\), with
	\(0\le j\le K_\rho^{k,\ell}-1\), and the barycentric formula yields
	\begin{equation}
		\label{eq:pure-Euler-Newton-barycentric}
		c_{k,\ell}
		=
		\sum_{\substack{\rho=1\\K_\rho^{k,\ell}\ge1}}^q
		\sum_{j=0}^{K_\rho^{k,\ell}-1}
		\frac{
			(\boldsymbol\beta+(a_\rho+j+1)\one_p)_{\boldsymbol m}
		}{
			(-1)^j j!(K_\rho^{k,\ell}-1-j)!
			\displaystyle\prod_{\substack{\sigma=1\\\sigma\ne\rho}}^q
			(a_\sigma-a_\rho-j)_{K_\sigma^{k,\ell}}
		}.
	\end{equation}
	Use
	\[
		(B+j)_m=(B)_m\frac{(B+m)_j}{(B)_j},
		\qquad
		(C-j)_K=(C)_K\frac{(1-C)_j}{(1-C-K)_j},
	\]
	together with
	\[
		\frac{1}{(-1)^j j!(n-j)!}=\frac{(-n)_j}{n!j!}.
	\]
	Substitution in \eqref{eq:pure-Euler-Newton-barycentric} gives
	exactly \eqref{eq:pure-Euler-Newton-hypergeometric-coefficient}.

	Finally, with \(t=z+k\),
	\[
		\prod_{\sigma=1}^{\ell-1}(t+a_\sigma)
		=
		\sum_{h=1}^{\ell}
		e_{\ell-h}(a_1,\ldots,a_{\ell-1})t^{h-1}.
	\]
	Insert this expansion in \eqref{eq:pure-Euler-block-Newton-expansion} and
	collect the powers \((z+k)^{h-1}\). The result is
	\[
		P(z)
		=
		\sum_{h=1}^{q}\sum_{k=0}^{\eta_h-1}
		d_{h,k}(z\one_q+\boldsymbol a)_k(z+k)^{h-1},
	\]
	which is the specialization of
	\eqref{eq:B-component-polynomial-system} to \(r=0\). Uniqueness of that
	expansion proves \eqref{eq:pure-Euler-components-hypergeometric}.
	Multiplication by the common Mellin factor \(\Gamma(z\one_q+\boldsymbol a)\) gives
	\[
		\mathcal M[\mathcal B_{\boldsymbol\eta,\boldsymbol m}^{\mathrm L}](z)
		=
		\Gamma(z\one_q+\boldsymbol a)(\boldsymbol\beta+(1-z)\one_p)_{\boldsymbol m}.
	\]
	At \(z=\beta_i+j+1\), with \(0\le j\le m_i-1\), the factor
	\((\beta_i+1-z)_{m_i}=(-j)_{m_i}\) vanishes. Hence the components in
	\eqref{eq:pure-Euler-components-hypergeometric} satisfy all the required
	orthogonality conditions.
\end{proof}

\begin{remark}[Operator interpretation of the pure Euler formula]
	At \(r=0\), Mellin transformation gives
	\[
		\mathcal B_{\boldsymbol\eta,\boldsymbol m}^{\mathrm L}=P(\thetaop)w_0,
		\qquad
		xw_0=\prod_{\rho=1}^{q}(\thetaop+a_\rho)w_0.
	\]
	Thus \eqref{eq:pure-Euler-components-hypergeometric} is the normal
	remainder of \(P(\thetaop)\) in the ordered basis
	\(x^k\thetaop^{h-1}w_0\).  It is a finite sum with terminating
	\({}_{p+q}F_{p+q-1}(1)\) coefficients, rather than a single classical
	Kamp\'e de F\'eriet polynomial in \(x\).
\end{remark}

\begin{remark}[Beyond the near-diagonal range]
	\label{rem:Laguerre-beyond-near-diagonal}
	We use the near-diagonal range for the Laguerre I mixed system because
	the Mellin cancellations are then automatic and the
	resulting \(\mathcal A\)- and \(\mathcal B\)-forms admit terminating
	hypergeometric representations. Outside this range, the same contour and
	finite reconstruction formulas may still be applied under additional
	finite cancellation conditions. The corresponding formulas are less
	involved, especially for the \(\mathcal B\)-form, where auxiliary
	polynomial reconstructions may involve extra components.
	
	We therefore restrict the Laguerre I results to the near-diagonal range.
	The finite-cancellation extension beyond this range is not used below.
\end{remark}

\begin{remark}[Components versus the complete form]
	The individual polynomials \(B^{(h)}\) and \(D^{(\ell)}\) are obtained
	from the finite identity \eqref{eq:B-component-polynomial-system}. In
	general they are not single hypergeometric polynomials in one component.
	The hypergeometric formula applies to the complete linear form
	\(\mathcal B\), rather than to each component separately.  The pure Euler
	boundary is the explicit exception described in
	Proposition~\ref{prop:Laguerre-pure-Euler-components}: its components are
	finite sums with terminating hypergeometric coefficients.
\end{remark}

\begin{remark}[Ordinary Laguerre-like situation]
	\label{rem:ordinary-Laguerre-like-edge}
	Take \(p=1\). If, for simplicity, \(\beta_1=0\), then the power vector is
	the scalar vector \([1]\), and the mixed matrix
	\eqref{eq:mixed-Laguerre-matrix} reduces to the Laguerre-like vector
	itself. In part~\textnormal{(ii)} of
	Theorem~\ref{thm:mixed-Laguerre-forms}, the condition
	\(|\boldsymbol m|=N-1\) means \(m_1=N-1\), and the normalization becomes
	\begin{equation}
		\label{eq:example-ordinary-B-Mellin}
		\int_0^\infty x^{z-1}\mathcal B_{\boldsymbol\lambda,N-1}^{\mathrm L}(x)\dx
		=
		\frac{\Gamma(z\one_q+\boldsymbol a)}{\Gamma(z\one_r+\boldsymbol b+\boldsymbol n)}(1-z)_{N-1}.
	\end{equation}
	Consequently,
	\[
	\int_0^\infty x^k\mathcal B_{\boldsymbol\lambda,N-1}^{\mathrm L}(x)\dx=0,
	\qquad k\in\{0,\ldots,N-2\},
	\]
	which is the ordinary Laguerre-like type I normalization. The dual
	formula in part~\textnormal{(i)} has a single power component. With
	\(m_1=N+1\), one obtains
	\begin{align}
		\label{eq:example-ordinary-A}
		A_{\boldsymbol\lambda,N+1}^{(1)}(x)
		={}&
		-
		\frac{\Gamma(\boldsymbol b+\boldsymbol n+\one_r)}{N!\,\Gamma(\boldsymbol a+\one_q)}
		\pFq{r+1}{q}
		{
			-N,\ \boldsymbol b+\boldsymbol n+\one_r
		}
		{
			\boldsymbol a+\one_q
		}
		{x}.
	\end{align}
	Thus the mixed theorem collapses to Wolfs's ordinary Laguerre-like pair of
	type I and type II formulae, up to the harmless global normalizations
	fixed here.
\end{remark}

\begin{example}[The smallest confluent mixed form]
	\label{ex:first-confluent-block}
	Let \(q=2\), \(r=1\) and \(s=1\),
	so that
	\[
	\mathbf U^{\mathrm L}=(w_1,v_1),
	\qquad v_1=w_0.
	\]
	Take \(\boldsymbol n=(1)\), \(\boldsymbol\eta=(1)\), hence \(N=2\), and let the
	power vector consist of the single function \(x^\beta\), with
	\(p=1\) and \(\boldsymbol m=(1)\). Since
	\[
	B^{(1)}\in\mathbb P_{n_1-1}=\mathbb P_0,
	\qquad
	D^{(1)}\in\mathbb P_{\eta_1-1}=\mathbb P_0,
	\]
	the mixed form on the Laguerre I vector has the form
	\[
	\mathcal B_{\beta}^{\mathrm L}(x)=b_0w_1(x)+d_0v_1(x).
	\]
	The finite identity \eqref{eq:B-component-polynomial-system} reduces to
	\begin{equation}
		\label{eq:example-confluent-linear-identity}
		b_0+d_0(z+b)=\beta+1-z.
	\end{equation}
	Therefore
	\[
	d_0=-1,
	\qquad
	b_0=\beta+b+1,
	\]
	and
	\begin{equation}
		\label{eq:example-confluent-B}
		\mathcal B_{\beta}^{\mathrm L}(x)
		=(\beta+b+1)w_1(x)-v_1(x).
	\end{equation}
	Its Mellin transform is
	\[
	\int_0^\infty x^{z-1}\mathcal B_{\beta}^{\mathrm L}(x)\dx
	=
	\frac{\Gamma(z+a_1)\Gamma(z+a_2)}{\Gamma(z+b+1)}(\beta+1-z),
	\]
	and hence
	\[
	\int_0^\infty x^{\beta}\mathcal B_{\beta}^{\mathrm L}(x)\dx=0.
	\]
	Thus, already in the first confluent case, the form expanded in the
	Laguerre I vector is a nontrivial linear combination of the
	beta-shifted and Euler-generated weights.
\end{example}

\begin{example}[The exponential-integral specialization]
	\label{ex:exponential-integral-specialization}
	In Example~\ref{ex:first-confluent-block}, choose
	\[
	a_1=0,
	\qquad
	a_2=b,
	\qquad
	b>0.
	\]
	Then
	\[
	\M[w_0](z)=\frac{\Gamma(z)\Gamma(z+b)}{\Gamma(z+b)}=\Gamma(z),
	\]
	so
	\[
	w_0(x)=\e^{-x}.
	\]
	Moreover,
	\[
	\M[w_1](z)=\frac{\Gamma(z)\Gamma(z+b)}{\Gamma(z+b+1)}=\frac{\Gamma(z)}{z+b}.
	\]
	With the standard notation
	\[
	E_{\nu+1}(x)=\int_1^\infty \e^{-xt}t^{-\nu-1}\dt,
	\]
	one has
	\[
	\int_0^\infty x^{z-1}E_{\nu+1}(x)\dx=\frac{\Gamma(z)}{z+\nu},
	\]
	and therefore
	\[
	w_1(x)=E_{b+1}(x),
	\qquad
	v_1(x)=\e^{-x}.
	\]
	Formula \eqref{eq:example-confluent-B} becomes the explicit mixed
	exponential-integral identity
	\begin{equation}
		\label{eq:example-expint-B}
		\mathcal B_{\beta}^{\mathrm L}(x)
		=(\beta+b+1)E_{b+1}(x)-\e^{-x},
	\end{equation}
	with
	\[
	\int_0^\infty x^{\beta}\bigl((\beta+b+1)E_{b+1}(x)-\e^{-x}\bigr)\dx=0.
	\]
	This gives the first-order member of the exponential-integral
	specialization of the Laguerre I construction.
\end{example}

\begin{example}[A rectangular calculation]
	\label{ex:first-rectangular-calculation}
	Keep \(q=2\), \(r=1\), \(s=1\), but now take two powers
	\[
	\mathbf V=(x^{\beta_1},x^{\beta_2}).
	\]
	Choose
	\[
	\boldsymbol n=(2),
	\qquad
	\boldsymbol\eta=(1),
	\qquad
	N=3,
	\qquad
	\boldsymbol m=(1,1).
	\]
	Then the form on the Laguerre I vector has the shape
	\begin{equation}
		\label{eq:example-rectangular-shape}
		\mathcal B^{\mathrm L}(x)=(b_0+b_1x)w_1(x)+d_0v_1(x).
	\end{equation}
	The identity \eqref{eq:B-component-polynomial-system} becomes
	\begin{equation}
		\label{eq:example-rectangular-identity}
		b_0(z+b+1)+b_1(z+a_1)(z+a_2)+d_0(z+b)(z+b+1)
		=(\beta_1+1-z)(\beta_2+1-z).
	\end{equation}
	Assume
	\[
	\Delta\coloneq(a_1-b-1)(a_2-b-1)\ne0.
	\]
	Comparison of the coefficients of \(z^2\), \(z\), and the constant term gives
	\begin{align}
		\label{eq:example-rectangular-coefficients}
		b_1&=\frac{(b+\beta_1+2)(b+\beta_2+2)}{\Delta},
		&
		d_0&=1-b_1,\nonumber\\
		b_0&=
		\frac{(\beta_1+1)(\beta_2+1)-b_1a_1a_2-d_0b(b+1)}{b+1}.
	\end{align}
	Thus \eqref{eq:example-rectangular-shape} is completely explicit. Its
	Mellin transform is
	\[
	\frac{\Gamma(z+a_1)\Gamma(z+a_2)}{\Gamma(z+b+2)}
	(\beta_1+1-z)(\beta_2+1-z),
	\]
	and therefore
	\[
	\int_0^\infty x^{\beta_j}\mathcal B^{\mathrm L}(x)\dx=0,
	\qquad j\in\{1,2\}.
	\]
	This is the first case where the rectangular nature of the construction is
	visible: one combines two Laguerre I functions against two independent
	powers.
\end{example}

\begin{remark}[A formal degeneration to the Pi\~neiro system]
	The target of this formal degeneration is the mixed Pi\~neiro system
	of~\cite{PineiroMixed2026}.
	The Laguerre I setting considered above assumes \(s\ge1\), so that a
	gamma block remains in the Mellin transform and the corresponding weights
	are supported on \((0,\infty)\). If one formally allows \(s=0\), then
	\(q=r\) and the gamma block disappears. If, in addition,
	\[
	\boldsymbol a=\boldsymbol b=\boldsymbol\gamma,
	\]
	then
	\[
	\frac{\Gamma(z\one_q+\boldsymbol\gamma)}
	{\Gamma(z\one_q+\boldsymbol\gamma+\boldsymbol e_j)}
	=
	\frac{1}{z+\gamma_j}
	=
	\int_0^1 x^{z-1}x^{\gamma_j}\dx,
	\qquad j\in\{1,\ldots,q\}.
	\]
	Thus the resulting weights are
	\(x^{\gamma_j}\boldsymbol 1_{(0,1)}(x)\), and the matrix of measures
	becomes, equivalently,
	\[
	\left[x^{\beta_i+\gamma_j}\right]_{
		\substack{j\in\{1,\ldots,q\}\\ i\in\{1,\ldots,p\}}}
	\dx,
	\qquad x\in(0,1),
	\]
	which is the mixed Pi\~neiro matrix studied in~\cite{PineiroMixed2026}.
	Hence this formal specialization
	does not remain in the Laguerre I setting with \(s\ge1\); it degenerates to
	the Jacobi--Pi\~neiro setting.
\end{remark}

\begin{remark}[Confluent and exponential-integral cases]
	For \((q,r,s)=(2,1,1)\), the Laguerre I vector has one shifted beta
	weight and one Mellin-Euler derivative weight,
	\[
	\mathbf U^{\mathrm L}=(w_1,v_1),
	\qquad
	v_1=w_0.
	\]
	The base weight
	\[
	w_0=\mathcal B_{a_1,b_1}*\mathcal G_{a_2}
	\]
	has Mellin transform
	\[
	\M[w_0](z)
	=
	\frac{\Gamma(z+a_1)\Gamma(z+a_2)}{\Gamma(z+b_1)}.
	\]
	Equivalently, up to normalization,
	\[
	w_0(x)
	=
	x^{a_2}\mathrm e^{-x}
	U(b_1-a_1,a_2-a_1+1,x),
	\]
	so this case contains the confluent hypergeometric weights appearing
	in the work of Lima and Loureiro~\cite{LimaLoureiro2020}, after
	matching parameters and normalizations.
	
	The non-mixed exponential-integral weight system of Van Assche and
	Wolfs~\cite{VanAsscheWolfs2023} is obtained as a boundary degeneration
	of the same \((q,r,s)=(2,1,1)\) block. Indeed, taking
	\[
	a_2=\alpha,\qquad a_1=\alpha+\nu,\qquad b_1=a_1
	\]
	gives
	\[
	\M[w_0](z)=\Gamma(z+\alpha),
	\qquad
	\M[w_1](z)
	=
	\frac{\Gamma(z+\alpha)}{z+\alpha+\nu}.
	\]
	These are the Mellin transforms of
	\(x^\alpha\mathrm e^{-x}\) and \(x^\alpha E_{\nu+1}(x)\),
	respectively. Thus the non-mixed Van Assche--Wolfs pair appears as a
	boundary degeneration of the beta block, not as an interior case under
	the strict assumption \(a_1<b_1\).
	
	The mixed exponential-integral system of Van Assche and Wolfs is also
	recovered from the present mixed construction, again as a boundary
	degeneration. Namely, take
	\[
	(q,r,s)=(2,1,1),
	\qquad
	p=2,
	\qquad
	(a_1,a_2,b_1)=(\nu,0,\nu),
	\qquad
	\boldsymbol\beta=(\alpha,\beta).
	\]
	Then
	\[
	v_1(x)=\mathrm e^{-x},
	\qquad
	w_1(x)=E_{\nu+1}(x),
	\]
	up to the normalization of \(E_{\nu+1}\). Hence the mixed matrix
	\[
	\mathrm d\Lagmat(x)
	=
	\begin{bNiceMatrix}[margin=2pt]
		w_1(x)x^\alpha & w_1(x)x^\beta\\
		v_1(x)x^\alpha & v_1(x)x^\beta
	\end{bNiceMatrix}
	\dx
	\]
	is, after interchanging the two rows,
	\[
	\begin{bNiceMatrix}[margin=2pt]
		\mathrm e^{-x}x^\alpha & \mathrm e^{-x}x^\beta\\
		E_{\nu+1}(x)x^\alpha & E_{\nu+1}(x)x^\beta
	\end{bNiceMatrix}
	\dx.
	\]
	This is the mixed exponential-integral matrix built from the weight
	vector \((\mathrm e^{-x},E_{\nu+1})\) and the power vector
	\([x^\alpha,x^\beta]\). In the present construction the Laguerre I vector
	is ordered as
	\((w_1,v_1)=(E_{\nu+1},\mathrm e^{-x})\), whereas the usual
	exponential-integral order is
	\((\mathrm e^{-x},E_{\nu+1})\). Thus the underlying matrix of weights is
	identified after a row permutation. At the level of the mixed
	orthogonality problems, one must also permute the corresponding
	components of the multi-index.

\end{remark}

\begin{remark}[Zhang's Bessel \(I/K\) system and the \(2\times2\) Laguerre I case]
	\label{rem:Zhang-Bessel-Laguerre-like}
	Zhang's mixed system associated with modified Bessel functions~\cite{Zhang2016BesselMixed} is also
	related to the Laguerre I level, but in a different way from the
	exponential-integral system of Van Assche and Wolfs. The relevant
	Laguerre I specialization has
	\[
		r=0,
		\qquad
		s=2,
		\qquad
		q=2,
		\qquad
		p=2.
	\]
	Choose the Laguerre I seed so that
	\[
		\M[w_0](z)=\Gamma(z)\Gamma(z+\nu).
	\]
	Thus, up to normalization and a dilation of the variable,
	\[
		w_0(x)=x^{\nu/2}K_\nu(2b\sqrt{x})\eqqcolon \rho_{\nu,b}(x).
	\]
	The second Laguerre I component is obtained by applying the Euler
	operator. With \(\thetaop=-x\mathrm d/\mathrm dx\), the elementary
	identity for modified Bessel functions gives
	\[
		\thetaop\rho_{\nu,b}
		=
		b\rho_{\nu+1,b}-\nu\rho_{\nu,b}.
	\]
	Consequently
	\[
		\operatorname{span}\{w_0,\thetaop w_0\}
		=
		\operatorname{span}\{\rho_{\nu,b},\rho_{\nu+1,b}\},
	\]
	again up to the harmless normalization and dilation. Hence the
	\(K\)-Bessel vector used by Zhang is the Laguerre I vector of this
	\(q=2\) specialization, written in an equivalent two-dimensional basis.

	The other side of Zhang's mixed system is not the power vector itself but
	its Bessel \({}_0F_1\) analogue. Namely,
	\[
		\omega_{\mu,a}(x)
		\coloneq
		x^{\mu/2}I_\mu(2a\sqrt{x})
		=
		\frac{a^\mu x^\mu}{\Gamma(\mu+1)}
		{}_0F_1\left(
		\begin{matrix}
		-\\
		\mu+1
		\end{matrix}
		\middle|a^2x\right),
	\]
	and similarly for \(\omega_{\mu+1,a}\). Thus
	\((\omega_{\mu,a},\omega_{\mu+1,a})\) is a Bessel-hypergeometric
	deformation of the power vector \((x^\mu,x^{\mu+1})\). After multiplying
	\(\omega_{\mu+i,a}\) by \(\Gamma(\mu+i+1)a^{-\mu-i}\), the limit
	\(a\to0\) is \(x^{\mu+i}\), for \(i\in\{0,1\}\). Thus the two functions
	reduce to the two powers after an explicit diagonal normalization.

	The precise link is at the level of bimoments. For \(0<a<b\), Zhang uses
	\[
		\int_0^\infty
		\omega_{\mu+i,a}(x)\rho_{\nu+j,b}(x)\dx
		=
		\frac{a^{\mu+i}b^{\nu+j}}
		{2(b^2-a^2)^{\mu+\nu+1+i+j}}
		\Gamma(\mu+\nu+1+i+j),
		\qquad i,j\in\N_0.
	\]
	Set
	\[
		c=b^2-a^2,
		\qquad
		\lambda=\mu+\nu+1,
	\]
	and normalize the shifted Bessel families by
	\[
		\Phi_i(x)=c^ia^{-\mu-i}\omega_{\mu+i,a}(x),
		\qquad
		\Psi_j(x)=c^jb^{-\nu-j}\rho_{\nu+j,b}(x).
	\]
	Then
	\[
		\int_0^\infty\Phi_i(x)\Psi_j(x)\dx
		=
		\frac{1}{2c^\lambda}\Gamma(\lambda+i+j)
		=
		\frac{1}{2c^\lambda}
		\int_0^\infty t^it^j t^{\lambda-1}\mathrm e^{-t}\,\mathrm dt.
	\]
	Therefore Zhang's shifted Bessel bases have, up to a constant factor,
	the same bimoment matrix as the gamma moments associated with the
	Laguerre I specialization above.

	This does not mean that the component polynomials are literally the same
	functions of the same variable. Rather, the Laguerre coefficients are
	written in Bessel bases. If
	\[
		\widehat L_n^{(\lambda-1)}(t)
		=
		\sum_{i=0}^n \ell_{n,i}t^i
	\]
	is the corresponding Laguerre polynomial, then the form before reducing
	Bessel orders is
	\[
		\sum_{i=0}^n \ell_{n,i}\Phi_i(x),
	\]
	which, after undoing the normalization, is Zhang's finite expansion in
	the shifted functions \(\omega_{\mu+i,a}\). The final two-component form
	is obtained by reducing the shifted orders to the basis
	\((\omega_{\mu,a},\omega_{\mu+1,a})\). The same argument applies on the
	dual side, where shifted functions \(\rho_{\nu+j,b}\) are reduced to
	\((\rho_{\nu,b},\rho_{\nu+1,b})\). The coefficients of these order
	reductions are the Lommel polynomials, equivalently terminating
	hypergeometric polynomials. Thus Zhang's \(2\times2\) mixed polynomials
	are the Laguerre I moment problem written in the Bessel bases
	\((\omega_{\mu,a},\omega_{\mu+1,a})\) and
	\((\rho_{\nu,b},\rho_{\nu+1,b})\).
\end{remark}
	
\section{Rodrigues formulas}
\label{sec:Rodrigues-hypergeometric-settings}

The Mellin-transform identities obtained above for the mixed forms
expanded in the weight vector admit a direct differential-operator
interpretation. In the ordinary Jacobi-like and Laguerre-like systems
of Wolfs, the Mellin transform of the complete type~I function contains
a single descending Pochhammer factor. In the rectangular mixed
extension, the power vector has \(p\) components, and the Mellin
transform contains one factor
\[
(\gamma_i+1-s)_{n_i}
\]
for each of them. The Rodrigues formula is therefore obtained by
composing one differential operator for each entry of the power vector.

The identity concerns the complete mixed form expanded in the weight
vector. Only when that vector has a single component does it reduce to a
Rodrigues formula for an individual polynomial. In particular, in the
Jacobi setting the specialization \(q=1\) gives a Rodrigues formula
for the Jacobi--Pi\~neiro type~II polynomial, whereas the specialization
\(p=1\), with the corresponding power exponent equal to zero, recovers
the Rodrigues formula for the ordinary Jacobi-like type~I function of
Wolfs.

For \(\gamma\in\mathbb R\) and \(n\in\mathbb N_0\), define
\begin{equation}
	\label{eq:Rodrigues-block-operator}
	\mathscr R_{\gamma,n}[f](x)
	\coloneq
	x^{-\gamma}
	\frac{\mathrm d^n}{\mathrm dx^n}
	\left(
	x^{\gamma+n}f(x)
	\right).
\end{equation}
Set
\[
\vartheta
\coloneq
x\frac{\mathrm d}{\mathrm dx}.
\]
Then
\begin{equation}
	\label{eq:Rodrigues-block-Euler}
	\mathscr R_{\gamma,n}
	=
	(\vartheta+\gamma+1)_n.
\end{equation}
Indeed, both sides act on a monomial \(x^\lambda\) as multiplication
by
\[
(\lambda+\gamma+1)_n.
\]
Consequently, the operators
\(\mathscr R_{\gamma_i,n_i}\) commute. For
\[
\boldsymbol\gamma=(\gamma_1,\ldots,\gamma_p),
\qquad
\boldsymbol n=(n_1,\ldots,n_p)\in\mathbb N_0^p,
\]
we may therefore define
\begin{equation}
	\label{eq:Rodrigues-composed-operator}
	\mathscr R_{\boldsymbol\gamma,\boldsymbol n}
	\coloneq
	\mathscr R_{\gamma_1,n_1}
	\circ\cdots\circ
	\mathscr R_{\gamma_p,n_p},
\end{equation}
without regard to the order of the factors.

We use the following elementary Mellin-transform identity. Here
and below, \(\M\) denotes the Mellin transform on the corresponding
real-line support \(\Delta\), namely \((0,1)\) in the Jacobi
setting and \((0,\infty)\) in the Laguerre I setting.

\begin{proposition}[Mellin action of the Rodrigues operators]
	\label{prop:Mellin-Rodrigues-action}
	Let \(f\) be a function on \(\Delta\) such that the Mellin transforms
	below exist in a common fundamental strip and the endpoint terms produced
	by the integrations by parts vanish in that strip. Then
	\begin{equation}
		\label{eq:Mellin-Rodrigues-block}
		\M[\mathscr R_{\gamma,n}[f]](s)
		=
		(\gamma+1-s)_n\M[f](s).
	\end{equation}
	Consequently, if the same condition holds for the intermediate functions
	appearing in a partial composition of the operators
	\[
	\mathscr R_{\gamma_1,n_1},
	\ldots,
	\mathscr R_{\gamma_p,n_p},
	\]
	then
	\begin{equation}
		\label{eq:Mellin-Rodrigues-composed}
		\M[\mathscr R_{\boldsymbol\gamma,\boldsymbol n}[f]](s)
		=
		(\boldsymbol\gamma+(1-s)\one_p)_{\boldsymbol n}\M[f](s).
	\end{equation}
\end{proposition}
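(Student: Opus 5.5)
The plan is to prove \eqref{eq:Mellin-Rodrigues-block} by induction on \(n\), using the factorization of \(\mathscr R_{\gamma,n}\) into first-order pieces, and then to obtain \eqref{eq:Mellin-Rodrigues-composed} by iterating the block identity.

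\textbf{The first-order step.} For \(n=1\), write
\[
\mathscr R_{\gamma,1}[f]=x^{-\gamma}\frac{\mathrm d}{\mathrm dx}\bigl(x^{\gamma+1}f\bigr).
\]
Then
\[
\M[\mathscr R_{\gamma,1}[f]](s)=\int_\Delta x^{s-\gamma-1}\frac{\mathrm d}{\mathrm dx}\bigl(x^{\gamma+1}f(x)\bigr)\dx .
\]
Integrating by parts, the boundary term is \(x^{s}f(x)\) evaluated at the endpoints of \(\Delta\), and it vanishes by hypothesis. What remains is
\[
-(s-\gamma-1)\int_\Delta x^{s-\gamma-2}x^{\gamma+1}f\dx=(\gamma+1-s)\M[f](s).
\]

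\textbf{Induction on \(n\).} Observe that
\[
x^{\gamma+n}\mathscr R_{\gamma,n}[f]=\frac{\mathrm d}{\mathrm dx}\Bigl(x^{\gamma+n}\,\mathscr R_{\gamma+1,n-1}[f]\Bigr)\cdot x^{0}
\]
fails in this form. The correct bookkeeping is the operator identity \(\mathscr R_{\gamma,n}=(\vartheta+\gamma+1)_n\) from \eqref{eq:Rodrigues-block-Euler}, rewritten as
\[
\mathscr R_{\gamma,n}=\mathscr R_{\gamma,1}\circ\mathscr R_{\gamma+1,n-1},
\]
since \((\vartheta+\gamma+1)(\vartheta+\gamma+2)_{n-1}=(\vartheta+\gamma+1)_n\). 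One can also verify this directly from the definition: \(x^{-\gamma}\tfrac{\mathrm d}{\mathrm dx}\bigl(x^{\gamma+1}\cdot x^{-\gamma-1}\tfrac{\mathrm d^{n-1}}{\mathrm dx^{n-1}}(x^{\gamma+n}f)\bigr)\) equals \(\mathscr R_{\gamma,n}[f]\).

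Applying the first-order step to \(g=\mathscr R_{\gamma+1,n-1}[f]\) and using the induction hypothesis gives
\[
(\gamma+1-s)(\gamma+2-s)_{n-1}\M[f](s)=(\gamma+1-s)_n\M[f](s).
\]

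\textbf{Main obstacle.} The only genuine difficulty is the analytic justification: each integration by parts needs the intermediate functions to have Mellin transforms in the common strip and the boundary terms to vanish. The hypothesis is phrased precisely to grant this, so I would simply state that the assumption is applied to each intermediate function \(\mathscr R_{\gamma+k,n-k}[f]\), \(k=1,\ldots,n-1\).

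\textbf{Composition.} Apply \eqref{eq:Mellin-Rodrigues-block} successively, innermost factor first. By commutativity the ordering of the factors is irrelevant. The multipliers accumulate to \(\prod_i(\gamma_i+1-s)_{n_i}=(\boldsymbol\gamma+(1-s)\one_p)_{\boldsymbol n}\), which is \eqref{eq:Mellin-Rodrigues-composed}.
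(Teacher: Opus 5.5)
Your argument is correct and is essentially the paper's proof. The paper integrates by parts once to get $\M[\vartheta f](s)=-s\M[f](s)$ and then reads off $(\vartheta+\gamma+1)_n\mapsto(\gamma+1-s)_n$. You instead integrate by parts once per first-order factor $\mathscr R_{\gamma+k,1}=\vartheta+\gamma+k+1$, using $\mathscr R_{\gamma,n}=\mathscr R_{\gamma,1}\circ\mathscr R_{\gamma+1,n-1}$, which is the same computation organized as an induction; your direct check of that factorization from the definition is a nice touch. Delete the stray displayed identity that you immediately declare to fail, since it only clutters the write-up.
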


\begin{proof}
	By \eqref{eq:Rodrigues-block-Euler}, it is enough to determine the
	Mellin action of \(\vartheta\). In the fundamental strip under
	consideration, and with no endpoint contribution in the integration by
	parts,
	\begin{align*}
		\M[\vartheta f](s)
		&=
		\int_{\Delta}
		x^{s-1}
		x\frac{\mathrm d}{\mathrm dx}f(x)\dx
		=
		\int_{\Delta}
		x^{s}
		\frac{\mathrm d}{\mathrm dx}f(x)\dx
		\\
		&=
		-s
		\int_{\Delta}
		x^{s-1}f(x)\dx
		=
		-s\M[f](s).
	\end{align*}
	Hence
	\[
		\M[\mathscr R_{\gamma,n}[f]](s)
		=
		\M[(\vartheta+\gamma+1)_n f](s)
		=
		(\gamma+1-s)_n\M[f](s),
	\]
	which proves \eqref{eq:Mellin-Rodrigues-block}. Since the operators
	in \eqref{eq:Rodrigues-composed-operator} commute, successive
	application of this identity gives
	\eqref{eq:Mellin-Rodrigues-composed}.
\end{proof}

\begin{lemma}[Endpoint behavior for the Rodrigues formulas]
	\label{lem:Rodrigues-endpoint-behavior}
	In the Jacobi setting, assume
	\[
	a_j>-1,\qquad b_j>a_j,
	\qquad j\in\{1,\ldots,q\}.
	\]
	Let \(|\boldsymbol m|=|\boldsymbol n|+1\). Then the shifted seed
	\[
	w_0(x;\boldsymbol a,\boldsymbol b+\boldsymbol m)
	\]
	and all functions obtained from it by partial compositions of the
	Rodrigues operators
	\[
	\mathscr R_{\alpha_1,n_1},\ldots,\mathscr R_{\alpha_p,n_p}
	\]
	have no endpoint contribution in the integrations by parts used in
	Proposition~\ref{prop:Mellin-Rodrigues-action}, in their common
	fundamental Mellin strip.
	
	In the Laguerre I setting, assume
	\[
	a_\rho>-1,\qquad \rho\in\{1,\ldots,q\},
	\]
	and, for the beta-shifted block,
	\[
	b_h>a_h,\qquad h\in\{1,\ldots,r\}.
	\]
	Let
	\[
	|\boldsymbol m|=|\boldsymbol n|+|\boldsymbol\eta|-1.
	\]
	Then the shifted seed
	\[
	w_0(x;\boldsymbol a,\boldsymbol b+\boldsymbol n)
	\]
	and all functions obtained from it by partial compositions of the
	Rodrigues operators
	\[
	\mathscr R_{\beta_1,m_1},\ldots,\mathscr R_{\beta_p,m_p}
	\]
	have no endpoint contribution in the integrations by parts used in
	Proposition~\ref{prop:Mellin-Rodrigues-action}, in their common
	fundamental Mellin strip.
\end{lemma}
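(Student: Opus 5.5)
The plan is to reduce the statement to explicit boundary asymptotics of the shifted seed, and then to track how each Rodrigues block changes those asymptotics. Integration by parts for \(\vartheta\) on \(\Delta\) produces the boundary term \(\bigl[x^{s}f(x)\bigr]_{\partial\Delta}\). So at each stage it suffices to show that, for \(s\) in a common vertical strip, the intermediate function \(f\) satisfies \(x^{s}f(x)\to0\) at both endpoints. By \eqref{eq:Rodrigues-block-Euler}, each \(\mathscr R_{\gamma,n}\) is a product of \(n\) first-order factors \(\vartheta+c\). It is therefore enough to control functions of the form \(g=\prod(\vartheta+c_k)\,w_0\), that is, finite combinations of \(\vartheta^k w_0\), \(k\le|\boldsymbol n|\) (respectively \(k\le|\boldsymbol m|\)).

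At \(x=0\) I will use the Meijer \(G\)-representation \eqref{eq:w0-Meijer} (respectively \eqref{eq:w0-Meijer-Laguerre}) and its Slater residue expansion \eqref{eq:wj-general-hypergeometric}. Near the origin \(w_0\) is a finite combination of \(x^{a_\rho}\) times power series, with logarithmic terms when the \(a_\rho\) differ by integers. The operator \(\vartheta\) maps \(x^{a}(\log x)^k\cdot(\text{power series})\) to a function of the same type. Hence every intermediate \(g\) is \(\mathrm O(x^{a_{\min}}|\log x|^{K})\), and \(x^sg\to0\) whenever \(\operatorname{Re}s>-a_{\min}\).

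In the Laguerre I setting the endpoint \(x=\infty\) is handled by the exponential decay of the gamma factors (\(s\ge1\)). In that case \(w_0\) and all its \(\vartheta\)-images are \(\mathrm O(x^{C}\mathrm e^{-x/\kappa})\) for suitable \(C,\kappa\). This follows from the standard large-\(x\) asymptotics of \(G^{q,0}_{r,q}\) with \(q>r\); alternatively, since \(\vartheta^k w_0\) is again a \(G^{q,0}_{r+1,q+1}\)-type function, its Mellin transform \(z^k\Gamma(z\one_q+\boldsymbol a)/\Gamma(z\one_r+\boldsymbol b+\boldsymbol n)\) decays exponentially on vertical lines, so every polynomial growth is dominated. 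Hence no restriction on \(s\) from the right is needed.

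The main obstacle is the endpoint \(x=1\) in the Jacobi case. There the seed \(w_0(x;\boldsymbol a,\boldsymbol b+\boldsymbol m)\) behaves like \((1-x)^{\sigma-1}\) with
\[
\sigma\coloneq|\boldsymbol b|+|\boldsymbol m|-|\boldsymbol a|,
\]
and each \(\vartheta\) may lower this exponent by one. I would first establish this behavior from the convolution \eqref{eq:w0-convolution}: a convolution of beta densities with exponents \(b_j+m_j-a_j-1\) vanishes at \(1\) to order \(\sum(b_j+m_j-a_j)-1\). The same fact can be read off from the Meijer \(G\) expansion near \(x=1\), equivalently from the Mellin transform's decay \(|s|^{-\sigma}\). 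After at most \(|\boldsymbol n|\) applications of \(\vartheta\), the exponent is \(\sigma-1-|\boldsymbol n|\). Since \(|\boldsymbol m|=|\boldsymbol n|+1\) and \(b_j>a_j\), this gives
\[
\sigma-1-|\boldsymbol n|=|\boldsymbol b|-|\boldsymbol a|>0,
\]
so every intermediate function vanishes at \(1\), at least in the integrated sense needed: the boundary term \(x^s f\) involves only the functions before the last derivative, whose exponents are \(\ge\sigma-|\boldsymbol n|>0\). This also requires checking that the derivatives of the \((1-x)\)-expansion (a power series in \(1-x\) times \((1-x)^{\sigma-1}\), or a polynomial if \(\sigma\) is an integer) behave termwise. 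I would justify this by writing \(w_0\) near \(1\) as a \({}_qF_{q-1}\)-type expansion in \(1-x\) via Nørlund's theorem, or more simply by differentiating under the convolution integral one factor at a time. The common fundamental strip is then \(\operatorname{Re}s>-a_{\min}\) (respectively the same strip with \(\boldsymbol\beta\) shifts absorbed), which completes the argument.
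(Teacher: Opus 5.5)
Your proposal follows the paper's own proof: power-type (Slater) behaviour at \(0\) fixes the strip \(\operatorname{Re}s>-a_{\min}\); the beta-type exponent \(\sum_j(b_j+m_j-a_j)-1=|\boldsymbol b|-|\boldsymbol a|+|\boldsymbol n|\) at \(x=1\) stays positive after at most \(|\boldsymbol n|\) Euler derivatives; and the gamma block gives fast decay at \(\infty\). Your N{\o}rlund and convolution-commutation remarks only fill in details the paper leaves implicit. One slip: for \(s\ge2\) the decay at infinity is only stretched-exponential, roughly \(\exp(-s\,x^{1/s})\), so no bound \(\mathrm O(x^{C}\mathrm e^{-x/\kappa})\) holds. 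This is harmless, because only superpolynomial decay is needed, and your Mellin-inversion alternative (moving the line \(\operatorname{Re}z=c\) arbitrarily far to the right) supplies exactly that.
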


\begin{proof}
	In the Jacobi case the support is \((0,1)\). Near \(0\), the seed
	and its partial Rodrigues transforms have power-type behavior, so the
	Mellin transform is defined in a nonempty vertical strip. Near \(1\), the
	shifted seed \(w_0(x;\boldsymbol a,\boldsymbol b+\boldsymbol m)\) has a beta-type endpoint
	behavior with total endpoint exponent
	\[
	\sum_{j=1}^{q}(b_j+m_j-a_j)-1
	=
	\sum_{j=1}^{q}(b_j-a_j)+|\boldsymbol m|-1.
	\]
	Since \(|\boldsymbol m|=|\boldsymbol n|+1\), this exponent is
	\[
	\sum_{j=1}^{q}(b_j-a_j)+|\boldsymbol n|.
	\]
	A partial composition of the Rodrigues operators has order at most
	\(|\boldsymbol n|\). Therefore, in every integration by parts occurring before
	or during the composition, the endpoint exponent at \(1\) remains
	positive, because \(b_j>a_j\) for every \(j\). Hence the boundary term at
	\(1\) vanishes. The boundary term at \(0\) vanishes in the common
	fundamental Mellin strip.
	
	In the Laguerre I case the support is \((0,\infty)\). Near \(0\), the
	seed and its partial Rodrigues transforms again have power-type behavior,
	so there is a nonempty Mellin strip. Near \(\infty\), the gamma block in
	the Laguerre I seed gives rapid, generally stretched-exponential decay.
	Applying finitely many
	Rodrigues operators only produces finite linear combinations of
	derivatives multiplied by powers of \(x\), and therefore preserves
	sufficiently rapid decay at infinity. Hence the boundary term at \(\infty\)
	vanishes. The boundary term at \(0\) vanishes in the common fundamental
	Mellin strip.
\end{proof}

\subsection{Jacobi mixed Rodrigues formula}

The Jacobi Mellin identity for the mixed form expanded in the
Jacobi vector contains one factor
\[
(\alpha_i+1-s)_{n_i}
\]
for each component \(x^{\alpha_i}\) of the power vector. By
Proposition~\ref{prop:Mellin-Rodrigues-action}, these factors are
generated by applying the composed operator
\(\mathscr R_{\boldsymbol\alpha,\boldsymbol n}\) to the seed weight whose
parameter vector \(\boldsymbol b\) has been shifted by \(\boldsymbol m\).

In this subsection, we write
\(\mathcal B_{\boldsymbol n,\boldsymbol m}^{\,\mathrm J}\) for the form
\(\mathcal B_{\boldsymbol n,\boldsymbol m}\) introduced in
Theorem~\ref{thm:mixed-Jacobi-like-forms}, in order to distinguish it
from its Laguerre I analogue below.

\begin{theorem}[Rodrigues formula for the Jacobi mixed form]
	\label{thm:Jacobi-mixed-Rodrigues}
	Assume the hypotheses of
	Theorem~\ref{thm:mixed-Jacobi-like-forms}\textnormal{(ii)}, so that
	\[
	|\boldsymbol m|=|\boldsymbol n|+1.
	\]
	Assume also
	\[
	a_j>-1,\qquad b_j>a_j,
	\qquad j\in\{1,\ldots,q\}.
	\]
	Then the normalized mixed form expanded in the Jacobi vector is
	given by
	\begin{equation}
		\label{eq:Jacobi-mixed-Rodrigues}
		\mathcal B_{\boldsymbol n,\boldsymbol m}^{\,\mathrm J}(x)
		=
		-
		\mathscr R_{\boldsymbol\alpha,\boldsymbol n}
		\left[
		w_0(x;\boldsymbol a,\boldsymbol b+\boldsymbol m)
		\right].
	\end{equation}
	Equivalently,
	\begin{equation*}
		\mathcal B_{\boldsymbol n,\boldsymbol m}^{\,\mathrm J}(x)
		=
		-
		\left(
		\mathscr R_{\alpha_1,n_1}
		\circ\cdots\circ
		\mathscr R_{\alpha_p,n_p}
		\right)
		\left[
		w_0(x;\boldsymbol a,\boldsymbol b+\boldsymbol m)
		\right].
	\end{equation*}
\end{theorem}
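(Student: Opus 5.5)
The plan is to compare Mellin transforms and then invoke uniqueness of Mellin inversion on \((0,1)\). First, by \eqref{eq:w0-Mellin} applied with \(\boldsymbol b\) replaced by \(\boldsymbol b+\boldsymbol m\), the shifted seed satisfies
\[
\M[w_0(\cdot;\boldsymbol a,\boldsymbol b+\boldsymbol m)](s)
=
\frac{\Gamma(s\one_q+\boldsymbol a)}{\Gamma(s\one_q+\boldsymbol b+\boldsymbol m)},
\]
in the strip \(\operatorname{Re}s>-a_{\min}\). By Lemma~\ref{lem:Rodrigues-endpoint-behavior}, which uses exactly \(a_j>-1\), \(b_j>a_j\) and \(|\boldsymbol m|=|\boldsymbol n|+1\), every partial composition of the operators \(\mathscr R_{\alpha_i,n_i}\) applied to this seed produces no boundary terms in the integrations by parts. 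Proposition~\ref{prop:Mellin-Rodrigues-action} then applies, in the form \eqref{eq:Mellin-Rodrigues-composed} with \(\boldsymbol\gamma=\boldsymbol\alpha\), and gives
\[
\M\bigl[\mathscr R_{\boldsymbol\alpha,\boldsymbol n}[w_0(\cdot;\boldsymbol a,\boldsymbol b+\boldsymbol m)]\bigr](s)
=
(\boldsymbol\alpha+(1-s)\one_p)_{\boldsymbol n}\,
\frac{\Gamma(s\one_q+\boldsymbol a)}{\Gamma(s\one_q+\boldsymbol b+\boldsymbol m)}.
\]

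Up to the overall sign, this is exactly the normalized Mellin identity \eqref{eq:B-Mellin} of Theorem~\ref{thm:mixed-Jacobi-like-forms}(ii). Hence \(\mathcal B^{\,\mathrm J}_{\boldsymbol n,\boldsymbol m}\) and \(-\mathscr R_{\boldsymbol\alpha,\boldsymbol n}[w_0(\cdot;\boldsymbol a,\boldsymbol b+\boldsymbol m)]\) have the same Mellin transform on a common vertical strip. Both functions are supported in \((0,1)\), and both are integrable against \(x^{s-1}\) there: the first is a finite combination of the \(w_j\) with polynomial coefficients, and the second is a finite combination of derivatives of the seed, controlled by the endpoint analysis. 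Injectivity of the Mellin transform on such a strip (Mellin inversion) then gives equality almost everywhere. Since both sides are real-analytic on \((0,1)\), this upgrades to pointwise equality. An alternative route is the Meijer \(G\)-representation \eqref{eq:B-complete-Meijer-G}, which encodes the same Gamma quotient.

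The main obstacle is making the common strip and the absence of boundary terms precise. At \(x=1\), the Lemma gives the endpoint exponent \(\sum_j(b_j-a_j)+|\boldsymbol n|\). This must stay positive through up to \(|\boldsymbol n|\) differentiations, and the point to check is that each derivative lowers the exponent by one, so it stays above \(\sum_j(b_j-a_j)>0\). At \(x=0\) there is a Frobenius-type power behaviour \(x^{a_\rho}\) (from \eqref{eq:wj-general-hypergeometric}, or its logarithmic analogue at coalescence), so the strip \(\operatorname{Re}s\) large enough works. I would simply cite Lemma~\ref{lem:Rodrigues-endpoint-behavior} for these facts and keep the final identification at the level of Mellin transforms.
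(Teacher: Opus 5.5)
Your proposal is correct and follows essentially the same route as the paper's proof. You compute the Mellin transform of the shifted seed from \eqref{eq:w0-Mellin}, use Lemma~\ref{lem:Rodrigues-endpoint-behavior} to discard the boundary terms, apply Proposition~\ref{prop:Mellin-Rodrigues-action}, match the result with \eqref{eq:B-Mellin}, and conclude by uniqueness of the Mellin transform. Your additional remarks on the common strip, integrability, the endpoint exponent staying above \(\sum_j(b_j-a_j)>0\), and the upgrade to pointwise equality by real-analyticity simply make explicit details that the paper leaves implicit.
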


\begin{proof}
	By \eqref{eq:w0-Mellin}, with \(\boldsymbol b\) replaced by
	\(\boldsymbol b+\boldsymbol m\), we have
	\[
	\M[w_0(\,\cdot\,;\boldsymbol a,\boldsymbol b+\boldsymbol m)](s)
	=
	\frac{\Gamma(s\one_q+\boldsymbol a)}
	{\Gamma(s\one_q+\boldsymbol b+\boldsymbol m)}.
	\]
	By Lemma~\ref{lem:Rodrigues-endpoint-behavior}, the integrations by
	parts required in Proposition~\ref{prop:Mellin-Rodrigues-action} have
	no endpoint contributions for this seed and its partial Rodrigues
	transforms. Therefore, Proposition~\ref{prop:Mellin-Rodrigues-action}
	yields
	\begin{equation*}
		\M\left[
		-
		\mathscr R_{\boldsymbol\alpha,\boldsymbol n}
		\left[
		w_0(\,\cdot\,;\boldsymbol a,\boldsymbol b+\boldsymbol m)
		\right]
		\right](s)
		 =
		-
		\frac{\Gamma(s\one_q+\boldsymbol a)}
		{\Gamma(s\one_q+\boldsymbol b+\boldsymbol m)}
		(\boldsymbol\alpha+(1-s)\one_p)_{\boldsymbol n}.
	\end{equation*}
	The right-hand side is precisely the Mellin transform prescribed in
	\eqref{eq:B-Mellin} for
	\(\mathcal B_{\boldsymbol n,\boldsymbol m}^{\,\mathrm J}\). Since both sides have
	the same Mellin transform in the present class of functions,
	uniqueness of the Mellin transform gives
	\eqref{eq:Jacobi-mixed-Rodrigues}.
\end{proof}

The preceding identity is a Rodrigues formula for the complete mixed
form. Its two boundary specializations recover familiar polynomial or
type~I formulas. When \(q=1\), the Jacobi vector has one
component, so the complete form is one polynomial times one weight;
the corresponding polynomial is of Jacobi--Pi\~neiro type. When
\(p=1\) and the unique power exponent is zero, the composed operator
reduces to a single ordinary derivative operator and the formula
recovers the ordinary Jacobi-like type~I Rodrigues identity of Wolfs.

\begin{corollary}[Jacobi--Pi\~neiro polynomial Rodrigues formula]
	\label{cor:Jacobi-Pineiro-Rodrigues}
	In the specialization \(q=1\), the mixed form expanded in the
	Jacobi vector has a single component:
	\[
	\mathcal B_{\boldsymbol n,m}^{\,\mathrm J}(x)
	=
	B_{\boldsymbol n,m}^{\,\mathrm J,(1)}(x)
	w_1(x;a,b).
	\]
	Consequently, for \(m=|\boldsymbol n|+1\),
	\begin{equation}
		\label{eq:Jacobi-Pineiro-polynomial-Rodrigues}
		B_{\boldsymbol n,m}^{\,\mathrm J,(1)}(x)
		=
		-
		\frac{
			\mathscr R_{\boldsymbol\alpha,\boldsymbol n}
			\left[
			w_0(x;a,b+m)
			\right]
		}{
			w_1(x;a,b)
		}.
	\end{equation}
	In particular, the quotient on the right-hand side belongs to
	\(\mathbb P_{m-1}\).
\end{corollary}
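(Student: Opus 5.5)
The plan is to specialize Theorem~\ref{thm:Jacobi-mixed-Rodrigues} to \(q=1\) and then divide by the single weight. With \(q=1\), \(\boldsymbol a=(a)\), \(\boldsymbol b=(b)\) and \(\boldsymbol m=(m)\), the form \(\mathcal B^{\,\mathrm J}_{\boldsymbol n,m}\) of Theorem~\ref{thm:mixed-Jacobi-like-forms}(ii) has the single component \(B^{\,\mathrm J,(1)}_{\boldsymbol n,m}\in\mathbb P_{m-1}\) multiplying \(w_1(x;a,b)\). The near-diagonal hypothesis is vacuous for one component, and the balance \(|\boldsymbol m|=|\boldsymbol n|+1\) reads \(m=|\boldsymbol n|+1\). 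Hence the first display of the corollary is just the definition \eqref{eq:gen-B-form} specialized to \(q=1\).

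Next I will invoke \eqref{eq:Jacobi-mixed-Rodrigues} under the stated parameter hypotheses \(a>-1\) and \(b>a\). It gives
\[
B^{\,\mathrm J,(1)}_{\boldsymbol n,m}(x)\,w_1(x;a,b)=-\mathscr R_{\boldsymbol\alpha,\boldsymbol n}\left[w_0(x;a,b+m)\right],
\qquad 0<x<1.
\]
By Remark~\ref{rem:JP-edge}, \(w_1(x;a,b)=x^a(1-x)^{b-a}/\Gamma(b-a+1)\), which is strictly positive on \((0,1)\). Dividing by it yields \eqref{eq:Jacobi-Pineiro-polynomial-Rodrigues} pointwise on \((0,1)\). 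The membership of the quotient in \(\mathbb P_{m-1}\) then follows immediately, since the quotient equals \(B^{\,\mathrm J,(1)}_{\boldsymbol n,m}\), which lies in \(\mathbb P_{m-1}\) by construction in Step~4 of that theorem's proof.

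The main point to verify is that no additional hypotheses are hidden in the specialization. The finite nonresonance conditions of Theorem~\ref{thm:mixed-Jacobi-like-forms}(ii) reduce to distinctness of the nodes \(-b-\ell\), \(0\le\ell\le m-1\), which is automatic, and nonvanishing of the reconstruction denominators, which the corollary assumes. Lemma~\ref{lem:Rodrigues-endpoint-behavior} covers the endpoint terms. As a sanity check, I would also compute directly: \(w_0(x;a,b+m)=x^a(1-x)^{b+m-a-1}/\Gamma(b+m-a)\), and each \(\mathscr R_{\alpha_i,n_i}\) lowers the exponent of \(1-x\) by at most \(n_i\) while producing polynomial factors. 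The total lowering is \(|\boldsymbol n|=m-1\), leaving \((1-x)^{b-a}\) times a polynomial of degree at most \(m-1\), in agreement with the claim.
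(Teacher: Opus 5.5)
Your proposal is correct and follows the paper's own argument: you specialize Theorem~\ref{thm:Jacobi-mixed-Rodrigues} to $q=1$, divide by the positive weight $w_1(x;a,b)$, and read off membership in $\mathbb P_{m-1}$ from the prescribed polynomial space of the single component. Your direct degree count on $w_0(x;a,b+m)=x^a(1-x)^{b+m-a-1}/\Gamma(b+m-a)$ is an independent confirmation that the paper does not give. Also, for $q=1$ the reconstruction denominators $(a-b-K)_{\ell+1}$ with $\ell\le K$ are automatically nonzero when $b>a$, so nothing extra needs to be assumed there.
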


\begin{proof}
	When \(q=1\), the mixed form
	\(\mathcal B_{\boldsymbol n,m}^{\,\mathrm J}\) has exactly one polynomial
	component in the Jacobi vector. Substituting its expression into
	\eqref{eq:Jacobi-mixed-Rodrigues} and dividing by the nonzero weight
	\(w_1(x;a,b)\) gives
	\eqref{eq:Jacobi-Pineiro-polynomial-Rodrigues}. The asserted
	belonging to \(\mathbb P_{m-1}\) is the polynomial-space condition
	prescribed by the mixed orthogonality problem.
\end{proof}

\begin{remark}[Ordinary Jacobi-like specialization]
	When \(p=1\) and \(\alpha_1=0\), formula
	\eqref{eq:Jacobi-mixed-Rodrigues} becomes, up to the normalization
	used here,
	\[
	\mathcal B_{(n),\boldsymbol m}^{\,\mathrm J}(x)
	=
	-
	\frac{\mathrm d^n}{\mathrm dx^n}
	\left[
	x^n w_0(x;\boldsymbol a,\boldsymbol b+\boldsymbol m)
	\right],
	\qquad
	n=|\boldsymbol m|-1.
	\]
	This is the Rodrigues formula for the ordinary Jacobi-like type~I
	function obtained by Wolfs~\cite{Wolfs2024}.
\end{remark}

\subsection{Laguerre I mixed Rodrigues formula}

The Laguerre I case is obtained in the same way, with the roles of
the two multi-indices adapted to the mixed organization. The power
vector is indexed by \(\boldsymbol m\), while the shift of the seed weight is
determined by the beta-shifted part \(\boldsymbol n\) of
\(\boldsymbol\lambda=(\boldsymbol n,\boldsymbol\eta)\). Consequently, the factors appearing
in the Mellin transform are reproduced by
\(\mathscr R_{\boldsymbol\beta,\boldsymbol m}\).

\begin{theorem}[Rodrigues formula for the Laguerre I mixed form]
	\label{thm:Laguerre-mixed-Rodrigues}
	Assume the hypotheses of
	Theorem~\ref{thm:mixed-Laguerre-forms}\textnormal{(ii)}. Let
	\[
	\boldsymbol\lambda=(\boldsymbol n,\boldsymbol\eta),
	\qquad
	|\boldsymbol m|=|\boldsymbol n|+|\boldsymbol\eta|-1.
	\]
	Assume also
	\[
	a_\rho>-1,\qquad \rho\in\{1,\ldots,q\},
	\]
	and
	\[
	b_h>a_h,\qquad h\in\{1,\ldots,r\}.
	\]
	Then the normalized form on the Laguerre I vector is given by
	\begin{equation}
		\label{eq:Laguerre-mixed-Rodrigues}
		\mathcal B_{\boldsymbol\lambda,\boldsymbol m}^{\mathrm L}(x)
		=
		\mathscr R_{\boldsymbol\beta,\boldsymbol m}
		\left[
		w_0(x;\boldsymbol a,\boldsymbol b+\boldsymbol n)
		\right]
		=
		\left(
		\mathscr R_{\beta_1,m_1}
		\circ\cdots\circ
		\mathscr R_{\beta_p,m_p}
		\right)
		\left[
		w_0(x;\boldsymbol a,\boldsymbol b+\boldsymbol n)
		\right].
	\end{equation}
\end{theorem}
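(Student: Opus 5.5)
The plan mirrors the proof of Theorem~\ref{thm:Jacobi-mixed-Rodrigues}: identify both sides through their Mellin transforms. First, by \eqref{eq:w0-Laguerre-Mellin} with \(\boldsymbol b\) replaced by \(\boldsymbol b+\boldsymbol n\), the shifted seed satisfies
\[
\M[w_0(\,\cdot\,;\boldsymbol a,\boldsymbol b+\boldsymbol n)](z)
=
\frac{\Gamma(z\one_q+\boldsymbol a)}{\Gamma(z\one_r+\boldsymbol b+\boldsymbol n)},
\]
in the strip \(\operatorname{Re} z>-a_{\min}\). The shift \(\boldsymbol b\mapsto\boldsymbol b+\boldsymbol n\) preserves \(a_h<b_h+n_h\), so the seed is a genuine Mellin convolution of beta and gamma densities supported on \((0,\infty)\).

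Second, I will apply Proposition~\ref{prop:Mellin-Rodrigues-action} with \(\boldsymbol\gamma=\boldsymbol\beta\) and multi-index \(\boldsymbol m\). Its hypotheses, namely the vanishing of boundary terms for the seed and every partial composition of \(\mathscr R_{\beta_1,m_1},\dots,\mathscr R_{\beta_p,m_p}\), are provided by the Laguerre part of Lemma~\ref{lem:Rodrigues-endpoint-behavior}. That part uses exactly the assumptions \(a_\rho>-1\), \(b_h>a_h\) and \(|\boldsymbol m|=|\boldsymbol n|+|\boldsymbol\eta|-1\). Together with \eqref{eq:Mellin-Rodrigues-composed}, this gives
\[
\M\bigl[\mathscr R_{\boldsymbol\beta,\boldsymbol m}[w_0(\,\cdot\,;\boldsymbol a,\boldsymbol b+\boldsymbol n)]\bigr](z)
=
\frac{\Gamma(z\one_q+\boldsymbol a)}{\Gamma(z\one_r+\boldsymbol b+\boldsymbol n)}
(\boldsymbol\beta+(1-z)\one_p)_{\boldsymbol m}.
\]
This is exactly the normalization \eqref{eq:Laguerre-B-Mellin} of Theorem~\ref{thm:mixed-Laguerre-forms}(ii), with the sign \(+\) rather than the Jacobi sign \(-\).

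Third, I will conclude by uniqueness of the Mellin transform. Both functions are locally integrable on \((0,\infty)\), have power-type behaviour at \(0\) and rapid decay at \(\infty\), and their Mellin transforms agree on a common vertical strip. Mellin inversion along a vertical line in that strip then gives equality almost everywhere, hence everywhere by continuity. The operator \(\mathscr R_{\beta_i,m_i}=(\vartheta+\beta_i+1)_{m_i}\) is a differential operator, and the seed is smooth on \((0,\infty)\) as a \(G\)-function, so both sides are continuous. It is also worth remarking that this identification holds independently of the component representation \eqref{eq:Laguerre-B-form}: the normalized form is by definition the function with that Mellin transform, and Step~5 of the theorem shows it exists and is unique.

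The main obstacle I expect is the common strip. I must check that the fundamental strip of the seed stays nonempty after each partial Rodrigues step. I must also check that this strip meets the region where \eqref{eq:Laguerre-B-Mellin} is the Mellin transform of the finite combination \(\sum B^{(h)}w_h+\sum D^{(\ell)}v_\ell\); that region is \(\operatorname{Re}z>-a_{\min}\), since every weight behaves like \(x^{a_\rho}\) near \(0\).

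For the Rodrigues side, I would argue that \(\mathscr R_{\gamma,n}\) maps \(x^{\lambda}\)-type behaviour at \(0\) to the same exponent \(\lambda\). Indeed, its action on monomials is multiplication by \((\lambda+\gamma+1)_n\), termwise on the Frobenius expansion \eqref{eq:Laguerre-B-complete-hypergeometric}-type series of the seed. So the strip \(\operatorname{Re}z>-a_{\min}\) is preserved. Logarithmic terms at coalescent \(a_\rho\) do not shift the abscissa. The decay at infinity is untouched, as noted in Lemma~\ref{lem:Rodrigues-endpoint-behavior}.
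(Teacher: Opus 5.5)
Your proposal is correct and follows the paper's proof essentially step for step. You compute the Mellin transform of the shifted seed $w_0(\cdot;\boldsymbol a,\boldsymbol b+\boldsymbol n)$, use Lemma~\ref{lem:Rodrigues-endpoint-behavior} to justify applying Proposition~\ref{prop:Mellin-Rodrigues-action}, match the result with \eqref{eq:Laguerre-B-Mellin}, and conclude by Mellin uniqueness. Your extra discussion of the common strip $\operatorname{Re}z>-a_{\min}$, its preservation under the Rodrigues operators, and continuity of both sides only spells out what the paper compresses into ``Mellin inversion proves''.
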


\begin{proof}
	From \eqref{eq:w0-Laguerre-Mellin}, with \(\boldsymbol b\) replaced by
	\(\boldsymbol b+\boldsymbol n\), one has
	\[
	\M[w_0(\,\cdot\,;\boldsymbol a,\boldsymbol b+\boldsymbol n)](s)
	=
	\frac{\Gamma(s\one_q+\boldsymbol a)}
	{\Gamma(s\one_r+\boldsymbol b+\boldsymbol n)}.
	\]
	By Lemma~\ref{lem:Rodrigues-endpoint-behavior}, the integrations by
	parts required in Proposition~\ref{prop:Mellin-Rodrigues-action} have
	no endpoint contributions for this seed and its partial Rodrigues
	transforms. Therefore Proposition~\ref{prop:Mellin-Rodrigues-action}
	gives
	\[
	\M\left[
	\mathscr R_{\boldsymbol\beta,\boldsymbol m}
	\left[
	w_0(\,\cdot\,;\boldsymbol a,\boldsymbol b+\boldsymbol n)
	\right]
	\right](s)
	=
	\frac{\Gamma(s\one_q+\boldsymbol a)}
	{\Gamma(s\one_r+\boldsymbol b+\boldsymbol n)}
	(\boldsymbol\beta+(1-s)\one_p)_{\boldsymbol m}.
	\]
	This is exactly the Mellin transform in
	\eqref{eq:Laguerre-B-Mellin}. Mellin inversion proves
	\eqref{eq:Laguerre-mixed-Rodrigues}.
\end{proof}

The ordinary Laguerre-like family is recovered when the power vector
contains a single component. In this case the composed operator reduces
to one ordinary derivative and the formula agrees with the type~I
Rodrigues identity obtained by Wolfs.

\begin{remark}[Ordinary Laguerre-like specialization]
	For \(p=1\) and \(\beta_1=0\), let
	\[
	N=|\boldsymbol n|+|\boldsymbol\eta|,
	\qquad
	m_1=N-1.
	\]
	Then \eqref{eq:Laguerre-mixed-Rodrigues} reduces to
	\[
	\mathcal B_{\boldsymbol\lambda,N-1}^{\mathrm L}(x)
	=
	\frac{\mathrm d^{N-1}}{\mathrm dx^{N-1}}
	\left[
	x^{N-1}
	w_0(x;\boldsymbol a,\boldsymbol b+\boldsymbol n)
	\right],
	\]
	which is the Rodrigues-type formula for the ordinary Laguerre-like
	type~I function obtained by Wolfs~\cite{Wolfs2024}.
\end{remark}

\section{The hypergeometric settings in the step-line}
\label{sec:step-line-specializations}

The general step-line construction of
Section~\ref{sec:general-mixed-framework} can now be specialized to the
two systems obtained above. In this section, we write the resulting
step-line forms explicitly, including their polynomial components and
normalization constants.  The recurrence coefficients are then evaluated
directly from the step-line pairing
\(\langle x\mathcal B_N,\mathcal A_{N+k}\rangle\).  This avoids introducing
any recurrence between off-step-line neighbouring multi-indices.

For \(d\in\mathbb N\), let
\[
\boldsymbol\sigma_d(N)\in\mathbb N_0^d,
\qquad
N\in\mathbb N_0,
\]
denote the step-line multi-index introduced in
Section~\ref{sec:general-mixed-framework}. We also set
\[
\jmath_N\coloneq \ell_q(N)+1,
\qquad
N\in\mathbb N_0,
\]
so that
\[
\boldsymbol\sigma_q(N+1)-\boldsymbol\sigma_q(N)
=
\boldsymbol e_{\jmath_N}^{(q)}.
\]

\subsection{Jacobi step-line forms}
\label{subsec:Jacobi-step-line}

In the Jacobi setting, the first multi-index refers to the
\(p\)-component power vector, whereas the second refers to the
\(q\)-component Jacobi vector. Define
\begin{equation}
	\label{eq:SL-J-indices}
	\begin{aligned}
		\boldsymbol\nu_N&\coloneq \boldsymbol\sigma_p(N),
		&
		\boldsymbol\mu_N&\coloneq \boldsymbol\sigma_q(N+1),
		\\
		\boldsymbol\nu_N^{\,*}&\coloneq \boldsymbol\sigma_p(N+1),
		&
		\boldsymbol\mu_N^{\,*}&\coloneq \boldsymbol\sigma_q(N),
	\end{aligned}
	\qquad
	N\in\mathbb N_0.
\end{equation}
Then
\[
|\boldsymbol\mu_N|=|\boldsymbol\nu_N|+1,
\qquad
|\boldsymbol\nu_N^{\,*}|=|\boldsymbol\mu_N^{\,*}|+1,
\]
and
\begin{equation}
	\label{eq:SL-incremented-coordinate}
	\boldsymbol\mu_N-\boldsymbol\mu_N^{\,*}
	=
	\boldsymbol e_{\jmath_N}^{(q)}.
\end{equation}
Thus, the unnormalized Jacobi step-line forms are obtained from
the index pairs
\[
\widehat{\mathcal A}_N^{\,\mathrm J}
\coloneq
\mathcal A_{\boldsymbol\nu_N^{\,*},\boldsymbol\mu_N^{\,*}}^{\,\mathrm J},
\qquad
\widehat{\mathcal B}_N^{\,\mathrm J}
\coloneq
\mathcal B_{\boldsymbol\nu_N,\boldsymbol\mu_N}^{\,\mathrm J}.
\]

\subsubsection{The form on the power vector}

The normalized form on the power vector is
\begin{equation}
	\label{eq:Jacobi-SL-A-form}
	\mathcal A_N^{\,\mathrm J}(x)
	=
	\frac{1}{g_N^{\,\mathrm J}}
	\widehat{\mathcal A}_N^{\,\mathrm J}(x)
	=
	\frac{1}{g_N^{\,\mathrm J}}
	\sum_{i=1}^{p}
	\mathscr A_N^{\,\mathrm J,(i)}(x)x^{\alpha_i}.
\end{equation}
For \(i\in\{1,\ldots,p\}\) such that
\((\boldsymbol\nu_N^{\,*})_i\ge1\),
define
\begin{multline}
	\label{eq:Jacobi-SL-A-components}
		\mathscr A_N^{\,\mathrm J,(i)}(x)
		=
		C_{N,i}^{\,\mathrm J,\mathcal A}
		\\*
		\times
		\pFq{p+q}{p+q-1}
		{
			-(\boldsymbol\nu_N^{\,*})_i+1,\
			(\alpha_i+1)\one_q+\boldsymbol b+\boldsymbol\mu_N^{\,*},\
			(\alpha_i+1)\one_{p-1}-\boldsymbol\alpha^{\,*i}
			-(\boldsymbol\nu_N^{\,*})^{\,*i}
		}
		{
			(\alpha_i+1)\one_q+\boldsymbol a,\
			(\alpha_i+1)\one_{p-1}-\boldsymbol\alpha^{\,*i}
		}
		{x},
\end{multline}
where
\begin{equation}
	\label{eq:Jacobi-SL-A-prefactor}
	C_{N,i}^{\,\mathrm J,\mathcal A}
	\coloneq
	(-1)^{|(\boldsymbol\nu_N^{\,*})^{\,*i}|+1}
	\frac{
		\Gamma((\alpha_i+1)\one_q+\boldsymbol b)
	}{
		\Gamma((\alpha_i+1)\one_q+\boldsymbol a)
	}
	\frac{
		((\alpha_i+1)\one_q+\boldsymbol b)_{\boldsymbol\mu_N^{\,*}}
	}{
		((\boldsymbol\nu_N^{\,*})_i-1)!
		((\alpha_i+1)\one_{p-1}-\boldsymbol\alpha^{\,*i}
		-(\boldsymbol\nu_N^{\,*})^{\,*i})_{(\boldsymbol\nu_N^{\,*})^{\,*i}}
	}.
\end{equation}
If
$(\boldsymbol\nu_N^{\,*})_i=0$, we set
$\mathscr A_N^{\,\mathrm J,(i)}\equiv0$.

\subsubsection{The form on the Jacobi vector}

The normalized form on the Jacobi vector is
\begin{equation}
	\label{eq:Jacobi-SL-B-form}
	\mathcal B_N^{\,\mathrm J}(x)
	=
	\frac{1}{h_N^{\,\mathrm J}}
	\widehat{\mathcal B}_N^{\,\mathrm J}(x)
	=
	\frac{1}{h_N^{\,\mathrm J}}
	\sum_{j=1}^{q}
	\mathscr B_N^{\,\mathrm J,(j)}(x)
	w_j(x;\boldsymbol a,\boldsymbol b).
\end{equation}
For
\(J\in\{1,\ldots,q\}\) such that \((\boldsymbol\mu_N)_J\ge1\),
and
\(K\in\{0,\ldots,(\boldsymbol\mu_N)_J-1\}\),
define
\begin{equation}
	\label{eq:Jacobi-SL-B-seed-coefficients}
	\Pi_{N;J,K}^{\,\mathrm J}
	\coloneq
	-
	\frac{
		(\boldsymbol\alpha+(b_J+K+1)\one_p)_{\boldsymbol\nu_N}
	}{
		(-1)^K K!((\boldsymbol\mu_N)_J-K-1)!
		(\boldsymbol b^{\,*J}-(b_J+K)\one_{q-1})_{\boldsymbol\mu_N^{\,*J}}
	}.
\end{equation}
For \(j\in\{1,\ldots,q\}\) and
\(K-1+\delta_{j,J}\ge0\),
set
\begin{equation}
	\label{eq:Jacobi-SL-Psi}
	\Psi_{N;j;J,K}^{\,\mathrm J}(x)
	\coloneq
	\frac{
		(\boldsymbol b^{\,*j}-(b_J+K)\one_{q-1})_1
	}{
		(\boldsymbol a-(b_J+K)\one_q)_1
	}
	\pFq{q+1}{q}
	{
		1,\ \boldsymbol b+(1-b_J-K)\one_q-\boldsymbol e_j
	}
	{
		\boldsymbol a+(1-b_J-K)\one_q
	}
	{x},
\end{equation}
and put
\(\Psi_{N;j;J,0}^{\,\mathrm J}\coloneq0\), \(j\neq J\).
Indeed, the \(J\)-th upper parameter in
\eqref{eq:Jacobi-SL-Psi} is \(-K\) when \(j=J\), and \(1-K\)
when \(j\neq J\). Consequently, every nonzero
\(\Psi_{N;j;J,K}^{\,\mathrm J}\) is a terminating polynomial of degree
\(K-1+\delta_{j,J}\).

For \(j\in\{1,\ldots,q\}\) such that
\((\boldsymbol\mu_N)_j\ge1\),
the step-line component polynomial is
\begin{equation}
	\label{eq:Jacobi-SL-B-components}
	\mathscr B_N^{\,\mathrm J,(j)}(x)
	=
	\frac{
		(\boldsymbol a-b_j\one_q)_1
	}{
		(\boldsymbol b^{\,*j}-b_j\one_{q-1})_1
	}
	\sum_{\substack{J=1\\(\boldsymbol\mu_N)_J\ge1}}^{q}
	\sum_{K=0}^{(\boldsymbol\mu_N)_J-1}
	\Pi_{N;J,K}^{\,\mathrm J}
	\Psi_{N;j;J,K}^{\,\mathrm J}(x).
\end{equation}
If \((\boldsymbol\mu_N)_j=0\), we set
\[
\mathscr B_N^{\,\mathrm J,(j)}\equiv0.
\]

\subsubsection{Normalization and Mellin identity}

The normalization constants fixing the biorthonormal Jacobi
step-line system are
\begin{align}
	\label{eq:SL-J-normalization-constants}
	h_N^{\,\mathrm J}
	&\coloneq
	[x^{(\boldsymbol\mu_N)_{\jmath_N}-1}]
	\mathscr B_N^{\,\mathrm J,(\jmath_N)}(x),
	&
	g_N^{\,\mathrm J}
	&\coloneq
	\int_0^1
	x^{(\boldsymbol\mu_N^{\,*})_{\jmath_N}}
	\widehat{\mathcal A}_N^{\,\mathrm J}(x)
	w_{\jmath_N}(x;\boldsymbol a,\boldsymbol b)\dx.
\end{align}
Here \([x^d]P(x)\) denotes the coefficient of \(x^d\) in \(P\).
The terminating expression \eqref{eq:Jacobi-SL-B-components} evaluates
\(h_N^{\,\mathrm J}\), while expansion of
\eqref{eq:Jacobi-SL-A-components} followed by
\eqref{eq:wj-Mellin} evaluates \(g_N^{\,\mathrm J}\).
Thus both constants are finite Gamma--Pochhammer expressions. The Mellin
identity of the normalized form on the Jacobi vector is
\begin{equation}
	\label{eq:Jacobi-SL-B-Mellin}
	\int_0^1
	x^{s-1}\mathcal B_N^{\,\mathrm J}(x)\dx
	=
	-
	\frac{1}{h_N^{\,\mathrm J}}
	\frac{
		\Gamma(s\one_q+\boldsymbol a)
	}{
		\Gamma(s\one_q+\boldsymbol b+\boldsymbol\mu_N)
	}
	(\boldsymbol\alpha+(1-s)\one_p)_{\boldsymbol\nu_N}.
\end{equation}

Under the boundary degeneration
\[
\boldsymbol a,\boldsymbol b\longrightarrow\boldsymbol\beta,
\]
these formulas specialize to the mixed Pi\~neiro step-line forms.
These are precisely the step-line forms developed in~\cite{PineiroMixed2026}.

\subsection{Laguerre I step-line forms}
\label{subsec:Laguerre-step-line}

In the Laguerre I setting, the notation of the explicit forms places
first the multi-index of the \(q\)-component Laguerre I vector and
second the multi-index of the \(p\)-component power vector. Define
\begin{equation}
	\label{eq:Laguerre-SL-indices}
	\boldsymbol\lambda_N^{\mathrm L}
	\coloneq
	\boldsymbol\sigma_q(N)
	=
	(\boldsymbol n_N^{\mathrm L},\boldsymbol\eta_N^{\mathrm L}),
	\qquad
	\boldsymbol\mu_N^{\mathrm L}
	\coloneq
	\boldsymbol\sigma_p(N),
	\qquad
	N\in\mathbb N_0,
\end{equation}
where
\[
\boldsymbol n_N^{\mathrm L}\in\mathbb N_0^r,
\qquad
\boldsymbol\eta_N^{\mathrm L}\in\mathbb N_0^s.
\]
Then
\[
|\boldsymbol\mu_{N+1}^{\mathrm L}|
=
|\boldsymbol\lambda_N^{\mathrm L}|+1,
\qquad
|\boldsymbol\lambda_{N+1}^{\mathrm L}|
=
|\boldsymbol\mu_N^{\mathrm L}|+1,
\]
and
\begin{equation}
	\label{eq:Laguerre-SL-incremented-coordinate}
	\boldsymbol\lambda_{N+1}^{\mathrm L}
	-
	\boldsymbol\lambda_N^{\mathrm L}
	=
	\boldsymbol e_{\jmath_N}^{(q)}.
\end{equation}
Accordingly, the unnormalized Laguerre I step-line forms are
\[
\widehat{\mathcal A}_N^{\mathrm L}
\coloneq
\mathcal A_{\boldsymbol\lambda_N^{\mathrm L},\,
	\boldsymbol\mu_{N+1}^{\mathrm L}}^{\mathrm L},
\qquad
\widehat{\mathcal B}_N^{\mathrm L}
\coloneq
\mathcal B_{\boldsymbol\lambda_{N+1}^{\mathrm L},\,
	\boldsymbol\mu_N^{\mathrm L}}^{\mathrm L}.
\]

\subsubsection{The form on the power vector}

The normalized form on the power vector is
\begin{equation}
	\label{eq:Laguerre-SL-A-form}
	\mathcal A_N^{\mathrm L}(x)
	=
	\frac{1}{g_N^{\mathrm L}}
	\widehat{\mathcal A}_N^{\mathrm L}(x)
	=
	\frac{1}{g_N^{\mathrm L}}
	\sum_{i=1}^{p}
	\mathscr A_N^{\mathrm L,(i)}(x)x^{\beta_i}.
\end{equation}
For \(i\in\{1,\ldots,p\}\) such that
\((\boldsymbol\mu_{N+1}^{\mathrm L})_i\ge1\),
define
\begin{multline}
	\label{eq:Laguerre-SL-A-components}
		\mathscr A_N^{\mathrm L,(i)}(x)
		=
		C_{N,i}^{\mathrm L,\mathcal A}
		\\*
		\times
		\pFq{r+p}{q+p-1}
		{
			-(\boldsymbol\mu_{N+1}^{\mathrm L})_i+1,\
			(\beta_i+1)\one_r+\boldsymbol b+\boldsymbol n_N^{\mathrm L},\
			(\beta_i+1)\one_{p-1}-\boldsymbol\beta^{\,*i}
			-(\boldsymbol\mu_{N+1}^{\mathrm L})^{\,*i}
		}
		{
			(\beta_i+1)\one_q+\boldsymbol a,\
			(\beta_i+1)\one_{p-1}-\boldsymbol\beta^{\,*i}
		}
		{x},
\end{multline}
where
\begin{equation}
	\label{eq:Laguerre-SL-A-prefactor}
	C_{N,i}^{\mathrm L,\mathcal A}
	\coloneq
	-
	\frac{
		\Gamma((\beta_i+1)\one_r+\boldsymbol b+\boldsymbol n_N^{\mathrm L})
	}{
		\Gamma((\beta_i+1)\one_q+\boldsymbol a)
	}
	\frac{
		1
	}{
		((\boldsymbol\mu_{N+1}^{\mathrm L})_i-1)!
		(\boldsymbol\beta^{\,*i}-\beta_i\one_{p-1})_{(\boldsymbol\mu_{N+1}^{\mathrm L})^{\,*i}}
	}.
\end{equation}
If
\((\boldsymbol\mu_{N+1}^{\mathrm L})_i=0\),
we set
\(\mathscr A_N^{\mathrm L,(i)}\equiv0\).

\subsubsection{The form on the Laguerre I vector}

The normalized form on the Laguerre I vector is
\begin{equation}
	\label{eq:Laguerre-SL-B-form}
	\mathcal B_N^{\mathrm L}(x)
	=
	\frac{1}{h_N^{\mathrm L}}
	\widehat{\mathcal B}_N^{\mathrm L}(x)
	=
	\frac{1}{h_N^{\mathrm L}}
	\left(
	\sum_{h=1}^{r}
	\mathscr B_N^{\mathrm L,(h)}(x)w_h(x;\boldsymbol a,\boldsymbol b)
	+
	\sum_{\ell=1}^{s}
	\mathscr D_N^{\mathrm L,(\ell)}(x)v_\ell(x;\boldsymbol a,\boldsymbol b)
	\right).
\end{equation}
For \(h\in\{1,\ldots,r\}\) such that
\((\boldsymbol n_{N+1}^{\mathrm L})_h\ge1\),
write
\[
\mathscr B_N^{\mathrm L,(h)}(x)
=
\sum_{k=0}^{(\boldsymbol n_{N+1}^{\mathrm L})_h-1}
b_{N;h,k}^{\mathrm L}x^k,
\]
whereas, if \((\boldsymbol n_{N+1}^{\mathrm L})_h=0\), we set
\(\mathscr B_N^{\mathrm L,(h)}\equiv0\).
For \(\ell\in\{1,\ldots,s\}\) such that
\((\boldsymbol\eta_{N+1}^{\mathrm L})_\ell\ge1\),
write
\[
\mathscr D_N^{\mathrm L,(\ell)}(x)
=
\sum_{k=0}^{(\boldsymbol\eta_{N+1}^{\mathrm L})_\ell-1}
d_{N;\ell,k}^{\mathrm L}x^k,
\]
whereas, if \((\boldsymbol\eta_{N+1}^{\mathrm L})_\ell=0\), we set
\(\mathscr D_N^{\mathrm L,(\ell)}\equiv0\).
Introduce, for the step-line data, the compact basis polynomials
	\begin{align}
		\label{eq:Laguerre-SL-PD-basis-polynomials}
		P_{N;h,k}^{\mathrm L}(z)
		&\coloneq
		(z\one_q+\boldsymbol a)_k
		(z\one_r+\boldsymbol b+k\one_r+\boldsymbol e_h)_{\boldsymbol n_{N+1}^{\mathrm L}-k\one_r-\boldsymbol e_h},
		\\
		D_{N;\ell,k}^{\mathrm L}(z)
		&\coloneq
		(z\one_q+\boldsymbol a)_k
		(z\one_r+\boldsymbol b+k\one_r)_{\boldsymbol n_{N+1}^{\mathrm L}-k\one_r}
		(z+k)^{\ell-1}.
	\end{align}
The polynomial components in \eqref{eq:Laguerre-SL-B-form} are uniquely
determined by the finite confluent identity
\begin{multline}
	\label{eq:Laguerre-SL-B-component-system}
	\sum_{\substack{h=1\\(\boldsymbol n_{N+1}^{\mathrm L})_h\ge1}}^{r}
	\sum_{k=0}^{(\boldsymbol n_{N+1}^{\mathrm L})_h-1}
	b_{N;h,k}^{\mathrm L}P_{N;h,k}^{\mathrm L}(z)
	+
	\sum_{\substack{\ell=1\\(\boldsymbol\eta_{N+1}^{\mathrm L})_\ell\ge1}}^{s}
	\sum_{k=0}^{(\boldsymbol\eta_{N+1}^{\mathrm L})_\ell-1}
	d_{N;\ell,k}^{\mathrm L}D_{N;\ell,k}^{\mathrm L}(z)
	=
	(\boldsymbol\beta+(1-z)\one_p)_{\boldsymbol\mu_N^{\mathrm L}}.
\end{multline}

If
\(a_\rho-a_\sigma\notin\mathbb Z\), 
\(\rho\neq\sigma\),
then the complete unnormalized form
\(\widehat{\mathcal B}_N^{\mathrm L}\) admits the residue expansion
\begin{multline}
	\label{eq:Laguerre-SL-B-complete-hypergeometric}
		\widehat{\mathcal B}_N^{\mathrm L}(x)
		=
		\sum_{\rho=1}^{q}
		\frac{
			\Gamma(\boldsymbol a^{\,*\rho}-a_\rho\one_{q-1})
			(\boldsymbol\beta+(a_\rho+1)\one_p)_{\boldsymbol\mu_N^{\mathrm L}}
		}{
			\Gamma(\boldsymbol b+\boldsymbol n_{N+1}^{\mathrm L}
			-a_\rho\one_r)
		}
		x^{a_\rho}
		\\*
		\times
		\pFq{r+p}{q+p-1}
		{
			(a_\rho+1)\one_r-\boldsymbol b-\boldsymbol n_{N+1}^{\mathrm L},\
			\boldsymbol\beta+\boldsymbol\mu_N^{\mathrm L}
			+(a_\rho+1)\one_p
		}
		{
			(a_\rho+1)\one_{q-1}-\boldsymbol a^{\,*\rho},\
			\boldsymbol\beta+(a_\rho+1)\one_p
		}
		{(-1)^s x}.
\end{multline}
Consequently,
\[
h_N^{\mathrm L}\mathcal B_N^{\mathrm L}(x)
=
\widehat{\mathcal B}_N^{\mathrm L}(x)
\]
is given by the right-hand side of
\eqref{eq:Laguerre-SL-B-complete-hypergeometric}.

\subsubsection{Normalization and Mellin identity}

For \(\rho\in\{1,\ldots,q\}\), set
\[
\mathscr U_N^{\mathrm L,(\rho)}
\coloneq
\begin{cases}
	\mathscr B_N^{\mathrm L,(\rho)},
	&
	\rho\in\{1,\ldots,r\},
	\\
	\mathscr D_N^{\mathrm L,(\rho-r)},
	&
	\rho\in\{r+1,\ldots,q\}.
\end{cases}
\]
The normalization constants fixing the biorthonormal Laguerre I
step-line system are
\begin{align}
	\label{eq:SL-L-normalization-constants}
	h_N^{\mathrm L}
	&\coloneq
	[x^{(\boldsymbol\lambda_{N+1}^{\mathrm L})_{\jmath_N}-1}]
	\mathscr U_N^{\mathrm L,(\jmath_N)}(x),
	&
	g_N^{\mathrm L}
	&\coloneq
	\int_0^\infty
	x^{(\boldsymbol\lambda_N^{\mathrm L})_{\jmath_N}}
	\widehat{\mathcal A}_N^{\mathrm L}(x)
	U_{\jmath_N}^{\mathrm L}(x)\dx.
\end{align}
The finite component system
\eqref{eq:Laguerre-SL-B-component-system} evaluates \(h_N^{\mathrm L}\);
expanding \eqref{eq:Laguerre-SL-A-components} and applying the Mellin
transform of \(U_{\jmath_N}^{\mathrm L}\) evaluates \(g_N^{\mathrm L}\).
The Mellin identity for the normalized form on the Laguerre I vector is
\begin{equation}
	\label{eq:Laguerre-SL-B-Mellin}
	\int_0^\infty
	x^{z-1}\mathcal B_N^{\mathrm L}(x)\dx
	=
	\frac{1}{h_N^{\mathrm L}}
	\frac{
		\Gamma(z\one_q+\boldsymbol a)
	}{
		\Gamma(z\one_r+\boldsymbol b+\boldsymbol n_{N+1}^{\mathrm L})
	}
	(\boldsymbol\beta+(1-z)\one_p)_{\boldsymbol\mu_N^{\mathrm L}}.
\end{equation}

\begin{proposition}[Biorthonormal step-line systems]
	\label{prop:hypergeometric-step-line-biorthogonality}
	Assume that the relevant step-line index pairs are normal and that
	the normalization constants
	\[
	h_N^{\,\mathrm J},
	\qquad
	g_N^{\,\mathrm J},
	\qquad
	h_N^{\mathrm L},
	\qquad
	g_N^{\mathrm L}
	\]
	do not vanish. Then
	\begin{equation}
		\label{eq:SL-biorthogonality}
		\left\langle
		\mathcal B_N^{\,\mathrm J},
		\mathcal A_M^{\,\mathrm J}
		\right\rangle
		=
		\left\langle
		\mathcal B_N^{\mathrm L},
		\mathcal A_M^{\mathrm L}
		\right\rangle
		=
		\delta_{N,M},
		\qquad
		N,M\in\mathbb N_0.
	\end{equation}
\end{proposition}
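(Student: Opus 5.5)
We prove \eqref{eq:SL-biorthogonality} by splitting into the cases $M<N$, $M>N$ and $M=N$. The off-diagonal cases follow from the orthogonality relations already established, and the diagonal case follows from the choice of the constants $h_N$ and $g_N$. The argument is the same in both settings, so we describe it for the Jacobi system and then note the Laguerre I adaptation.

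\emph{Off-diagonal vanishing.} First let $M<N$. Then $\boldsymbol\nu_M^{\,*}=\boldsymbol\sigma_p(M+1)\le\boldsymbol\sigma_p(N)=\boldsymbol\nu_N$ componentwise, since step-line indices are monotone in the total degree. Hence every component of $\widehat{\mathcal A}_M^{\,\mathrm J}$ has the form $A^{(i)}(x)x^{\alpha_i}$ with $\deg A^{(i)}\le(\boldsymbol\nu_N)_i-1$. By \eqref{eq:B-orthogonality}, $\mathcal B_N^{\,\mathrm J}$ annihilates each $x^{\alpha_i+r}$ with $r<(\boldsymbol\nu_N)_i$, so the pairing vanishes.

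Now let $M>N$. Then $\boldsymbol\mu_N=\boldsymbol\sigma_q(N+1)\le\boldsymbol\sigma_q(M)=\boldsymbol\mu_M^{\,*}$. Thus $\mathcal B_N^{\,\mathrm J}$ is a combination of terms $x^kw_j$ with $k<(\boldsymbol\mu_M^{\,*})_j$. By \eqref{eq:A-orthogonality}, $\mathcal A_M^{\,\mathrm J}$ is orthogonal to each such term. This is exactly the vanishing relation \eqref{eq:gen-biorthogonality-vanishing}. In the Laguerre I case we use \eqref{eq:A-orthogonality-wh}, \eqref{eq:A-orthogonality-vl} and \eqref{eq:Laguerre-B-orthogonality} in the same way, with the index dictionary \eqref{eq:Laguerre-SL-indices}.

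\emph{Diagonal value.} For $M=N$, write $j=\jmath_N$, so that $\boldsymbol\mu_N=\boldsymbol\mu_N^{\,*}+\boldsymbol e_j$. Expand $\widehat{\mathcal B}_N^{\,\mathrm J}=\sum_{j'}\mathscr B^{(j')}w_{j'}$ and pair it with $\widehat{\mathcal A}_N^{\,\mathrm J}$.

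Each monomial $x^kw_{j'}$ with $k<(\boldsymbol\mu_N^{\,*})_{j'}$ is annihilated by $\widehat{\mathcal A}_N^{\,\mathrm J}$, by \eqref{eq:A-orthogonality}. The only monomial not covered is $x^{(\boldsymbol\mu_N^{\,*})_j}w_j$, because every other component satisfies $\deg\mathscr B^{(j')}\le(\boldsymbol\mu_N)_{j'}-1=(\boldsymbol\mu_N^{\,*})_{j'}-1$. Therefore
\[
\langle\widehat{\mathcal B}_N^{\,\mathrm J},\widehat{\mathcal A}_N^{\,\mathrm J}\rangle
=[x^{(\boldsymbol\mu_N)_j-1}]\mathscr B_N^{\,\mathrm J,(j)}\cdot
\int_0^1x^{(\boldsymbol\mu_N^{\,*})_j}\widehat{\mathcal A}_N^{\,\mathrm J}w_j\dx
=h_N^{\,\mathrm J}g_N^{\,\mathrm J},
\]
by \eqref{eq:SL-J-normalization-constants}. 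Dividing by the nonzero constants gives $1$. In the Laguerre I case the same computation applies, now using \eqref{eq:SL-L-normalization-constants} and the components $\mathscr U_N^{\mathrm L,(\rho)}$.

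\emph{Main obstacle.} The only delicate point is bookkeeping in the Laguerre I case. Here the incremented coordinate $\jmath_N$ may lie in either the beta block or the Euler block, so we must check that the degree bounds of the $\mathscr D^{(\ell)}$ components behave exactly like those of the $\mathscr B^{(h)}$ components. We must also confirm that the index pairs used in the statement are Laguerre admissible, so that the orthogonality relations apply. Both checks follow because $\boldsymbol\sigma_q(N)$ is simultaneously near-diagonal and has its Euler part on the step-line. Integrability of all pairings follows from the standing hypotheses.
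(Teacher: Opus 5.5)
Your proposal is correct and follows the paper's own argument. The paper's proof is a two-line version of it: the off-diagonal pairings vanish by the orthogonality conditions, and the constants $h_N^\star$, $g_N^\star$ force the diagonal pairing to equal one. You supply the details the paper leaves implicit, namely the componentwise monotonicity of $\boldsymbol\sigma_d$ and the single surviving monomial $x^{(\boldsymbol\mu_N^{\,*})_{\jmath_N}}w_{\jmath_N}$ that gives the value $h_Ng_N$, together with the Laguerre admissibility of $\boldsymbol\sigma_q(N)$.
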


\begin{proof}
	The displayed Jacobi and Laguerre I forms are precisely the
	specializations of the general step-line forms associated with the
	index pairs defined above.  For
	\(\star\in\{\mathrm J,\mathrm L\}\), their orthogonality conditions give
	zero when \(N\ne M\), and the constants \(h_N^\star\) and
	\(g_N^\star\) impose
	\(\langle\mathcal B_N^\star,\mathcal A_N^\star\rangle=1\).
	Hence the general step-line biorthogonality relation gives
	\eqref{eq:SL-biorthogonality}.
\end{proof}

\subsection{Explicit step-line recurrence coefficients}
\label{subsec:explicit-step-line-recurrence-coefficients}

We introduce coefficient notation for the terminating polynomials displayed
above:
\begin{align}
	\mathscr A_M^{\,\mathrm J,(i)}(x)
	&=\sum_{u\ge0}\mathsf a_{M;i,u}^{\,\mathrm J}x^u,
	&
	\mathscr B_N^{\,\mathrm J,(j)}(x)
	&=\sum_{v\ge0}\mathsf b_{N;j,v}^{\,\mathrm J}x^v,
	\label{eq:Jacobi-SL-component-coefficients}
	\\
	\mathscr A_M^{\mathrm L,(i)}(x)
	&=\sum_{u\ge0}\mathsf a_{M;i,u}^{\mathrm L}x^u,
	&
	\mathscr B_N^{\mathrm L,(h)}(x)
	&=\sum_{v\ge0}\mathsf b_{N;h,v}^{\mathrm L}x^v,
	&
	\mathscr D_N^{\mathrm L,(\ell)}(x)
	&=\sum_{v\ge0}\mathsf d_{N;\ell,v}^{\mathrm L}x^v.
	\label{eq:Laguerre-SL-component-coefficients}
\end{align}
All these sums are finite; the appropriate component is identically zero
when its degree bound is negative.

For \(N,M\in\mathbb N_0\), the Jacobi pairing
\(\langle x\mathcal B_N^{\mathrm J},\mathcal A_M^{\mathrm J}\rangle\)
is the finite sum
\begin{equation}
	\label{eq:Jacobi-SL-finite-pairing-value}
	\mathscr P_{N,M}^{\,\mathrm J}
	\coloneq
	\frac{1}{h_N^{\,\mathrm J}g_M^{\,\mathrm J}}
	\sum_{\substack{1\le i\le p,\ 1\le j\le q\\u,v\ge0}}
	\mathsf a_{M;i,u}^{\,\mathrm J}
	\mathsf b_{N;j,v}^{\,\mathrm J}
	\frac{
		\Gamma((\alpha_i+u+v+2)\one_q+\boldsymbol a)
	}{
		\Gamma((\alpha_i+u+v+2)\one_q+\boldsymbol b+\boldsymbol e_j)
	}.
\end{equation}
For the Laguerre I pairing, put
\(\zeta_{i,u,v}\coloneq\beta_i+u+v+2\) and define
\begin{multline}
	\label{eq:Laguerre-SL-finite-pairing-value}
	\mathscr P_{N,M}^{\mathrm L}
	\coloneq
	\frac{1}{h_N^{\mathrm L}g_M^{\mathrm L}}
	\sum_{\substack{1\le i\le p\\u\ge0}}
	\mathsf a_{M;i,u}^{\mathrm L}
	\Bigg[
	\sum_{\substack{1\le h\le r\\v\ge0}}
	\mathsf b_{N;h,v}^{\mathrm L}
	\frac{\Gamma(\zeta_{i,u,v}\one_q+\boldsymbol a)}
	{\Gamma(\zeta_{i,u,v}\one_r+\boldsymbol b+\boldsymbol e_h)}
	\\*
	+
	\sum_{\substack{1\le\ell\le s\\v\ge0}}
	\mathsf d_{N;\ell,v}^{\mathrm L}
	\zeta_{i,u,v}^{\,\ell-1}
	\frac{\Gamma(\zeta_{i,u,v}\one_q+\boldsymbol a)}
	{\Gamma(\zeta_{i,u,v}\one_r+\boldsymbol b)}
	\Bigg].
\end{multline}
Thus the Jacobi coefficients are finite Gamma--Pochhammer sums, while
the Laguerre I coefficients are finite Gamma--Pochhammer sums after the
finite confluent system
\eqref{eq:Laguerre-SL-B-component-system} has been solved.

\begin{theorem}[Step-line recurrences for the two hypergeometric systems]
\label{thm:explicit-hypergeometric-step-line-recurrences}
Assume the hypotheses of
Proposition~\ref{prop:hypergeometric-step-line-biorthogonality}.  For
\(\star\in\{\mathrm J,\mathrm L\}\), \(k\in\{-p,\ldots,q\}\), and
\(N+k\ge0\), set
\begin{equation}
	\label{eq:explicit-SL-recurrence-coefficients}
	b_N^{\star,k}
	\coloneq
	\mathscr P_{N,N+k}^{\star}.
\end{equation}
Then
\begin{equation}
	\label{eq:explicit-SL-B-recurrence}
	x\mathcal B_N^{\star}(x)
	=
	\sum_{\substack{k=-p\\N+k\ge0}}^{q}
	b_N^{\star,k}\mathcal B_{N+k}^{\star}(x),
\end{equation}
	and the corresponding recurrence for the \(\mathcal A\)-forms is
\begin{equation}
	\label{eq:explicit-SL-A-recurrence}
	x\mathcal A_N^{\star}(x)
	=
	\sum_{\substack{k=-q\\N+k\ge0}}^{p}
	b_{N+k}^{\star,-k}\mathcal A_{N+k}^{\star}(x).
\end{equation}
Moreover,
\begin{equation}
	\label{eq:SL-top-coefficients}
	b_N^{\mathrm J,q}=b_N^{\mathrm L,q}=1.
\end{equation}
\end{theorem}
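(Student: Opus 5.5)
The plan is to reduce the statement to Proposition~\ref{prop:general-step-recurrence}, applied once to each of the two hypergeometric systems, and then to check that the finite sums \(\mathscr P^\star_{N,M}\) are exactly the pairings \(\langle x\mathcal B^\star_N,\mathcal A^\star_M\rangle\).

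First, for \(\star=\mathrm J\) we take \(\mathbf u=(x^{\alpha_i})\), \(\mathbf v=(w_j)\) and \(\Delta=(0,1)\). For \(\star=\mathrm L\) we take \(\mathbf u=(x^{\beta_i})\), \(\mathbf v=\mathbf U^{\mathrm L}\) and \(\Delta=(0,\infty)\). By Proposition~\ref{prop:hypergeometric-step-line-biorthogonality}, the normalized forms \(\mathcal A^\star_N,\mathcal B^\star_N\) are the general step-line forms \eqref{eq:gen-step-forms} and satisfy \eqref{eq:gen-step-biorthogonality}. We still have to match the normalization of each form.
\begin{itemize}
\item The constant \(h_N^\star\) divides by the coefficient of degree \((\boldsymbol\sigma_q(N+1))_{\jmath_N}-1\) in the active component \(\jmath_N=\ell_q(N)+1\). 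This is exactly the first condition in \eqref{eq:gen-step-normalization}.
\item The constant \(g_N^\star\) divides by \(\langle x^{(\boldsymbol\sigma_q(N))_{\jmath_N}}\widehat{\mathcal A}^\star_N,v_{\jmath_N}\rangle\), which is the second condition.
\end{itemize}
The required nonvanishing of leading minors comes from Corollary~\ref{cor:Jacobi-step-line-normality}, respectively Proposition~\ref{prop:AT-normality-Laguerre-admissible}, together with the assumed nonvanishing of \(h^\star_N,g^\star_N\). So Proposition~\ref{prop:general-step-recurrence} applies, and it gives \eqref{eq:explicit-SL-B-recurrence} with coefficients \(b_N^k=\langle x\mathcal B_N,\mathcal A_{N+k}\rangle\) and \(b_N^q=1\). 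Reindexing \eqref{eq:gen-step-A-recurrence} with \(k\mapsto -k\) gives \eqref{eq:explicit-SL-A-recurrence}: the terms \(b^{j}_{N-j}\mathcal A_{N-j}\) and \(b^{-i}_{N+i}\mathcal A_{N+i}\) combine into \(b^{\star,-k}_{N+k}\mathcal A_{N+k}\) for \(k\in\{-q,\dots,p\}\).

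Second, we compute the pairing. We expand \(\mathcal A^\star_M=g_M^{-1}\sum_{i,u}\mathsf a_{M;i,u}x^{\alpha_i+u}\) and \(\mathcal B^\star_N=h_N^{-1}\sum_{j,v}\mathsf b_{N;j,v}x^v w_j\), which are finite sums. Then
\[
\langle x\mathcal B_N,\mathcal A_M\rangle=\frac{1}{h_Ng_M}\sum \mathsf a\,\mathsf b\int x^{\alpha_i+u+v+1}w_j\,\mathrm dx .
\]
Each integral equals \(\mathcal M[w_j](\alpha_i+u+v+2)\). By \eqref{eq:wj-Mellin} this is the Gamma quotient in \eqref{eq:Jacobi-SL-finite-pairing-value}. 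In the Laguerre case we use \eqref{eq:wh-Mellin} and \eqref{eq:vl-Mellin} at \(z=\zeta_{i,u,v}\), and this reproduces \eqref{eq:Laguerre-SL-finite-pairing-value}, including the factor \(\zeta^{\ell-1}\) coming from the Euler weights. The evaluation points lie in the fundamental strip because of the standing integrability conditions \(\alpha_i+a_\rho>-1\), respectively \(\beta_i+a_\rho>-1\). The exchange of the finite sum with the integral is trivial.

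The only delicate point, which I expect to be the main obstacle, is the top coefficient \(b_N^{\mathrm L,q}=1\). In the Laguerre case the active component may be an Euler-block component \(\mathscr D\), and the leading-coefficient comparison in the proof of Proposition~\ref{prop:general-step-recurrence} needs the degree of each component to be exactly the one determined by the step-line index. I would argue as in that proof: \(\boldsymbol\sigma_q(N+q+1)=\boldsymbol\sigma_q(N+1)+\one_q\). In the \(\jmath_N\)-th component of \(x\mathcal B_N\), the coefficient of degree \((\boldsymbol\sigma_q(N+1))_{\jmath_N}\) equals one by normalization. Among \(\mathcal B_{N-p},\dots,\mathcal B_{N+q}\), only \(\mathcal B_{N+q}\) reaches that degree in that component, and its coefficient there is also one by normalization. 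Equivalently, we can read \(b^q_N=1\) off the Gauss--Borel realization \(T=\mathscr L\Lambda_{[q]}\mathscr L^{-1}\) with \(\mathscr L\) unitriangular, once the normalizations \eqref{eq:SL-J-normalization-constants} and \eqref{eq:SL-L-normalization-constants} are identified with those of \eqref{eq:gen-GB-polynomial-systems}. For this, Proposition~\ref{prop:AT-normality-Laguerre-admissible} guarantees that the active step-line coefficient is nonzero, which is all that is needed here.
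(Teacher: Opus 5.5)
Your proposal is correct and follows essentially the same route as the paper. Proposition~\ref{prop:general-step-recurrence} supplies the band structure, the identification \(b_N^k=\langle x\mathcal B_N,\mathcal A_{N+k}\rangle\), and, once \(h_N^\star,g_N^\star\) are matched with \eqref{eq:gen-step-normalization}, the value \(b_N^q=1\); the pairings are then evaluated termwise by \eqref{eq:wj-Mellin}, \eqref{eq:wh-Mellin} and \eqref{eq:vl-Mellin}, and the \(\mathcal A\)-recurrence is read off \(T^{\mathsf T}\). The step you flag as delicate is not: the top-coefficient comparison uses only the degree bounds \(B_M^{(j)}\in\mathbb P_{(\boldsymbol\sigma_q(M+1))_j-1}\) together with \(h_N^\star\neq0\), and under normality \(h_N^\star\) is, up to a nonzero factor, the cofactor of the active row (the \(N\)-th leading minor), so the nonvanishing minors follow from the theorem's own hypotheses without the parameter-range results you cite.
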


\begin{proof}
	Proposition~\ref{prop:general-step-recurrence} already proves that only
	the indices \(N-p,\ldots,N+q\) occur and identifies the coefficient of
	\(\mathcal B_{N+k}^{\star}\) as
	\[
	\left\langle
	x\mathcal B_N^{\star},
	\mathcal A_{N+k}^{\star}
	\right\rangle.
	\]
	Insert first the two Jacobi component expansions from
	\eqref{eq:Jacobi-SL-component-coefficients}.  A term with polynomial
	degrees \(u\) and \(v\) contains the moment
	\[
	\int_0^1
	x^{\alpha_i+u+v+1}w_j(x;\boldsymbol a,\boldsymbol b)\dx
	=
	\frac{
		\Gamma((\alpha_i+u+v+2)\one_q+\boldsymbol a)
	}{
		\Gamma((\alpha_i+u+v+2)\one_q+\boldsymbol b+\boldsymbol e_j)
	},
	\]
	by \eqref{eq:wj-Mellin}.  Including the two normalization constants gives
	exactly \eqref{eq:Jacobi-SL-finite-pairing-value}.

	For the Laguerre I system, a term containing the weight \(w_h\) gives
	\[
	\int_0^\infty x^{\zeta_{i,u,v}-1}w_h(x)\dx
	=
	\frac{\Gamma(\zeta_{i,u,v}\one_q+\boldsymbol a)}
	{\Gamma(\zeta_{i,u,v}\one_r+\boldsymbol b+\boldsymbol e_h)}
	\]
	by \eqref{eq:wh-Mellin}.  A term containing the weight \(v_\ell\) gives
	\[
	\int_0^\infty x^{z-1}v_\ell(x)\dx
	=
	z^{\ell-1}\frac{\Gamma(z\one_q+\boldsymbol a)}{\Gamma(z\one_r+\boldsymbol b)}
	\]
	by \eqref{eq:vl-Mellin}; here \(z=\zeta_{i,u,v}\).  Summing the finite
	component expansions gives \eqref{eq:Laguerre-SL-finite-pairing-value}.
	This proves \eqref{eq:explicit-SL-recurrence-coefficients} and
	\eqref{eq:explicit-SL-B-recurrence}.

	Let \(T^\star\) be the recurrence matrix, so that
	\((T^\star)_{N,N+k}=b_N^{\star,k}\).  The \(N\)-th component of
	\((T^\star)^{\mathsf T}\boldsymbol{\mathcal A}^\star
	=x\boldsymbol{\mathcal A}^\star\) is
	\[
	x\mathcal A_N^\star
	=
	\sum_{\substack{k=-q\\N+k\ge0}}^p
	(T^\star)_{N+k,N}\mathcal A_{N+k}^\star
	=
	\sum_{\substack{k=-q\\N+k\ge0}}^p
	b_{N+k}^{\star,-k}\mathcal A_{N+k}^\star,
	\]
	which is \eqref{eq:explicit-SL-A-recurrence}.  Finally,
	\eqref{eq:SL-J-normalization-constants} and
	\eqref{eq:SL-L-normalization-constants} are the specializations of
	\eqref{eq:gen-step-normalization}; hence the coefficient of
	\(\mathcal B_{N+q}^\star\) is one by the last part of
	Proposition~\ref{prop:general-step-recurrence}.
\end{proof}

Consequently, each system defines a \((p,q)\)-banded step-line recurrence
matrix by
\begin{equation}
	\label{eq:SL-matrix-entries}
	(T^{\star})_{N,N+k}=b_N^{\star,k},
	\qquad
	\star\in\{\mathrm J,\mathrm L\},
	\quad
	k\in\{-p,\ldots,q\},
	\quad
	N+k\ge0.
\end{equation}

\section{Bidiagonal factorization of the step-line recurrence matrix}
\label{sec:explicit-bidiagonal-factors}

This section concerns the ordered step-line matrix \(T\).  The finite pairings
of the previous section evaluate its band entries, while the
Christoffel--Gauss--Borel factorization explains how the same matrix decomposes
into elementary bidiagonal factors. Closed formulas are available precisely
for the Christoffel chains which
remain inside the explicit family; otherwise the factors are kept in
tau-determinant form.

The left Christoffel chain remains within each of the two explicit families:
it acts on the power vector by the cyclic affine shift of its exponent
parameters. Thus
\begin{equation}
	\label{eq:left-Christoffel-same-family}
	\mathcal A_{\mathrm L,N}^{\,\mathrm J,[k]}
	=
	\left.
	\mathcal A_N^{\,\mathrm J}
	\right|_{\boldsymbol\alpha\mapsto\boldsymbol\alpha^{[k]}},
	\qquad
	\mathcal A_{\mathrm L,N}^{\mathrm L,[k]}
	=
	\left.
	\mathcal A_N^{\mathrm L}
	\right|_{\boldsymbol\beta\mapsto\boldsymbol\beta^{[k]}},
\end{equation}
where
\[
\boldsymbol\alpha^{[k]}
\coloneq
\mathscr C_p^k(\boldsymbol\alpha),
\qquad
\boldsymbol\beta^{[k]}
\coloneq
\mathscr C_p^k(\boldsymbol\beta).
\]
To display the cyclic shifts in the factor formulas, extend the exponent
parameters by
\begin{equation}
	\label{eq:cyclic-extended-exponents}
	\alpha_{sp+i}\coloneq\alpha_i+s,
	\qquad
	\beta_{sp+i}\coloneq\beta_i+s,
	\qquad
	s\in\mathbb N_0,
	\quad
	i\in\{1,\ldots,p\}.
\end{equation}
For each \(N\in\mathbb N_0\), write
\begin{equation}
	\label{eq:factor-step-line-quotients}
	N=pu_N+\ell_N=qv_N+j_N,
	\qquad
	\ell_N\in\{0,\ldots,p-1\},
	\quad
	j_N\in\{0,\ldots,q-1\},
\end{equation}
and set
\begin{equation}
	\label{eq:factor-step-coordinates}
	\iota_N\coloneq\ell_N+1,
	\qquad
	\chi_N\coloneq j_N+1,
	\qquad
	\boldsymbol d_N\coloneq\boldsymbol\sigma_p(N+1),
	\qquad
	d_{N,i}\coloneq(\boldsymbol d_N)_i.
\end{equation}
Throughout this section, assume the normality and nonzero-pivot hypotheses of
Proposition~\ref{prop:gen-bidiagonal-scheme} along every Christoffel chain
used. All denominators in the formulas below are assumed to be nonzero.

\subsection{Closed evaluation of the left normalizing factors}

The normalizing constants in the left Christoffel chain can be computed
directly from the contour representations of the forms on the power
vector; no expansion of the hypergeometric components is needed.

\begin{proposition}[Closed normalizing factors]
	\label{prop:closed-left-normalizing-factors}
	For \(k\in\{0,\ldots,p\}\), define
	\[
	g_N^{\,\mathrm J,[k]}
	\coloneq
	\left.
	g_N^{\,\mathrm J}
	\right|_{\boldsymbol\alpha\mapsto\boldsymbol\alpha^{[k]}},
	\qquad
	g_N^{\mathrm L,[k]}
	\coloneq
	\left.
	g_N^{\mathrm L}
	\right|_{\boldsymbol\beta\mapsto\boldsymbol\beta^{[k]}}.
	\]
	Then
	\begin{equation}
		\label{eq:Jacobi-left-normalizer-closed}
		g_N^{\,\mathrm J,[k]}
		=
		-
		\frac{
			(\boldsymbol a-(b_{\chi_N}+v_N)\one_q)_{v_N}
			\prod_{h=1}^{\chi_N-1}
			(b_h-b_{\chi_N})
		}{
			(\boldsymbol\alpha^{[k]}+(b_{\chi_N}+v_N+1)\one_p)_{\boldsymbol d_N}
		}.
	\end{equation}
	For the Laguerre I system one has
	\begin{equation}
		\label{eq:Laguerre-left-normalizer-closed}
		g_N^{\mathrm L,[k]}
		=
		\begin{cases}
			-
			\frac{
				(\boldsymbol a-(b_{\chi_N}+v_N)\one_q)_{v_N}
				\prod_{h=1}^{\chi_N-1}
				(b_h-b_{\chi_N})
			}{
				(\boldsymbol\beta^{[k]}+(b_{\chi_N}+v_N+1)\one_p)_{\boldsymbol d_N}
			},
			&
			\chi_N\in\{1,\ldots,r\},
			\\[18pt]
			(-1)^{N+1},
			&
			\chi_N\in\{r+1,\ldots,q\}.
		\end{cases}
	\end{equation}
\end{proposition}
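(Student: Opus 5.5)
The plan is to evaluate \(g_N\) directly from the contour representation of \(\widehat{\mathcal A}_N\), so that no hypergeometric component is ever expanded. Since \(\mathcal A_N^{[k]}\) is obtained by substituting \(\boldsymbol\alpha\mapsto\boldsymbol\alpha^{[k]}\), it suffices to treat \(k=0\) with a generic exponent vector and then substitute. Put \(j=\chi_N\), \(v=v_N\), so \((\boldsymbol\mu_N^{\,*})_j=v\) and \(\boldsymbol\mu_N^{\,*}=\boldsymbol\sigma_q(N)\) has entries \(v+1\) for \(h<j\) and \(v\) for \(h\ge j\).

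For the Jacobi case, insert \eqref{eq:A-contour} into \(g_N\), exchange integrals as in Step 1 of Theorem~\ref{thm:mixed-Jacobi-like-forms}, and use \eqref{eq:wj-Mellin} at \(s=t+v+1\). This gives
\[
g_N=\frac{(-1)^{N+1}}{2\pi\mathrm i}\int_\Sigma \frac{R(t)}{D_{\boldsymbol\alpha,\boldsymbol d_N}(t)}\,\mathrm dt,\qquad
R(t)=\frac{P_{\boldsymbol b,\boldsymbol\mu^*}(t)\,(t\one_q+\boldsymbol a+\one_q)_v}{(t\one_q+\boldsymbol b+\one_q)_v\,(t+b_j+v+1)}.
\]
The Pochhammer \(P\) cancels \((t+b_h+1)_v\) for every \(h\). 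What is left is \(\prod_{h<j}(t+b_h+v+1)\) over \(t+b_j+v+1\), multiplied by the degree-\(qv\) polynomial \((t\one_q+\boldsymbol a+1)_v\). The integrand therefore has degree \(-(N+1)+qv+(j-1)-1=-2\), and \(N=qv+j-1\) makes the total residue vanish. The integral over \(\Sigma\) then equals minus the residue at the single outside pole \(t=-b_j-v-1\). That residue is
\[
\frac{(\boldsymbol a-(b_j+v)\one_q)_v\prod_{h<j}(b_h-b_j)}{D_{\boldsymbol\alpha,\boldsymbol d_N}(-b_j-v-1)},
\]
and \(D_{\boldsymbol\alpha,\boldsymbol d_N}(-b_j-v-1)=\prod_i(-b_j-v-\alpha_i-d_{N,i})_{d_{N,i}}=(-1)^{N+1}(\boldsymbol\alpha+(b_j+v+1)\one_p)_{\boldsymbol d_N}\). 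The signs \((-1)^{N+1}\) cancel, and the extra minus from moving the residue outside produces \eqref{eq:Jacobi-left-normalizer-closed}. One point needs care: the contour must be chosen to avoid \(-b_j-v-1\), which the nonresonance hypotheses allow, and in the degree count the \(\boldsymbol a\)-Pochhammers must not cancel anything.

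For the Laguerre I case, repeat the argument with \eqref{eq:Laguerre-A-contour}, which carries no sign prefactor and uses \(D_{\boldsymbol\beta,\boldsymbol m}(t)=\prod_i(\beta_i-t)_{m_i}\). If \(\chi_N\le r\), the pairing is against \(w_j\), and \eqref{eq:wh-Mellin} at \(t+v+1\) produces exactly the same rational integrand. This is the polynomial \(R_{h,k}\) of Step 2 of Theorem~\ref{thm:mixed-Laguerre-forms}, now with the top index \(k=v\) and \(\boldsymbol n=\boldsymbol n_N\). The gamma factors reduce to Pochhammers and leave one uncancelled pole at \(t=-b_j-v-1\). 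Here \(D_{\boldsymbol\beta,\boldsymbol d_N}(-b_j-v-1)=(\boldsymbol\beta+(b_j+v+1)\one_p)_{\boldsymbol d_N}\) with no sign, and the same residue-at-infinity argument gives the first case.

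If \(\chi_N=r+\ell>r\), the pairing is against \(v_\ell\), and \eqref{eq:vl-Mellin} gives the polynomial \(S(t)\) of Step 3. Its degree is exactly \(N\) and it is monic: the factor \((t+v+1)^{\ell-1}\) is a polynomial, so no finite outside pole survives. The integrand \(S/D\) then behaves like \(t^{N}/((-1)^{N+1}t^{N+1})\) at infinity, and \(\Sigma\) encloses all finite poles. Hence the integral equals the residue at infinity with reversed sign, which is \((-1)^{N+1}\). The main obstacle is the degree bookkeeping in this Euler case. One has to show that \(S\) has degree exactly \(N\) (and not lower) with leading coefficient \(1\), using \(|\boldsymbol n_N|+sv+\ell-1=N\) from the step-line structure. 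One also has to check that the \(\boldsymbol n\)-Pochhammers cancel against the gamma quotient without leaving extra poles.
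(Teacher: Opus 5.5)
Your proposal is correct and follows the paper's proof essentially step for step. You insert the contour representation of the $\mathcal A$-form into the normalizing pairing, reduce it via the Mellin transforms to a rational integrand that is $\mathrm O(t^{-2})$, and read off $g_N$ from the single exterior pole $t=-b_{\chi_N}-v_N-1$ (beta rows) or from the $(-1)^{N+1}/t$ behaviour at infinity (Euler rows); your sign and degree bookkeeping, e.g.\ $D_{\boldsymbol\alpha,\boldsymbol d_N}(-b_j-v-1)=(-1)^{N+1}(\boldsymbol\alpha+(b_j+v+1)\one_p)_{\boldsymbol d_N}$ and the monic degree-$N$ numerator in the Euler case, supplies exactly the details the paper leaves implicit (and the separation of the exterior pole from the nodes actually follows already from the integrability condition, not from any extra nonresonance).
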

\begin{proof}
	Consider first the Jacobi system. Inserting the contour
	representation \eqref{eq:A-contour} in the normalizing pairing in
	\eqref{eq:gen-step-normalization}, with
	\[
	\boldsymbol n=\boldsymbol\sigma_p(N+1),
	\qquad
	\boldsymbol m=\boldsymbol\sigma_q(N),
	\qquad
	\boldsymbol\alpha\mapsto\boldsymbol\alpha^{[k]},
	\]
	and using
	\[
	(\boldsymbol\sigma_q(N))_{\chi_N}=v_N,
	\]
	reduces the integral to
	\begin{equation}
		\frac{(-1)^{N+1}}{2\pi\mathrm i}
		\int_{\Sigma}
		\frac{
			(t\one_q+\boldsymbol a+\one_q)_{v_N}
			\prod_{h=1}^{\chi_N-1}(t+b_h+v_N+1)
		}{
			(t+b_{\chi_N}+v_N+1)
			\prod_{i=1}^{p}\prod_{\ell=0}^{d_{N,i}-1}
			(t-\alpha_i^{[k]}-\ell)
		}
		\dt.
		\label{eq:Jacobi-normalizer-residue-integral}
	\end{equation}
	The integrand is \(\mathrm{O}(t^{-2})\) at infinity. Moving the contour
	to the exterior leaves the simple pole at
	\(t=-b_{\chi_N}-v_N-1\), whose residue gives
	\eqref{eq:Jacobi-left-normalizer-closed}.
	
	In the Laguerre I setting, use
	\eqref{eq:Laguerre-A-contour}. If
	\(\chi_N\in\{1,\ldots,r\}\), coordinate \(\chi_N\) corresponds to the
	weight \(w_{\chi_N}\), and the same exterior-pole calculation gives the
	first line of \eqref{eq:Laguerre-left-normalizer-closed}, with
	\(\boldsymbol\alpha^{[k]}\) replaced by
	\(\boldsymbol\beta^{[k]}\). If
	\(\chi_N\in\{r+1,\ldots,q\}\), it corresponds to
	\(v_{\chi_N-r}\). The corresponding rational integrand has leading
	behavior
	\[
	\frac{(-1)^{N+1}}{t}
	+
	\mathrm{O}(t^{-2}),\qquad t\to\infty,
	\]
	and its finite residues therefore sum to \((-1)^{N+1}\). This proves
	the second line.
\end{proof}

\subsection{Simplification of the leading-coefficient quotients}

The leading coefficients in \eqref{eq:gen-L-leading-coefficients} also
simplify before they are combined with the normalizing factors.

\begin{proposition}[Leading-coefficient quotient reduction]
	\label{prop:leading-coefficient-quotient-reduction}
	For \(k\in\{1,\ldots,p\}\), the quotient of the leading coefficients
	of the unnormalized left Christoffel forms is
	\begin{equation}
		\label{eq:Jacobi-leading-quotient-reduction}
		-
		\frac{
			\alpha_{k+\ell_N+1}-\alpha_k+u_N
		}{
			\alpha_{k+\ell_N+1}+u_N+1+b_{\chi_N}+v_N
		}
	\end{equation}
	in the Jacobi setting. In the Laguerre I setting it is
	\begin{equation}
		\label{eq:Laguerre-leading-quotient-reduction}
		\begin{cases}
			-
			\frac{
				\beta_{k+\ell_N+1}-\beta_k+u_N
			}{
				\beta_{k+\ell_N+1}+u_N+1+b_{\chi_N}+v_N
			},
			&
			\chi_N\in\{1,\ldots,r\},
			\\[16pt]
			-
			\left(
			\beta_{k+\ell_N+1}-\beta_k+u_N
			\right),
			&
			\chi_N\in\{r+1,\ldots,q\}.
		\end{cases}
	\end{equation}
\end{proposition}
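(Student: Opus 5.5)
The plan is to read off both leading coefficients directly from the contour representations \eqref{eq:A-contour} and \eqref{eq:Laguerre-A-contour}, with the exponent parameters replaced by \(\boldsymbol\alpha^{[k]}\) and \(\boldsymbol\alpha^{[k-1]}\) (respectively \(\boldsymbol\beta^{[k]}\) and \(\boldsymbol\beta^{[k-1]}\)). The point is that the top coefficient of a component is a single residue at its highest node. Step~2 of the proof of Theorem~\ref{thm:mixed-Jacobi-like-forms} shows that the coefficient of \(x^{\alpha_i+\ell}\) is the residue of the integrand at \(t=\alpha_i+\ell\). So the leading coefficient of the active component \(\iota_N\) is the residue at the largest node attached to that component. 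The quotient then becomes a ratio of two residues of very similar rational-times-Gamma functions.

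\textbf{Step 1: identify the nodes.} Use the extended exponents \eqref{eq:cyclic-extended-exponents}. For the shift \(\mathscr C_p^{j}\) and the power multi-index \(\boldsymbol\sigma_p(M)\), the enclosed node set is \(\{\alpha_{j+1},\ldots,\alpha_{j+M}\}\), listed in step-line order. For the form \(A^{\langle k\rangle}_N\) we have \(j=k\) and \(M=N+1\), so the nodes are \(\alpha_{k+1},\ldots,\alpha_{k+N+1}\). For \(A^{\langle k-1\rangle}_{N+1}\) we have \(j=k-1\) and \(M=N+2\), so the nodes are \(\alpha_k,\ldots,\alpha_{k+N+1}\). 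The two sets therefore differ exactly by the lowest node \(\alpha_k\).

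Next I will check, by separating the cases \(\ell_N<p-1\) and \(\ell_N=p-1\), that the active top node is the same in both forms, namely \(t_*=\alpha_{k+\ell_N+1}+u_N\). The case \(\ell_N=p-1\) is where the wrap-around \(\alpha_{k+p}=\alpha_k+1\) is used.

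\textbf{Step 2: compare the integrands at \(t_*\).}
\begin{itemize}
\item The denominators \(D_{\boldsymbol\alpha,\boldsymbol n}\) differ by the linear factor attached to \(\alpha_k\). Taking the derivative at the simple root \(t_*\) therefore contributes the factor \(1/(t_*-\alpha_k)\) to the quotient, up to a sign.
\item The numerator \(P_{\boldsymbol b,\boldsymbol m}\) uses \(\boldsymbol m=\boldsymbol\sigma_q(N)\) for the first form and \(\boldsymbol\sigma_q(N+1)=\boldsymbol\sigma_q(N)+\boldsymbol e_{\chi_N}\) for the second. So the second numerator carries the extra factor \(t+b_{\chi_N}+v_N+1\). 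This gives the denominator \(t_*+b_{\chi_N}+v_N+1\) in the quotient.
\item The Gamma quotient does not depend on the indices in the Jacobi case, so it cancels.
\item The prefactor \((-1)^{|\boldsymbol n|}\) changes by one power, and combining it with the sign from the extra denominator factor gives the overall minus sign.
\end{itemize}
Together these produce \eqref{eq:Jacobi-leading-quotient-reduction}.

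\textbf{Step 3: the Laguerre I case.} Here the sign comes from \(D_{\boldsymbol\beta,\boldsymbol m}=\prod_i(\beta_i-t)_{m_i}\), since the contour has no explicit \((-1)^{|\boldsymbol m|}\). The index dependence now sits in \(\Gamma(t\one_r+\boldsymbol b+\boldsymbol n+\one_r)\), with \(\boldsymbol\lambda_N^{\mathrm L}\) replaced by \(\boldsymbol\lambda_{N+1}^{\mathrm L}\).
\begin{itemize}
\item If \(\chi_N\le r\), the ratio of these Gamma factors is the same linear factor \(t_*+b_{\chi_N}+v_N+1\) as in the Jacobi case.
\item If \(\chi_N>r\), the increment falls in the Euler block, so \(\boldsymbol n\) is unchanged and the ratio is \(1\). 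Only \(-(t_*-\beta_k)\) survives, which is the second line of \eqref{eq:Laguerre-leading-quotient-reduction}.
\end{itemize}

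The main obstacle is the sign bookkeeping together with the verification that the top node is common to both forms. The sign requires tracking \((-1)^{|\boldsymbol n|}\) against the reflected denominator \((\beta-t)_m\) and against the derivative of the extra factor. The common top node depends on the cyclic-shift conventions \eqref{eq:gen-cyclic-affine-shift} and \eqref{eq:factor-step-line-quotients}. I would settle both by checking the case \(p=1\), \(q=1\) against the explicit residue formula \eqref{eq:Laguerre-A-residue-coefficients}.
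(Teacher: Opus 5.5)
Your proof is correct and is essentially the paper's argument. The paper reads the top coefficients off the hypergeometric components \eqref{eq:Jacobi-SL-A-components} and \eqref{eq:Laguerre-SL-A-components}, which are exactly your top-node residues, uses the common top node \(\alpha_{k+\ell_N+1}+u_N\) to cancel the Gamma quotients, and gets the denominator from \(\boldsymbol\sigma_q(N+1)=\boldsymbol\sigma_q(N)+\boldsymbol e_{\chi_N}^{(q)}\); your observation that the two node polynomials differ by the single factor at \(\alpha_k\) makes the paper's terse ``remaining factors reduce to the numerator'' step transparent. One small correction to your Jacobi sign remark: \(D_{\boldsymbol\alpha,\boldsymbol n}(t)=\prod_\tau(t-\tau)\) is monic, so the extra factor enters the quotient as \(t_*-\alpha_k\) in the numerator with no sign, and the minus sign comes entirely from \((-1)^{|\boldsymbol n|}\); only in the Laguerre case does the reflected factor \(\beta_k-t\) supply it.
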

\begin{proof}
	Insert the highest-degree coefficients obtained from
	\eqref{eq:Jacobi-SL-A-components} and
	\eqref{eq:Laguerre-SL-A-components}, after the corresponding cyclic
	parameter shift. Since
	\[
	\alpha_{\iota_N}^{[k]}+d_{N,\iota_N}
	=
	\alpha_{\iota_{N+1}}^{[k-1]}+d_{N+1,\iota_{N+1}}
	=
	\alpha_{k+\ell_N+1}+u_N+1,
	\]
	the Gamma quotients involving \(\boldsymbol a\) cancel in the Jacobi
	calculation. The change
	\[
	\boldsymbol\sigma_q(N+1)
	=
	\boldsymbol\sigma_q(N)+\boldsymbol e_{\chi_N}^{(q)}
	\]
	produces the denominator in
	\eqref{eq:Jacobi-leading-quotient-reduction}; the remaining
	Pochhammer and factorial factors reduce to its numerator.
	
	For Laguerre I forms, the same cancellation occurs on the power
	side. The vector \(\boldsymbol n_N^{\mathrm L}\) changes from \(N\) to
	\(N+1\) only when \(\chi_N\le r\), giving the denominator in the
	first line of \eqref{eq:Laguerre-leading-quotient-reduction}; if
	\(\chi_N>r\), the beta-shifted block does not change and no such
	denominator remains.
\end{proof}

\subsection{Explicit lower bidiagonal factors}

\begin{theorem}[Jacobi lower bidiagonal factors]
	\label{thm:explicit-lower-bidiagonal-factors-J}
	For \(k\in\{1,\ldots,p\}\),
	\begin{equation}
		\label{eq:Jacobi-explicit-lower-bidiagonal-factor}
		(L_k^{\,\mathrm J})_{N+1,N}
		=
		-
		\frac{g_{N+1}^{\,\mathrm J,[k-1]}}
		{g_N^{\,\mathrm J,[k]}}
		\frac{
			\alpha_{k+\ell_N+1}-\alpha_k+u_N
		}{
			\alpha_{k+\ell_N+1}+u_N+1+b_{\chi_N}+v_N
		},
	\end{equation}
	where the two normalizing factors are the Pochhammer products in
	\eqref{eq:Jacobi-left-normalizer-closed}.
\end{theorem}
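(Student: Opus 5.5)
The plan is to start from the general formula \eqref{eq:gen-L-leading-coefficients} of Proposition~\ref{prop:gen-bidiagonal-scheme} and evaluate its two leading coefficients inside the Jacobi family. The left Christoffel chain closes, by \eqref{eq:left-Christoffel-same-family}. Hence the \(k\)-th transformed step-line \(A\)-form is the Jacobi form with exponents \(\boldsymbol\alpha^{[k]}=\mathscr C_p^k(\boldsymbol\alpha)\). The form \(\mathcal A_N^{\langle k\rangle}\), whose first \(k\) powers are multiplied by \(z\), is exactly \(\mathcal A_N^{\mathrm J}\) with \(\boldsymbol\alpha\mapsto\boldsymbol\alpha^{[k]}\), after the components are relabelled according to the cyclic convention of \(\mathscr C_p\).

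In the biorthonormal normalization \eqref{eq:Jacobi-SL-A-form}, each normalized form equals \(\widehat{\mathcal A}/g\). The ratio in \eqref{eq:gen-L-leading-coefficients} therefore splits into two factors:
\[
\frac{\operatorname{LC}\bigl(\widehat{\mathscr A}^{[k]}_N\bigr)}{\operatorname{LC}\bigl(\widehat{\mathscr A}^{[k-1]}_{N+1}\bigr)}\cdot\frac{g_{N+1}^{\,\mathrm J,[k-1]}}{g_N^{\,\mathrm J,[k]}}.
\]
The second factor is read off from Proposition~\ref{prop:closed-left-normalizing-factors}, formula \eqref{eq:Jacobi-left-normalizer-closed}. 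The first factor, the quotient of unnormalized leading coefficients, is supplied by Proposition~\ref{prop:leading-coefficient-quotient-reduction}, formula \eqref{eq:Jacobi-leading-quotient-reduction}. Multiplying the two gives \eqref{eq:Jacobi-explicit-lower-bidiagonal-factor}, with the overall minus sign coming from \eqref{eq:Jacobi-leading-quotient-reduction}.

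The main obstacle is bookkeeping the components. The active component index is \(\ell_p(N)+1\) in the \([k]\)-system and \(\ell_p(N+1)+1\) in the \([k-1]\)-system. Under \(\mathscr C_p^k\), the original labels of these components must be identified, and both must correspond to the same shifted exponent \(\alpha_{k+\ell_N+1}\) in the extended notation \eqref{eq:cyclic-extended-exponents}. This is exactly the identity \(\alpha^{[k]}_{\iota_N}+d_{N,\iota_N}=\alpha^{[k-1]}_{\iota_{N+1}}+d_{N+1,\iota_{N+1}}\) used in the proof of Proposition~\ref{prop:leading-coefficient-quotient-reduction}. I would check it first by writing \(\mathscr C_p^k(\boldsymbol\alpha)_i=\alpha_{i+k}\) with the extended convention. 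Then I would verify that the leading coefficient of the top-degree term in \eqref{eq:Jacobi-SL-A-components}, namely \(C^{\mathrm J,\mathcal A}\) times the last hypergeometric term, is what Proposition~\ref{prop:leading-coefficient-quotient-reduction} compares.

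Finally, I would confirm the sign conventions. The factor \((-1)^{|(\boldsymbol\nu^*)^{*i}|+1}\) in \eqref{eq:Jacobi-SL-A-prefactor} must be consistent with the \((-1)^{N+1}\) in the contour integral \eqref{eq:Jacobi-normalizer-residue-integral}. I would also check that the nonzero-pivot hypotheses guarantee that all denominators in the stated formula are nonzero.
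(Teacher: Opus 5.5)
Your proposal is correct and follows the paper's own proof. The paper likewise writes the normalized leading-coefficient quotient in \eqref{eq:gen-L-leading-coefficients} as the ratio \(g_{N+1}^{\,\mathrm J,[k-1]}/g_N^{\,\mathrm J,[k]}\) of left normalizers times the quotient of unnormalized leading coefficients, and then applies Propositions~\ref{prop:closed-left-normalizing-factors} and~\ref{prop:leading-coefficient-quotient-reduction}; the relabelling and sign checks you list are already built into those two propositions, so they serve as consistency checks rather than extra steps.
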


\begin{proof}
	The factorization formula \eqref{eq:gen-L-leading-coefficients}
	uses the leading coefficients after the left normalization. Hence it
	is the product of the quotient of normalizing constants and the
	quotient of unnormalized leading coefficients. Apply
	Proposition~\ref{prop:closed-left-normalizing-factors} and
	Proposition~\ref{prop:leading-coefficient-quotient-reduction}.
\end{proof}

\begin{theorem}[Laguerre I lower bidiagonal factors]
	\label{thm:Laguerre-explicit-lower-bidiagonal-factors}
	Assume first that \(r=0\). Then, for
	\(k\in\{1,\ldots,p\}\),
	\begin{equation}
		\label{eq:Laguerre-pure-Euler-lower-bidiagonal-factor}
		(L_k^{\mathrm L})_{N+1,N}
		=
		\beta_{k+\ell_N+1}-\beta_k+u_N.
	\end{equation}
	Assume now that \(r>0\). Then
	\begin{align}
		(L_k^{\mathrm L})_{N+1,N}
		&=
		-
		\frac{g_{N+1}^{\mathrm L,[k-1]}}
		{g_N^{\mathrm L,[k]}}
		\frac{
			\beta_{k+\ell_N+1}-\beta_k+u_N
		}{
			\beta_{k+\ell_N+1}+u_N+1+b_{\chi_N}+v_N
		},
		&&
		\chi_N\in\{1,\ldots,r-1\},
		\label{eq:Laguerre-beta-beta-lower-factor}
		\\[6pt]
		(L_k^{\mathrm L})_{N+1,N}
		&=
		-
		\frac{(-1)^{N+2}}
		{g_N^{\mathrm L,[k]}}
		\frac{
			\beta_{k+\ell_N+1}-\beta_k+u_N
		}{
			\beta_{k+\ell_N+1}+u_N+1+b_r+v_N
		},
		&&
		\chi_N=r,
		\label{eq:Laguerre-beta-Euler-lower-factor}
		\\[6pt]
		(L_k^{\mathrm L})_{N+1,N}
		&=
		\beta_{k+\ell_N+1}-\beta_k+u_N,
		&&
		\chi_N\in\{r+1,\ldots,q-1\},
		\label{eq:Laguerre-Euler-Euler-lower-factor}
		\\[6pt]
		(L_k^{\mathrm L})_{N+1,N}
		&=
		-
		\frac{g_{N+1}^{\mathrm L,[k-1]}}
		{(-1)^{N+1}}
		\left(
		\beta_{k+\ell_N+1}-\beta_k+u_N
		\right),
		&&
		\chi_N=q.
		\label{eq:Laguerre-Euler-beta-lower-factor}
	\end{align}
	Here every \(g_M^{\mathrm L,[a]}\) appearing in a beta-block
	position is the explicit Pochhammer product in the first line of
	\eqref{eq:Laguerre-left-normalizer-closed}; empty ranges of
	conditions are omitted.
\end{theorem}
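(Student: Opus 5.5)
The plan is to repeat the proof of Theorem~\ref{thm:explicit-lower-bidiagonal-factors-J}. First I apply formula \eqref{eq:gen-L-leading-coefficients} of Proposition~\ref{prop:gen-bidiagonal-scheme} to the Laguerre I step-line forms. Since the left Christoffel chain preserves the family through \(\boldsymbol\beta\mapsto\boldsymbol\beta^{[k]}\), by \eqref{eq:left-Christoffel-same-family}, each normalized leading coefficient is an unnormalized leading coefficient divided by \(g_M^{\mathrm L,[a]}\). The entry \((L_k^{\mathrm L})_{N+1,N}\) therefore splits into two pieces:
\[
\frac{g_{N+1}^{\mathrm L,[k-1]}}{g_N^{\mathrm L,[k]}}
\times
\frac{\operatorname{LC}\bigl(\widehat A_N^{[k],(\iota_N)}\bigr)}{\operatorname{LC}\bigl(\widehat A_{N+1}^{[k-1],(\iota_{N+1})}\bigr)}.
\]
The second factor is given by Proposition~\ref{prop:leading-coefficient-quotient-reduction}, in formula \eqref{eq:Laguerre-leading-quotient-reduction}. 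The first is given by Proposition~\ref{prop:closed-left-normalizing-factors}, in formula \eqref{eq:Laguerre-left-normalizer-closed}.

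The case analysis follows which block the coordinates \(\chi_N\) and \(\chi_{N+1}\) lie in. If \(\chi_N\le r-1\), then both \(\chi_N\) and \(\chi_{N+1}=\chi_N+1\) are beta coordinates. Both normalizers are then Pochhammer products, and the leading quotient has the first-line form of \eqref{eq:Laguerre-leading-quotient-reduction}; this gives \eqref{eq:Laguerre-beta-beta-lower-factor}. If \(\chi_N=r\), the leading quotient is still of beta type. However, \(\chi_{N+1}=r+1\) is an Euler coordinate, so \(g_{N+1}^{\mathrm L,[k-1]}=(-1)^{N+2}\); this gives \eqref{eq:Laguerre-beta-Euler-lower-factor}.

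If \(r+1\le\chi_N\le q-1\), both normalizers are Euler signs. Their quotient is \((-1)^{N+2}/(-1)^{N+1}=-1\), which cancels the minus sign of the second line of \eqref{eq:Laguerre-leading-quotient-reduction}; this gives \eqref{eq:Laguerre-Euler-Euler-lower-factor}. If \(\chi_N=q\), then \(\chi_{N+1}=1\) wraps around to the beta block. The numerator normalizer stays a Pochhammer product and the denominator is \((-1)^{N+1}\); this gives \eqref{eq:Laguerre-Euler-beta-lower-factor}.

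When \(r=0\), every coordinate is Euler. Every \(g\) is then a sign, and the argument of the case \(r+1\le\chi_N\le q-1\) applies for all \(N\), including the wrap-around \(\chi_N=q\). This gives \eqref{eq:Laguerre-pure-Euler-lower-bidiagonal-factor}.

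The main obstacle is bookkeeping: checking that the indices entering the normalizers are the right ones. The coordinate \(\chi\) attached to \(g_{N+1}^{[k-1]}\) is \(\chi_{N+1}\), which differs from the \(\chi_N\) that controls the leading quotient. I also have to verify that the cyclic shift \(\boldsymbol\beta^{[k]}\) does not change \(\chi\), since it acts only on the power side. Finally, when \(r=1\) the range \(\chi_N\le r-1\) is empty, and I must check that the case \(\chi_N=q=r+s\) is handled consistently with the case \(\chi_N=r\). I would check these against the step-line index conventions \eqref{eq:Laguerre-SL-indices}, \eqref{eq:factor-step-coordinates} before assembling the product.
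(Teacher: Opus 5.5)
Your proposal is correct and is essentially the paper's proof. You split each entry of \eqref{eq:gen-L-leading-coefficients} into the normalizer quotient \(g_{N+1}^{\mathrm L,[k-1]}/g_N^{\mathrm L,[k]}\) times the unnormalized leading-coefficient quotient, then read off the \(r=0\) case and the four \(r>0\) regimes from \eqref{eq:Laguerre-left-normalizer-closed} and \eqref{eq:Laguerre-leading-quotient-reduction}, according to whether \(\chi_N\) and \(\chi_{N+1}\) (including the wrap-around \(\chi_N=q\mapsto\chi_{N+1}=1\)) are beta or Euler coordinates; this is exactly the paper's case split.
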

\begin{proof}
	If \(r=0\), every coordinate of the Laguerre I vector corresponds
	to one of the weights \(v_\ell\). Therefore
	\[
	\frac{g_{N+1}^{\mathrm L,[k-1]}}
	{g_N^{\mathrm L,[k]}}
	=-1,
	\]
	while the second line of
	\eqref{eq:Laguerre-leading-quotient-reduction} contributes another
	minus sign, proving
	\eqref{eq:Laguerre-pure-Euler-lower-bidiagonal-factor}.
	
	Let \(r>0\). From step \(N\) to step \(N+1\), coordinate \(\chi_N\)
	is incremented.  At the next step the increment occurs in coordinate
	\(\chi_N+1\), unless \(\chi_N=q\), in which case it returns to
	coordinate \(1\). The four displayed equations correspond,
	respectively, to \(\chi_N<r\), \(\chi_N=r\),
	\(r<\chi_N<q\), and \(\chi_N=q\). Combining
	\eqref{eq:Laguerre-left-normalizer-closed} with
	\eqref{eq:Laguerre-leading-quotient-reduction} proves the formulas.
\end{proof}

\subsection{Consistency with the classical one-weight reductions}
\label{subsec:classical-one-weight-reductions}

The cases \(q=1\) in the two real-line settings provide a direct check of
the normalizations and of the Christoffel indexing. In the Jacobi
setting, write
\[
\widetilde a_i\coloneq\alpha_i+a_1+1,
\qquad
\widetilde b\coloneq b_1-a_1,
\]
so that
\[
x^{\alpha_i}w_1(x;a_1,b_1)
\propto
x^{\widetilde a_i-1}(1-x)^{\widetilde b}.
\]
Extend these parameters cyclically by
\[
\widetilde a_{sp+i}\coloneq\widetilde a_i+s,
\qquad
s\in\mathbb N_0,
\quad
i\in\{1,\ldots,p\}.
\]
Writing \(N=pm+k\), with \(k\in\{0,\ldots,p-1\}\), the specialization of
the displayed Jacobi formulas gives
\begin{equation}
	\label{eq:Jacobi-one-weight-U-check}
	(U_1^{\,\mathrm J})_{N,N}
	=
	\frac{
		(\widetilde a_{k+1}+m)
		\prod_{j=1}^{p}
		(\widetilde a_j+\widetilde b+pm+k)
	}{
		\prod_{j=1}^{p+1}
		(\widetilde a_{k+j}+\widetilde b+(p+1)m+k)
	},
\end{equation}
and
\begin{equation}
	\label{eq:Jacobi-one-weight-L-check}
	(L_i^{\,\mathrm J})_{N+1,N}
	=
	\frac{
		(\widetilde a_{k+i+1}-\widetilde a_i+m)
		(\widetilde b+pm+k+1)
		\prod_{j=1}^{p-1}
		(\widetilde a_{i+j}+\widetilde b+pm+k)
	}{
		\prod_{j=1}^{p+1}
		(\widetilde a_{k+i+j}+\widetilde b+(p+1)m+k)
	},
	\qquad
	i\in\{1,\ldots,p\}.
\end{equation}
These are equations~(5.20)--(5.21) of
\cite{BranquinhoDiazFoulquieLimaManas2026JAT}, after the identification
\(r=p\), \(a_i=\widetilde a_i\), and \(b=\widetilde b\).

In the Laguerre I setting, \(q=1\) implies \(r=0\), \(s=1\). With
\[
\widetilde a_i\coloneq\beta_i+a_1+1,
\]
one has
\[
x^{\beta_i}w_0(x;a_1)
\propto
x^{\widetilde a_i-1}\mathrm e^{-x}.
\]
The one-weight specialization gives
\begin{equation}
	\label{eq:Laguerre-one-weight-U-check}
	(U_1^{\mathrm L})_{N,N}
	=
	\widetilde a_{k+1}+m,
\end{equation}
and
\begin{equation}
	\label{eq:Laguerre-one-weight-L-check}
	(L_i^{\mathrm L})_{N+1,N}
	=
	\widetilde a_{k+i+1}-\widetilde a_i+m,
	\qquad
	i\in\{1,\ldots,p\},
\end{equation}
where the cyclic extension
\(\widetilde a_{sp+i}=\widetilde a_i+s\) is used. Equivalently, when this
is unfolded into the fundamental range, the second branch acquires the extra
term \(+1\). These formulas reproduce Theorem~5.4, equivalently
equations~(5.26)--(5.27), of
\cite{BranquinhoDiazFoulquieLimaManas2026JAT}.

\subsection{The mixed Pi\~neiro specialization}

In the boundary degeneration
\(\boldsymbol a,\boldsymbol b\to\boldsymbol\sigma\) of the Jacobi matrix, both
vectors in the matrix of measures are power vectors, giving the mixed Pi\~neiro system
of~\cite{PineiroMixed2026}. Put
\[
\boldsymbol\rho=(\rho_1,\ldots,\rho_p),
\qquad
\boldsymbol\sigma=(\sigma_1,\ldots,\sigma_q),
\qquad
\boldsymbol\rho^{[k]}\coloneq\mathscr C_p^k(\boldsymbol\rho),
\qquad
\boldsymbol\sigma^{[k]}\coloneq\mathscr C_q^k(\boldsymbol\sigma).
\]
With
\[
\boldsymbol d_N\coloneq\boldsymbol\nu_N^{\,*},
\qquad
\boldsymbol c_N\coloneq\boldsymbol\mu_N^{\,*},
\]
define the explicitly normalized leading coefficients
\begin{equation}
	\label{eq:Pineiro-left-normalized-LC}
	\Lambda_{\mathrm L,N,i}^{\mathrm P,[k]}
	\coloneq
	\frac{
		(\boldsymbol\rho^{[k]}+
		(\sigma_{\chi_N}+(\boldsymbol c_N)_{\chi_N}+1)\one_p)_{\boldsymbol d_N}
	}{
		(\boldsymbol\sigma-
		(\sigma_{\chi_N}+(\boldsymbol c_N)_{\chi_N})\one_q)_{\boldsymbol c_N}
	}
	\frac{
		(-1)^{(\boldsymbol d_N)_i-1}
		((\rho_i^{[k]}+(\boldsymbol d_N)_i)\one_q+\boldsymbol\sigma)_{\boldsymbol c_N}
	}{
		((\boldsymbol d_N)_i-1)!
		(\boldsymbol\rho^{[k],*i}+
		(1-\rho_i^{[k]}-(\boldsymbol d_N)_i)\one_{p-1})_{\boldsymbol d_N^{\,*i}}
	},
\end{equation}
and
\begin{equation}
	\label{eq:Pineiro-right-normalized-LC}
	\Lambda_{\mathrm R,N,i}^{\mathrm P,[k]}
	\coloneq
	\frac{
		(\boldsymbol\rho+
		(\sigma_{\chi_N}^{[k]}+(\boldsymbol c_N)_{\chi_N}+1)\one_p)_{\boldsymbol d_N}
	}{
		(\boldsymbol\sigma^{[k]}-
		(\sigma_{\chi_N}^{[k]}+(\boldsymbol c_N)_{\chi_N})\one_q)_{\boldsymbol c_N}
	}
	\frac{
		(-1)^{(\boldsymbol d_N)_i-1}
		((\rho_i+(\boldsymbol d_N)_i)\one_q+\boldsymbol\sigma^{[k]})_{\boldsymbol c_N}
	}{
		((\boldsymbol d_N)_i-1)!
		(\boldsymbol\rho^{\,*i}+
		(1-\rho_i-(\boldsymbol d_N)_i)\one_{p-1})_{\boldsymbol d_N^{\,*i}}
	}.
\end{equation}
These are finite Pochhammer products obtained from the displayed mixed
Pi\~neiro step-line forms of~\cite{PineiroMixed2026} after the left and right
cyclic Christoffel
shifts, respectively. Moreover, after setting
\[
\boldsymbol a=\boldsymbol b=\boldsymbol\sigma,
\qquad
\boldsymbol\alpha=\boldsymbol\rho,
\]
the product in \eqref{eq:Jacobi-left-normalizer-closed}, combined with
the Jacobi leading coefficient before normalization, gives precisely
\eqref{eq:Pineiro-left-normalized-LC}. Therefore the Jacobi lower
factor formula \eqref{eq:Jacobi-explicit-lower-bidiagonal-factor}
specializes exactly to \eqref{eq:Pineiro-lower-bidiagonal-factors}.

\begin{corollary}[Complete mixed Pi\~neiro bidiagonal factorization]
	\label{cor:explicit-Pineiro-bidiagonal-factorization}
	The mixed Pi\~neiro step-line recurrence matrix
	of~\cite{PineiroMixed2026} admits
	\[
	T^{\mathrm P}
	=
	L_1^{\mathrm P}\cdots L_p^{\mathrm P}
	U_q^{\mathrm P}\cdots U_1^{\mathrm P},
	\]
	where
	\begin{align}
		\label{eq:Pineiro-lower-bidiagonal-factors}
		(L_k^{\mathrm P})_{N+1,N}
		&=
		\frac{
			\Lambda_{\mathrm L,N,\iota_N}^{\mathrm P,[k]}
		}{
			\Lambda_{\mathrm L,N+1,\iota_{N+1}}^{\mathrm P,[k-1]}
		},
		&&
		k\in\{1,\ldots,p\},
		\\
		\label{eq:Pineiro-upper-bidiagonal-factors}
		(U_k^{\mathrm P})_{N,N}
		&=
		\frac{
			\Lambda_{\mathrm R,N,\iota_N}^{\mathrm P,[k-1]}
		}{
			\Lambda_{\mathrm R,N,\iota_N}^{\mathrm P,[k]}
		},
		&&
		k\in\{1,\ldots,q\}.
	\end{align}
\end{corollary}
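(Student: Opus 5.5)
The plan is to apply Proposition~\ref{prop:gen-bidiagonal-scheme} to the mixed Pi\~neiro matrix \eqref{eq:Pineiro-matrix}, with \(\boldsymbol\alpha=\boldsymbol\rho\) and \(\boldsymbol\beta=\boldsymbol\sigma\), and to make both Christoffel chains explicit. Both sides of the matrix of measures are power vectors, so the left and right Christoffel chains stay in the Pi\~neiro family. The left chain acts by \(\boldsymbol\rho\mapsto\mathscr C_p^k(\boldsymbol\rho)\). The right chain, which multiplies \(v_1,\ldots,v_b\) by \(x\), acts by \(\boldsymbol\sigma\mapsto\mathscr C_q^b(\boldsymbol\sigma)\); this is the reason the closed formula \eqref{eq:gen-U-leading-coefficients} can be used in place of the tau-quotients \eqref{eq:gen-U-tau-quotients}.

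\emph{Step 1: the right chain.} First I check that multiplying the first \(b\) row weights \(x^{\sigma_j}\) by \(x\) and relabelling cyclically gives the weight vector \(x^{\sigma^{[b]}_j}\), and that the step-line ordering is compatible with the cyclic convention \eqref{eq:gen-cyclic-affine-shift}. The relabelling must also be compatible with the step-line indices \(\boldsymbol\sigma_q(N)\), so that the \(A\)-form of the transformed system on the step-line is the Pi\~neiro step-line \(A\)-form with parameters \((\boldsymbol\rho,\boldsymbol\sigma^{[b]})\). This is the same mechanism the paper already uses for the left chain. Throughout, the nonzero-pivot hypotheses are assumed, as in the statement.

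\emph{Step 2: normalized leading coefficients.} Next I compute the leading coefficient of the active component \(A_N^{(\iota_N)}\) in the biorthonormal normalization. The unnormalized leading coefficient comes from \eqref{eq:Pineiro-A}, with \(\boldsymbol n=\boldsymbol d_N\) and \(\boldsymbol m=\boldsymbol c_N\); it is the top coefficient of the terminating \({}_{p+q}F_{p+q-1}\). The normalizer \(g_N\) is \eqref{eq:Jacobi-left-normalizer-closed} specialized to \(\boldsymbol a=\boldsymbol b=\boldsymbol\sigma\). It is evaluated by the exterior residue at \(t=-\sigma_{\chi_N}-(\boldsymbol c_N)_{\chi_N}-1\), exactly as in the proof of Proposition~\ref{prop:closed-left-normalizing-factors}. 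The quotient of the two is \eqref{eq:Pineiro-left-normalized-LC}. Applying the same computation with \(\boldsymbol\sigma\) replaced by \(\boldsymbol\sigma^{[k]}\) and \(\boldsymbol\rho\) left unshifted gives \eqref{eq:Pineiro-right-normalized-LC}. The paper has already recorded that the Jacobi normalizer combined with the unnormalized coefficient reproduces \eqref{eq:Pineiro-left-normalized-LC}. For the right version I only need to make sure the shift enters solely through \(\boldsymbol\sigma\), including the index \(\chi_N\) and the quantity \((\boldsymbol c_N)_{\chi_N}\).

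\emph{Step 3: read off the factors.} Substituting into \eqref{eq:gen-L-leading-coefficients} gives \eqref{eq:Pineiro-lower-bidiagonal-factors}. This also agrees with the specialization of Theorem~\ref{thm:explicit-lower-bidiagonal-factors-J}, which I will note as a consistency check. Substituting into \eqref{eq:gen-U-leading-coefficients} gives \eqref{eq:Pineiro-upper-bidiagonal-factors}. The factorization \(T^{\mathrm P}=L_1\cdots L_pU_q\cdots U_1\) itself is \eqref{eq:gen-bidiagonal-factorization}.

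The step I expect to be the main obstacle is Step~1 for the upper factors. One has to check carefully that the right Christoffel transform \(\mathfrak X_{[q]}^b\,\mathrm d\boldsymbol\mu\), after cyclic relabelling, corresponds exactly to \(\boldsymbol\sigma^{[b]}\) with the step-line index \(\boldsymbol\sigma_q(N)\) unchanged. In particular, the incremented coordinate \(\chi_N\) in \eqref{eq:Pineiro-right-normalized-LC} has to refer to the shifted labels. One also has to check that the normalization used in \eqref{eq:gen-U-leading-coefficients} coincides with the normalization of \eqref{eq:gen-step-normalization} for the shifted family. To settle these points I would first verify them for \(q=1\) against \eqref{eq:Jacobi-one-weight-U-check}, and then confirm them by comparison with the tau-quotient \eqref{eq:gen-U-tau-quotients} at small \(N\).
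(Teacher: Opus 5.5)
Your proposal is correct and follows essentially the same route as the paper. Both Christoffel chains close inside the Pi\~neiro family, so Proposition~\ref{prop:gen-bidiagonal-scheme} applies with the leading-coefficient formulas \eqref{eq:gen-L-leading-coefficients} and \eqref{eq:gen-U-leading-coefficients}, and the normalized leading coefficients (the top coefficient of \eqref{eq:Pineiro-A} divided by the exterior-residue normalizer) are exactly \eqref{eq:Pineiro-left-normalized-LC} and \eqref{eq:Pineiro-right-normalized-LC}; your check that the right transform must be cyclically relabelled, so that \(\Lambda^b\mathscr M\) is the step-line moment matrix of \((\boldsymbol\rho,\mathscr C_q^b(\boldsymbol\sigma))\) with the same biorthonormal normalization, makes explicit what the paper's short proof leaves implicit.
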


\begin{proof}
	The lower and upper entries are the quotients of the normalized
	leading coefficients of the left and right Christoffel transforms,
	respectively. Equations~\eqref{eq:Pineiro-left-normalized-LC} and
	\eqref{eq:Pineiro-right-normalized-LC} evaluate each of those
	coefficients as a finite Pochhammer product, so that the displayed
	factorization contains no unevaluated integral or determinant.
\end{proof}

An independent Cauchy-minor derivation of the same factor entries, together
with the positivity regions, stochastic normalization, and Markov-chain
applications of this specialization, is given in \cite{ManasMarkov2026}.

\subsection{Upper factors when the right Christoffel chain is not closed}
\label{subsec:upper-factors-tau-formulas}

Outside the two one-weight reductions and the mixed Pi\~neiro specialization
of~\cite{PineiroMixed2026},
the right Christoffel transforms do not in general remain inside the same
explicit parametric family. The upper factors must then be written in the
Christoffel tau-form. The formulas for the two families follow.

For the Jacobi system define the Gauss--Borel normalized values
\begin{equation}
	\label{eq:Jacobi-GB-B-values-at-zero}
	\mathfrak b_{N,j}^{\,\mathrm J,[0]}
	\coloneq
	\left(\mathcal B_N^{\,\mathrm J}\right)^{(j)}(0)
	=
	\frac{
		\mathscr B_N^{\,\mathrm J,(j)}(0)
	}{
		h_N^{\,\mathrm J}
	},
	\qquad
	j\in\{1,\ldots,q\}.
\end{equation}
For the Laguerre I system set
\begin{equation}
	\label{eq:Laguerre-GB-B-values-at-zero}
	\mathfrak b_{N,\rho}^{\mathrm L,[0]}
	\coloneq
	\left(\mathcal B_N^{\mathrm L}\right)^{(\rho)}(0)
	=
	\frac{
		\mathscr U_N^{\mathrm L,(\rho)}(0)
	}{
		h_N^{\mathrm L}
	},
	\qquad
	\rho\in\{1,\ldots,q\},
\end{equation}
where \(\mathscr U_N^{\mathrm L,(\rho)}\) denotes the unified Laguerre I
component introduced before \eqref{eq:SL-L-normalization-constants}.  These
are values of the normalized \(B\)-forms, not values of the unnormalized
hypergeometric representatives.

Define
\begin{equation}
	\label{eq:Jacobi-upper-tau-determinants}
	\tau_{0,N}^{\,\mathrm J,B}\coloneq1,
	\qquad
	\tau_{b,N}^{\,\mathrm J,B}
	\coloneq
	\det\left[
		\mathfrak b_{N+\alpha-1,\nu}^{\,\mathrm J,[0]}
	\right]_{\alpha,\nu=1}^{b},
	\qquad
	b\in\{1,\ldots,q\},
\end{equation}
and
\begin{equation}
	\label{eq:Laguerre-upper-tau-determinants}
	\tau_{0,N}^{\mathrm L,B}\coloneq1,
	\qquad
	\tau_{b,N}^{\mathrm L,B}
	\coloneq
	\det\left[
		\mathfrak b_{N+\alpha-1,\nu}^{\mathrm L,[0]}
	\right]_{\alpha,\nu=1}^{b},
	\qquad
	b\in\{1,\ldots,q\}.
\end{equation}
When the displayed determinants do not vanish, the upper factors are
\begin{align}
	\label{eq:Jacobi-upper-tau-factor-formula}
	(U_b^{\,\mathrm J})_{N,N}
	&=
	-
	\frac{
		\tau_{b-1,N}^{\,\mathrm J,B}\tau_{b,N+1}^{\,\mathrm J,B}
	}{
		\tau_{b-1,N+1}^{\,\mathrm J,B}\tau_{b,N}^{\,\mathrm J,B}
	},
	&& b\in\{1,\ldots,q\},
	\\[4pt]
	\label{eq:Laguerre-upper-tau-factor-formula}
	(U_b^{\mathrm L})_{N,N}
	&=
	-
	\frac{
		\tau_{b-1,N}^{\mathrm L,B}\tau_{b,N+1}^{\mathrm L,B}
	}{
		\tau_{b-1,N+1}^{\mathrm L,B}\tau_{b,N}^{\mathrm L,B}
	},
	&& b\in\{1,\ldots,q\}.
\end{align}
Thus the nonclosed upper factors are finite Christoffel determinants built
from the original Gauss--Borel normalized \(B\)-forms.  In the closed
Pi\~neiro case of~\cite{PineiroMixed2026}, these determinants reduce to the
leading-coefficient quotients
in \eqref{eq:Pineiro-upper-bidiagonal-factors}; outside the closed case the
tau-determinants above are the correct finite formulas.
The tau-form also admits an equivalent expression in terms of moment minors,
which does not require the right Christoffel chain to close.  For either family
\(\star\in\{\mathrm J,\mathrm L\}\), let
\(\mathscr M^\star\) be its scalar step-line moment matrix and define the
shifted leading minors
\begin{equation}
	\label{eq:shifted-leading-moment-minors}
	\Delta_{b,0}^{\star}\coloneq1,
	\qquad
	\Delta_{b,N}^{\star}
	\coloneq
	\det\left[
		\mathscr M^\star_{b+\alpha,\beta}
	\right]_{\alpha,\beta=0}^{N-1},
	\qquad
	b\in\{0,\ldots,q\},
	\quad N\ge1.
\end{equation}
Thus \(\Delta_{0,N}^{\star}\) is the ordinary leading moment minor and
\(\Delta_{b,N}^{\star}\) is obtained by starting the row sequence after
\(b\) cyclic row steps.
\begin{proposition}[Shifted-minor formula for the upper factors]
	\label{prop:shifted-minor-upper-characterization}
	Assume that the minors in
	\eqref{eq:shifted-leading-moment-minors} which occur below are nonzero.
	Then
	\begin{equation}
		\label{eq:tau-shifted-minor-identity}
		\tau_{b,N}^{\star,B}
		=
		(-1)^{bN}
		\frac{
			\Delta_{b,N}^{\star}
		}{
			\Delta_{0,N}^{\star}
		},
		\qquad
		b\in\{0,\ldots,q\},
		\quad N\in\mathbb N_0.
	\end{equation}
	Consequently,
	\begin{equation}
		\label{eq:upper-factor-shifted-minor-cross-ratio}
		(U_b^\star)_{N,N}
		=
		\frac{
			\Delta_{b-1,N}^{\star}
			\Delta_{b,N+1}^{\star}
		}{
			\Delta_{b-1,N+1}^{\star}
			\Delta_{b,N}^{\star}
		}.
	\end{equation}
\end{proposition}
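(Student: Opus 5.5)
The identity is a finite linear-algebra statement about the Gauss--Borel factorization, so I would prove it at the level of the moment matrix. By \eqref{eq:gen-GB-polynomial-systems}, the row \(N\) of \(\mathscr L\) contains the coefficients of the normalized \(B\)-form \(\mathcal B_N\) in the ordered basis \(z^k v_j\). The value \(B_N^{(\nu)}(0)\) is the coefficient of the \(\nu\)-th scalar monomial slot, namely column \(\nu-1\) of \(\mathscr L\) in the scalar step-line ordering (the row index \(z^0 v_\nu\)). Thus \(\tau^{\star,B}_{b,N}\) is the \(b\times b\) minor of \(\mathscr L\) with rows \(N,\dots,N+b-1\) and columns \(0,\dots,b-1\). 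Since \(\mathscr L=\mathscr U^{-1}\mathscr M^{-1}\) is awkward, I would instead use \(\mathscr L\mathscr M=\mathscr U^{-1}\), which is upper triangular.

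Restricting to finite truncations of size \(N+b\), the first step is to use the Jacobi complementary-minor formula for \(\mathscr L=\mathscr U^{-1}\mathscr M^{-1}\). Alternatively, and more cleanly, I would proceed directly. The rows \(N,\dots,N+b-1\) of \(\mathscr L\) are characterized by two properties: each row \(\mathscr L_{N+\alpha-1,\cdot}\) is supported on columns \(0,\dots,N+\alpha-1\) with a unit diagonal, and it is orthogonal via \(\mathscr M\) to columns \(0,\dots,N+\alpha-2\). Cramer's rule for the lower unitriangular \(\mathscr L\) expresses each entry \(\mathscr L_{n,c}\) as a ratio of minors of \(\mathscr M\). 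The quantity \(\mathscr L_{n,c}\) equals \((-1)^{n+c}\) times the minor of \(\mathscr M\) on rows \(\{0,\dots,n\}\setminus\{c\}\) and columns \(\{0,\dots,n-1\}\), divided by \(\Delta_{0,n}\).

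The \(b\times b\) determinant of such entries over \(c\in\{0,\dots,b-1\}\) then collapses, by a Sylvester / Desnanot--Jacobi type identity (equivalently, the Cauchy--Binet expansion of the compound of \(\mathscr L\) applied to \(\mathscr L\mathscr M=\mathscr U^{-1}\)), to the single minor on rows \(b,\dots,b+N-1\) and columns \(0,\dots,N-1\). That minor is \(\Delta^\star_{b,N}\), divided by \(\Delta^\star_{0,N}\). Concretely, I would take the \((N+b)\)-square truncation and consider the \(b\)-th compound of \(\mathscr L\mathscr M=\mathscr U^{-1}\). The entry with row set \(\{N,\dots,N+b-1\}\) and column set \(\{0,\dots,N-1\}\) completed appropriately vanishes by triangularity. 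Cauchy--Binet together with the unitriangularity of \(\mathscr L\) then isolates the minor of \(\mathscr L\) on the first \(b\) columns against the complementary minor of \(\mathscr M\). The sign \((-1)^{bN}\) arises from moving the \(b\) columns past \(N\) columns in the complementary-minor formula.

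This sign and index bookkeeping is the main obstacle. I would fix it by checking \(b=1\), where \(\mathscr L_{N,0}=(-1)^N\Delta_{1,N}/\Delta_{0,N}\) is the standard Cramer expression. Once \eqref{eq:tau-shifted-minor-identity} is established, I would substitute it into \eqref{eq:gen-U-tau-quotients} of Proposition~\ref{prop:gen-bidiagonal-scheme}. The \(\Delta_{0,\cdot}\) factors cancel in the cross-ratio, and the signs combine as \(-(-1)^{(b-1)N+b(N+1)-(b-1)(N+1)-bN}=-(-1)^{1}=1\). This yields \eqref{eq:upper-factor-shifted-minor-cross-ratio}.
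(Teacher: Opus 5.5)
Your proposal is correct and follows essentially the paper's own argument. You identify $\tau^{\star,B}_{b,N}$ with the minor of $\mathscr L$ on rows $N,\dots,N+b-1$ and columns $0,\dots,b-1$, and then apply Jacobi's complementary-minor identity to $\mathscr L=\mathscr U^{-1}\mathscr M^{-1}$ on the $(N+b)$-square truncation; there the diagonal entries of $\mathscr U^{-1}$ at positions $N,\dots,N+b-1$ multiply to $\Delta^\star_{0,N+b}/\Delta^\star_{0,N}$ and cancel the determinant $\Delta^\star_{0,N+b}$ of the truncated moment block. You then substitute into the tau-quotient, where your sign count $-(-1)^{1}=1$ is right. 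Your paragraph on the $b$-th compound is muddled, since a $b$-th compound entry cannot have the $N$-element column set $\{0,\dots,N-1\}$, but it is not needed once the complementary-minor route is carried out.
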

\begin{proof}
	In the Gauss--Borel realization
	\eqref{eq:gen-gauss-borel}, the value at the origin of the
	\(\nu\)-th component of the \(N\)-th normalized \(B\)-form is the entry
	\(\mathscr L_{N,\nu-1}\).  Therefore
	\(\tau_{b,N}^{\star,B}\) is the minor of \(\mathscr L\) with rows
	\(N,\ldots,N+b-1\) and columns \(0,\ldots,b-1\).  Apply the
	complementary-minor identity to the leading \((N+b)\)-square
	Gauss--Borel factorization.  The upper-triangular factor cancels, and
	moving the first \(b\) rows past the following \(N\) rows contributes
	\((-1)^{bN}\).  The remaining quotient is precisely
	\(\Delta_{b,N}^{\star}/\Delta_{0,N}^{\star}\), which proves
	\eqref{eq:tau-shifted-minor-identity}.  Substitution in
	\eqref{eq:Jacobi-upper-tau-factor-formula} or
	\eqref{eq:Laguerre-upper-tau-factor-formula} cancels the four ordinary
	leading minors and the four signs, giving
	\eqref{eq:upper-factor-shifted-minor-cross-ratio}.
\end{proof}
\subsection{The first mixed upper chain: \texorpdfstring{$q=2$}{q=2}}
\label{subsec:q2-bidiagonal-example}
For \(q=2\), the upper Christoffel chain has two factors. Let \(p\) be
arbitrary and write
\[
	N=pu_N+\ell_N=2v_N+\varepsilon_N,
	\qquad
	\ell_N\in\{0,\ldots,p-1\},
	\qquad
	\varepsilon_N\in\{0,1\}.
\]
Then
\[
	\iota_N=\ell_N+1,
	\qquad
	\chi_N=\varepsilon_N+1.
\]
For either the Jacobi or the Laguerre I family, let
\(\mathfrak b_{N,j}^{[0]}\) denote the Gauss--Borel normalized value at the
origin of the \(j\)-th component of the step-line \(B\)-form.  Thus one reads
\(\mathfrak b_{N,j}^{[0]}=\mathfrak b_{N,j}^{\,\mathrm J,[0]}\) in the
Jacobi case and
\(\mathfrak b_{N,j}^{[0]}=\mathfrak b_{N,j}^{\mathrm L,[0]}\) in the
Laguerre I case.  The two upper Christoffel tau-functions are
\begin{equation}
	\label{eq:q2-upper-tau-one}
	\tau_{1,N}^{B}
	=
	\mathfrak b_{N,1}^{[0]},
\end{equation}
\begin{equation}
	\label{eq:q2-upper-tau-two}
	\tau_{2,N}^{B}
	=
	\det
	\begin{bNiceMatrix}[margin=2pt]
		\mathfrak b_{N,1}^{[0]}&\mathfrak b_{N,2}^{[0]}\\
		\mathfrak b_{N+1,1}^{[0]}&\mathfrak b_{N+1,2}^{[0]}
	\end{bNiceMatrix}.
\end{equation}
To make these determinants directly computable, introduce the constant
terms and the normalizing leading coefficients
\begin{equation}
	\label{eq:q2-raw-constant-and-leading-data}
	\begin{aligned}
		\mathcal C_{N,j}^{\mathrm J}
		&\coloneq
		\mathscr B_N^{\,\mathrm J,(j)}(0),
		&
		\mathcal H_N^{\mathrm J}
		&\coloneq h_N^{\,\mathrm J},
		\\
		\mathcal C_{N,j}^{\mathrm L}
		&\coloneq
		\mathscr U_N^{\mathrm L,(j)}(0),
		&
		\mathcal H_N^{\mathrm L}
		&\coloneq h_N^{\,\mathrm L}.
	\end{aligned}
\end{equation}
Thus, in either family,
\[
	\mathfrak b_{N,j}^{[0]}
	=
	\frac{\mathcal C_{N,j}}{\mathcal H_N}.
\]
The quantities \(\mathcal C_{N,j}^{\mathrm J}\) are obtained by setting
\(x=0\) in the finite sum
\eqref{eq:Jacobi-SL-B-components}; the Laguerre I quantities
\(\mathcal C_{N,j}^{\mathrm L}\) are obtained in the same way from the
finite component system
\eqref{eq:Laguerre-SL-B-component-system}.  No limiting operation or
unevaluated integral is involved.

Put
\begin{equation}
	\label{eq:q2-row-Casoratian}
	\Delta_N^{(2)}
	\coloneq
	\mathcal C_{N,1}\mathcal C_{N+1,2}
	-
	\mathcal C_{N,2}\mathcal C_{N+1,1}.
\end{equation}
Substitution in \eqref{eq:q2-upper-tau-one} and
\eqref{eq:q2-upper-tau-two} gives
\begin{equation}
	\label{eq:q2-expanded-tau-functions}
	\tau_{1,N}^{B}
	=
	\frac{\mathcal C_{N,1}}{\mathcal H_N},
	\qquad
	\tau_{2,N}^{B}
	=
	\frac{\Delta_N^{(2)}}
	{\mathcal H_N\mathcal H_{N+1}}.
\end{equation}
Consequently, whenever the displayed denominators do not vanish, the two
upper bidiagonal factors are the finite expressions
\begin{equation}
	\label{eq:q2-upper-factor-one}
	(U_1)_{N,N}
	=
	-
	\frac{
		\mathcal C_{N+1,1}\mathcal H_N
	}{
		\mathcal C_{N,1}\mathcal H_{N+1}
	},\qquad
	(U_2)_{N,N}
	=
	-
	\frac{
		\mathcal C_{N,1}\Delta_{N+1}^{(2)}
		\mathcal H_{N+1}
	}{
		\mathcal C_{N+1,1}\Delta_N^{(2)}
		\mathcal H_{N+2}
	}.
\end{equation}
Thus the step-line recurrence matrix factors as
\begin{equation}
	\label{eq:q2-bidiagonal-product}
	T
	=
	L_1\cdots L_pU_2U_1.
\end{equation}
The lower factors in \eqref{eq:q2-bidiagonal-product} are still obtained
from the closed column-Christoffel chain.  In the Jacobi case this gives
\begin{equation}
	\label{eq:q2-Jacobi-lower-factor}
	(L_k^{\,\mathrm J})_{N+1,N}
	=
	-
	\frac{g_{N+1}^{\,\mathrm J,[k-1]}}
	{g_N^{\,\mathrm J,[k]}}
	\frac{
		\alpha_{k+\ell_N+1}-\alpha_k+u_N
	}{
		\alpha_{k+\ell_N+1}+u_N+1+b_{\varepsilon_N+1}+v_N
	},
	\qquad
	k\in\{1,\ldots,p\},
\end{equation}
where
\begin{equation}
	\label{eq:q2-Jacobi-g-factors}
	g_N^{\,\mathrm J,[k]}
	=
	\begin{cases}
	-
	\frac{
		(\boldsymbol a-(b_1+v_N)\one_q)_{v_N}
	}{
		(\boldsymbol\alpha^{[k]}+(b_1+v_N+1)\one_p)_{\boldsymbol d_N}
	},
	&
	\varepsilon_N=0,
	\\[16pt]
	-
	\frac{
		(\boldsymbol a-(b_2+v_N)\one_q)_{v_N}(b_1-b_2)
	}{
		(\boldsymbol\alpha^{[k]}+(b_2+v_N+1)\one_p)_{\boldsymbol d_N}
	},
	&
	\varepsilon_N=1.
	\end{cases}
\end{equation}
For the Laguerre I family the same formula holds with
\(\boldsymbol\alpha\) replaced by \(\boldsymbol\beta\), and with the Laguerre normalizers
\(g_N^{\mathrm L,[k]}\).  Since \(q=2\), there are only two Laguerre regimes:
when \(r=0\) one has the pure Euler formula
\begin{equation}
	\label{eq:q2-Laguerre-pure-Euler-lower-factor}
	(L_k^{\mathrm L})_{N+1,N}
	=
	\beta_{k+\ell_N+1}-\beta_k+u_N,
\end{equation}
and when \(r=1\) the two transitions are the beta--Euler and Euler--beta
cases of Theorem~\ref{thm:Laguerre-explicit-lower-bidiagonal-factors}.  This
example shows the general pattern: the closed column side gives the lower
factors by leading-coefficient quotients, while the nonclosed two-step row
side gives the upper factors through the rank-one and rank-two Christoffel
tau-functions \eqref{eq:q2-upper-tau-one}--\eqref{eq:q2-upper-tau-two}.
\subsection{An exact mixed Jacobi factorization}
\label{subsec:exact-mixed-Jacobi-factorization-example}
Consider a mixed example in which both Christoffel chains have length two.
Take
\[
	p=q=2,
	\qquad
	\boldsymbol\alpha=
	\begin{bNiceMatrix}[small]\frac15&\frac25\end{bNiceMatrix},
	\qquad
	\boldsymbol a=
	\begin{bNiceMatrix}[small]0&\frac13\end{bNiceMatrix},
	\qquad
	\boldsymbol b=
	\begin{bNiceMatrix}[small]1&\frac43\end{bNiceMatrix}.
\]
If
\[
	s_{u,v,i}\coloneq u+v+\alpha_i+1,
\]
then the bimoments are rational:
\begin{equation}
	\label{eq:exact-mixed-Jacobi-bimoments}
	\mathscr M_{(u,j),(v,i)}
	=
	\frac{1}{
		(s_{u,v,i})_{1+\delta_{j,1}}
		(s_{u,v,i}+\frac13)_{1+\delta_{j,2}}
	},
	\qquad
	u,v\in\mathbb N_0.
\end{equation}
Gauss--Borel elimination may therefore be performed without floating-point
arithmetic.  The leading \(5\times5\) block of the resulting
\((2,2)\)-banded recurrence matrix is
\[
	T^{[5]}
	\doteq
	\begin{bNiceArray}[small,margin=2pt]{rrrrr}
		0.226974&3.54657&1&0&0\\
		0.00845235&0.269142&0.201390&1&0\\
		0.00374871&0.217645&0.346208&4.47968&1\\
		0&0.00217769&0.0114150&0.332614&0.196135\\
		0&0&0.00416138&0.278947&0.365588
	\end{bNiceArray}.
\]
The subdiagonal entries of the two lower factors are
\begin{align*}
	\operatorname{subdiag}L_1^{[5]}
	&\doteq
	[0.00962773,0.598156,0.0282521,0.853697],
	\\
	\operatorname{subdiag}L_2^{[5]}
	&\doteq
	[0.0276116,0.562346,0.0350197,0.830354],
\end{align*}
and the diagonal entries of the two upper factors are
\begin{align*}
	\operatorname{diag}U_1^{[5]}
	&\doteq
	[0.868421,3.28520,0.122428,2.18223,0.0839606],
	\\
	\operatorname{diag}U_2^{[5]}
	&\doteq
	[0.261364,0.0417234,1.13695,0.0489025,1.33328].
\end{align*}
The displayed decimals are only for readability.  Using the exact rational
values obtained from \eqref{eq:exact-mixed-Jacobi-bimoments}, one has
\begin{equation}
	\label{eq:exact-mixed-Jacobi-factorization-check}
	T^{[5]}
	=
	L_1^{[5]}L_2^{[5]}U_2^{[5]}U_1^{[5]}
\end{equation}
entry by entry.  More generally, a direct comparison of the first eight rows
and all ten columns containing their band entries gives the zero
\(8\times10\) difference matrix.  This checks simultaneously the closed
formulas for the lower factors, the \(\tau\)-determinant formulas for the
upper factors, and the cyclic ordering of the four Christoffel steps.

\section{Conclusions}
\label{sec:conclusions}

We have constructed two non-Gaussian hypergeometric families of rank-one
mixed-type systems for arbitrary \(p\) and \(q\). In the admissible
near-diagonal range, both normalized mixed forms are explicit. Their
components are expressed through terminating generalized hypergeometric
polynomials and finite reconstruction formulas. In the Jacobi case the
reconstruction can be grouped into terminating Kamp\'e de F\'eriet
polynomials, whereas in the beta--Euler Laguerre I case it is governed by
a finite triangular system. Gamma-quotient Mellin transforms, Meijer
\(G\)-representations, and Rodrigues formulas complete the description of
the two systems.

The corresponding step-line sequences satisfy dual recurrences governed by
\((p,q)\)-banded matrices. Every band entry is a finite
Gamma--Pochhammer expression. Under the stated normality and nonzero-pivot
conditions, the two Christoffel chains give bidiagonal factorizations of the
recurrence matrices. The lower factors are explicit Pochhammer products; the
upper factors are finite tau-determinants and, equivalently, cross-ratios of
shifted moment minors. In the mixed Pi\~neiro specialization both chains
close and every factor is explicit. The exact rational \(p=q=2\) example
verifies the general construction.

The one-weight reductions recover the Jacobi--Pi\~neiro system and the
multiple Laguerre system of the first kind. The present article is confined
to the algebraic and hypergeometric construction of the two mixed families.
Their endpoint and central asymptotic regimes form a separate confluence
problem, developed in~\cite{ManasAskeyJacobi2026}; the corresponding
Hahn-like scheme for ordinary multiple orthogonality is treated
in~\cite{ManasAskeyHahn2026}.

Two further developments are pursued in companion work. Applying a Bernstein
kernel to each variable of the Jacobi pairing leads to mixed Hahn systems
with bivariate discrete matrix measures, while the polynomial components
remain univariate. A second direction is to determine parameter regions
where the prescribed Jacobi bidiagonal factors are positive, and to study
the resulting Markov chains. When a positive fixed vector is available,
diagonal normalization gives a stochastic matrix, and positive bidiagonal
factors resolve its transitions into elementary nearest-neighbour steps.
The required positivity and normalization are additional questions,
separate from the algebraic factorization established here.

\section*{Acknowledgements}
The author was supported by the research project PID2024-155133NB-I00,
\emph{Ortogonalidad, aproximaci\'on e integrabilidad: aplicaciones en procesos
estoc\'asticos cl\'asicos y cu\'anticos}.

\section*{Conflict of interest}
The author declares no conflict of interest.

\section*{Data availability}
No datasets were generated or analyzed in this study.

\end{document}